\documentclass[10pt]{amsart}%{elsart}%{amsart}
\usepackage{enumitem}

\numberwithin{equation}{section}

\usepackage{amscd}
\usepackage{overpic}
\usepackage{amsmath,amsfonts,amsthm,epsfig,latexsym,graphicx,amssymb}%,mathtext}
\usepackage[english]{babel}
\usepackage{color}
\usepackage{verbatim}

\newtheorem{theorem}{Theorem}[section]
\newtheorem{lemma}[theorem]{Lemma}
\newtheorem{claim}[theorem]{Claim}

\newtheorem{proposition}[theorem]{Proposition}
\newtheorem{corollary}[theorem]{Corollary}

\newtheorem{mtheo}{Theorem}

\theoremstyle{definition}
\newtheorem{definition}[theorem]{Definition}
\newtheorem{remark}[theorem]{Remark}

\newtheorem{question}{Question}

\DeclareMathOperator{\interior}{int}
\DeclareMathOperator{\Diff}{Diff}
\DeclareMathOperator{\id}{id}
\DeclareMathOperator{\supp}{supp}
\newcommand{\eqdef}{\stackrel{\scriptscriptstyle\rm def}{=}}

\def\bN{\mathbb{N}}
\def\bZ{\mathbb{Z}}
\def\bR{\mathbb{R}}
\usepackage{xcolor}
\definecolor{forestgreen}{rgb}{0.0, 0.27, 0.13}

\def\cs{\mathrm{cs}}
\def\cu{\mathrm{cu}}
\def\u{\mathrm{u}}
\def\s{\mathrm{s}}
\def\ss{\mathrm{ss}}
\def\uu{\mathrm{uu}}
\def\c{\mathrm{c}}

\def\A{A}

\DeclareMathOperator{\Leb}{Leb}
\DeclareMathOperator{\diam}{diam}

\newcommand{\llbracket}{[\![}
\newcommand{\rrbracket}{]\!]}

\def\R{I\kern -0.37 em R}
\def\N{I\kern -0.37 em N}
\def\Z{I\kern -0.37 em Z}

\def\supess_#1{\mathop{\rm supess}\limits_{#1}}
\def\infess_#1{\mathop{\rm infess}\limits_{#1}}

\textwidth=13.4 true cm
\textheight=22. true cm
\voffset=-1. true cm

\def\EE{{\mathbb E}}

 \def\RR{{\mathbb R}}

 \def\ZZ{{\mathbb Z}}

\def\Si{\Sigma}
\def\La{\Lambda}
\def\De{\Delta}

\def\Ga{\Gamma}

   \def\cM{{\mathcal M}}  
     
\def\cC{{\mathcal C}}   \def\cO{{\mathcal O}} \def\cU{{\mathcal U}}
\def\cD{{\mathcal D}}    
    \def\cW{{\mathcal W}}
\def\cF{{\mathcal F}}    

\def\fB{\mathfrak{B}}
\def\fF{\mathfrak{F}}
\def\ff{\mathfrak{f}}

\def\fR{\mathfrak{R}}

\title[Heterodimensional cycles of measures]{\bf Heterodimensional cycles\\ of hyperbolic ergodic measures.} 

\author[Ch.~Bonatti]{Ch. Bonatti}
\address{Institut de Math\'{e}matiques de Bourgogne, UMR 5584 du CNRS, Universit\'{e} de
Bourgogne, 21000, Dijon, France}
\email{bonatti@u-bourgogne.fr}

\author[L.~J.~D\'iaz]{L. J. D\'\i az}
\address{Departamento de Matem\'atica PUC-Rio, Marqu\^es de S\~ao Vicente 225, G\'avea, Rio de Janeiro 22451-900, Brazil}
\email{lodiaz@mat.puc-rio.br}
\author[K.~Gelfert]{K.~Gelfert}
\address{Instituto de Matem\'atica Universidade Federal do Rio de Janeiro, Av. Athos da Silveira Ramos 149, Cidade Universit\'aria - Ilha do Fund\~ao, Rio de Janeiro 21945-909,  Brazil}\email{gelfert@im.ufrj.br}

\thanks{The IMB receives support from the EIPHI Graduate School (contract ANR-17-EURE-0002).
This research has been supported [in part] by 
CAPES -- Finance Code 001, by 
CNPq-grants  310069/2020-3 and % bolsa Lorenzo
305327/2022-4,  % bolsa Katrin
CNPq Projeto Universal 430154/2018-6  and % Universal Lorenzo 
404943/2023-3, % Universal Pacifico
E-16/2014 INCT/FAPERJ and  % lorenzo inctmat
E-26/200.371/2023 CNE/FAPERJ (all Brazil), %Katrin 
and the French-Brazilian Network in Mathematics (GDRI-RFBM)} %  Katrin
%% Chamada CNPq/MCTI Nº 10/2023 - Faixa B - Grupos Consolidados 404943/2023-3 -- Zz
\begin{document}

\begin{abstract}
We introduce the concept of a heterodimensional cycle of hyperbolic ergodic measures and a special type of them that we call rich.
Within a partially hyperbolic context, we prove that if two measures are related by a rich heterodimensional cycle, then the entire segment of probability measures linking them lies within the closure of measures supported on periodic orbits. 
Motivated by the occurrence of robust heterodimensional cycles of hyperbolic basic sets, we study robust rich heterodimensional cycles of measures providing a framework for this phenomenon for diffeomorphisms.
 In the setting of skew products, we construct an open set of maps having uncountably many measures related by rich heterodimensional cycles.
\end{abstract}

\keywords{blender-horseshoe, cycle between measures, heterodimensional cycle, hyperbolic ergodic measures, partial hyperbolicity, Plykin saddle-attractor, saddle-SRB measure}
\subjclass[2000]{%
28A33, %: Spaces of measures, convergence of measures
37D30, %: Partially hyperbolic systems and dominated splittings
37C40, %: Smooth ergodic theory, invariant measures
%28D99, % Measure-theoretic ergodic theory
37C29% Homoclinic and heteroclinic orbits
%
%37D20, % Uniformly hyperbolic systems (expanding, Anosov, Axiom A, etc.) 
%37E10, % Dynamical systems involving maps of the circle 
%37E35, % Flows on surfaces 
%37C86%Foliations generated by dynamical systems 
%37D25, %Nonuniformly hyperbolic systems (Lyapunov exponents, Pesin theory, etc.)
%37C50, %: Approximate trajectories (pseudotrajectories, shadowing, etc.)
%37D25, %: Nonuniformly hyperbolic systems (Lyapunov exponents, Pesin theory, etc.)
%37D05% Dynamical systems with hyperbolic orbits and sets
}

\maketitle
\tableofcontents

%%%%%%%%%%%%%%%%%%%%%%%%%%%%%%%%%%%%%%%%%%%%%%%%
\section{Introduction}
%%%%%%%%%%%%%%%%%%%%%%%%%%%%%%%%%%%%%%%%%%%%%%%%

The study of individual orbits for chaotic dynamics, by its very nature, does not allow to capture the global complexity of the system. Often, it is more meaningful to concentrate on the ``statistical'' behavior and to study the measures invariant under the dynamics, especially the ergodic ones. Note that each ergodic measure encodes information about a ``cluster'' of orbits (its generic points) and in this way, the study of certain types of orbits turns naturally into a study of the space of measures. We focus on ``heteroclinic'' relations between measures that mimic the corresponding relation between orbits. To be more precise, given an ergodic measure $\mu$ for some dynamics $f$, a point $x$ is (forward) \emph{$\mu$-generic} if it belongs to the \emph{stable set} (of $\mu$)
\[
	W^\s(\mu,f) 
	\eqdef \Big\{y\colon \frac1n(\delta_y+\delta_{f(y)}+\ldots+\delta_{f^{n-1}(y)})\to\mu\Big\}.
\] 
Analogously, for an invertible map $f$, we can define the \emph{unstable set} $W^\u(\mu,f)$ considering backward orbits. Two ergodic measures $\mu,\nu$ are \emph{heteroclinically related} if their associated sets $W^\s(\cdot)$ and $W^\u(\cdot)$  intersect cyclically, that is,
\begin{equation}\label{cycleofmeasures}
	W^\s(\mu,f) \cap W^\u(\nu,f) \ne\emptyset,\quad
	W^\u(\mu,f) \cap W^\s(\nu,f) \ne\emptyset.
\end{equation}
We aim to understand which are the implication of such sort of intersections. This configuration is somewhat reminiscent of the homoclinic/heteroclinic intersections between saddles (hyperbolic periodic points) which is at the core of the understanding of chaotic dynamics and, in particular,  hyperbolic dynamics. By means of Smale's spectral decomposition theorem \cite{Sma:67}, the orbital behavior is understood for basic%
\footnote{A set is \emph{basic} if it is compact, invariant, topologically transitive, locally maximal, and hyperbolic.}
 sets. Here the dynamics is organized around periodic orbits and the homoclinic/heteroclinic intersection of their invariant manifolds. Bowen's results \cite{Bow:08} provide a corresponding symbolic description. From the point of view of measures, the latter results have a counterpart that we proceed to discuss. 

For a continuous map $f$ on a compact metric space,  the space $\cM(f)$ of all invariant Borel probability measures equipped with the weak$\ast$ topology is a Choquet simplex and its extreme points are the ergodic measures, $\cM_{\rm erg}(f)$. For $f$ having the periodic specification property (in the differentiable setting, it represents essential features of hyperbolicity and is satisfied for basic sets) Sigmund \cite{Sig:74} proved that \emph{periodic measures} (those supported on periodic orbits) are dense in $\cM(f)$. Therefore, this space is the Poulsen simplex and has a rich topological structure \cite{LinOlsSte:78}. To give an oversimplifying idea how to obtain the results in \cite{Sig:74} in a hyperbolic context, note that any pair of measures $\mu,\nu$ presents very rich intersections as in \eqref{cycleofmeasures} (provided by homoclinic intersections of invariant manifolds of the hyperbolic dynamics). The symbolic codification then enables to  ``almost freely concatenate'' (this is essentially the meaning of specification) orbit segments of points contained in these intersections. This allows  to ``almost freely interpolate'' between two extreme points $\mu,\nu\in\cM_{\rm erg}(f)$ approximating the segment
\begin{equation}
\label{e.convexhull}
	[\mu,\nu]
	\eqdef \{s\mu+(1-s)\nu\colon s\in[0,1]\}
	\subset\cM(f).
\end{equation}
 in the weak$\ast$ topology. 

The above-described structure is only true for uniformly hyperbolic dynamics. We aim to go beyond uniform hyperbolicity and place Sigmund's results into a broader context. 
To make this more rigorous, in what follows, we consider a diffeomorphism $f$ on some compact manifold. Recall that an ergodic measure has Lyapunov exponents. An ergodic measure whose exponents are all nonzero is \emph{hyperbolic} and we denote by $\cM_{\rm hyp}(f)$ the set of all such measures. Two ergodic measures have \emph{different type of hyperbolicities} if their numbers of negative exponents (counted with their multiplicity) are different. There exist systems that simultaneously present ``transitive pieces of different types of hyperbolicity" and ``non-hyperbolic pieces'' which may intersect or ``(dynamically) overlap'' in the sense of presenting heteroclinic relations. For those systems, given two ergodic measures $\mu,\nu$, we aim to understand how the segment $[\mu,\nu]$ is placed in $\cM (f)$ and, in particular, if, how, and where it is accumulated by ergodic or periodic measures. The analysis of this question heavily relies on whether or not the measures share the same type of hyperbolicity. We are interested in the case when the measures form a \emph{cycle} as in \eqref{cycleofmeasures}. This cycle is \emph{equidimensional} if the measures have the same type of hyperbolicity and {\em{heterodimensional}} otherwise. This terminology extends the particular case of cycles of basic sets (see \cite{NewPal:73,Dia:95} and the discussed below).

For ergodic measures $\mu, \nu$ with the \emph{same} type of hyperbolicity, under mild ``domination'' hypotheses, \cite{BocBonGel:18} provides a complete geometric answer to the previous questions: the following facts are equivalent:
\begin{itemize}[leftmargin=0.9cm ]
\item[(st1)] there is $t\in(0,1)$ so that $t\mu+(1-t)\nu$ is the limit of ergodic measure,
\item[(st2)] the segment $[\mu,\nu]$ is contained in the closure of the set of ergodic measures,
 \item[(st3)] the measures $\mu$ and $\nu$ are \emph{homoclinically related} in the sense that there are generic points of $\mu$ and $\nu$ whose Pesin stable/unstable manifolds intersect transversely. In the above terminology, they form an equidimensional cycle and, additionally, have transverse intersections.
\end{itemize}
The configuration in (st3) was also investigated in \cite{RodRodTahUre:11} where the term \emph{ergodic homoclinic class} was coined. By \cite{AbdBonCro:11}, for a $C^1$ generic diffeomorphism, for periodic measures $\mu,\nu$ supported in a homoclinic class and of the same type, (st2) holds (relative to the convex closure of all periodic measures in this class). In \cite{GorPes:17}, a further structure such as general path-connectedness is studied.%
\footnote{By \cite{AbdBonCro:11}, $C^1$ generically, two periodic orbits of the same type of hyperbolicity and in the same homoclinic class are homoclinically related. This is the hypothesis used in \cite{GorPes:17}.}

In this paper, we investigate the case of ergodic measure of \emph{different} types of hyperbolicity and aim for results in the spirit of (st1)--(st3). In the seminal works \cite{New:70,AbrSma:70}, nonhyperbolic dynamical systems are described, exhibiting diverse forms of nonhyperbolicity. Indeed, as nonhyperbolicity may occur in many ways, here we focus on the simplest setting which is also the closest to the hyperbolic one. We assume that there is a \emph{partially hyperbolic splitting} of the form $E^\ss\oplus E^\c \oplus E^\uu$, where $E^\c$ is one-dimensional and $E^\ss$ and $E^\uu$ are uniformly contracting and expanding, respectively. Given any ergodic measure $\mu$, the \emph{central direction} $E^\c$ gives rise to a Lyapunov exponent in this direction. Any nonhyperbolicity comes from this direction. The pioneering works \cite{Shu:71,Man:78} provide robustly nonhyperbolic and transitive diffeomorphisms having this sort of splitting. \cite{Pal:05,Bon:11} points out the relevance of cycles in the generation of  nonhyperbolicity. According to \cite{BonDia:08}, in dimensions equal or higher than three, the occurrence of  nonhyperbolicity can be explained by a semi-local mechanism called robust heterodimensional cycle which involves blender-horseshoes and that will be explored below. 

To summarize our results, let us first revisit the setting of heterodimensional cycles between sets. Recall that every hyperbolic set  $\La$ of a diffeomorphism $f$ always has a well-defined continuation $\La_g$ for diffeomorphisms $g$ close to $f$. Analogously to what was said above, we talk about \emph{types of hyperbolicity} of basic sets. Two basic sets $\La$ and $\Ga$ with different types of hyperbolicity form a \emph{heterodimensional cycle} if their invariant sets intersect cyclically (analogously to \eqref{cycleofmeasures}). A special case occurs when $\La$ and $\Ga$ are orbits of periodic points, say $P$ and $Q$. In this case, considering their associated periodic measures, one gets a \emph{heterodimensional cycle of measures}. We point out that, without further assumptions about the ``interaction" of the measures $\mu, \nu$  in such a cycle, even in the periodic case, few can be said about the segment $[\mu,\nu]$. For instance, no measure in the ``open segment'' $(\mu,\nu)$ may be accumulated or the accumulation points may contain only one point in the open segment (see the examples in Section \ref{appnonrich}). To exclude these exceptions, we introduce the concept of \emph{rich heterodimensional cycle of ergodic measures}, (Definition~\ref{defrichcycmeas}).  The key feature of such cycles is that the strong un-/stable manifolds of some generic points of the measures are involved. Due to the hypothesis of partial hyperbolicity, such manifolds are indeed well-defined. In Figure \ref{figcycles} (right figure), we depict the simplest examples of a rich heterodimensional cycle, where the measures involved in the cycle are periodic ones.

\begin{figure}[h] 
 \begin{overpic}[scale=.7]{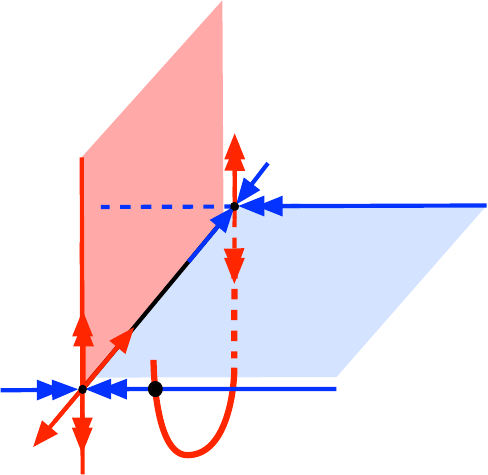}
 	\put(7,23){$Q$}
 	\put(37,59){$P$}
\end{overpic}
\hspace{1cm}
 \begin{overpic}[scale=.7]{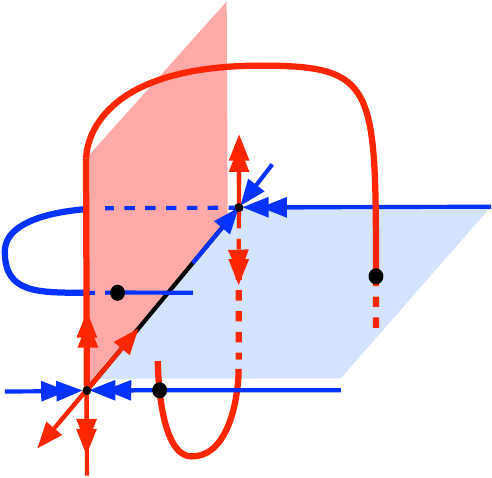}
 	\put(7,23){$Q$}
 	\put(37,59){$P$}
	\put(73,80){\textcolor{red}{$W^\uu(Q)$}}
	\put(-10,56){\textcolor{blue}{$W^\ss(P)$}}
\end{overpic}
\caption{``Sparse'' heterodimensional cycle (left figure) and rich heterodimensional cycle (right figure) between 
 measures $\delta_P$ and $\delta_Q$ associated to the saddles $P$ and $Q$}
\label{figcycles}
\end{figure}

\begin{mtheo}\label{mmt.cycle} 
Let $f\in \Diff (M)$. Consider measures $\mu^\pm\in \cM_{\rm hyp}(f)$ related by a rich heterodimensional cycle. Then for every $\nu \in [\mu^-, \mu^+]$ there is a sequence of hyperbolic periodic measures $(\mu_{\nu,n})_n$ converging to $\nu$ in the weak$\ast$ topology, as $n\to +\infty$. 
\end{mtheo}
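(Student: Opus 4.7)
The plan is to realize every $\nu = s\mu^- + (1-s)\mu^+$ with $s\in[0,1]$ as a weak$\ast$ limit of measures $\mu_{\nu,n}$ supported on hyperbolic periodic orbits, constructed by specification-style concatenation. Each such periodic orbit will consist of a long $\mu^-$-typical segment of relative length $\approx s$, a long $\mu^+$-typical segment of relative length $\approx 1-s$, and two short transitional pieces passing through the heteroclinic intersections supplied by the rich cycle. Applying Birkhoff's theorem to a countable family of observables that is dense in $C^0(M)$, the empirical measure of such a periodic orbit then converges weak$\ast$ to the target $\nu$ as the total period $N\to\infty$, since the contributions of the short transitional pieces are negligible in the limit.

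For the gluing step, I would first fix Pesin blocks $K^\pm$ of large $\mu^\pm$-measure on which the hyperbolic constants and the sizes of Pesin (un)stable local manifolds are uniform, and select generic points $y^\pm \in K^\pm$ whose forward and backward Birkhoff averages closely approximate $\mu^\pm$. Partial hyperbolicity provides globally defined strong stable $W^\ss$ and strong unstable $W^\uu$ foliations, transverse to the Pesin center-(un)stable discs at $y^\pm$. The richness of the cycle (Definition~\ref{defrichcycmeas}) yields two transition points $p,q$ at which the strong (un)stable leaf of one measure meets the (un)stable Pesin disc of the other, cyclically. Using $p$ and $q$ as bridges, and combining the uniform contraction along $E^\ss$, the uniform expansion along $E^\uu$, and the Pesin estimates along the one-dimensional center, one assembles a closed $\ve$-pseudo-orbit with only two exponentially small jumps. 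A shadowing/graph-transform argument adapted to the partially hyperbolic setting, in the spirit of Hirsch--Pugh--Shub, then produces an honest periodic orbit that $\ve$-shadows the pseudo-orbit, and the ratio of the two segment lengths is free to be tuned to $s/(1-s)$.

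The principal obstacle is to ensure that the resulting periodic orbit is \emph{hyperbolic}: uniform hyperbolicity along $E^\ss\oplus E^\uu$ is automatic, but the sign of the central Lyapunov exponent is delicate. The central exponent of the closed orbit of length $N$ equals, up to $O(1/N)$, the convex combination $s\lambda^\c(\mu^-) + (1-s)\lambda^\c(\mu^+)$. Because the cycle is heterodimensional, $\lambda^\c(\mu^-)$ and $\lambda^\c(\mu^+)$ have opposite signs, and this affine function of $s$ vanishes at a unique $s^\ast\in(0,1)$. For $s\neq s^\ast$ the periodic orbit is hyperbolic for all sufficiently large $N$, and the convergence of its empirical measure to $\nu$ follows as above. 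For $s=s^\ast$ one instead approximates $\nu$ by a sequence $\nu_n = s_n\mu^- + (1-s_n)\mu^+$ with $s_n\to s^\ast$ and $s_n\neq s^\ast$; the hyperbolic periodic measures constructed for each $\nu_n$ (with sufficiently large period) then accumulate at $\nu$, completing the proof.
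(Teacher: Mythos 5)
Your outline follows the same route as the paper: concatenate a long $\mu^-$-generic backward segment and a long $\mu^+$-generic forward segment through the intersections furnished by the rich cycle, close up the resulting pseudo-orbit by shadowing, and let the ratio of segment lengths realize $s$. The treatment of the exceptional parameter $s^\ast$ where the averaged center exponent vanishes (approximate by $s_n\to s^\ast$) is also exactly what the paper does. However, there is a genuine gap at the shadowing step. In a partially hyperbolic set with a nontrivial one-dimensional center, a Hirsch--Pugh--Shub/graph-transform shadowing argument does \emph{not} produce a genuine periodic orbit: shadowing holds only modulo a sliding along the center direction, so the orbit cannot be closed unless the center behaves hyperbolically \emph{along the specific pseudo-orbit}. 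One therefore needs a quasi-hyperbolic shadowing lemma (Liao--Gan type, Theorem~\ref{theoGan}), and before it can be invoked one must verify Pliss-type contraction estimates for the products $\prod_j\lVert Df|_{E^\c(f^j(\widehat q))}\rVert$ at \emph{every} intermediate time $k$ of the concatenated segment --- not just at the final time. This is where the condition $\chi(s)<0$ enters \emph{a priori}, as an input to the construction rather than an a posteriori check of hyperbolicity of an already-existing closed orbit; it is the content of Proposition~\ref{pl.quasihyp} and requires the careful case analysis over $k$ in Lemma~\ref{lemclaim1}. Your proposal treats the existence of the periodic orbit and its hyperbolicity as two separate issues, when in fact the second is a precondition for the first.

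A secondary point: the theorem is stated for $f\in\Diff(M)$, i.e.\ $C^1$ only, where Pesin blocks and nontrivial Pesin manifolds are not available (as the paper notes, Pesin stable manifolds of a $C^1$ diffeomorphism may reduce to points). Your gluing step leans on Pesin theory, which is neither valid here nor needed: condition~(1) of Definition~\ref{defrichcycmeas} requires $\mu^\pm$ to be \emph{uniformly} hyperbolic, so the invariant manifolds used in the transitions are those of the uniformly hyperbolic supports, with uniform sizes and continuous dependence for free. Replacing the Pesin-block language by the uniform local manifolds $W^\dagger_{r,L}$ of Remark~\ref{r.localR} removes this issue without changing your construction.
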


As pointed out above, an important issue concerning cycles, either of sets or of measures, is ``robustness''. In what is below, the term \emph{robust} refers to the fact that the involved objects persist after small perturbations and depend continuously. A cycle associated to $\La$ and $\Ga$ is \emph{$C^r$-robust} if for every $g$ which is $C^r$-close to $f$ the continuations $\La_g$ and $\Ga_g$ of $\Lambda$ and $\Gamma$ form a heterodimensional cycle. Heterodimensional cycles associated to a pair of  basic sets%
\footnote{The partially hyperbolic assumption implies that these cycles have co-index one, very few is known in the higher co-index case.}
generate $C^r$-robust cycles by arbitrarily small perturbations (see \cite{BonDia:08} in the $C^1$-case 
and \cite{LiTur:} for higher differentiability). In the discussion of robustness, the fact that a hyperbolic set has a well-defined continuous continuation is crucial.

To achieve ``robust'' heterodimensional cycles of measures, it is essential to first define measure continuations. While there is not yet an adequate framework for this definition, there are some feasible cases. Besides the obvious case of continuations of saddle periodic measures, one can also consider SRB measures of hyperbolic attractors/repellers. The latter cannot be related by cycles, but they can be used as constructing plugs. We define \emph{saddle-SRB measures} associated to \emph{saddle-attractors and -repellers}. To give a rough description, consider a hyperbolic attractor on a surface and its SRB measure. Embedding this attractor in a normally expanding surface one gets a saddle-attractor and the corresponding saddle-SRB measure. Every saddle-attractor $\Lambda^-$ of a $C^2$ diffeomorphism supports a saddle-SRB measure $\mu^-$. Moreover, the set and the measure have well-defined continuous continuations for every $C^2$-close diffeomorphism $g$, denoted by $\Lambda^-_g$ and $\mu^-_g$, respectively. Analogously, for saddle-repellers and their saddle-SRB measures. We present a setting having \emph{robust rich heterodimensional cycles of measures}. We postpone further details to Section \ref{sec:SRB}. 

\begin{mtheo}[Robust cycles of measures]\label{mmt.SRBrobust} 
Let $M$ be a three-dimensional compact manifold. There is an open set $\mathcal U$ of the space of $C^2$-diffeomorphisms on $M$ consisting of diffeomorphisms such that every $f\in \mathcal U$ has a saddle-attractor basic set $\La^-_f$ and a saddle-repeller basic set $\La^+_f$ whose associated saddle-SRB measures $\mu^\pm_f$ have a rich heterodimensional cycle. As a consequence, for every $f\in\mathcal U$ every measure in the segment $[\mu^-_f,\mu^+_f]$ is accumulated by ergodic measures. 
\end{mtheo}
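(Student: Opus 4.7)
The plan is to construct the open set $\cU$ as a $C^2$-neighborhood of a model diffeomorphism $f_0$ assembled from three plugs: a saddle-attractor basic set $\La^-$, a saddle-repeller basic set $\La^+$, and a pair of blender-horseshoes connecting them. Once a rich heterodimensional cycle between $\mu^-$ and $\mu^+$ is set up and shown to be $C^2$-robust, the conclusion about the segment $[\mu^-_f,\mu^+_f]$ follows directly from Theorem~\ref{mmt.cycle}.

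First I would build the two end plugs from the Plykin construction. Start with a Plykin attractor $\La_0^-$ on a two-disc carrying its SRB measure $\mu_0^-$, and suspend it in a normally expanding direction; inside a suitable invariant region of $M$ this yields a saddle-attractor basic set $\La^-$ whose tangent bundle splits as $E^\ss\oplus E^\c\oplus E^\uu$, with $E^\uu$ tangent to the transverse expanding factor, $E^\ss$ to the stable direction of $\La_0^-$, and $E^\c$ to its (weaker) expanding direction. The lift of $\mu_0^-$ is then a saddle-SRB measure $\mu^-$ with one negative and two positive Lyapunov exponents (index one). Symmetrically, from a Plykin repeller crossed with a normally contracting direction I produce a saddle-repeller basic set $\La^+$ with a saddle-SRB measure $\mu^+$ of index two. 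Arrange both plugs in disjoint regions of $M$ so that their partially hyperbolic splittings extend compatibly to the intermediate region.

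Next I would engineer the heterodimensional cycle using blender-horseshoes as in \cite{BonDia:08}. Insert a $\cs$-blender-horseshoe $\fK^\cs$ into the intermediate region and arrange that a cs-disc inside $W^\s(\La^+)$ meets its superposition region; the defining property of $\fK^\cs$ forces every curve tangent to its strong-unstable cone to meet this disc, so in particular an arc of $W^\uu(\La^-)$ does so robustly. A symmetric construction with a $\cu$-blender-horseshoe $\fK^\cu$ yields robust intersections between $W^\u(\La^-)$ and $W^\ss(\La^+)$. Together these give a robust heterodimensional cycle between $\La^\pm_f$ for every $f$ in a $C^2$-open set $\cU$, and since saddle-SRB measures vary continuously in the $C^2$-topology, the continuations $\mu^\pm_f$ are well-defined throughout $\cU$.

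The main obstacle is upgrading this robust cycle of basic sets to a \emph{rich} cycle of the measures $\mu^\pm_f$, which by Definition~\ref{defrichcycmeas} demands that the cyclic intersections be realized along strong stable and strong unstable manifolds of $\mu^\pm$-generic points. The key tool is that, for saddle-SRB measures, the set of generic points has full measure on $\La^\pm$ and, by absolute continuity of the strong holonomies, its saturation under $W^\ss$ (respectively $W^\uu$) is dense in the strong-stable (respectively strong-unstable) lamination of $\La^\pm$. A small $C^2$-perturbation therefore places the blender intersections along strong manifolds of generic points; both the blender's superposition property and the continuations $\mu^\pm_f$ persist in $C^2$, so richness is robust. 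Applying Theorem~\ref{mmt.cycle} then yields that every $\nu\in[\mu^-_f,\mu^+_f]$ is accumulated by ergodic (in fact periodic) measures, completing the proof.
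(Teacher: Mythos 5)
Your overall architecture (Plykin plugs carrying saddle-SRB measures, their $C^2$-continuations, reduction to Theorem~\ref{mmt.cycle}) is the paper's, but the mechanism you propose for the decisive quasi-transverse intersection has a genuine gap. Condition (4) of Definition~\ref{defrichcycmeas} requires a point of $W^\ss(x^+)\cap W^\uu(x^-)$ with $x^\pm$ \emph{generic} for $\mu^\pm$: a $1+1$ intersection in a $3$-manifold. A blender-horseshoe does not deliver this. First, its defining property forces a disc crossing the superposition region to meet the local stable manifold \emph{of the blender itself}, not a prescribed disc of $W^\s(\La^+)$ placed there, so your sentence ``the defining property of $\fK^\cs$ forces every curve tangent to its strong-unstable cone to meet this disc'' misstates what a blender gives; you would only obtain heteroclinic chains through the auxiliary blender. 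Second, and more seriously, even a correct blender argument controls which \emph{leaves} of a strong lamination are hit only up to the blender's own stable set, whereas you need the leaf through a $\mu^+$-generic point. Your repair --- the saturation of the generic set is dense, so ``a small $C^2$-perturbation places the blender intersections along strong manifolds of generic points'' --- does not work: a dense, non-open family of leaves cannot be hit stably by perturbation, and the perturbation moves the generic set as well. Whether a blender carries a measure whose generic leaves fill a full-Lebesgue set of the leaves crossing the superposition region is exactly the open problem the paper isolates (Question~\ref{q.generalLY}). The paper's route avoids blenders entirely: it arranges the \emph{two-dimensional} local stable manifold of the saddle-attractor and the two-dimensional local unstable manifold of the saddle-repeller to meet transversely along a curve $\sigma$ transverse to both strong foliations (Theorem~\ref{tp.SRBrobust}(a)); by absolute continuity of the saddle-SRB measures, see \eqref{eqSRB}, the points of $\sigma$ lying on strong-stable leaves of $\mu^+$-generic points and those lying on strong-unstable leaves of $\mu^-$-generic points each have \emph{full} Lebesgue measure in $\sigma$, hence intersect. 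Two dense sets need not meet; two full-measure subsets of the same curve must. The transverse curve, the foliations, and the SRB measures all persist under $C^2$-perturbation, which is what makes richness robust.

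A second gap: conditions (2)--(4) require all the cyclic intersection orbits to lie in a single partially hyperbolic set $\Lambda$ with one-dimensional centre extending the splittings over $\La^\pm$, and you dispose of this with ``arrange both plugs \dots so that their partially hyperbolic splittings extend compatibly.'' Because the two-dimensional weak unstable and weak stable laminations of the Plykin saddle-sets are folded, their intersection points can carry degenerate configurations of the three candidate line fields (tangencies between the centre direction and the transverse strong directions), and the paper spends Section~\ref{ss.proofofpSRBrobust} selecting intersection points in general position --- using the non-real eigenvalues at the auxiliary saddles to produce two distinct limit laminations --- before the splitting can be extended. This step, which the paper calls the major difficulty, cannot be asserted away. (Minor: your saddle-attractor $\mu^-$ is described with two positive exponents, so $\chi^\c(\mu^-)>0$, which contradicts condition (1) as you have labelled things; swap the signs consistently.)
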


Let us slightly change the perspective of robustness and present another class of examples. Here we introduce skew product maps $F$ which have two uncountable sets of measures $\Leb^-(F)$ and $\Leb^+(F)$ such that every measure $\mu^-\in\Leb^-(F)$ and $\mu^+\in\Leb^+(F)$ are related by a rich heterodimensional cycles. Moreover, every nearby $G$ has also this property. However, it is unclear if some of these pairs of measures $\mu^\pm_F$  have some continuations to corresponding measures of $G$. The heart of this construction are blender-horseshoes with different types of hyperbolicity in step skew products and with ``overlaps''. We postpone further details to Section \ref{s.robustblenders}. 

\begin{mtheo}\label{mmt.skew} 
	Assume that $F$ is a step skew product with overlapping blenders and $C^2$ fiber maps. Then the convex hull of $\Leb^-(F)\cup\Leb^+(F)$ is contained in the closure of the set of periodic measures. Moreover, this property holds for every step skew product map sufficiently $C^2$-close to $F$.
\end{mtheo}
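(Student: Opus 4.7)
\medskip

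\noindent\textbf{Proof plan.} The plan is to combine Theorem~\ref{mmt.cycle} applied pairwise with a Sigmund/Katok-style concatenation argument adapted to the skew product and blender setup. The base of the skew product is a shift (giving specification for free) and the \emph{overlap} of the two blender-horseshoes supporting $\Leb^-(F)$ and $\Leb^+(F)$ provides the freedom to interpolate between generic orbits of measures of different central sign; these two ingredients are exactly what the argument requires.

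First I would handle the edges of the simplex spanned by $\Leb^-(F)\cup\Leb^+(F)$. For any pair $\mu^-\in \Leb^-(F)$, $\mu^+\in \Leb^+(F)$, Theorem~\ref{mmt.cycle} directly yields that every element of $[\mu^-,\mu^+]$ is a limit of periodic measures. For two measures $\mu_1,\mu_2$ in the same family $\Leb^\pm(F)$, I would observe that the blender-horseshoe carrying them is a transitive basic set, so $\mu_1$ and $\mu_2$ lie in a common homoclinic class and the equidimensional criterion (st1)--(st3) gives approximability of $[\mu_1,\mu_2]$.

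To treat a general convex combination $\nu=\sum_{i=1}^{k}s_i\mu_i$, I would build an approximating periodic measure as follows. For each $\mu_i$, a Katok-type construction inside the appropriate blender-horseshoe yields long finite orbit segments whose empirical measures are within $\varepsilon$ of $\mu_i$, and I pick the $i$-th segment of length $\lfloor N s_i\rfloor$. In the base, any ordering and splicing of the projected segments is realizable by the shift. In the fiber, a transition $\mu_i\mapsto\mu_{i+1}$ crossing between $\Leb^-(F)$ and $\Leb^+(F)$ is realized through the overlap of the two blenders, which provides a heteroclinic piece along which the strong stable and strong unstable leaves of generic points of $\mu_i$ and $\mu_{i+1}$ match; transitions within a single family are handled by the horseshoe structure itself. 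Closing the resulting finite pseudo-orbit via the blender property yields a genuine periodic orbit whose empirical measure is within $O(\varepsilon)+O(k/N)$ of $\nu$, and sending $N\to\infty$ and $\varepsilon\to 0$ gives the desired approximation.

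The hard part will be the fiberwise splicing across a sign change of the central exponent, where both the contracting and expanding central behavior must be respected when passing from a generic point of some $\mu_i\in\Leb^-(F)$ to one of some $\mu_{i+1}\in\Leb^+(F)$. This is precisely what the overlap of the two blender-horseshoes is designed to perform, and it is a $C^1$-open property. Persistence of the whole construction under $C^2$-perturbations among step skew products is then essentially automatic: the blender overlap, the Katok horseshoes approximating each measure, and the splicing scheme all depend continuously on $F$ within this class, so the statement transfers to every step skew product sufficiently $C^2$-close to $F$.
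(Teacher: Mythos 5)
Your treatment of the two ``edge'' cases matches the paper: pairs $\mu^-\in\Leb^-(F)$, $\mu^+\in\Leb^+(F)$ are handled by Theorem~\ref{mmt.cycle}, and pairs in the same family by homoclinic relation inside the corresponding blender-horseshoe (this is Lemma~\ref{lemproc.segment} and Corollary~\ref{c.convexblender} in the paper). One caveat even here: invoking Theorem~\ref{mmt.cycle} ``directly'' presupposes that $\mu^-$ and $\mu^+$ form a \emph{rich} heterodimensional cycle, and verifying condition (4) of Definition~\ref{defrichcycmeas} is exactly where the defining full-Lebesgue-measure property of $\Leb^\pm(F)$ enters (one needs $\Pi(W^\s(\mu^+))\cap\Pi(W^\u(\mu^-))\cap[0,1]\ne\emptyset$ to place a $\mu^+$-generic and a $\mu^-$-generic point over the same fiber coordinate); this is the content of Theorem~\ref{tpropt.skew} and should be stated, not assumed.

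For a general convex combination your route diverges from the paper's, and it has a genuine gap at the closing step. ``Closing the resulting finite pseudo-orbit via the blender property'' is not a proof: the ambient set is only partially hyperbolic, so the standard shadowing lemma does not apply, and the quasi-hyperbolic shadowing one must use (Theorem~\ref{theoGan}) requires the Pliss-type cumulative conditions (1)--(2) of Definition~\ref{defGan} along the \emph{entire} concatenation. For a splicing of $k$ segments with central exponents of mixed signs these conditions fail for a generic ordering --- e.g.\ if the first segment is generic for some $\mu_i\in\Leb^+(F)$, condition (1) for the bundle $E^\ss\oplus E^\c$ is violated from the first iterate --- so the ordering and the sign of every partial average must be controlled, and the transition jumps between consecutive segments must be made smaller than the shadowing constant $\delta_0$, which requires redoing the quantitative analysis of Sections~\ref{secprelim}--\ref{secquasihyp} at each junction. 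The paper avoids all of this by never concatenating more than two blocks: Proposition~\ref{p.convexsums} rewrites the target measure as $\sum_{i,j}b_{ij}\mu_{ij}$ with each $\mu_{ij}$ a \emph{two-term} combination of one measure from each family having central exponent of a fixed sign; each $\mu_{ij}$ is then approximated by periodic orbits via Lemma~\ref{lemproc.segment}, and since these periodic orbits all have the same $\s$-index and meet the blender, they are pairwise homoclinically related (Corollary~\ref{c.convexblender}), so their convex combinations are again limits of periodic measures by the classical equidimensional result. If you want to keep your direct multi-segment concatenation, you must at minimum specify an ordering (e.g.\ all central-contracting blocks first), prove the cumulative Pliss estimates for that ordering, and construct the small-jump transitions; otherwise you should adopt the pairwise decomposition.
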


Finally, although the above result deals with toy models of blender-horseshoes involved in a heterodimensional cycle, the approximation of convex combinations of measures of the same type of hyperbolicity in \cite{BonZha:19} sheds some light on the study of measures arising in robust cycles.
In a $C^1$-open setting, similar to the one in this paper, in \cite{BonZha:19} it is shown that the closures of the sets of hyperbolic ergodic measures of both possible types of hyperbolicity (positive or negative center Lyapunov exponent) is convex and their intersection is the closure of the set of non-hyperbolic ergodic measures. There are also similar global results for some classes of robustly transitive diffeomorphisms, where the approximation of measures of the segments is done also in entropy,  see \cite{DiaGelSan:20,YanZha:20}. 
 
Having in mind the robust cycles associated with blender-horseshoes in \cite{BonDia:08}, 
to show the existence of rich heterodimensional cycles of measures supported on those blender-horseshoes and to state a version of Theorem~\ref{mmt.skew} in a broader context of diffeomorphisms, not only for skew products, remains a challenge.  We close this introduction with some questions posed in the partially hyperbolic context of this paper. 

\begin{question} \label{q1}
In the context of a diffeomorphism with robust heterodimensional cycles, what convexity properties can we expect for the closure of the set of ergodic measures supported on the cycle? Specifically, given two 
such measures $\mu$ and $\nu$, under what conditions the segment $[\mu, \nu]$ is contained in the closure of ergodic/periodic measures?
\end{question}

This question is very general. In Section~\ref{sec:acip} it will be reframed in more practical terms. Roughly, it asks about the existence of measures supported on blender-horseshoes having absolutely continuous-like properties similar to those of saddle-SRB measures.

Versions of Question \ref{q1} considering the dependence on the dynamics become relevant, as they lead to  \emph{robust cycles of measures}. In the following question, determining the necessary degree of differentiability of the diffeomorphisms is also a part of the question.

\begin{question}\label{q2}
Consider a $C^r$-open set $\cU$ consisting of diffeomorphisms $f$ with two hyperbolic sets $\La^+_f$ and $\La^-_f$ with different types of hyperbolicity forming a robust heterodimensional cycle. Are there ergodic measures $\mu^+_f,\mu^-_f$ supported on $\La^+_f$ and $\La^-_f$, respectively, depending continuously on $f$ and are also related by a heterodimensional cycle? Additionally, can we choose such a cycle to be rich?
\end{question}

This paper is organized as follows. In Section \ref{sec2}, we define rich heterodimensional cycles between measures. Theorem \ref{mmt.cycle} is proven in Section \ref{sec3}.  In Section \ref{sec:SRB}, we define saddle-SRB measures, provide the construction of Plykin saddle-attractors, and prove Theorem \ref{mmt.SRBrobust}. In Section \ref{s.robustblenders}, we investigate robust rich heterodimensional cycles that arise from blenders in skew products and prove Theorem \ref{mmt.skew}. Section \ref{appnonrich} presents further examples that illustrate that the hypothesis of a \emph{rich} heterodimensional cycle is indeed necessary to have convexity properties between the measures that give rise to the cycle. Appendix \ref{appA} contains an auxiliary result about convex sums used in the proof of Theorem~\ref{mmt.skew}.

%%%%%%%%%%%%%%%%%%%%%%%%%%%%%%%%%%%%%%%%%%%%%%
\section{Heterodimensional cycle between measures}\label{sec2}
%%%%%%%%%%%%%%%%%%%%%%%%%%%%%%%%%%%%%%%%%%%%%%

Throughout this paper, $M$ denotes a closed Riemannian manifold with metric $d$. Denote by $\Diff(M)$   the space $C^1$ diffeomorphisms of $M$ and endow this space with the uniform topology. Consider the space of Borel probability measures on $M$, denoted by $\cM$. Given $f\in \Diff(M)$, we consider the compact and convex subset $\cM(f)\subset\cM$ of all $f$-invariant Borel probability measures. We denote by $\cM_{\rm erg}(f)$ its subset of ergodic measures. An ergodic measure is \emph{periodic} if it is supported on a periodic orbit. The set of periodic measures of $f$ is denoted by $\cM_{\rm per}(f)$. We choose on $\cM$ some distance $D$ defining the weak$\ast$ topology.

Given $f\in \Diff(M)$ and an ergodic probability measure $\mu$, the Oseledets theorem asserts that there are an $f$-invariant set $\Lambda$ with $\mu(\Lambda)=1$, an $Df$-invariant splitting $E_{\mu,1}\oplus \cdots\oplus E_{\mu,k}$ defined over $\Lambda$, and numbers $\chi_1<\ldots<\chi_k$, called \emph{Lyapunov exponents} of $\mu$, such that $\lim_{n\to\pm\infty}\frac1n\log\,\lVert Df^n|_{E_{\mu,i}(x)}\rVert=\chi_i$, for every $i=1,\ldots,k$ and $x\in\Lambda$. 

An ergodic measure $\mu$ is  \emph{hyperbolic} if all of its Lyapunov exponents are different from zero. In this case, the \emph{stable bundle} $E^\s_\mu$ associated to $\mu$ is the sum of the Oseledets spaces $E_{\mu,i}$ corresponding to the negative Lyapunov exponents. Similarly, the \emph{unstable bundle} $E^\u_\mu$ is defined considering the positive exponents. The dimension of $E^\s_\mu$ is the \emph{$\s$-index} of $\mu$. Associated to each $x\in\Lambda$ there exist the \emph{Pesin stable manifold} tangent to $E^\s_\mu(x)$ and the \emph{Pesin unstable manifold} tangent to $E^\u_\mu(x)$, \cite{Pes:77}. Note that for every $y\in W^\s(x)$, 
\[
	\limsup_{n\to\infty}\frac1n\log d(f^n(y),f^n(x))<0,
\]
analogously for $W^\u(x)$ considering negative iterations. Note that if $f\in\Diff^{1+\varepsilon}(M)$ then the Pesin un-/stable manifolds are nontrivial. If, however, $f$ is only a $C^1$ diffeomorphism, then these manifolds may consist only of single points (see, \cite{Pug:84,BonCroShi:13}).

We denote the subset of hyperbolic ergodic measures by $\cM_{\rm hyp}(f)$. We call an ergodic measure \emph{uniformly hyperbolic} if it is hyperbolic and its support is a uniformly hyperbolic set. Note that in this case, the stable Oseledets bundle of $\mu$ coincides with the stable hyperbolic bundle over $\supp(\mu)$; analogously for the unstable bundle.

Given $x\in M$ and $n\in\bN$, denote by $\delta_y$ the Dirac measure supported at $y$ and introduce the following ``Birkhoff averages'' 
\begin{equation}\label{defBn}
	\fB_n(x,f)
	\eqdef \frac1n\sum_{k=0}^{n-1}\delta_{f^k(x)},\quad
	\fB_{-n}(x,f)
	\eqdef \frac1n\sum_{k=-n+1}^{0}\delta_{f^k(x)}.
\end{equation}
We simply write $\fB_n(x)$ and $\fB_{-n}(x)$ if the map $f$ is clear from the context.
Let $\fB_\infty(x)$ denote the set of limit points of the sequence $(\fB_n(x))_n$ (in the weak$\ast$ topology), analogously define $\fB_{-\infty}(x)$ to be the set of limit points of the sequence $(\fB_{-n}(x))_{n\in\bN}$. As we assume that $M$ is compact, these sets are always nonempty. It is straightforward to check that for every $x$, $\fB_{\pm\infty}(x)$ are closed and connected subsets of $\cM(f)$. If $\mu$ is ergodic, then $\fB_\infty(x)=\{\mu\}$ for $\mu$-almost every $x$; any point with this property is called a \emph{generic point} of $\mu$. 

We now define a cycle between hyperbolic measures (compare also Figure \ref{figcycles} (left figure)). This definition mimics the one of cycles between hyperbolic sets.

\begin{definition}[Cycle between measures]\label{defcycmeas}
Two measures $\mu,\nu \in  \cM_{\rm hyp}(f)$ \emph{are related by a cycle} (or \emph{there is a cycle between $\mu$ and $\nu$}) if there are $\mu$-generic points $x_1,x_2$ and $\nu$-generic points $y_1, y_2$  such that
\begin{enumerate}
\item[(i)] $W^\s(x_1)\cap W^\u(y_1)\neq \emptyset$,
\item[(ii)] $W^\u(x_2)\cap W^\s(y_2)\neq \emptyset$. 
\end{enumerate}
The cycle is called \emph{heterodimensional} if the measures have different $\s$-indices and \emph{equidimensional} otherwise. 
\end{definition}

\begin{figure}[h] 
 \begin{overpic}[scale=.5]{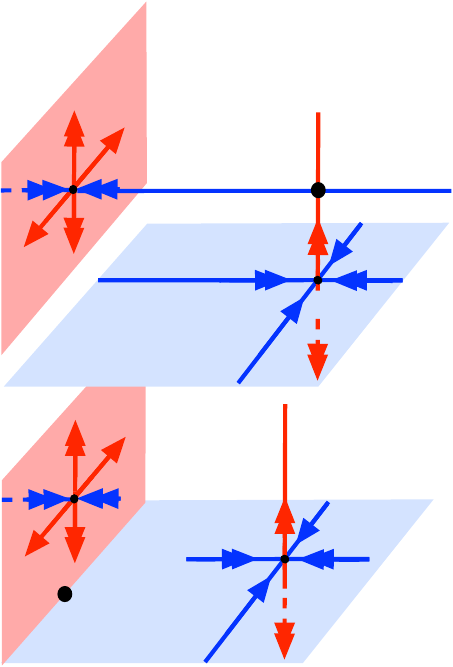}
	\put(5,75){$y_2$}
	\put(5,29){$y_1$}
	\put(50,53){$x_2$}
	\put(45,11){$x_1$}
	\put(45,85){\textcolor{red}{$W^\u(x_2)$}}
	\put(62,75){\textcolor{blue}{$W^\s(y_2)$}}
	\put(62,18){\textcolor{blue}{$W^\s(x_1)$}}
	\put(22,33){\textcolor{red}{$W^\u(y_1)$}}
	\put(-8,20){(i)}
	\put(-8,60){(ii)}
\end{overpic}
\hspace{2cm}
 \begin{overpic}[scale=.5]{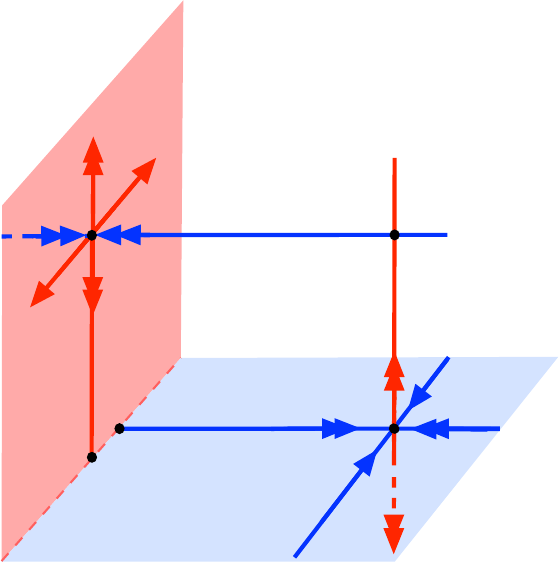}
	\put(72,16){$x^-$}
	\put(19,50){$x^+$}
	\put(65,75){\textcolor{red}{$W^\uu(x^-)$}}
	\put(80,59){\textcolor{blue}{$W^\ss(x^+)$}}
\end{overpic}
\caption{Heterodimensional cycle between measures satisfying conditions (i) and (ii) in Definition \ref{defcycmeas} (left figure) and rich heterodimensional cycle (right figure)}
\label{figcyclesbis}
\end{figure}

Note that in the above definition \emph{a priori} no transverse intersection is required. In the equidimensional case, when both intersections are transverse then the measures are \emph{homoclinically related}

In what follows, we will focus on heterodimensional cycles, only. Let us assume that in Definition \ref{defcycmeas}, the $\s$-index of $\mu$ is bigger than the one of $\nu$. Note that in this case in the intersection (i) there is an ``excess of dimension'' in the sense that $\dim W^\s(x_1)+\dim W^\u(y_1)>\dim M$, while in the intersection (ii) there is a ``deficiency'' in the sense that $\dim W^\u(x_2)+\dim W^\s(y_2)<\dim M$. 

In what is below, we consider a set $\Lambda$ with a partially hyperbolic splitting $E^\ss\oplus E^\c\oplus E^\uu$ with three non-trivial directions, where $E^\ss$ is uniformly contracting, $E^\uu$ is uniformly expanding, and $\dim E^\c=1$. More precisely, following \cite{Gou:07}, there is a metric (equivalent to the original metric on $M$) which induces a norm on the tangent bundle such that for every $x\in \Lambda$,
\begin{equation}\label{defdomspl}
	\lVert Df|_{E^\ss(x)}\rVert 
	< m(Df|_{E^\c(x)}) = \lVert Df|_{E^\c(x)}\rVert 
	< m(Df|_{E^\uu(x)}) \le \lVert Df|_{E^\uu(x)}\rVert,
\end{equation}
where $m(A)=\lVert A^{-1}\rVert^{-1}$ denotes the  minimum norm (compare \eqref{conorm}; the equality is a consequence of the fact that $E^\c$ is one-dimensional). Note that in this case, for every hyperbolic ergodic measure $\mu$, the Oseledets subbundles satisfy either $E^\s_\mu=E^\ss\oplus E^\c$ and $E^\u_\mu=E^\uu$ or $E^\s_\mu=E^\ss$ and $E^\u_\mu=E^\c\oplus E^\uu$ over the support of $\mu$.
Moreover, the Lyapunov exponent of $\mu$ corresponding to $E^\c$ is well defined and denoted by $\chi^\c (\mu)$. 

\begin{remark}\label{r.strongmanifolds}
Under the above conditions, for every point $x\in \Lambda$, there are defined the \emph{strong stable} and \emph{strong unstable manifolds}, denoted by $W^\ss(x)$ and $W^\uu(x)$, that are tangent to $E^\ss(x)$ and $E^\uu(x)$. These manifolds depend continuously on $x$.
\end{remark}

Given two hyperbolic measures $\mu^\pm$ related by a heterodimensional cycle, we aim to study how the segment $[\mu^-,\mu^+]$ is approximated by ergodic measures. Without further assumptions, in general, nothing can be said (in Section \ref{appnonrich} we provide some paradigmatic examples illustrating different approximation scenarios). It turns out that the strong un-/stable manifolds play an important role. Indeed, when they are ``involved in a cycle'', much more can be said about $[\mu^-,\mu^+]$. This motivates the following definition.

\begin{definition}[Rich heterodimensional cycle]\label{defrichcycmeas}
Given measures $\mu^\pm \in  \cM_{\rm hyp}(f)$ related by a heterodimensional cycle, we say that this  cycle is \emph{rich} if:
\begin{enumerate}
\item $\mu^\pm$ both are uniformly hyperbolic and satisfy $\chi^\c(\mu^-)<0<\chi^\c(\mu^+)$, 
\item there exists a partially hyperbolic set $\Lambda$ with three nontrivial directions and a one-dimensional central bundle containing the supports of $\mu^-$ and $\mu^+$,
\item for every $x\in \supp(\mu^+)$ and every $y\in\supp(\mu^-)$ it holds
\[
	\big(W^\uu(x) \pitchfork W^\s(y)\big)\cap \Lambda \ne \emptyset
	\quad \mbox{and}\quad
	\big(W^\u(x) \pitchfork W^\ss(y)\big) \cap \Lambda\ne \emptyset,
\]
\item there is a $\mu^-$-generic point $x^-\in \Lambda$ and a $\mu^+$-generic point $x^+\in \Lambda$ such that 
\[
 	W^\s(x^+)\cap W^\u(x^-)\cap \Lambda \ne \emptyset.
\]
\end{enumerate}
\end{definition}

Note that, under the conditions in Definition \ref{defrichcycmeas}, it holds $W^\ss(x)=W^\s(x)$ for every $x\in \supp(\mu^+)$ and $W^\uu(y)=W^\u(y)$ for every $y\in\supp(\mu^-)$.

In Sections  \ref{sec:SRB} and \ref{s.robustblenders}, we provide two classes of maps that present rich heterodimensional cycles between measures. Both classes are, in an appropriate sense, robust. Indeed, in these examples, the intersections in the cycle are ``abundant''. 

\begin{remark}[``Sparse'' versus rich heterodimensional cycles]
Comparing Definition \ref{defrichcycmeas} with Definition \ref{defcycmeas}, we point out the following differences. For that assume again that the $\s$-index of $\mu$ is bigger than the one of $\nu$ (compare also Figure \ref{figcyclesbis}): In the former, all intersections occur inside a partially hyperbolic set. Moreover, intersection (i) is replaced by the intersections in (3); note that in (3) there is no ``excess of dimension''. Also, we require this intersection for every pair of points (not only for some pair of generic points). Finally, intersection (ii) corresponds to item (4). 

The hypothesis (2) of uniform hyperbolicity of the support of the measures seems restrictive. We hope to weaken this requirement. 

Compare Figure \ref{figcycles} (right figure), in which the Dirac measures $\delta_P$ and $\delta_Q$ are related by a rich heterodimensional cycle.
\end{remark}

Theorem \ref{mmt.cycle} follows from the following result. 

\begin{theorem}\label{t.cycle} 
Let $f\in \Diff (M)$. Consider measures $\mu^\pm\in \cM_{\rm hyp}(f)$ related by a rich heterodimensional cycle. Then for every $\nu \in [\mu^-, \mu^+]$ there is a sequence of hyperbolic periodic measures $(\mu_{\nu,n})_n$ converging to $\nu$ in the weak$\ast$ topology, as $n\to +\infty$. Moreover, the periodic measures can be chosen to be supported on periodic orbits in an arbitrarily small neighborhood of $\La$.
\end{theorem}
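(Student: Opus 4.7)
The plan is to approximate, for each $s \in (0,1)$, the convex combination $\nu = s\mu^- + (1-s)\mu^+$ by periodic measures; the endpoints $s \in \{0,1\}$ are handled by Sigmund's theorem applied to the uniformly hyperbolic basic sets $\supp(\mu^\mp)$. The strategy is to build a closed pseudo-orbit in $\Lambda$ that spends proportion $\approx s$ of its length shadowing a $\mu^-$-generic orbit and proportion $\approx 1-s$ shadowing a $\mu^+$-generic orbit, and then to shadow this pseudo-orbit by a hyperbolic periodic orbit.

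First I would fix $x^\pm$ as in~(4) of Definition~\ref{defrichcycmeas}, chosen so that they are Birkhoff-generic for $\mu^\pm$ in both time directions (possible after a small adjustment within $\supp(\mu^\pm)$, using density of two-sided generic points and continuity of the strong stable/unstable laminations to preserve the heteroclinic connection). Let $z \in W^\s(x^+) \cap W^\u(x^-) \cap \Lambda$; since $\chi^\c(\mu^-) < 0 < \chi^\c(\mu^+)$ we have $W^\s(x^+) = W^\ss(x^+)$ and $W^\u(x^-) = W^\uu(x^-)$, so $f^{-n}(z)$ approaches $f^{-n}(x^-)$ exponentially along $W^\uu(x^-)$ and $f^n(z)$ approaches $f^n(x^+)$ exponentially along $W^\ss(x^+)$. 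Thus for large $N_\pm$ with $N_-/(N_-+N_+) \approx s$, the Birkhoff average of $\{f^k(z)\}_{k=-N_-}^{N_+}$ lies within $\varepsilon$ of $\tfrac{N_-}{N_-+N_+}\mu^- + \tfrac{N_+}{N_-+N_+}\mu^+$, since the ``transition'' near time $0$ has length only $O(\log\varepsilon^{-1})$ and negligible weight.

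To close this segment into a loop I would pick $\tilde y^\pm \in \supp(\mu^\pm)$ within $\varepsilon$ of $f^{N_+}(z)$ and $f^{-N_-}(z)$ and invoke the first transverse intersection in~(3) to obtain $w \in W^\uu(\tilde y^+) \pitchfork W^\s(\tilde y^-) \cap \Lambda$. Since $w \in W^\uu(\tilde y^+)$, its backward iterates approach the orbit of $\tilde y^+$ exponentially, and since $w \in W^\s(\tilde y^-)$, its forward iterates approach the orbit of $\tilde y^-$ exponentially. Concatenating a piece of the $w$-orbit of length $O(\log\varepsilon^{-1})$ with the $z$-segment produces a closed $\delta$-pseudo-orbit of total length $T \approx N_- + N_+$ lying in $\Lambda$ whose empirical measure remains within $O(\varepsilon)$ of $\nu$.

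The principal technical step---and what I expect to be the main obstacle---is to shadow this pseudo-orbit by a genuine hyperbolic periodic orbit lying in a prescribed neighborhood of $\Lambda$. Although $\Lambda$ is not uniformly hyperbolic, the pseudo-orbit inherits the partially hyperbolic splitting $E^\ss \oplus E^\c \oplus E^\uu$, uniformly hyperbolic in $E^\ss$ and $E^\uu$, while its total center exponent over one loop equals $\tfrac{N_-\chi^\c(\mu^-) + N_+\chi^\c(\mu^+)}{N_-+N_+}$ to leading order. By adjusting $N_\pm$ slightly to avoid the resonant ratio $s^\ast = \chi^\c(\mu^+)/(\chi^\c(\mu^+) - \chi^\c(\mu^-))$, one keeps this exponent bounded away from zero without changing the ratio beyond $\varepsilon$, so the pseudo-orbit is uniformly hyperbolic as a finite sequence; a Liao-type closing lemma---exploiting the transverse intersections from~(3) and the partially hyperbolic structure to resolve the neutral direction---then yields the required hyperbolic periodic orbit $\gamma_n$ whose periodic measure $\mu_{\nu,n}$ satisfies $D(\mu_{\nu,n}, \nu) = O(\varepsilon)$. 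The delicate point is precisely that this shadowing must handle the heterodimensional, index-changing character of the pseudo-orbit rather than a globally uniformly hyperbolic ambient set.
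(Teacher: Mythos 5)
Your overall strategy coincides with the paper's: build a closed pseudo-orbit in $\Lambda$ spending proportions $s$ and $1-s$ of its length near $\mu^-$- and $\mu^+$-generic orbits, check that its empirical measure is close to $s\mu^-+(1-s)\mu^+$, and close it with a Liao-type shadowing lemma for quasi-hyperbolic pseudo-orbits (the paper uses Gan's version, Theorem~\ref{theoGan}), splitting into the cases $\chi(s)<0$ and $\chi(s)>0$ exactly as you propose. The Birkhoff-average estimates and the remark that the transitions cost only $O(\log\varepsilon^{-1})$ iterates are fine and correspond to Lemmas~\ref{l.birkhoff} and \ref{l.convexsum}.

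The genuine gap is in the closing step. Your closed pseudo-orbit has \emph{two} jumps and hence two orbit segments: the long $z$-segment from $f^{-N_-}(z)$ to $f^{N_+}(z)$, and the short connecting piece through $w\in W^\uu(\tilde y^+)\pitchfork W^\s(\tilde y^-)$. Any quasi-hyperbolic shadowing lemma requires \emph{each} segment between consecutive jumps to satisfy a Pliss-type condition from its initial point: for $\chi(s)<0$ one must use the splitting $E\oplus F=(E^\ss\oplus E^\c)\oplus E^\uu$, and condition (1) of Definition~\ref{defGan} demands $\prod_{j=0}^{k-1}\lVert Df|_{E_j}\rVert\le e^{-k\lambda}$ for \emph{every} $k\ge 1$. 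Your $w$-segment starts near $\supp(\mu^+)$, where $\lVert Df|_{E^\c}\rVert\ge e^{\lambda^+}>1$, so this fails already at $k=1$; switching bundles on that segment is not allowed since the splitting is fixed over $\Lambda$. Knowing that the \emph{total} center exponent over the loop is negative does not rescue the intermediate segment. The paper circumvents this by producing a \emph{single} segment with a \emph{single} jump placed where the Pliss conditions do hold: it chooses $q\in W^\uu_{r,L}(x^-)$ with $f^{N^+}(q)\in W^\uu_{r,L}(f^{N^+}(p))\cap W^\s_{r,L}(f^{-N^--\ell}(x^-))$ (Lemma~\ref{lem:1}), so the return to $\supp(\mu^-)$ happens along the forward orbit of $q$ itself and the only jump occurs at the very beginning of the contracting part; Proposition~\ref{pl.quasihyp} then verifies the partial-product estimates for all $k$. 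To repair your argument you should take $w$ on $W^\uu$ of a point of the forward orbit of $z$ itself (rather than of a nearby $\tilde y^+\in\supp(\mu^+)$), which is precisely this construction.
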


%%%%%%%%%%%%%%%%%%%%%%%%%%%%%%%%%%%%%%%%
\section{Accumulation of measure segments in rich cycles}\label{sec3}
%%%%%%%%%%%%%%%%%%%%%%%%%%%%%%%%%%%%%%%%

In this section, we prove Theorem \ref{t.cycle}.
We start by recalling Gan's shadowing lemma, see Section \ref{secGan}. After some preliminaries in Section \ref{secprelim}, estimates of Birkhoff averages in Section \ref{secBirk}, and construction of quasi-hyperbolic pseudo-orbits in Section \ref{secquasihyp}, finally the proof of Theorem \ref{t.cycle} is completed in Section \ref{secfimprova}.

%%%%%%%%%%%%%%%%%%%%%%%%%%%%%%%%%%%%%%%%%
\subsection{Gan's shadowing lemma}\label{secGan}
%%%%%%%%%%%%%%%%%%%%%%%%%%%%%%%%%%%%%%%%%

It is well-known that pseudo-orbit nearby hyperbolic sets can be shadowed by true orbits provided that the ``jumps'' of the pseudo-orbits are sufficiently small (see, \cite[Chapter 18]{KatHas:95}). Here we use a generalization due to Gan \cite{Gan:02} assuming some weak form of hyperbolicity.  
It allows to shadow pseudo-orbits obtained by concatenation of \emph{quasi-hyperbolic segments}. A quasi-hyperbolic segment is an orbit segment endowed with a (locally) invariant (locally) dominated splitting $E\oplus  F$ so that the end time of this segment is a Pliss-hyperbolic time for $E$ and the initial time is a Pliss-hyperbolic time for $f^{-1}$ on $F$ (see conditions (1) and (2) in Definition \ref{defGan}).
To state this lemma, we need some preliminary notation and definitions.

Given a linear map $A\colon U\to V$ between two Euclidean spaces, we denote by $\|A\|$ the norm of $A$
by $m(A)$ the minimum norm of $A$, that is,
\begin{equation}\label{conorm}
	m(A) \eqdef \inf\{\|A(v)\| \colon v \in U , \|v\| = 1\}.
\end{equation}
In what is below we consider $f \in \Diff(M )$. Given a point $x\in M$ and $n\geq 0$ we denote 
\[
	\llbracket x, n\rrbracket
	\eqdef\{x, f(x), f^2(x), \ldots , f^n(x)\}
\]	 
the \emph{orbit segment} of length $n+1$.

\begin{definition}[Quasi-hyperbolic orbit segment]\label{defGan}
Given $\lambda>0$, an orbit segment $\llbracket x,n\rrbracket$ of $f$
is \emph{$\lambda$-quasi-hyperbolic} with respect to a splitting $T_x M = E_x \oplus F_x$ if the following holds:
let $E_j = Df^j(E_x)$ and $F_j = Df^j(F_x)$, 
\begin{itemize}[leftmargin=0.8cm ]
\item[(1)] 
$\displaystyle
	\prod_{j=0}^{k-1} \| Df|_{E_j} \| 
		\leq e^{-k\lambda},\mbox{ for every } k = 1, 2, \dots, n$,	
\item[(2)] 
$\displaystyle
	\prod_{j=k}^{n-1} m\left(Df|_{F_j} \right) 
		\geq e^{(n-k)\lambda},\mbox{ for every\,}   k = 0, 1, \dots , n-1$, and	
\item[(3)] 
$\displaystyle
	\frac{\|Df|_{E_j}\|}{m(Df|_{F_j} )}
		\leq e^{-2\lambda}, \mbox{ for every } k = 0, 1, \ldots , n-1.
$		
\end{itemize}
\end{definition}
 
\begin{definition}[Pseudo-orbit]
Given $\lambda\in(0, 1)$ and $\delta>0$, a bi-infinite sequence of orbit segments $\big\{\llbracket x_i, n_i\rrbracket \big\}^\infty_{i=-\infty} $ is a \emph{$\lambda$-quasi-hyperbolic $\delta$-pseudo-orbit} with respect to the sequence of splittings $T_{x_i} M = E_{x_i} \oplus F_{x_i}$, $i\in\bZ$, if the following holds for every $i\in \bZ$: 
\begin{itemize}[leftmargin=0.8cm ]
\item $\llbracket x_i, n_i\rrbracket$ is $\lambda$-quasi-hyperbolic with respect to 
$T_{x_i}M =E_{x_i} \oplus F_{x_i}$ and
\item $d(f^{n_i}(x_i) , x_{i+1} )\le \delta$. 
\end{itemize}
The sequence is \emph{periodic} if there is $m\in\bN$ such that $x_{i+m} = x_i$ and $n_{i+m} = n_i$ for every $i$. The minimal number $m$ with the previous property is the \emph{period} of the sequence.
\end{definition}

\begin{definition}[Shadowing]
Given $\varepsilon>0$, a point $x$ \emph{$\varepsilon$-shadows} a sequence of orbit segments $\big\{\llbracket x_i, n_i\rrbracket \big\}^\infty_{i=-\infty}$ if
\[
	d(f^j(x), f^{j-N_i}(x_i) )\leq \varepsilon,
	\quad\text{ for every }i\in\bZ\text{ and for every } j=N_i,\ldots,N_{i+1}-1,
\] 
where $N_i$ is defined as
\begin{equation}
	N_i
	\eqdef \begin{cases}
             0&\text{if }i=0,\\
             n_0+n_1+\dots+n_{i-1}&\text{if }n>0,\\
             n_i+n_{i+1}+\dots+n_{-1}&\text{if }n<0.
            \end{cases}
\end{equation}
\end{definition}

\begin{theorem}[Gan's shadowing lemma, {\cite{Gan:02}}]\label{theoGan}
Let $f \in \Diff(M )$, $\La\subset M$ a closed $f$-invariant set, and  $T_\La M = E \oplus F$ a continuous $Df$-invariant splitting over $\La$.  

For every $\lambda>0$ there exist $L>0$ and $\delta_0 > 0$ such that for every $\delta \in (0, \delta_0 ]$ and every $\lambda$-quasi-hyperbolic $\delta$-pseudo-orbit $\{\llbracket x_i , n_i \rrbracket\}_{i=-\infty}^\infty$ with respect to the sequence of splittings $E_{x_i} \oplus F_{x_i}$, where $x_i\in\Lambda$ for every $i$, there exists a point $x$ which $L\delta$-shadows $\{\llbracket x_i , n_i\rrbracket \}^\infty_{i=-\infty}$. 

Moreover, if this $\delta$-pseudo-orbit is periodic of period $m$, then the point $x$ can be chosen to be periodic with period $m$.
\end{theorem}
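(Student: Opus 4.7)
\textbf{Proof plan for Theorem~\ref{t.cycle}.}
The plan is, for each $t\in[0,1]$ and each $\varepsilon>0$, to construct a hyperbolic periodic orbit whose empirical measure is within $\varepsilon$ (in the distance $D$ on $\cM$) of $\nu=t\mu^-+(1-t)\mu^+$. The construction proceeds by building a long periodic \emph{quasi-hyperbolic pseudo-orbit} that traverses the rich heterodimensional cycle, spending a fraction $(1-t)$ of the time following a $\mu^+$-generic orbit and a fraction $t$ of the time following a $\mu^-$-generic orbit. Gan's shadowing lemma (Theorem~\ref{theoGan}) will then provide a periodic orbit shadowing this pseudo-orbit, and a Birkhoff-type computation will show that its empirical measure approximates $\nu$ arbitrarily well.

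\textbf{Setup and pseudo-orbit construction.} Let $x^\pm$ be the $\mu^\pm$-generic points of condition~(4) of Definition~\ref{defrichcycmeas}, so there is a heteroclinic point $q^+\in W^\s(x^+)\cap W^\u(x^-)\cap\Lambda$. By condition~(3) applied to $x^\pm$, there is also $q^-\in W^\uu(x^+)\cap W^\s(x^-)\cap\Lambda$ (or, equivalently, in $W^\u(x^+)\cap W^\ss(x^-)\cap\Lambda$). Because the supports of $\mu^\pm$ are uniformly hyperbolic inside the partially hyperbolic set $\Lambda$, the Pesin manifolds coincide locally with uniform ones. For large $N$ and auxiliary $L$ with $L/N\to 0$, set $n_+=\lfloor(1-t)N\rfloor$ and $n_-=N-n_+$. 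I assemble a periodic pseudo-orbit of period $N+2L$ from four blocks: (a) the orbit segment $\llbracket x^+,n_+\rrbracket$; (b) a transition of length $L$ near $q^-$, linking a neighborhood of $f^{n_+}(x^+)$ to one of $x^-$; (c) the segment $\llbracket x^-,n_-\rrbracket$; (d) a transition of length $L$ near $q^+$, linking a neighborhood of $f^{n_-}(x^-)$ back to one of $x^+$. The transitions use the heteroclinic points, whose orbits lie in $\Lambda$ and converge exponentially to those of $x^\pm$ in the appropriate (strong or weak) manifolds, so the pseudo-orbit has jump size $\delta\to 0$ as $L\to\infty$.

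\textbf{Quasi-hyperbolicity.} The crucial verification is that each block is $\lambda$-quasi-hyperbolic in the sense of Definition~\ref{defGan}, and the \emph{main obstacle} lies precisely here. The natural hyperbolic splitting for a $\mu^+$-block is $E^\ss\mid E^\c\oplus E^\uu$ (central is expanding), whereas for a $\mu^-$-block it is $E^\ss\oplus E^\c\mid E^\uu$ (central is contracting); neither single splitting on $\Lambda$ satisfies conditions~(1)--(2) of Definition~\ref{defGan} on both block types. I resolve this by choosing the block endpoints at \emph{Pliss-hyperbolic times} for \emph{both} potential splittings: since $E^\ss$ is uniformly contracting and $E^\uu$ uniformly expanding, Pliss times for the strong directions are abundant, and the one-dimensional central direction is hyperbolic along each ergodic orbit, supplying the remaining Pliss times within each block. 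I then apply Gan's shadowing lemma twice, once per splitting, along the corresponding sub-families of blocks, and glue the shadowing segments at the heteroclinic transition points, where transversality of the associated invariant manifolds (ensured by the richness of the cycle) allows the two shadowing orbits to be identified as a single true periodic orbit of period $N+2L$.

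\textbf{Approximation of $\nu$ and hyperbolicity.} Denote the resulting shadowing periodic orbit by $\cO_n$. By the periodic part of Theorem~\ref{theoGan} and by the fact that all base points lie in $\Lambda$, $\cO_n$ is contained in an arbitrarily small neighborhood of $\Lambda$ (controlled by $L\delta$). Its empirical measure satisfies
\[
\mu_{\nu,n} = \fB_{N+2L}(y,f) \;\approx\; \frac{n_+}{N+2L}\,\fB_{n_+}(x^+,f) \;+\; \frac{n_-}{N+2L}\,\fB_{n_-}(x^-,f) \;+\; o(1),
\]
where $o(1)$ absorbs the contribution of the transition blocks (of relative weight $2L/(N+2L)\to 0$) and the shadowing error (bounded by $L\delta\to 0$). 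Since $x^\pm$ are $\mu^\pm$-generic, the right-hand side converges to $(1-t)\mu^++t\mu^-=\nu$ in weak$\ast$ as $N\to\infty$. Finally, the Lyapunov exponents of $\cO_n$ along $E^\ss$ and $E^\uu$ are uniformly bounded away from zero by partial hyperbolicity, and the central exponent is approximately $(1-t)\chi^\c(\mu^+)+t\chi^\c(\mu^-)$, which is nonzero for every $t$ outside one exceptional value $t^\ast$; at $t^\ast$ a small perturbation of $n_+/n_-$ restores hyperbolicity while preserving the weak$\ast$ approximation of $\nu$.
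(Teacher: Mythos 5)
Your text does not prove the statement it was supposed to prove. The statement is Gan's shadowing lemma itself (Theorem~\ref{theoGan}): the existence, for a given $\lambda$, of constants $L,\delta_0$ such that every $\lambda$-quasi-hyperbolic $\delta$-pseudo-orbit with base points in $\Lambda$ is $L\delta$-shadowed by a true orbit, periodic if the pseudo-orbit is periodic. This is an external result that the paper only cites from Gan's article and does not prove; a proof would have to construct the shadowing point, typically by a contraction or fixed-point argument in a space of sequences adapted to the Pliss-time hyperbolicity encoded in Definition~\ref{defGan}. Your proposal never engages with any of this: it is a proof plan for Theorem~\ref{t.cycle}, in which Theorem~\ref{theoGan} is used as a black box. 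So, relative to the statement under review, the proposal is simply off target.

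Even read as a sketch of Theorem~\ref{t.cycle}, the key step is flawed. You correctly identify that no single hyperbolic splitting works for both the $\mu^+$-blocks and the $\mu^-$-blocks, but your remedy --- applying Gan's lemma twice, once per splitting, and then ``gluing'' the two shadowing segments at the heteroclinic transition points so that they are ``identified as a single true periodic orbit'' --- does not work: separate applications of the shadowing lemma produce distinct genuine orbits, and transversality of invariant manifolds at the transition points gives no mechanism to merge them into one periodic orbit. The paper's actual argument avoids the problem entirely: for a convex combination with $\chi(s)<0$ it uses the single splitting $E=E^\ss\oplus E^\c$, $F=E^\uu$, orders the pseudo-orbit so that the segment near $\supp(\mu^-)$ comes first, and proves in Proposition~\ref{pl.quasihyp} that the \emph{entire} concatenated segment is $\lambda$-quasi-hyperbolic for this one splitting, precisely because the averaged central exponent is negative (condition (1) of Definition~\ref{defGan} holds for every prefix, while condition (2) is automatic since $E^\uu$ is uniformly expanding); the case $\chi(s)>0$ is reduced to this one by replacing $f$ with $f^{-1}$ and using $E^\ss\oplus(E^\c\oplus E^\uu)$. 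A single application of Theorem~\ref{theoGan} to the periodic pseudo-orbit then yields the periodic shadowing orbit, and no ad hoc Pliss-time selection or gluing is needed.
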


The above theorem is even easier to apply when the splitting is truly dominated, rather than ``locally dominated''. Furthermore, it becomes highly flexible when one of the bundles, $E$ or $F$, is uniformly hyperbolic, as will be in our applications. Indeed, one only needs a single hyperbolic time to obtain a hyperbolic segment (as discussed in Section \ref{secfimprova}, where we consider the dominated splitting $E\oplus F$ into bundles $E=E^\ss\oplus E^\c$ and $F=E^\uu$).

%------------------------------------------------------------------------------------------
\subsection{Preliminaries towards the proof of Theorem \ref{t.cycle}}\label{secprelim}
%------------------------------------------------------------------------------------------

In the remainder of this section, let $\mu^\pm$ be two ergodic measures of $f$, $\Lambda\subset M$ be a partially hyperbolic set of $f$ as in Definition \ref{defrichcycmeas}, and $x^\pm\in\Lambda$ be corresponding generic points as in the hypotheses of Theorem \ref{t.cycle}. We first need a quantitative version of the intersections in that definition.  We start by establishing some notation.

\begin{remark}[Large enough local invariant manifolds]\label{r.localR} 
In what is below, we need to consider invariant manifolds which are ``large enough''.  Let us make this precise. For that assume that $\Upsilon$ is a hyperbolic basic set with a splitting of the form $E^\ss\oplus E^\c\oplus E^\uu$, where $E^\s=E^\ss\oplus E^\c$. First recall that, for $r>0$ sufficiently small, the local invariant manifolds
\[
	W^\dagger_r (x)
	\eqdef \{y\colon d(f^\ell(y),f^\ell(x))<r\text{ for all }\ell\ge0\}\cap W^\dagger(x),
\]
$\dagger=\s, \ss$, are well-defined and depend continuously on the points in $\Upsilon$. The sets
\[
	W^\dagger_{r,\ell}(x)
	\eqdef f^{-\ell} \big(W^\dagger_r (f^\ell(x))\big),\quad
	\dagger=\s,\ss,
\]
are nested and contained in $W^\dagger (x)$, contain $W^\dagger_r (x)$, and ``spread'' along the whole set $W^\dagger (x)$ as $\ell$ increases. Analogously, we define 
\[
	W^\dagger_{r,\ell} (y)
	\eqdef f^{\ell} \big(W^\dagger_r (f^{-\ell}(x))\big), \quad \dagger=\uu,
\]	
Note that in this case $W_{r,\ell}^\uu(y)$ is a local unstable manifold.

If $\Upsilon$ is a basic set with a splitting  $E^\ss\oplus E^\c\oplus E^\uu$ such that $E^\u=E^\c\oplus E^\uu$ we define $W_{r,\ell}^\dagger(x)$, $x\in\Upsilon$, for $\dagger\in\{\u,\uu,\ss\}$, analogously.
\end{remark}

Let $O\supset \Lambda$ be some small neighborhood of $\Lambda$ and let $\Lambda'$ be the maximal invariant set contained in $\overline O$. If $O$ was chosen small enough, then the partially hyperbolic splitting $E^\ss\oplus E^\c\oplus E^\uu=T_\Lambda M$ extends to some partially hyperbolic splitting defined on $\Lambda'$. By hypothesis (3) in Definition \ref{defrichcycmeas}, for every $x\in\supp(\mu^+)$ and $y\in\supp(\mu^-)$
\[
	\big(W^\uu(x) \pitchfork W^\s(y)\big)\cap \Lambda \ne \emptyset
	\quad \mbox{and}\quad
	\big(W^\u(x) \pitchfork W^\ss(y)\big) \cap \Lambda\ne \emptyset.
\]
Hence, by Remark \ref{r.localR}, there are $r>0$ and $\ell_0\in\bN$ so that for every $\ell\ge\ell_0$ for every $x\in\supp(\mu^+)$ and $y\in\supp(\mu^-)$
\[
	\big(W^\uu_{r,\ell}(x) \pitchfork W^\s_{r,\ell}(y)\big)\cap \Lambda \ne \emptyset
	\quad \mbox{and}\quad
	\big(W^\u_{r,\ell}(x) \pitchfork W^\ss_{r,\ell}(y)\big) \cap \Lambda\ne \emptyset.
\]
By the continuous variation of the invariant manifolds in these intersections, for every $x'$ near $x$ and $y'$ near $y$, it holds 
\[
	\big(W^\uu_{r,\ell}(x') \pitchfork W^\s_{r,\ell}(y')\big)\cap O \ne \emptyset
	\quad \mbox{and}\quad
	\big(W^\u_{r,\ell}(x') \pitchfork W^\ss_{r,\ell}(y')\big) \cap O\ne \emptyset.
\]
By a compacity argument, for $L\in\bN$ sufficiently large, for \emph{every} $x'\in\supp(\mu^+)$ and \emph{every} $y'\in\supp(\mu^-)$, it holds
\begin{equation}\label{choiceR}
	\big(W^\uu_{r,L}(x') \pitchfork W^\s_{r,L}(y')\big)\cap \Lambda' \ne \emptyset
	\quad \mbox{and}\quad
	\big(W^\u_{r,L}(x') \pitchfork W^\ss_{r,L}(y')\big)\cap \Lambda' \ne \emptyset.
\end{equation}
Possibly after increasing $L$, we can assume that condition (4) in Definition \ref{defrichcycmeas} holds in the following stronger form: 
\[
 	W^\s_{r,L}(x^+)\cap W^\u_{r,L}(x^-)\cap \Lambda' \ne \emptyset
\]
and therefore, we can consider a point 
\begin{equation}\label{defchoicep}
	p\in W^\s_{r,L}(x^+)\cap W^\u_{r,L}(x^-)\cap \Lambda'.
\end{equation}	 
For notational simplicity, we continue to write $\Lambda=\Lambda'$. This concludes the first preliminary step. 

We now proceed to a safety construction. Let $O'\supset \overline O\supset \Lambda$ be some small neighborhood of $\Lambda$ and let $\Lambda''$ be the maximal invariant set contained in $\overline{O'}$. If $O'$ was chosen small enough, then the partially hyperbolic splitting $E^\ss\oplus E^\c\oplus E^\uu=T_\Lambda M$ extends to some partially hyperbolic splitting defined on $\Lambda''$. 

%------------------------------------------------------------------------------------------
\subsection{Birkhoff averages on shadowing orbits}\label{secBirk}
%------------------------------------------------------------------------------------------

We start with some auxiliary results.  

\begin{lemma}\label{lem:1}
	There exists $N_0\in\bN$ such that for every $N^+\ge N_0$ and $y\in \supp(\mu^-)$ there is $q\in W^\uu_{r,L}(x^-)$ such that
\[
	f^{N^+}(q)\in W^\uu_{r,L}(f^{N^+}(p))\cap W^\s_{r,L}(y)
	\,\,\text{ and }\,\,
	\{q,\ldots,f^{N^+}(q)\}\subset O,
\]	
where $p$ is as in \eqref{defchoicep}.
Moreover, $q$ can be chosen in the partially hyperbolic set $\Lambda''$.
\end{lemma}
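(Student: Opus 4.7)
The plan is to identify $q$ by first finding an intersection point $z$ at the ``future time'' $N^+$ and then setting $q\eqdef f^{-N^+}(z)$. The key observations are that, by condition (1) of Definition \ref{defrichcycmeas}, $\chi^\c(\mu^+)>0$ gives $W^\s(x^+)=W^\ss(x^+)$, so $p\in W^\ss_{r,L}(x^+)$ and hence
\[
    d(f^{N^+}(p),f^{N^+}(x^+))\longrightarrow 0 \quad\text{exponentially as } N^+\to\infty;
\]
analogously, $\chi^\c(\mu^-)<0$ gives $W^\u(x^-)=W^\uu(x^-)$, so $p\in W^\uu_{r,L}(x^-)$.

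First I would fix $\varepsilon>0$ small, calibrated to the transversality angles of the intersections in \eqref{choiceR} and to the distances from $\Lambda$ to $\partial O$ and from $\overline O$ to $\partial O'$. Then I would choose $N_0$ so that $d(f^{N^+}(p),f^{N^+}(x^+))<\varepsilon$ for every $N^+\ge N_0$ (assuming, without loss of generality, that the generic point $x^+$ is chosen in $\supp(\mu^+)$, so that $f^{N^+}(x^+)\in\supp(\mu^+)$ for all $N^+$). Applying \eqref{choiceR} to the pair $f^{N^+}(x^+)$ and $y$, I obtain a transverse intersection point $z_0\in W^\uu_{r,L}(f^{N^+}(x^+))\pitchfork W^\s_{r,L}(y)\cap\Lambda$. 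The continuous dependence of the strong unstable lamination on its base point (Remark \ref{r.strongmanifolds}), together with openness of transverse intersections and uniformity via compactness of $\supp(\mu^\pm)$, then yields a nearby transverse intersection $z\in W^\uu_{r,L}(f^{N^+}(p))\cap W^\s_{r,L}(y)$, lying in the enlarged invariant set $\Lambda''$.

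Next I would set $q\eqdef f^{-N^+}(z)$. That $q\in W^\uu(p)=W^\uu(x^-)$ is immediate from the choice of $z$. To show $q\in W^\uu_{r,L}(x^-)$, I use the exponential backward contraction along $E^\uu$: after $N^+$ backward iterations the distance $d(q,p)=d(f^{-N^+}(z),f^{-N^+}(f^{N^+}(p)))$ is much smaller than the initial $d(z,f^{N^+}(p))\le r$, so the triangle inequality combined with $p\in W^\uu_{r,L}(x^-)$ places $q$ in a slightly enlarged strong unstable disk of $x^-$; shrinking $r$ at the outset (replacing $r$ by $r/2$ in \eqref{choiceR}, say) then puts $q$ into $W^\uu_{r,L}(x^-)$. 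The same backward-contraction estimate yields $d(f^n(q),f^n(p))<r$ for every $n\in[0,N^+]$, so the entire orbit segment $\{q,f(q),\ldots,f^{N^+}(q)\}$ stays in the $r$-neighborhood of $\Lambda$, which is contained in $O$ by the choice of $r$. Finally, $q\in\Lambda''$ because its full orbit stays in $\overline{O'}$: backward iterates shadow the orbit of $x^-\in\Lambda$ along $W^\uu$, forward iterates beyond $f^{N^+}(q)$ shadow the orbit of $y\in\Lambda$ along $W^\s$, and the intermediate segment already lies in $O$.

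The main technical obstacle I anticipate is the bookkeeping of the local-manifold parameters $(r,L)$ through the two continuity steps: perturbation of the base point from $f^{N^+}(x^+)$ to $f^{N^+}(p)$ (which may enlarge the relevant disk slightly) and pulling back by $f^{-N^+}$ (which shrinks distances along $W^\uu$ but still forces an extra term in the triangle inequality). Careful initial shrinking of $r$, together with the uniformity of \eqref{choiceR} coming from compactness of $\supp(\mu^-)$, allows a single $N_0$ to be chosen that works for all $y\in\supp(\mu^-)$ simultaneously.
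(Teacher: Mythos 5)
Your proposal is correct and follows essentially the same route as the paper's proof: apply the uniform intersection property \eqref{choiceR} at the points $f^{N^+}(x^+)$, use the convergence $d(f^{N^+}(p),f^{N^+}(x^+))\to0$ and continuity of the strong unstable lamination to move the intersection to $W^\uu_{r,L}(f^{N^+}(p))$, set $q=f^{-N^+}(z)$, and use backward contraction along $E^\uu$ to control the orbit segment and place $q$ in $\Lambda''$. Your extra care about the $(r,L)$ bookkeeping when pulling back (shrinking $r$ at the outset) is a point the paper glosses over, but it does not change the argument.
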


\begin{figure}[h] 
 \begin{overpic}[scale=.6]{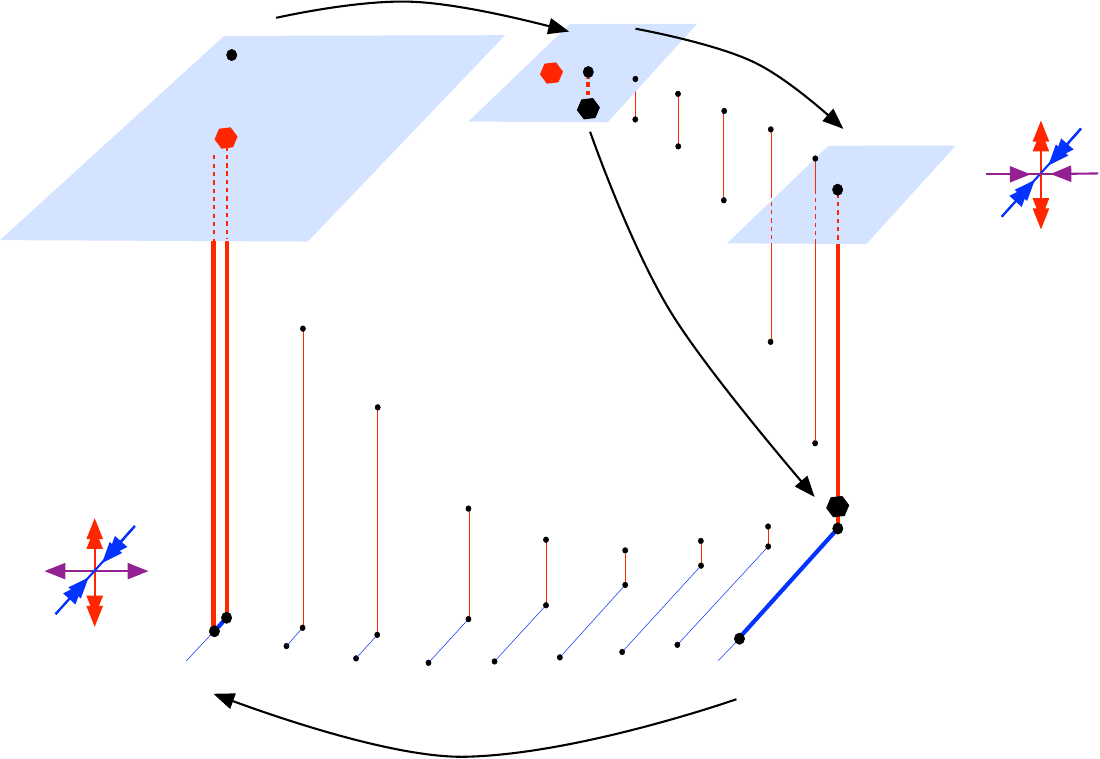}
	\put(35,66.5){\tiny{ $f^{\ell}$}}
	\put(64,66){\tiny{ $f^{N^-}$}}
	\put(57,36){\tiny{ $f^{N^-}$}}
	\put(35,1){\tiny{ $f^{N^+}$}}
	\put(77,51){\small{ $x^-$}}
	\put(36,62){\small{$f^{N^++\ell}(q)$}}
	\put(1,56){\small{$z=f^{N^+}(q)$}}
	\put(79,23.5){\small{$q$}}
	\put(79,20){\small{$p$}}
	\put(70,10){\small{$x^+$}}
	\put(51,55){\small{$\widehat q$}}
\end{overpic}
\caption{Closing a heterodimensional cycle}
\end{figure}

\begin{proof}
Note that $p\in W^\u_{r,L}(x^-)=W^\uu_{r,L}(x^-)$ and
\begin{equation}\label{eqconv}
	d(f^{n}(p),f^{n}(x^+))\to0\text{ as }n\to\infty.
\end{equation}	 
By \eqref{choiceR} applied to the points $f^n(x^+)$ and any $y\in \supp(\mu^-)$ we get that
\[
	\big(W^\uu_{r,L}(f^n(x^+)) \pitchfork W^\s_{r,L}(y)\big)\cap \Lambda' \ne \emptyset.
\]
By \eqref{eqconv}, $W^\uu_{r,L}(f^n(p))$  approaches $W^\uu_{r,L}(f^n(x^+))$ as $n\to+\infty$. 
This implies that there is $N_0$ such that for every $N^+ \ge N_0$ it holds 
\[
	\big(W^\uu_{r,L}(f^{N^+}(p)) \pitchfork W^\s_{r,L}(y)\big)\cap O
	\neq \emptyset, \quad \mbox{for every $y\in \supp(\mu^-)$}.
\]
Given $y\in \supp(\mu^-)$, consider any point 
\[
	z\in  \big(W^\uu_{r,L}(f^{N^+}(p)) \pitchfork  W^\s_{r,L}(y)\big)\cap O,
\]	 
and let $q= q(N^+)= f^{-N^+}(z)\in W^\uu_{r,L}(p)\cap W^\uu_{r,L} (x^-)$. As in this construction, $q$ can be chosen arbitrarily close to $p$ (by increasing $N^+$), it follows that $q$ can be chosen such that the orbit segment $\{q,\ldots,f^{N^+}(q)\}$ is entirely contained in $O$. This proves the first assertion. 

Moreover, $x^\pm$ and $p$ belong to $\Lambda$. This orbit segment is close to a segment of the orbit of $p$ (in $\Lambda$, by \eqref{defchoicep}) and a segment of the orbit of a point in $W^\uu_{r,L}(f^{N^+}(p))\cap W^\s_{r,L}(f^{-N^--\ell}(x^-))$ (in $\Lambda$, by \eqref{choiceR}). This implies $q\in \Lambda''$ and proves the lemma. 
\end{proof}

\begin{lemma}\label{lemdeltaclose}
For every $\delta>0$ there exists $\ell_0=\ell_0(\delta)\in\bN$ such that for every $\ell\ge\ell_0$ there is $N_0(\ell)$ such that for every $N^\pm\ge N_0$ there is a point $q=q(\ell,N^-,N^+)\in W^\uu_{r,L}(x^-)$ which satisfies
\[
	d\big(f^{N^++\ell}(q),f^{-N^-}(q)\big)<\delta.
\]	
\end{lemma}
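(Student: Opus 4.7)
The plan is to apply Lemma \ref{lem:1} with a carefully chosen point $y \in \supp(\mu^-)$, namely $y \eqdef f^{-N^- - \ell}(x^-)$; this lies in $\supp(\mu^-)$ because $x^-$ is $\mu^-$-generic and the support is $f$-invariant. With this choice, Lemma \ref{lem:1} yields a point $q \in W^\uu_{r,L}(x^-)$ such that $f^{N^+}(q) \in W^\s_{r,L}(y)$, and moreover $f^\ell(y) = f^{-N^-}(x^-)$. The strategy is then to show that both $f^{N^++\ell}(q)$ and $f^{-N^-}(q)$ are close to this common reference point $f^{-N^-}(x^-)$, and to conclude by the triangle inequality.

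The two required approximations come from uniform hyperbolicity. By item (1) of Definition \ref{defrichcycmeas}, the support of $\mu^-$ is a uniformly hyperbolic basic set, so there exist constants $C \ge 1$ and $\lambda \in (0,1)$ such that for every $y' \in \supp(\mu^-)$, every $z \in W^\s_{r,L}(y')$, and every $n \ge L$,
\[
d(f^n(z), f^n(y')) \le C\lambda^{n-L} r,
\]
with an analogous estimate for $q' \in W^\uu_{r,L}(x')$ with $x' \in \supp(\mu^-)$ under backward iteration. These rates are \emph{uniform} over the basic set, which is crucial because the point $q$ produced by Lemma \ref{lem:1} is not itself required to lie on $\supp(\mu^\pm)$.

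Given $\delta > 0$, I would first choose $\ell_0 \ge L$ large enough that $C\lambda^{\ell_0 - L} r < \delta/2$, and then set $N_0(\ell) \eqdef \max\{\ell_0, N_0^\ast\}$, where $N_0^\ast$ denotes the constant provided by Lemma \ref{lem:1}. The stable estimate applied to $z = f^{N^+}(q)$ and $y = f^{-N^- - \ell}(x^-)$ at time $n = \ell \ge \ell_0$ gives
\[
d\big(f^{N^++\ell}(q), f^{-N^-}(x^-)\big) = d\big(f^\ell(z), f^\ell(y)\big) \le C\lambda^{\ell-L} r < \tfrac{\delta}{2},
\]
while the unstable estimate applied to $q \in W^\uu_{r,L}(x^-)$ at time $n = N^- \ge \ell_0$ yields $d(f^{-N^-}(q), f^{-N^-}(x^-)) < \delta/2$. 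The triangle inequality then produces $d(f^{N^++\ell}(q), f^{-N^-}(q)) < \delta$.

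The main subtle point is the uniformity of the contraction rates along the orbit of $x^-$: without item (1) of Definition \ref{defrichcycmeas}, one would only have Pesin-type estimates whose constants deteriorate along the orbit, and the above chain of inequalities would break down. Once this uniformity is granted, the lemma follows directly from Lemma \ref{lem:1}, with no further dynamical ingredient required.
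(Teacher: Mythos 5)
Your argument is correct and follows essentially the same route as the paper's proof: apply Lemma \ref{lem:1} with $y=f^{-N^--\ell}(x^-)$, observe that $f^{N^++\ell}(q)$ and $f^{-N^-}(q)$ are both within $\delta/2$ of the common reference point $f^{-N^-}(x^-)$ (the first by uniform contraction of the stable manifolds of points of $\supp(\mu^-)$, the second by backward contraction along $W^\uu_{r,L}(x^-)$), and conclude by the triangle inequality. The only cosmetic difference is that the paper phrases the forward estimate as a uniform bound on $\diam(f^\ell(W^\s_{r,L}(z)))$ and the backward one as plain convergence, while you make the exponential rates explicit; the quantifier bookkeeping is handled the same way.
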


\begin{proof}
Given $\delta>0$, there is $\ell_0(\delta)\in\bN$ such that for every $\ell\ge\ell_0$ and $z\in \supp(\mu^-)$ we have $\diam(f^\ell(W^\s_{r,L}(z)))<\delta/2$. Fix now $\ell\ge\ell_0(\delta)$ and let $N_0=N_0(\ell)$ as provided by Lemma \ref{lem:1}. Given any $N^\pm\ge N_0$, apply this lemma to the points $y=f^{-N^--\ell}(x^-)$ and $p$ to get a point $q=q(\ell,N^-,N^+)\in W^\uu_{r,L}(x^-)$ satisfying
\[
		f^{N^+}(q)\in W^\uu_{r,L}(f^{N^+}(p))\cap W^\s_{r,L}(f^{-N^--\ell}(x^-)).
\]		 
By construction, it follows $f^{N^++\ell}(q) \in f^\ell(W^\s_{r,L}(f^{-N^--\ell}(x^-)))$. By the first argument in this proof, we get $d(f^{N^++\ell}(q),f^{-N^-}(x^-))<\delta/2$.

Similarly, by construction, $q\in W^\uu_{r,L} (x^-)$ and hence $d(f^{-n}(q),f^{-n}(x^-))\to 0$ as $n\to+\infty$. Hence this distance is smaller that $\delta/2$ for every $N^-$ big enough. The assertion follows now from the triangle inequality.
\end{proof}

Recall \eqref{defBn} defining the Birkhoff averages of Dirac measures. We state the following straightforward facts for further reference.

\begin{lemma}\label{l.weakclose}
	For every $\varepsilon>0$ there is $\tau=\tau(\varepsilon)>0$ such that for every $x,y\in M$ and $n\in\bN$ satisfying $d(f^k(x),f^k(y))<\tau$ for all $k=0,\ldots,n-1$ it holds 
\[
	D(\fB_n(x),\fB_n(y))<\varepsilon.
\]	
\end{lemma}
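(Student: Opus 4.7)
The plan is to reduce the weak$\ast$ estimate to a uniform continuity argument for a suitable family of test functions. Fix any countable dense family $\{\varphi_j\}_{j\ge1}\subset C(M)$ with $\lVert\varphi_j\rVert_\infty\le 1$. A standard (and equivalent) choice of metric $D$ for the weak$\ast$ topology on $\cM$ is
\[
	D(\mu,\nu)
	\eqdef \sum_{j=1}^\infty 2^{-j}\,\Big|\!\int\varphi_j\,d\mu-\int\varphi_j\,d\nu\Big|.
\]
Any metric inducing the weak$\ast$ topology is uniformly equivalent to this one on the weak$\ast$-compact set $\cM$, so proving the estimate for this $D$ suffices.

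Given $\varepsilon>0$, first choose $J\in\bN$ so large that $\sum_{j>J}2^{-j}\cdot 2<\varepsilon/2$. Each $\varphi_j$ is uniformly continuous on the compact manifold $M$, so for $j=1,\dots,J$ there is $\tau_j>0$ such that $d(u,v)<\tau_j$ implies $|\varphi_j(u)-\varphi_j(v)|<\varepsilon/2$. Set $\tau\eqdef\min_{1\le j\le J}\tau_j$.

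Now suppose $d(f^k(x),f^k(y))<\tau$ for every $k=0,\dots,n-1$. Then for each $j\le J$,
\[
	\Big|\!\int\varphi_j\,d\fB_n(x)-\int\varphi_j\,d\fB_n(y)\Big|
	=\Big|\frac1n\sum_{k=0}^{n-1}\bigl(\varphi_j(f^k(x))-\varphi_j(f^k(y))\bigr)\Big|
	<\frac\varepsilon 2.
\]
Summing the series yields
\[
	D(\fB_n(x),\fB_n(y))
	\le \sum_{j=1}^J 2^{-j}\cdot\frac\varepsilon 2+\sum_{j>J}2^{-j}\cdot 2
	<\frac\varepsilon 2+\frac\varepsilon 2=\varepsilon,
\]
as required. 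The only real step is the choice of finitely many uniformly continuous test functions to control $D$, which is why I do not expect any substantive obstacle; the subtlety, if any, is just verifying that one may work with this explicit metric $D$, justified by the uniform equivalence of weak$\ast$ metrics on the compact space $\cM$.
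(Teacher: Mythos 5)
Your proof is correct. The paper states this lemma without proof (it is introduced as a ``straightforward fact''), and your argument --- reducing to an explicit weak$\ast$ metric built from finitely many uniformly continuous test functions, justified by the uniform equivalence of metrics on the compact space $\cM$ --- is exactly the standard argument the authors have in mind.
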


\begin{lemma}\label{l.birkhoff}
For every $\varepsilon>0$ and $\ell\in\bN$ there is $N_1 =N_1(\varepsilon, \ell) \in\bN$ such that for every $N^\pm\ge N_1$ the point $q=q(\ell,N^-,N^+)$ provided by Lemma \ref{lemdeltaclose} satisfies
\[
	D\big(\fB_{N^-}(f^{-N^-}(q)),\mu^-\big)<\varepsilon \qquad \mbox{and} \qquad
	D\big(\fB_{N^++\ell}(q),\mu^+\big)<\varepsilon.
\]
\end{lemma}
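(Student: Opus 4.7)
The plan is to show that the orbit of $q$ shadows a backward segment of $x^-$ and a forward segment of $x^+$ (routing through the heteroclinic point $p$), outside bounded-length time windows of negligible weight, so that weak-$*$ closeness of Birkhoff averages follows from Lemma \ref{l.weakclose}. I first assume, without loss of generality, that $x^\pm$ are \emph{bi-generic}, meaning that both forward and backward Birkhoff averages along $x^\pm$ converge to $\mu^\pm$. The set of bi-generic points has full $\mu^\pm$-measure by Birkhoff's theorem applied to $f$ and to $f^{-1}$; condition (4) of Definition \ref{defrichcycmeas} can be arranged to hold for bi-generic $x^\pm$ by a density argument using iteration and continuity of invariant manifolds. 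Fix $\tau=\tau(\varepsilon/3)$ as provided by Lemma \ref{l.weakclose}.

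Next I establish three uniform shadowing estimates, all exploiting uniform exponential contraction of $Df$ on $E^\ss$ and of $Df^{-1}$ on $E^\uu$ over the uniformly hyperbolic supports of $\mu^\pm$. From $q\in W^\uu_{r,L}(x^-)$ I extract $K_0=K_0(\tau)$ with $d(f^{-k}(q),f^{-k}(x^-))<\tau$ for every $k\geq K_0$. From $p\in W^\s_{r,L}(x^+)$ I extract $K_1=K_1(\tau)$ with $d(f^k(p),f^k(x^+))<\tau/2$ for every $k\geq K_1$. Finally, from $f^{N^+}(q)\in W^\uu_{r,L}(f^{N^+}(p))$ together with backward contraction along $W^\uu$, I extract $T=T(\tau)$, independent of $N^+$, with $d(f^k(q),f^k(p))<\tau/2$ for every $k\in\{0,1,\ldots,N^+-T\}$. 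Combining the last two gives $d(f^k(q),f^k(x^+))<\tau$ on the window $[K_1,N^+-T]$. The critical point is that $K_0,K_1,T$ depend only on $\tau,L,r$ and the contraction rates, and not on $N^\pm$.

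For the first assertion I decompose $\fB_{N^-}(f^{-N^-}(q))$ into a boundary block of length $K_0$ (total weight $K_0/N^-$, vanishing as $N^-\to\infty$) and a main block of length $N^--K_0$ which, by Lemma \ref{l.weakclose} and the first shadowing estimate, is weak-$*$ close to the corresponding backward Birkhoff sum along $x^-$; bi-genericity of $x^-$ then drives the latter to $\mu^-$. The second assertion follows analogously from a three-block decomposition of $\fB_{N^++\ell}(q)$: an initial block of length $K_1$, a main block over $[K_1,N^+-T]$ that shadows the forward orbit of $x^+$, and a final block of length $T+\ell$ absorbing the trailing $\ell$ indices. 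The two boundary blocks have bounded length, so their weights tend to $0$ as $N^+\to\infty$; the main block approximates $\mu^+$ by Lemma \ref{l.weakclose} combined with forward genericity of $x^+$. Choosing $N_1=N_1(\varepsilon,\ell)$ large enough so that each of boundary-weight, $\tau$-shadowing, and genericity errors is below $\varepsilon/3$ yields both inequalities. The main obstacle is the bookkeeping of the three bounded-length windows and verifying that $K_0,K_1,T$ remain uniformly bounded in $N^\pm$, which ultimately follows from uniform hyperbolicity of $\supp(\mu^\pm)$ and the uniform size of the local invariant manifolds.
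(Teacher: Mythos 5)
Your argument is correct and follows essentially the same route as the paper: backward shadowing of $x^-$ along $W^\uu_{r,L}(x^-)$ gives the first estimate, and for the second you run the chain $q\leadsto p\leadsto x^+$, using backward contraction along $W^\uu$ from $f^{N^+}(q)\in W^\uu_{r,L}(f^{N^+}(p))$ together with $p\in W^\ss_{r,L}(x^+)$ and absorbing the trailing $\ell$ iterates because $\ell$ is fixed, exactly as in the paper (your three uniform constants $K_0,K_1,T$ just make the paper's ``most of the iterates are very close'' quantitative). Your observation that $x^-$ must be \emph{backward} generic --- the paper's definition of ``generic'' is forward, while the first inequality needs $\fB_{-n}(x^-)\to\mu^-$ --- is a fair point that the paper glosses over, and fixing it by taking $x^\pm$ bi-generic is a reasonable repair.
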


\begin{proof}
The genericity of $x^-$ with respect to $\mu^-$ and the fact that $q\in W^\uu_{r,L} (x^-)$ implies the convergence $\fB_{-n}(q)=\fB_n(f^{-n}(q))\to \mu^-$ as $n\to\infty$. This implies the first inequality.

To prove the second inequality, note that the genericity of $x^+$ for the measure $\mu^+$ and the fact that $p \in W^\ss_{r,L}(x^+)$ implies the convergence  $\fB_{N^+}(p)\to \mu^+$ as $N^+\to \infty$.  Recall that, by construction of $q$, we have $f^{N^+}(q)\in W^\uu_{r,L}(f^{N^+}(p))$. The uniform expansion of the strong unstable bundle implies that most of the iterates of the segment of orbits $\{ p, f(p), \dots, f^{N^+}(p)\}$ and $\{ q, f(q), \dots, f^{N^+}(q)\}$ are very close. Therefore, the probability measures $\fB_{N^+}(p)$ and $\fB_{N^+}(q)$ are also very close. Hence, as $\ell$ is fixed, taking $N^+$ sufficiently large, $\fB_{N^++\ell}(p)$ and $\fB_{N^++\ell}(q)$ are also close.
\end{proof}

For simplicity, write $\chi^\dagger\eqdef\chi^\c(\mu^\dagger)$, $\dagger\in\{-,+\}$. Given a number $s\in(0,1)$, let
\begin{equation}\label{defchis}
	\chi(s)
	\eqdef s\chi^-+(1-s)\chi^+.
\end{equation}

\begin{lemma}\label{l.convexsum}
For every $s\in(0,1)$ such that $\chi(s)<0$, $\varepsilon>0$, and $\ell\in\bN$ there is $N_2=N_2(s,\varepsilon, \ell)$ such that for every  $N^\pm\ge N_2$ satisfying
\begin{equation}\label{e.relations}
		\left|\frac{N^-}{N^-+N^++\ell}-s\right|<\frac\varepsilon2
\end{equation}
the point $q=q(\ell,N^-,N^+)$ provided by Lemma \ref{lemdeltaclose} satisfies
\[
	D \big(\fB_{N^- +N^+ + \ell}( f^{-N^-} (q)), s\mu^- +(1-s) \mu^+\big) < \varepsilon.
\]
\end{lemma}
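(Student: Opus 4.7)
The plan is to exploit the fact that the orbit segment of length $N=N^-+N^++\ell$ starting at $f^{-N^-}(q)$ decomposes naturally as the concatenation of the two sub-segments already controlled by Lemma \ref{l.birkhoff}. Writing out the definition \eqref{defBn}, one has the algebraic identity
\[
	\fB_{N}\big(f^{-N^-}(q)\big)
	= s'\,\fB_{N^-}\big(f^{-N^-}(q)\big) + (1-s')\,\fB_{N^++\ell}(q),
	\qquad s'\eqdef \frac{N^-}{N^-+N^++\ell},
\]
since both sides are $1/N$ times the same sum of Dirac masses $\sum_{k=-N^-}^{N^++\ell-1}\delta_{f^k(q)}$. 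This is the core observation; from here, the proof reduces to estimating how close the right-hand side sits to $s\mu^-+(1-s)\mu^+$.

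The next step is a triangle inequality in $\cM$:
\[
	D\big(\fB_{N}(f^{-N^-}(q)),\,s\mu^-+(1-s)\mu^+\big)
	\le D\big(\fB_{N}(f^{-N^-}(q)),\,s'\mu^-+(1-s')\mu^+\big)
	 + D\big(s'\mu^-+(1-s')\mu^+,\,s\mu^-+(1-s)\mu^+\big).
\]
For the first summand, I would use convexity of the weak$\ast$ metric $D$ (which holds for any standard realization such as Wasserstein or L\'evy--Prokhorov, up to a multiplicative constant $C$ absorbed in what follows) to bound it by
\[
	s'\,D\big(\fB_{N^-}(f^{-N^-}(q)),\mu^-\big) + (1-s')\,D\big(\fB_{N^++\ell}(q),\mu^+\big),
\]
and then invoke Lemma \ref{l.birkhoff} applied with, say, $\varepsilon/4$ in place of $\varepsilon$ to make each factor smaller than $\varepsilon/4$, provided $N^\pm\ge N_1(\varepsilon/4,\ell)$. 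For the second summand, the difference equals $(s'-s)(\mu^- - \mu^+)$, so it is bounded by a constant times $|s'-s|<\varepsilon/2$; this constant is at most $D(\mu^-,\mu^+)$ (or $2$ if $D\le 1$ is bounded), which I will absorb by adjusting constants.

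The final step is bookkeeping: setting $N_2(s,\varepsilon,\ell)\eqdef N_1(\varepsilon/4,\ell)$ (possibly enlarged by a fixed constant to swallow the $|s'-s|$ term uniformly), the two estimates combine to give $D(\fB_N(f^{-N^-}(q)),s\mu^-+(1-s)\mu^+)<\varepsilon$, as required. I expect no real obstacle: the whole argument is a convex-combination estimate, and the hypothesis $\chi(s)<0$ is not used here (it presumably enters later, when this lemma is combined with Gan's shadowing and the quasi-hyperbolicity estimates of Section \ref{secquasihyp} to produce \emph{periodic} shadowing orbits with a prescribed sign of the central exponent). The only mild subtlety is making sure that the choice of $s'$ coming from integer times $N^\pm$ can indeed be made $(\varepsilon/2)$-close to the prescribed $s\in(0,1)$, which is automatic once $N^\pm$ are allowed to be large enough.
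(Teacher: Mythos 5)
Your proposal is correct and follows essentially the same route as the paper: the paper's proof consists precisely of the decomposition $\fB_{N^-+N^++\ell}(f^{-N^-}(q))=s'\fB_{N^-}(f^{-N^-}(q))+(1-s')\fB_{N^++\ell}(q)$ with $s'=N^-/(N^-+N^++\ell)$, followed by \eqref{e.relations} and Lemma \ref{l.birkhoff}. You are merely more explicit than the paper about the convexity of the metric $D$ and the $\varepsilon$-bookkeeping (and you are right that $\chi(s)<0$ is not used here), so no further comment is needed.
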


\begin{proof}
First note that \eqref{e.relations} implies
\begin{equation}\label{e.relationsbis}
		\left|\frac{N^++\ell}{N^-+N^++\ell}-(1-s)\right|<\frac\varepsilon2.
\end{equation}
We claim that, with the choice in \eqref{e.relations}, Lemma~\ref{l.birkhoff} implies the assertion for $N^\pm$ large enough. To see why this is so write
\[\begin{split}
	\fB_{N^- +N^+ + \ell}( f^{-N^-} (q)) 
	&= \frac{N^- \fB_{N^-}( f^{-N^-} (q)) }{N^- +N^+ + \ell}+
		\frac{(N^+ + \ell) \fB_{N^+ + \ell}(q)}{N^- +N^+ + \ell}\\
	\text{\tiny{(by \eqref{e.relations} and \eqref{e.relationsbis})}}\quad
	&= (s+\frac\varepsilon2) \fB_{N^-}( f^{-N^-} (q))  + (1-s+\frac\varepsilon2) \fB_{N^+ + \ell}(q).
\end{split}\]
This together with Lemma~\ref{l.birkhoff} implies the assertion.
\end{proof}

%-----------------------------------------------------------------------------------------------------------
\subsection{Construction of a quasi-hyperbolic pseudo-orbit}\label{secquasihyp}
%-----------------------------------------------------------------------------------------------------------

The goal of this section is to prove Proposition \ref{pl.quasihyp} stated below. Without loss of generality, we can assume that \eqref{defdomspl} holds for all $x\in O'$, that is,
\begin{equation}\label{defdomsplbis}
	\lVert Df|_{E^\ss(x)}\rVert 
	< m(Df|_{E^\c(x)}) = \lVert Df|_{E^\c(x)}\rVert 
	< m(Df|_{E^\uu(x)}) \le \lVert Df|_{E^\uu(x)}\rVert.
\end{equation}
This implies
\begin{equation}\label{domspl3}
	\lambda_0\eqdef
	\max_{x\in O'}\log\frac{\lVert Df|_{E^\c(x)}\rVert }{m(Df|_{E^\uu(x)})}
	<0.
\end{equation}
As the splitting is partially hyperbolic and hence $x\mapsto \lVert Df|_{E^\ss(x)}\rVert$ and $x\mapsto m(Df|_{E^\uu(x)})$ are continuous functions, we can also assume that there is $\lambda^-<0<\lambda^+$ such that for every $x\in O'$, we have
\begin{equation}\label{domsplphy1}
	\lVert Df|_{E^\ss(x)}\rVert < e^{\lambda^-} < 1 < e^{\lambda^+} < m( Df|_{E^\uu(x)}).
\end{equation}
Finally, as $\supp(\mu^-)$ and $\supp(\mu^+)$ are hyperbolic sets with splittings $(E^\ss\oplus E^\c)\oplus E^\uu$ and $E^\ss\oplus (E^\c\oplus E^\uu)$, respectively, we can assume that for every $y\in\supp(\mu^-)$ and $z\in\supp(\mu^+)$,
\begin{equation}\label{domsplphy2}
	\lVert Df|_{E^\c(y)}\rVert < e^{\lambda^-} < 1 <	e^{\lambda^+} < \lVert Df|_{E^\c(z)}\rVert.
\end{equation}

Let us consider the potential 
\[	
	\psi\colon M\to\bR, \quad\psi(x)\eqdef \lVert Df|_{E^\c(x)}\rVert.
\]	 
Note again that domination implies that $\psi$ is continuous. Let $p$ be provided by \eqref{defchoicep}. Observe that, recalling that $\chi^\pm=\chi^\c(\mu^\pm)$, this implies
\begin{equation}\label{expsp}
	\lim_{n\to\infty}\frac1n\log \prod_{i=0}^{n-1}\psi(f^{-n}(p))
	= \chi^- 
	\,\text{ and }\,
	\lim_{n\to\infty}\frac1n\log \prod_{i=0}^{n-1}\psi(f^{n}(p))
	= \chi^+ .
\end{equation}

 The following is the main result in this section. Recall $\chi(s)$ defined in \eqref{defchis}.

\begin{proposition}\label{pl.quasihyp}
	Consider $s\in(0,1)$ such that $\chi(s)<0$. Then there is $\lambda>0$ such that for every $\varepsilon>0$ and $\ell\in\bN$ there is $N_3 =N_3(s,\varepsilon, \ell) \in\bN$ such that for every $N^\pm\ge N_3$ satisfying
\[
		\left|\frac{N^-}{N^-+N^++\ell}-s\right|<\frac\varepsilon2
\]
 the point $q=q(\ell,N^-,N^+)$ provided by Lemma \ref{lemdeltaclose} has the following property. The orbit segment $\llbracket f^{-N^-}(q),N^-+N^++\ell\rrbracket$ is $\lambda$-quasi hyperbolic with respect to the splitting $E\oplus F=(E^\ss\oplus E^\c)\oplus E^\uu$.
\end{proposition}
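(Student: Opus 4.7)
The plan is to verify each of the three quasi-hyperbolicity conditions of Definition~\ref{defGan} for the splitting $E\oplus F=(E^\ss\oplus E^\c)\oplus E^\uu$ along the segment of length $n=N^-+N^++\ell$. By Lemma~\ref{lem:1}, the orbit of $q$ lies in $\La''\subset O'$, so the uniform estimates \eqref{defdomsplbis}, \eqref{domsplphy1}, \eqref{domsplphy2}, and \eqref{domspl3} apply pointwise along it. I first fix $\lambda>0$ with $\lambda<\tfrac{1}{4}\min\{-\chi(s),\,\lambda^+,\,-\lambda^-,\,-\lambda_0\}$, giving every subsequent strict inequality margin.

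Conditions (2) and (3) follow from partial hyperbolicity. By \eqref{domsplphy1}, $m(Df|_{E^\uu(y_j)})\ge e^{\lambda^+}\ge e^\lambda$ pointwise, giving (2). By \eqref{domspl3}, $\|Df|_{E^\c(y_j)}\|/m(Df|_{E^\uu(y_j)})\le e^{\lambda_0}\le e^{-2\lambda}$, and in the adapted metric $\|Df|_{E^\ss\oplus E^\c}\|=\|Df|_{E^\c}\|$ since $E^\c$ dominates $E^\ss$ by \eqref{defdomspl}, giving (3).

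The substantive step is (1). Setting $g(x)=\log\|Df|_{E^\c(x)}\|$ and $y_j=f^{-N^-+j}(q)$, I must show $S(k)\eqdef\sum_{j=0}^{k-1}g(y_j)\le -k\lambda$ for every $k\in\{1,\ldots,n\}$. I decompose the indices into a minus phase $J^-=[0,N^--L]$ where $q\in W^\uu_{r,L}(x^-)$ and backward strong-unstable contraction place $y_j$ exponentially close to $f^{-(N^--j)}(x^-)\in\supp(\mu^-)$; a plus phase $J^+=[N^-+L,N^-+N^+-L]$ where $f^{N^+}(q)\in W^\uu_{r,L}(f^{N^+}(p))$ combined with $p\in W^\ss_{r,L}(x^+)$ places $y_j$ close to $f^{j-N^-}(x^+)\in\supp(\mu^+)$; and a transition/end set $J^0$ of bounded cardinality $O(L+\ell)$. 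The \emph{uniform} hyperbolicity of $\mu^\pm$ (Definition~\ref{defrichcycmeas}(1)) and \eqref{domsplphy2} give the crucial \emph{pointwise} bound $g|_{\supp(\mu^-)}\le\lambda^-$, hence $g(y_j)\le\lambda^-+\eta$ on $J^-$ by continuity, for $N^\pm$ large. Birkhoff's theorem applied to the generic $x^\pm$ controls the total: $\tfrac{1}{N}\sum_{m=1}^{N}g(f^{-m}(x^-))\to\chi^-$ and $\tfrac{1}{N}\sum_{m=0}^{N-1}g(f^m(x^+))\to\chi^+$; combined with \eqref{e.relations}, this yields $S(n)\le n(\chi(s)+\eta')\le -n\lambda$ for $\eta'$ small and $N^\pm$ large.

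For uniformity over $k$, I monitor $\Phi(k)\eqdef S(k)+k\lambda$. On $J^-$ the pointwise bound gives $\Phi(k)\le k(\lambda^-+\lambda+\eta)<0$. On $J^+$, $\Phi$ is approximately linear in $k$ with positive slope $\chi^++\lambda+\eta$, so its maximum in that range is at the right endpoint, controlled by the $S(n)$ estimate above. Transitions in $J^0$ alter $\Phi$ by at most $(L+\ell)M_0$ (with $M_0=\sup_M|g|$), a bounded quantity absorbed into the large $N^-$-linear negative contribution coming from $J^-$. The \textbf{main obstacle} is uniform control at the start of $J^+$, where the partial forward Birkhoff sum along $x^+$ has not yet stabilized near $\chi^+$; I resolve this via a buffer of fixed size $K_0=K_0(\eta,x^+)$ at the beginning of $J^+$ on which I use the trivial bound $|g|\le M_0$, whose bounded contribution is dominated by the strongly negative $N^-$-linear term from $J^-$ once $N_3=N_3(s,\varepsilon,\ell)$ is chosen sufficiently large.
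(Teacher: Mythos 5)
Your proposal is correct and follows essentially the same route as the paper's proof: a pointwise bound $\log\lVert Df|_{E^\c}\rVert\le\lambda^-+\eta$ on the segment shadowing $\supp(\mu^-)$, Birkhoff-sum upper bounds (with a bounded buffer/constant absorbing the unstabilized initial part and the $O(L+\ell)$ transition times) on the segment shadowing $\supp(\mu^+)$, the ratio condition to make the total sum $\le -n\lambda$, and the observation that the linear-in-$h$ upper bound on the plus phase is maximized at the right endpoint — which is exactly the paper's monotonicity of $h\mapsto(N^-\chi^-+h\chi^+)/(N^-+h)$; conditions (2) and (3) are immediate from \eqref{domsplphy1} and \eqref{domspl3} in both treatments. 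The only cosmetic differences are that you compare the orbit of $q$ directly to those of $x^\pm$ rather than passing through $p$, and you build $-\chi(s)$ into the definition of $\lambda$ where the paper instead shrinks $\lambda^\pm$ via \eqref{fixexpon}.
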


\begin{proof}
We first fix some auxiliary quantifiers. Without loss of generality, we can assume that the numbers $\lambda^\pm$ satisfying \eqref{domsplphy1} and \eqref{domsplphy2} also satisfy
\begin{equation}\label{fixexpon}
	\max\{\lvert\lambda^-\rvert,\lambda^+\}
	< \lvert\chi(s)\rvert .
\end{equation}
Let
\begin{equation}\label{choicelambda}
	\lambda
	\eqdef\frac12\min\{|\lambda_0|,|\lambda^-|,\lambda^+\}
	> 0
	\quad\text{ and }\quad
	\tau\eqdef\max\{|\chi^-|,\chi^+\}.
\end{equation}
Let $\varepsilon>0$ be sufficiently small such that 
\begin{equation}\label{eqeps}
	0
	< \lambda
	< \min\{|\lambda^-|,\lambda^+\} -3\varepsilon-\varepsilon\tau.	
\end{equation}
As $\psi$ is continuous, there exists $\delta>0$ such that for all $x\in \Lambda'$, $d(y,x)<\delta$ implies
\begin{equation}\label{eqdelta}
	\max\Big\{\frac{\psi(x)}{\psi(y)},\frac{\psi(y)}{\psi(x)}\Big\}
	<e^\varepsilon.
\end{equation}
As the splitting $E^\ss\oplus E^\c\oplus E^\uu$ is partially hyperbolic, there exists $m_0\in\bN$ such that for every $x\in \Lambda'$, $y\in W^\ss_{r,L}(x)$, and $z\in W^\uu_{r,L}(x)$, for every $n\ge m_0$, it holds
\begin{equation}\label{eqveryclose}
	d(f^n(z),f^n(x))\le\frac\delta2
	\quad\text{ and }\quad
	d(f^{-n}(y),f^{-n}(x))\le\frac\delta2.
\end{equation}
Fix some constant
\begin{equation}\label{constC}
	C
	\ge \max_{x\in \Lambda'}\psi(x).
\end{equation}
Note that \eqref{expsp} implies that, possibly after increasing $C$, for all $n\in\bN$
\begin{equation}\label{expop}
	\prod_{i=0}^{n-1}\psi(f^{-i}(p))
	\le Ce^{n(\chi^-+\varepsilon)}
	\,\text{ and }\,
	\prod_{i=0}^{n-1}\psi(f^i(p))
	\le Ce^{n(\chi^++\varepsilon)}.
\end{equation}
Finally, choose $n_0\in\bN$ such that
\begin{equation}\label{choiceC}
	C^{2(m_0+\ell)}
	\le e^{n\varepsilon}
	\quad\text{ for all $n\ge n_0$}.
\end{equation}
Assume that
\begin{equation}\label{fixNpm}
	N^\pm
	\ge n_0+m_0
\end{equation}
and that 
\begin{equation}\label{choiceNpm}
	\left|\frac{N^-}{N^-+N^++\ell}-s\right|<\frac\varepsilon2.
\end{equation}
This finishes our preliminary choice of constants and observations.

Notice that \eqref{eqveryclose} together with $p\in W^\uu_{r,L}(x^-)$ implies that 
\[
	d(f^{-N^-+n}(p),f^{-N^-+n}(x^-))
	\le \frac\delta2
		\quad\text{ for every }n=1,\ldots,N^--m_0.
\]
Analogously, $f^{N^+}(q)\in W^\uu_{r,L}(f^{N^+}(p))$ implies that 
\begin{equation}\label{pqclose}
	d(f^n(q),f^n(p))
	\le \frac\delta2
		\quad\text{ for every }n=-N^-,\ldots,0,\ldots,N^+-m_0.
\end{equation}

Let $\widehat q\eqdef f^{-N^-}(q)$. Consider the splitting $E\oplus F=(E^\ss\oplus E^\c)\oplus E^\uu$. We divide the proof of the proposition into two lemmas.

\begin{lemma}\label{lemclaim1}
	 Property (1) in Definition \ref{defGan} holds for the segment $\llbracket \widehat q,N^-+N^++\ell\rrbracket$. 	
\end{lemma}

\begin{proof} 
By \eqref{defdomsplbis}, it is enough to check property (1) for the bundle $E^\c$. For every $n=0,\ldots,N^--m_0-1$, we get 
\begin{eqnarray}
	\psi(f^n(\widehat q\,))=\psi(f^{n-N^-}( q))
	&=& \frac{\psi(f^{n-N^-}( q))}{\psi(f^{n-N^-}(p))} \frac{\psi(f^{n-N^-}(p))}{\psi(f^{n-N^-}(x^-))}
		\psi(f^{n-N^-}(x^-))\notag\\
	\text{\tiny{(by \eqref{eqdelta})}}\quad	
	&\le& e^{2\varepsilon}\psi(f^{n-N^-}(x^-))	\notag\\
	\text{\tiny{(by \eqref{domsplphy2})}}\quad	
	&\le& e^{2\varepsilon}e^{\lambda^-}
	= e^{\lambda^-+2\varepsilon}.
	\label{somenu1}
\end{eqnarray}
Note also that, by \eqref{constC}, for every $n=N^--m_0,\ldots,N^-$, we have
\begin{equation}\label{somenu2}
	\psi(f^n(\widehat q\,))
	\le C.
\end{equation}
Let us now check Property (1) in the following cases according to the value of $k$.
\smallskip

\noindent\textbf{Case $k\in\{1,\ldots,N^-\}$.}
By \eqref{choiceC} and the fact that \eqref{fixNpm} gives $N^--m_0\ge n_0$, we get  
\begin{equation}\label{Gan1}\begin{split}
	\prod_{j=0}^{k-1}\lVert Df|_{E^\c(f^j(\widehat q\,))}\rVert
	&= \prod_{j=0}^{k-1}\psi(f^j(\widehat q\,))\\
	\text{\tiny{(by \eqref{somenu1})}}\quad
	&\le e^{k(\lambda^-+2\varepsilon)}\\
	\text{\tiny{(by \eqref{eqeps})}}\quad
	&< e^{-k\lambda}
\end{split}\end{equation}
for every $k=1,\ldots,N^-$.
\smallskip

\noindent\textbf{Case $k\in\{N^-+1,\ldots,N^-+N^+-m_0\}$.}
For every $n=-N^-,\ldots,N^+-m_0$, \eqref{pqclose} and \eqref{eqdelta} imply that 
\begin{equation}\label{formulaa}
	\psi(f^n(q))
	= \frac{\psi(f^n(q))}{\psi(f^n(p))}\cdot\psi(f^n(p))
	\le e^\varepsilon \cdot\psi(f^n(p)).
\end{equation}
This, implies that for all $k=N^-+1,\ldots,N^-+N^+-m_0$
\[\begin{split}
	 \prod_{j=0}^{k-1}\psi(f^j(\widehat q\,))
	&=\prod_{j=0}^{N^--1}\psi(f^j(\widehat q\,))\cdot \prod_{j=N^-}^{k-1}\psi(f^j(\widehat q\,)) \\
	\text{\tiny{(by the definition of $\widehat q$)}}\quad
	&=\prod_{j=N^-}^{1}\psi(f^{-j}(q))\cdot \prod_{j=0}^{k-N^--1}\psi(f^j(q)) \\ 
	\text{\tiny{(by \eqref{formulaa})}}\quad
	&=e^{N^-\varepsilon}\prod_{j=1}^{N^-}\psi(f^{-j}( p))\cdot 
		e^{(k-N^-)\varepsilon}\prod_{j=0}^{k-1-N^-}\psi(f^j( p)) \\
	\text{\tiny{(by \eqref{expop})}}\quad
	&\le e^{N^-\varepsilon} Ce^{N^-(\chi^-+\varepsilon)}\cdot 
		e^{(k-N^-)\varepsilon}Ce^{(k-N^-)(\chi^++\varepsilon)}\\
	\text{\tiny{(by \eqref{choiceC} together with $N^-\ge n_0$)}}\quad
	&\le e^{N^-(\chi^-+3\varepsilon)}\cdot e^{(k-N^-)(\chi^++3\varepsilon)}.
\end{split}\]
Let us write $k-N^-=h$. Note that  $h<N^++\ell$,
\[
	\chi^-<0,\quad
	\frac{N^-}{N^-+h}>\frac{N^-}{N^-+N^++\ell}, 
	\quad\text{ and }\quad
	\chi^+>0.
\]	 This implies 
\[\begin{split}
	\frac{N^-\chi^-+h\chi^+}{N^-+h}
	&= \frac{N^-}{N^-+h}\chi^-+\frac{h}{N^-+h}\chi^+
	\le \frac{N^-}{N^-+N^++\ell}\chi^-+\frac{N^++\ell}{N^-+N^++\ell}\chi^+\\
	\text{\tiny{(by \eqref{choiceNpm})}}\quad
	&\le (s-\varepsilon)\chi^-+(1-s+\varepsilon)\chi^+\\
	\text{\tiny{(by \eqref{choicelambda})}}\quad
	&\le \chi+\varepsilon\max\{|\chi^-|,\chi^+\}
	= \chi+\varepsilon\tau.
\end{split}\]
Hence, 
\begin{equation}\label{theusa}
	\frac{N^-\chi^-+h\chi^+}{N^-+h}
	\le \chi+\varepsilon\tau.
\end{equation}

Continuing with the above estimate, we get
\begin{equation}\label{Gan2}\begin{split}
	\prod_{j=0}^{k-1}\lVert Df|_{E^\c(f^j(\widehat q\,))}\rVert
	= \prod_{j=0}^{k-1}\psi(f^j(\widehat q\,))
	&\le e^{N^-(\chi^-+3\varepsilon)}\cdot e^{(k-N^-)(\chi^++3\varepsilon)}\\
	\text{\tiny{(using that $k-N^-=h$ and \eqref{theusa})}}\quad
	&= e^{N^-\chi^-+h\chi^+}e^{k3\varepsilon}
	\le e^{(N^-+h)(\chi+\varepsilon\tau)}e^{k3\varepsilon}\\
	&= e^{k(\chi +3\varepsilon+\varepsilon\tau)}\\
	\text{\tiny{(by \eqref{fixexpon} and \eqref{eqeps})}}\quad
	&< e^{-k\lambda}
\end{split}\end{equation}
for all $k=N^-+1,\ldots,N^-+N^+-m_0$.
\smallskip

\noindent\textbf{Case $k\in\{N^-+N^+-m_0+1,\ldots,N^-+N^+-m_0+\ell\}$.}
We check that \eqref{Gan2} implies
\begin{equation}\label{Gan3}\begin{split}
	\prod_{j=0}^{k-1}\lVert Df|_{E^\c(f^j(\widehat q\,))}\rVert
	&\le e^{k(\chi +3\varepsilon+\varepsilon\tau)}C^{2(m_0+\ell)}\\
	\text{\tiny{(by \eqref{choiceC} and \eqref{fixNpm})}}\quad
	&\le e^{k(\chi+3\varepsilon+\varepsilon\tau)}\\
	\text{\tiny{(by \eqref{eqeps})}}\quad
	&< e^{-k\lambda}
\end{split}\end{equation}
for all $k=N^-+N^+-m_0+1,\ldots,N^-+N^++\ell$. This proves the lemma.
\end{proof}

\begin{lemma}\label{lemclaim2}
	Properties (2) and (3) in Definition \ref{defGan} hold for the orbit segment $\llbracket \widehat q,N^-+N^++\ell\rrbracket$. 	
\end{lemma}

\begin{proof}
	Assuming that $N^\pm\ge N_0(\ell)$, Lemma \ref{lem:1} asserts that this orbit segment is entirely contained in $O$. Hence, the property (2) follows from \eqref{domsplphy1}.  Property (3) is an immediate consequence of \eqref{domspl3} and \eqref{choicelambda}.
\end{proof}	

Lemmas \ref{lemclaim1} and \ref{lemclaim2} together prove the assertion of the proposition.
\end{proof}

%-----------------------------------------------------------------------------------------------------------
\subsection{End of the proof of Theorem \ref{t.cycle}}\label{secfimprova}
%-----------------------------------------------------------------------------------------------------------

It is enough to study the following two cases: $\chi(s)<0$ and $\chi(s)>0$. We only study the former one. The latter case is an immediate consequence considering $f^{-1}$ instead of $f$ and the splitting $E'\oplus F'$ given by $E'\eqdef E^\ss$ and $F'\eqdef E^\c\oplus E^\uu$ instead of the splitting $E\oplus F$ given by $E\eqdef E^\ss\oplus E^\c$ and $F\eqdef  E^\uu$.

Let $s\in(0,1)$ such that $\chi(s)<0$. Fix $\varepsilon>0$. 
Let $\lambda>0$ be as in Proposition \ref{pl.quasihyp}.
Let $L>0$ and $\delta_0>0$ be as provided by Theorem \ref{theoGan} applied to this value $\lambda$, the set $\Lambda'$, and the splitting $E\oplus F$ given by $E\eqdef E^\ss\oplus E^\c$ and $F\eqdef  E^\uu$. Let $\tau=\tau(\varepsilon)>0$ be as provided by Lemma \ref{l.weakclose}.

Fix now $\delta\in(0,\delta_0]$ sufficiently small such that $L\delta<\tau$. 
Let $\ell_0=\ell_0(\delta)\in\bN$ as in Lemma \ref{lemdeltaclose}. 
Fix $\ell\ge \ell_0$. 
Let $N_0=N_0(\ell),N_1=N_1(\varepsilon,\ell),N_2=N_2(s,\varepsilon,\ell)$, and $N_3=N_3(s,\varepsilon,\ell)\in\bN$ as provided by Lemmas \ref{lemdeltaclose}, \ref{l.birkhoff}, and  \ref{l.convexsum}, and by Proposition \ref{pl.quasihyp}, respectively.
Fix numbers $N^\pm\ge \max\{N_0,N_1,N_2,N_3\}$.  
Consider the point $q=q(\ell,N^-,N^+)$ as in Lemma \ref{lemdeltaclose}. By this lemma, the segment $\llbracket f^{-N^-}(q),N^-+N^++\ell\rrbracket$ is a $\delta$-pseudo-orbit. By Proposition \ref{pl.quasihyp}, this segment is also $\lambda$-quasi hyperbolic. Let $n=N^-+N^++\ell$ and $\widehat q=f^{-N^-}(q)$. Hence, by Theorem \ref{theoGan}, the orbit segment $\llbracket \widehat q,n\rrbracket$  is $L\delta$-shadowed by some periodic orbit segment $\llbracket x,n\rrbracket$ (which is of period $n$). By our choice of $\delta$, both orbit segments $\llbracket x,n\rrbracket$ and $\llbracket \widehat q,n\rrbracket$ are $\tau$-close. Hence, by Lemma \ref{l.weakclose}, both probability measures $\fB_n(x)$ and $\fB_n(\widehat q\,)$ are $\varepsilon$-close. By Lemma \ref{l.convexsum}, the measure $\fB_n(\widehat q\,)$ is $\varepsilon$-close to the measure $s\mu^-+(1-s)\mu^+$. Hence, the measure $\mu_n(x)\eqdef\fB_n(x)$ is $2\varepsilon$-close to $s\mu^-+(1-s)\mu^+$. As $\varepsilon$ is arbitrary, this proves the theorem.
\qed

%%%%%%%%%%%%%%%%%%%%%%%%%%%%%%%%%%%%%%%%%%%%%%%%
\section{Robust  and rich cycles of between saddle-SRB measures}\label{sec:SRB}
%%%%%%%%%%%%%%%%%%%%%%%%%%%%%%%%%%%%%%%%%%%%%%%%

In this section, we introduce a setting where ergodic measures involved in a heterodimensional cycle have natural continuations that also form a cycle. This establishes robust heterodimensional cycles between measures. Key concepts we explore are \emph{saddle-attractor/repeller basic sets} and the class of \emph{saddle-SRB measures} supported on them. These sets and measures have well-defined continuations.  The detailed definitions and properties are postponed to Section \ref{secsaddleSRB}.

The following is the main result of this section that implies Theorem \ref{mmt.SRBrobust}. Note that the $C^2$ differentiability hypothesis is essential to deal with saddle-SRB measures.

\begin{theorem}[Robust cycles of measures]\label{t.SRBrobust} 
Every three-dimensional compact manifold supports a $C^2$ diffeomorphism $f$ having a saddle-attractor basic set $\La^-$ and saddle-repeller basic set $\La^+$ whose associated saddle-SRB measures $\mu^-=\mu_{\La^-}$ and $\mu^+=\mu_{\La^+}$ satisfy:
\begin{itemize}[leftmargin=0.4cm ]
\item $\mu^-$ and $\mu^+$ have a rich heterodimensional cycle,
\item for every $C^2$ diffeomorphism $g$ sufficiently $C^2$-close to $f$, the saddle-SRB measures $\mu^-_g=\mu_{\La^-_g}$ and $\mu^+_g=\mu_{\La^+_g}$ (continuation of  $\mu^-$ and $\mu^+$, respectively)  are related by a rich heterodimensional cycle.
\end{itemize}
\end{theorem}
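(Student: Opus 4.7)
The plan is to construct, on an arbitrary three-dimensional compact manifold $M$, a single explicit $C^2$ diffeomorphism $f$ whose hyperbolic basic sets $\Lambda^-$ and $\Lambda^+$ are Plykin-type realizations of a saddle-attractor and a saddle-repeller, and then to verify that every relevant object is $C^2$-robust. Note that the second bullet of the theorem immediately implies the first via Theorem \ref{mmt.cycle} applied to each $g$ close to $f$, so the real work is only to exhibit the cycle and check its robustness. I would therefore split the argument into two stages: the local geometric construction of the model, and the verification that every ingredient persists under $C^2$-perturbations.

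For the construction, I would place two Plykin-type basic sets $\Lambda^-,\Lambda^+$ inside a common partially hyperbolic box $U\subset M$ carrying a splitting $E^{\ss}\oplus E^{\c}\oplus E^{\uu}$ with one-dimensional center. The set $\Lambda^-$ sits on an invariant two-disk $D^-$ on which the restricted dynamics is a planar Plykin attractor; the direction normal to $D^-$ is the uniformly super-contracting bundle $E^{\ss}$, the planar Plykin-stable direction plays the role of $E^{\c}$, and the planar Plykin-unstable direction plays the role of $E^{\uu}$. The saddle-SRB measure $\mu^-$ is then the lift to $M$ of the planar Plykin SRB measure; it is uniformly hyperbolic with $\chi^{\c}(\mu^-)<0$. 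The set $\Lambda^+$ is the time-reversed counterpart, a Plykin repeller on a two-disk $D^+$ whose normal direction is super-expanding, yielding a saddle-SRB measure $\mu^+$ with $\chi^{\c}(\mu^+)>0$. With the splitting extending to a common partially hyperbolic set $\Lambda\supset\supp(\mu^-)\cup\supp(\mu^+)$, items (1)--(2) of Definition \ref{defrichcycmeas} are built in by construction.

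The subtle step is arranging the gluing so that items (3) and (4) of Definition \ref{defrichcycmeas} hold. By construction, the strong-unstable fibres of points in $\supp(\mu^+)$ are one-dimensional arcs transverse to $D^-$, and the strong-stable fibres of points in $\supp(\mu^-)$ are one-dimensional arcs transverse to $D^+$. In the spirit of classical Plykin-attractor constructions on the sphere, I would shape the two disks $D^\pm$ and their images so that the strong-unstable saturation of $\supp(\mu^+)$ sweeps topologically transversely across every local stable lamina of $\Lambda^-$, and symmetrically for the other crossing; a standard transversality argument then also supplies a heteroclinic orbit between generic points of $\mu^-$ and $\mu^+$ as required in item (4). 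Outside the box $U$, $f$ is extended in any convenient Morse--Smale way to give a global diffeomorphism of $M$. This geometric design, rather than any hard dynamical lemma, is where the main effort lies.

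Robustness is then a checklist. Each basic set $\Lambda^\pm$ has a hyperbolic continuation $\Lambda^\pm_g$ for $g$ $C^1$-close to $f$, depending continuously on $g$, and the partially hyperbolic splitting extends continuously to a neighborhood. Because $g\in C^2$, each $\Lambda^\pm_g$ supports a unique saddle-SRB measure $\mu^\pm_g$ and the classical Bowen--Ruelle theory, together with continuous dependence of Markov partitions, gives continuous dependence of $\mu^\pm_g$ on $g$ in the weak$\ast$ topology. Continuity of Lyapunov exponents on uniformly hyperbolic measures preserves the strict inequalities $\chi^{\c}(\mu^-_g)<0<\chi^{\c}(\mu^+_g)$, and all transverse intersections in items (3)--(4) are $C^1$-open. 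Hence $\mu^-_g$ and $\mu^+_g$ still satisfy Definition \ref{defrichcycmeas}, giving a rich heterodimensional cycle for every $g$ in a $C^2$-neighborhood of $f$. The main obstacle in the whole argument is the explicit geometric arrangement securing item (3) for \emph{every} pair of points of $\supp(\mu^-)\times\supp(\mu^+)$, and not merely for some generic pair; the surrounding analytic statements are standard consequences of uniformly hyperbolic theory.
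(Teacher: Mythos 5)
Your overall architecture (two Plykin-type saddle-sets in a cyclic configuration inside a common partially hyperbolic region, plus a robustness checklist via continuations of basic sets and of SRB measures) is the same as the paper's, and the robustness part is essentially right. But the geometric core of your construction is set up backwards, and the two steps you defer as ``standard'' are precisely the two nontrivial steps of the proof. First, the bundle assignment: you place the Plykin \emph{attractor} on a normally \emph{contracting} disk with the planar unstable direction playing the role of $E^\uu$. That makes $\La^-$ a genuine three-dimensional hyperbolic attractor: a Plykin attractor contains the unstable manifolds of all its points, so $W^\u(y)\subset\La^-$ for every $y\in\La^-$ and the unstable set of $\La^-$ can never reach $W^\s(\La^+)$ — no heterodimensional cycle is possible (the paper's introduction makes exactly this point about SRB measures of true attractors/repellers). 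In Definition \ref{d.saddle-attractor} the invariant surface is normally \emph{expanding} and is the two-dimensional \emph{stable} set of the saddle-attractor; the surface-attractor's unstable direction is the \emph{center} bundle $E^\cu$, and the one-dimensional strong stable manifolds leave the surface. Dually, the saddle-repeller sits on a normally contracting surface as a surface \emph{repeller}, so its one-dimensional unstable manifolds sweep across that surface and can meet $W^\s(\La^+)$. Your assignment also puts the absolute continuity of the SRB measure in the strong unstable direction rather than in the center direction, which destroys the mechanism needed below.

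Second, item (4) of Definition \ref{defrichcycmeas} is not obtained by ``a standard transversality argument.'' Transversality of the two surfaces $W^\s_{r,L}(\La^+)$ and $W^\u_{r,L}(\La^-)$ only produces a curve $\sigma$ of intersection; to find a point of $\sigma$ lying simultaneously on the strong stable manifold of a $\mu^+$-\emph{generic} point and on the strong unstable manifold of a $\mu^-$-generic point, the paper uses that $\sigma$ is transverse to both strong foliations together with property \eqref{eqSRB}: absolute continuity of the saddle-SRB measure along the center direction plus $C^2$ regularity (two-domination) of the invariant surfaces imply that each of these two sets has full Lebesgue measure in $\sigma$, hence they intersect. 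This is the only place where the SRB nature of the measures is used, and without it the statement can fail. Third, item (3) does not merely ask that the intersections $W^\uu(x)\pitchfork W^\s(y)$ and $W^\u(x)\pitchfork W^\ss(y)$ be nonempty for every pair: the intersection points must lie in a single partially hyperbolic set $\Lambda$ extending the splitting of $\La^+\cup\La^-$. Because the two-dimensional weak manifolds of Plykin saddle-sets fold, arbitrary intersection points may carry degenerate configurations of the three relevant tangent lines; the paper's Proposition \ref{pl.fsatisfies} (Section \ref{ss.proofofpSRBrobust}) selects the intersection points via auxiliary saddles with non-real eigenvalues, limit laminations and the $\lambda$-lemma precisely to guarantee a direct-sum splitting at each chosen point. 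Your proposal does not address this selection at all, so the set you would obtain need not be partially hyperbolic. (A minor point: the first bullet of the theorem is just the case $g=f$ of the second; Theorem \ref{mmt.cycle} is a consequence of having a rich cycle, not a tool for producing one.)
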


This section is organized as follows. Section \ref{secsaddleSRB} introduces \emph{saddle-attractors/repellers basic sets} and \emph{saddle-SRB measures}. In Section \ref{secCycles}, we establish a robustness property, stated in Theorem~\ref{tp.SRBrobust}, and derive Theorem~\ref{t.SRBrobust} from it. In Section~\ref{ss.Plykinsaddle}, we construct cycles between saddle-SRB measures. In Section~\ref{ss.proofofpSRBrobust}, we prove that these cycles can be inserted into a partially hyperbolic set, completing the proof of Theorem~\ref{tp.SRBrobust}.

%%%%%%%%%%%%%%%%%%%%%%%%%%%%%%%%%%%%%%%%%%%%%%%%
\subsection{Saddle-SRB measures}\label{secsaddleSRB}
%%%%%%%%%%%%%%%%%%%%%%%%%%%%%%%%%%%%%%%%%%%%%%%%

In this section, we define saddle-SRB measures and state some auxiliary properties of them.

\begin{definition}[Saddle-attractor/repeller]\label{d.saddle-attractor}
Let $f$ be a $C^1$-diffeomorphism on a three-manifold and $\La$ be a basic set of $f$. We say that $\La$ is a \emph{saddle-attractor basic set} if 
\begin{itemize}[leftmargin=0.4cm ]
\item there is a partially hyperbolic splitting $T_\La M=E^\ss\oplus E^\cu\oplus E^\uu$ in three one-dimensional bundles, where $E^\ss$ is the stable bundle and $E^\cu\oplus E^\uu$ is the unstable bundle of $\Lambda$, respectively,
\item there is a surface with boundary $S$ that is $f$-strictly invariant (that is, $f(S)$ is contained in the interior of $S$), transverse to $E^\uu$, and $f$-normally hyperbolic such that it is expanding in the ``normal'' direction,
\item $\La\subset S$ and $\La$ is a hyperbolic attractor for the restriction $f|_S$. 
\end{itemize}

We say that $\La$ is a \emph{saddle-repeller basic set} if it is a saddle-attractor basic set for $f^{-1}$. 
\end{definition}

Note that if $\Lambda$ is a saddle-attractor then its unstable direction is two-dimensional and the stable one is one-dimensional, analogously for saddle-repellers. 

When the basic set $\Lambda$ in Definition \ref{d.saddle-attractor} is a Plykin attractor/repeller relative to $f|_S$ (see Section~\ref{sss.Plya} for a definition), then we call the resulting saddle-attractor/repeller a \emph{Plykin saddle-attractor/repeller}. This sort of set will play an important role in Section \ref{secCycles}. Indeed, the basic sets in Theorem \ref{tp.SRBrobust} are of Plykin type. 

\begin{remark}[Notation and terminology]
We will consider a setting where there is a saddle-attractor basic set $\Lambda^+$ and a saddle-repeller basic set  $\Lambda^-$ such that the union $\Lambda^+\cup\Lambda^-$ has a partially hyperbolic splitting $E^\ss\oplus E^\c\oplus E^\uu$. By Remark~\ref{r.strongmanifolds}, for every $x\in\Lambda^\pm$, there are defined its strong stable manifold  $W^\ss(x)$ and its strong unstable manifold $W^\uu(x)$. These manifolds are one-dimensional. For $x\in\Lambda^+$, $W^\ss(x)=W^\s(x)$ and $W^\uu(x) \subsetneq W^\u(x)$. Moreover, $W^\s(x)$ is one-dimensional and $W^\u(x)$ is two-dimensional. Analogously, for $y\in\Lambda^-$, $W^\ss(y)\subsetneq W^\s(y)$ and $W^\uu(y) =W^\u(y)$; moreover $W^\u(y)$ is one-dimensional and $W^\s(y)$ is two-dimensional. When the stable and unstable manifolds are two-dimensional, in some cases, we call them \emph{weak stable} and \emph{weak unstable} to emphasize the difference between the strong ones. 
\end{remark}

Recall the notation in Remark \ref{r.localR}.

\begin{remark}[Strong foliations]\label{r.stablefoliation}
Given a saddle-attractor basic set $\La^+$, the $f$-invariant surface $S$ in Definition \ref{d.saddle-attractor} can be written as
\[
	S= W^\s_{r,L}(\La^+)
	=\bigcup_{x\in\La^+} W^\ss_{r,L}(x).
\]	
We consider the strong stable foliation $\cW^\ss_{\La^+}\eqdef\{W^\ss_{r,L}(x)\}_{x\in\La^+}$ of $W^\s_{r,L}(\Lambda^+)$.%
\footnote{In fact, more accurately, this would be called the ``stable foliation''. However, as we consider a partially hyperbolic context with three directions and these leaves are tangent to the strong stable bundle, we prefer this name. We also omit in this notation the dependence on $r$ and $L$.}

Analogously, for the local unstable manifold of a saddle-repeller basic set $\La^-$, we consider the unstable foliation $\cW^\uu_{\La^-}$ of the $f$-invariant surface $W^\u_{r,L}(\La^-)$.
\end{remark}

Recall that a saddle-attractor basic set $\La$ is \emph{two-dominated} if 
\[
	\lVert Df|_{E^\cu}\rVert^2< m( Df|_{E^\uu}).
\]
Analogously for a saddle-repeller basic set.

\begin{remark}[SRB measures and foliations]\label{remSRB}
Let $\La^+$ be a two-dominated saddle-attractor basic set of a $C^2$-diffeomorphism $f$. By \cite{HiPuSh}, the two-domination property implies that the surface $S= W^\s_{r,L}(\La^+)$ is of class $C^2$. Moreover, by \cite{BowRue:75}, $\La^+$ carries a unique ergodic probability measure $\mu^+=\mu_{\Lambda^+}$ which is absolutely continuous in the $E^\cu$-direction. In our normally hyperbolic setting, in analogy to the case of attractors, we call the measure $\mu^+$ \emph{saddle Sinai-Ruelle-Bowen} (or, shortly, \emph{saddle-SRB}) \emph{measure}. Note that for the existence of such a measure, the $C^2$ regularity is required. 

Moreover, for every curve  $\sigma\colon [0,1]\to \Si$ which is transverse to the stable foliation $\cW^\ss_{\La^+}$, one has
\begin{equation}\label{eqSRB}
	\Leb\big\{t\in[0,1]\colon  
		\sigma(t)\in W^\ss_{r,L}(x)\text{ for some }\mu^+\mbox{-generic point }x\big\}	
	=1.
\end{equation}

In the case of a saddle-repeller $\La^-$, analogously, there is a probability measure $\mu^-$ that is a saddle-SRB measure for $f^{-1}$. Note that the measures $\mu^\pm$ are uniformly hyperbolic and $\chi^\c (\mu^-)<0 < \chi^\c(\mu^+)$. Thus, these measures have different $\s$-index.
\end{remark}

\begin{remark}[Continuations of saddle-SRB measures]\label{r.contsrb}
Every basic set of a diffeomorphism has a continuation for $C^1$-nearby diffeomorphisms. Moreover, the properties of being normally hyperbolic, of having a partially hyperbolic splitting, and of this splitting being two-dominated are $C^1$-robust. Hence, given a diffeomorphism $f$ with a two-dominated saddle-attractor/repeller basic set $\La^\pm$, any $C^1$ perturbation $g$ of $f$ gives rise to a $C^1$ diffeomorphism with a two-dominated saddle-attractor/repeller basic set $\La_g^\pm$. 

Therefore, if $f$ is $C^2$ and has a saddle-attractor $\La^+$ and a saddle-repeller $\La^-$ as above,
for every $g$ close to $f$,  the continuations $\Lambda^\pm_g$ of $\Lambda^\pm$ are well defined. Moreover, if $g$ is $C^2$, then by Remark \ref{remSRB}, there are saddle-SRB measures $\mu^\pm_g=\mu_{\Lambda^\pm_g}$. These measures depend continuously on $g$ and we can refer to $\mu^\pm_g$ as the continuations of $\mu^\pm$ (see, for example, the arguments in \cite[5.4 Proposition]{BowRue:75} stated in the case of flows).
\end{remark}

%%%%%%%%%%%%%%%%%%%%%%%%%%%%%%%%%%%%%%%%%%%%%%%%
\subsection{Proof of Theorem~\ref{t.SRBrobust}: Robust cycles between saddle-SRB measures}\label{secCycles}
%%%%%%%%%%%%%%%%%%%%%%%%%%%%%%%%%%%%%%%%%%%%%%%%

The following result is the main step towards the proof of Theorem \ref{t.SRBrobust}. 

\begin{theorem}\label{tp.SRBrobust}
Every compact three-dimensional manifold supports a $C^2$ diffeomorphism $f$ having a Plykin saddle-attractor $\La^+$ and a Plykin saddle-repeller $\La^-$, which are both two-dominated, and a partially hyperbolic set $\Lambda\supset \La^+ \cup \La^-$ such that for $r>0$ small and $L\in\bN$ sufficiently large
\begin{itemize}[leftmargin=0.8cm ]
\item[(a)]  $\Lambda\cap \left(W^\s_{r,L}(\La^+)\pitchfork W^\u_{r,L}(\La^-)\right)$ contains a curve 
$\sigma$ that is transverse in $W^\s_{r,L}(\La^+)$ to the foliation $\cW^\ss_{\La^+}$ and transverse in $W^\u_{r,L}(\La^-)$ to the foliation $\cW^\uu_{\La^-}$, respectively.
\item[(b)] For  every pair of points $x\in \Lambda^+$ and  $y\in \Lambda^-$ both transverse intersections
\[
W^\uu_{r,L}(x) \pitchfork W^\s_{r,L}(y) \quad \mbox{and} \quad
W^\u_{r,L}(x) \pitchfork W^\ss_{r,L}(y)
\]
contain points in the set $\Lambda$.
\end{itemize}
\end{theorem}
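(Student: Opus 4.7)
Build $f$ inside a 3-ball $B \subset M$ and extend it by the identity to $M$. In local coordinates $(x,y,z)$ on $B$, design $f$ so that a large sub-cube admits a continuous two-dominated partially hyperbolic splitting $E^\ss \oplus E^\c \oplus E^\uu$, with $E^\ss$ close to $\partial_x$ (uniformly contracted), $E^\c$ close to $\partial_y$ (central, expanding on one slice and contracting on another), and $E^\uu$ close to $\partial_z$ (uniformly expanded). The rates are chosen so that $\lVert Df|_{E^\c}\rVert^2 < m(Df|_{E^\uu})$ and $\lVert Df|_{E^\ss}\rVert < m(Df|_{E^\c})^2$ throughout the block; both are $C^1$-open conditions.

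\textbf{Two Plykin slices.} Fix a horizontal 2-disk $D^+ \subset \{z = z_0^+\}$ on which $f$ acts as a Plykin attractor diffeomorphism, with basic set $\Lambda^+$, Plykin stable direction $\partial_x$, and Plykin unstable direction $\partial_y$; normal to $D^+$, $f$ expands uniformly in $\partial_z$. Symmetrically, on a vertical 2-disk $D^- \subset \{x = x_0^-\}$, $f$ acts as the inverse of a Plykin attractor diffeomorphism, giving a Plykin repeller $\Lambda^-$ with Plykin stable direction $\partial_y$ and unstable direction $\partial_z$; in the normal direction $\partial_x$, $f$ contracts uniformly. Interpolate $f$ smoothly in the transition region between $D^+$ and $D^-$ so that the partially hyperbolic splitting and two-domination persist globally, and so that the central bundle $E^\c$ varies continuously from expanding (near $\Lambda^+$) to contracting (near $\Lambda^-$). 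This yields two-dominated Plykin saddle basic sets $\Lambda^\pm$ with opposite-sign center exponents.

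\textbf{Verifying (a).} By normal hyperbolicity, the surfaces $S^+ \eqdef W^\s_{r,L}(\Lambda^+)$ and $S^- \eqdef W^\u_{r,L}(\Lambda^-)$ are $C^1$-close to $D^+$ and $D^-$ respectively, so $TS^+ \approx \mathrm{span}(\partial_x, \partial_y)$ and $TS^- \approx \mathrm{span}(\partial_y, \partial_z)$. Position $D^\pm$ so that $S^+$ and $S^-$ intersect transversely as surfaces in $B$; the intersection curve $\sigma \subset S^+ \cap S^-$ is then approximately tangent to $\partial_y = E^\c$, and so is transverse inside $S^+$ to the Plykin stable foliation $\cW^\ss_{\Lambda^+}$ (tangent to $\partial_x$) and inside $S^-$ to the Plykin unstable foliation $\cW^\uu_{\Lambda^-}$ (tangent to $\partial_z$). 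Take $\Lambda$ to be the maximal invariant set of a suitable open neighborhood of $\Lambda^+ \cup \Lambda^- \cup \sigma$ together with the orbits through the intersections below; the partially hyperbolic splitting on $\Lambda^+ \cup \Lambda^-$ extends by continuity to $\Lambda$ because the ambient splitting on the block was designed to be continuous.

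\textbf{Verifying (b), and the main obstacle.} Given $x \in \Lambda^+$ and $y \in \Lambda^-$, the arc $W^\uu_{r,L}(x)$ is nearly parallel to $\partial_z$ and, for $L$ large, has fixed length covering the $z$-extent of the block; the disk $W^\s_{r,L}(y)$ is tangent to $\mathrm{span}(\partial_x, \partial_y)$ and, again for $L$ large, spans a neighborhood of $y$ of fixed size in that directions. Hence the $\partial_z$-arc crosses the $(x,y)$-surface transversely in a single point of $B$, which lies in $\Lambda$. The dual intersection $W^\u_{r,L}(x)\pitchfork W^\ss_{r,L}(y)$ is obtained symmetrically. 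The delicate point is making this hold for \emph{every} pair $(x,y) \in \Lambda^+ \times \Lambda^-$: this uses the topological transitivity of the Plykin basic sets, so that the families $\{W^\s_{r,L}(y)\}_{y \in \Lambda^-}$ and $\{W^\u_{r,L}(x)\}_{x\in \Lambda^+}$ densely sweep out the relevant sub-regions of $B$, combined with the uniform normal expansion/contraction that forces the strong manifolds of $\Lambda^\pm$ to cross the respective weak surfaces. The principal technical hurdle throughout is the coherent interpolation between the two Plykin regimes while preserving the two-dominated partially hyperbolic splitting needed for both the saddle structure of $\Lambda^\pm$ and the $C^2$-regularity of $S^\pm$ (which is used in the subsequent definition of the saddle-SRB measures). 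Since the entire construction is supported inside an arbitrary 3-ball, it transfers to any compact 3-manifold.
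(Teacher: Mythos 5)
There is a genuine gap, and it sits exactly where the paper locates the ``major difficulty'' of this theorem: item (b) together with the requirement that the intersection points lie in a \emph{partially hyperbolic} set $\Lambda$. Your picture treats $W^\u_{r,L}(x)$ (for $x\in\La^+$) and $W^\s_{r,L}(y)$ (for $y\in\La^-$) as flat coordinate planes that cross each other and the dual strong manifolds in a single clean point. But to make the intersections nonempty for \emph{every} pair $(x,y)$ you must take $L$ large, and then these local manifolds spread along the global invariant manifolds, which fold: the weak unstable set of $\La^+$ is saturated by the Plykin unstable lamination (whose tangent directions vary densely) and its forward iterates wrap around before reaching a neighborhood of $\La^-$. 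The transversality $W^\u_{r,L}(x)\pitchfork W^\ss_{r,L}(y)$ of a $2$-plane and a line in $\bR^3$ is not the issue; the issue is that at an intersection point $z$ the candidate splitting $E_z\oplus F_z\oplus G_z$ (strong stable, center $=T_z(W^\u(x)\cap W^\s(y))$, strong unstable) can degenerate because the curve $W^\u(x)\cap W^\s(y)$ may be \emph{tangent to a strong-stable leaf} inside $W^\s(y)$. If that happens, no invariant extension of the partially hyperbolic splitting to the orbit of $z$ exists, and $z$ cannot be put into $\Lambda$. Invoking ``topological transitivity'' and ``dense sweeping'' gives you existence of intersections, not the existence of intersections at which the three bundles remain in general position; these are different statements, and the second one is the content of Lemmas~\ref{lcl.ph} and~\ref{lcl.PH}.

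The paper's construction contains a specific mechanism to resolve this which your proposal has no analogue of: the skeleton map carries an auxiliary saddle $p_1$ whose stable eigenvalues are \emph{non-real} (condition (vi) in Section~\ref{secPlyb}). Backward iterates of the strong-stable sub-laminations of $W^\s(f^{-i}(y_0))$ accumulate on $W^\s(p_1)$ with at least two distinct limit laminations (Claim~\ref{cf.limitl}), so one of them is transverse to the trace curve $\alpha=W^\u(x)\cap W^\s(p_1)$; this lets one \emph{select} points $z(x,y)$, $w(x,y)$ where $T_zM=E_z\oplus F_z\oplus G_z$, and a compactness argument over $\La^+\times\La^-$ then produces the invariant compact set $\Lambda_{\rm t}$ on which the splitting extends (Lemmas~\ref{lcl.PH} and~\ref{lcl.K0bis}). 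Without such a device, your interpolated model could well produce pairs $(x,y)$ all of whose heteroclinic points of type (b) are tangencies between the center and strong-stable directions, and the set $\Lambda$ you propose would fail to be partially hyperbolic. A secondary, lesser gap is that the ``smooth interpolation preserving two-domination'' between the two Plykin regimes is asserted rather than constructed; the paper does this via explicit plugs $\Phi_T$ and a cone-field estimate (Lemma~\ref{lemconefield}), but I would accept that part as fillable. The treatment of item (a) and of the quasi-transverse orbits is essentially in line with the paper's $\Lambda_{\rm qt}$.
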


Let us quickly comment on this theorem and our constructions. Theorem~\ref{tp.SRBrobust} exclusively deals with Plykin saddle-attractors and -repellers, not with the saddle-SRB measures themselves (as they are uniquely determined by the dynamics). The key fact is that the topological intersection properties in item (b) 
are translated into intersections of invariant sets of the generic points of the measures.

The proof of Theorem~\ref{tp.SRBrobust} has two steps. First, we construct the Plykin saddle-sets and arrange them in a cyclic configuration (see Section~\ref{ss.Plykinsaddle}). The second, more intricate part, is to ensure that we can select intersection points within a partially hyperbolic set (i.e., the construction of the set $\Lambda$), detailed in Section~\ref{ss.proofofpSRBrobust}. The intricate and folded structure of the Plykin saddle-sets requires careful selection of points in $\Lambda$ to achieve partial hyperbolicity.

We now give the proof Theorem~\ref{t.SRBrobust} using Theorem~\ref{tp.SRBrobust}.

\begin{figure}[h] 
 \begin{overpic}[scale=.6]{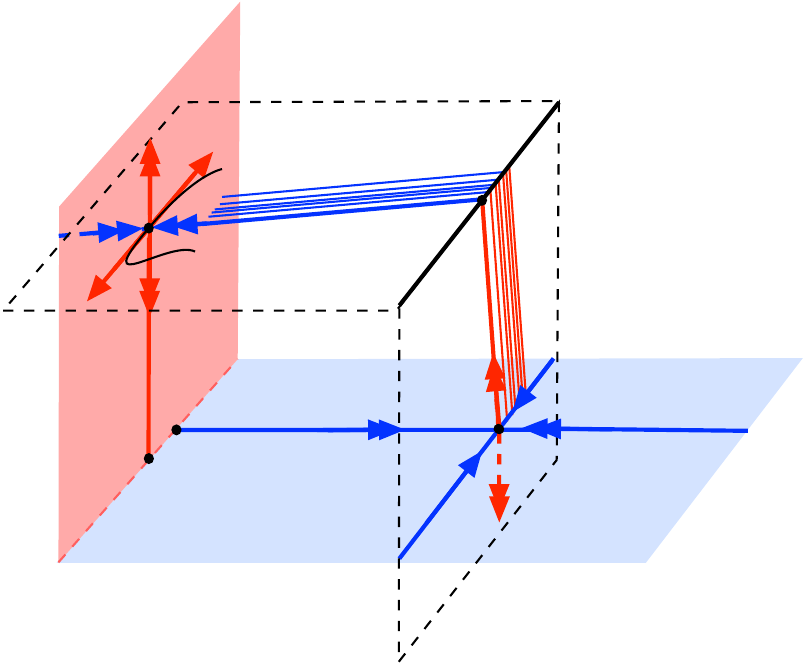}
%	\put(55,77){$x^-$}
%	\put(16,28){$x^+$}
	\put(53,53){$\sigma$}
%	\put(-23,2){\textcolor{red}{$W^\u_R(x^+)$}}
%	\put(90,90){\textcolor{blue}{$W^\s_R(x^-)$}}
%	\put(28,96){\textcolor{red}{$W^\u_R(x^-)$}}
\end{overpic}
\caption{Heterodimensional cycle between saddle-SRB measures}
\end{figure}

\begin{proof}[Proof of Theorem~\ref{t.SRBrobust}] 
Let us first see that the measures $\mu^+$ and $\mu^-$ verify conditions (1)--(4) in Definition~\ref{defrichcycmeas} of a rich cycle. Since we are dealing with saddle-attractor and saddle-repeller basic sets $\La^\pm$, we have that $\supp (\mu^\pm)$ are uniformly hyperbolic sets. Moreover, $\chi^\c (\mu^-) < 0< \chi^\c (\mu^+)$ holds true. Hence, condition (1) follows. The partially hyperbolic set $\Lambda$ satisfies conditions (2). Hypothesis (b) is just condition (3). By Remark \ref{remSRB} together with hypothesis (a), the curve $\sigma$ contains a full Lebesgue measure set of points which are in the strong stable manifold of some $\mu^+$-generic point and simultaneously a full Lebesgue measure set of points which are in the strong unstable manifold of some $\mu^-$-generic point. This implies condition (4). 

We now state the robustness of the cycle. By Remark~\ref{r.contsrb}, every $C^1$ diffeomorphism $g$ sufficiently close to $f$ gives rise to a $C^1$ diffeomorphism with a saddle-attractor basic set that is two-dominated. Therefore, for every $g$ sufficiently $C^2$-close to $f$,  the continuations $\Lambda^\pm_g$ of $\Lambda^\pm$ are two-dominated saddle-attractors/repellers and hence the saddle-SRB measures $\mu^+_g$ are well defined.

We consider a small neighborhood of $\Lambda$ and consider, for $g$ nearby $f$, the maximal invariant set $\Lambda'_g$ of $g$ in that neighborhood. Note that it is also partially hyperbolic. By transversality and continuous dependence of the invariant manifolds, there is a continuation $\sigma_g$ of $\sigma$ satisfying items (a) and (b) in Theorem~\ref{tp.SRBrobust} relative to $g$ and $\Lambda'_g$. If follows from \eqref{eqSRB} that Lebesgue almost every point in $\sigma_g$ belongs simultaneously to the stable manifold of a $\mu^+_g$-generic point $x^+\in\La^+_g$ and to the unstable manifold of a $\mu^-_g$-generic point  $x^-\in\La^-_g$. 
This ends the proof of the theorem.
\end{proof}

%-----------------------------------------------------------------------------------------------------------------
\subsection{Cyclically related Plykin saddle-attractors and -repellers}\label{ss.Plykinsaddle}
%-----------------------------------------------------------------------------------------------------------------

In Section~\ref{sss.Plya} we recall the construction of Plykin attractors of surface diffeomorphisms. In Section~\ref{secPlyb} we introduce a ``skeleton map'' that will be modified by introducing two plugs, one with a Plykin saddle-attractor and another one with a Plykin saddle-repeller, see Section~\ref{secPlyc}. These plugs are inserted in Section~\ref{sss.Plyd}. 

%-----------------------------------------------------------------------------------------------------------------
\subsubsection{Plykin attractors}\label{sss.Plya}
%-----------------------------------------------------------------------------------------------------------------

Recall that a \emph{Plykin attractor} $\Theta$ is a hyperbolic attractor contained in a two-dimensional disk such that the unstable manifold of any point in $\Theta$ is nontrivial and contained in $\Theta$. Moreover, $\Theta$ attracts every point in the disk, except three points. See \cite{Rob:99} for details. Note that this attractor is a one-di\-mensional lamination (formed by unstable manifolds) whose leaves are tangent to the unstable bundle. Moreover, each leaf of is dense in $\Theta$. Let us now provide some further properties of this attractor. 

In what follows, denote by $\Delta_\delta$ a two-dimensional disk of radius $\delta$. Fix a local $C^2$ diffeomorphism $\phi_0$ defined on $\Delta_{2\delta}$ having a Plykin attractor $\Theta\subset \Delta_{\delta}$ with hyperbolic splitting $F^\s\oplus F^\u$. The map $\phi_0$ has three repelling fixed points $r_0,s_0,r_2\in \Delta_\delta$ such that $W^\s(\Theta)\supset \Delta_{2\delta}\setminus\{r_0,s_0,r_2\}$. Without loss of generality, we can assume that $r_0=(0,0)\in\bR^2$. Let 
\begin{equation}\label{eqdeflambda}
	\lambda
	\eqdef 2 \max_{x\in \Theta}\,\lVert D\phi_0|_{F^\u(x)}\rVert >1.
\end{equation}
 Also assume that $\phi_0(\Delta_{2\delta})\subset\Delta_\delta$. 

As said already, $\Theta$ is a one-di\-mensional lamination whose leaves are each dense in $\Theta$ and tangent to $F^\u$. By Remark~\ref{r.stablefoliation}, there is defined the stable foliation. It will give rise to the strong stable foliation $\cW^\ss$ in Section \ref{secsaddleSRB} in the ``product model'' we will introduce below (see Section \ref{secPlyc}).

We extend this local diffeomorphism $\phi_0$ to the whole $\bR^2$. We continue to denote this extension by $\phi_0$. To anticipate the next steps, we also assume that $\phi_0$ coincides with the homothety of ratio $1/2$ outside $\De_{2\delta}$.

%-----------------------------------------------------------------------------------------------------------------
\subsubsection{The ``skeleton map''}\label{secPlyb}
%-----------------------------------------------------------------------------------------------------------------

We consider a diffeomorphism $f_0$ defined on $\bR^3$ that satisfies the conditions below (compare Figure \ref{figbefore}). Subsequently, surgeries will be performed on this map. The conditions are as follows:
\begin{itemize}[leftmargin=0.8cm ]
\item[(i)] $f_0$ has hyperbolic saddle fixed points $p_0,p_1, q_0,q_1$  so that 
\[
	\dim W^\u(p_0)=1=\dim W^\u(p_1), \quad 
	\dim W^\s(q_0)=1=\dim W^\s(q_1),
\]	
\item[(ii)] $W^\u(p_0)\pitchfork W^\s(p_1)\neq\emptyset$ and $W^\s(q_0)\pitchfork W^\u(q_1)\neq\emptyset$,
\item[(iii)] $W^\u(p_0)$ intersects $W^\s(q_1)$ quasi-transversely
 at a point $r_0$ and $W^\s(q_0)$ intersects $W^\u(p_1)$ quasi-transversely at a point $s_0$,%
\footnote{That is, $T_{r_0}W^\u(p_0)\oplus T_{r_0}W^\s(q_1)$ and $T_{s_0}W^\u(p_1)\oplus T_{s_0}W^\s(q_0)$ both are two-dimensional.}
\item[(iv)] the unstable eigenvalue $\lambda^\u_{p_0}$ of $Df_0(p_0)$ is larger than $\lambda^2$, the stable eigenvalue of $Df_0(q_0)$ is $\lambda^\s_{q_0}= (\lambda^\u_{p_0})^{-1}$, where $\lambda$ is as in \eqref{eqdeflambda},
\item[(v)] there is a neighborhood of the saddle $p_0$ where $f_0$ is smoothly conjugate to a linear saddle map having a representation by a diagonal matrix with two eigenvalues equal to $\frac{1}{2}$. Analogously, for $q_0$ with a diagonal matrix with two eigenvalues equal to $2$,
 \item[(vi)] The stable (resp. unstable)  eigenvalues of $p_1$ (resp. $q_1$)  are non-real numbers.
\item[(vii)] $W^\s(p_0)\pitchfork W^\u(q_0)$ contains a curve $\sigma$.
\end{itemize}

\begin{figure}[h] 
 \begin{overpic}[scale=.4]{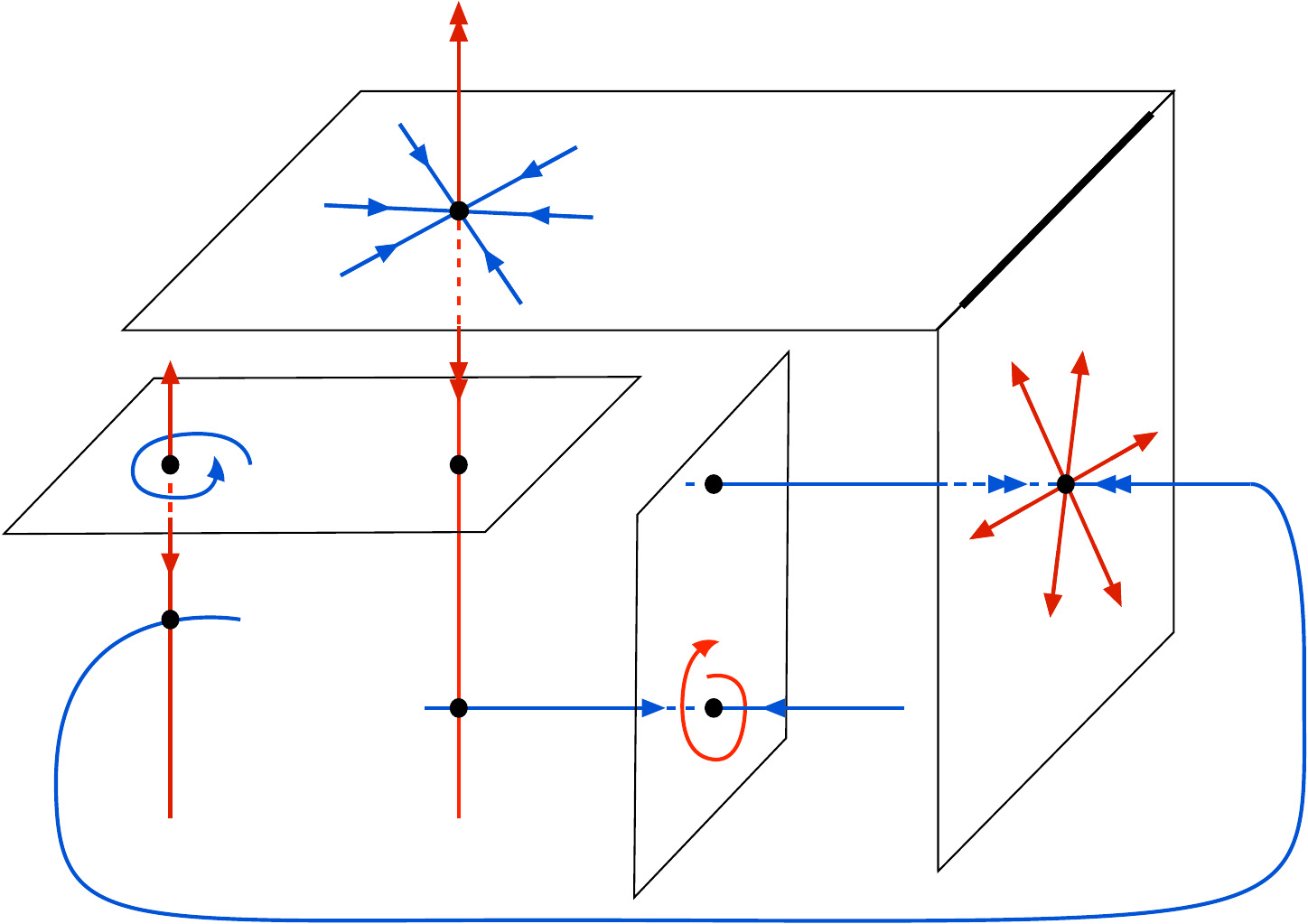}
 	\put(9,20){\small{${s_0}$}}
 	\put(31,14){\small{${r_0}$}}
 	\put(32,52){\small{$p_0$}}
 	\put(10,33){\small{$p_1$}}
 	\put(54,14){\small{$q_1$}}
 	\put(82,31.5){\small{$q_0$}}
 	\put(81,52){\small{$\sigma$}}
	\put(40,50.5){\small{$\Delta$}}
 \end{overpic}
  \caption{Conditions (i)--(vii)}
  \label{figbefore}
\end{figure}

%-----------------------------------------------------------------------------------------------------------------
\subsubsection{The plugs}\label{secPlyc}
%-----------------------------------------------------------------------------------------------------------------

We now construct two local plugs, one around the one-dimensional local unstable manifold of the point $p_0$. The construction of the one around the one-dimensional local stable manifold of $q_0$ is analogous (see Remark \ref{remotherplug}).

We first introduce some notation. Consider the linearization of $f_0$ restricted to $W^\s(p_0)$,
\[
	D^\s f_0(p_0)\eqdef Df_0(p_0)|_{E^\s_{p_0}}, \quad
	D^\s f_0(p_0)\colon E^\s_{p_0}\simeq \RR^2\to  E^\s_{p_0}\simeq \RR^2.
\]	 
By assumption (v), $D^\s f_0(p_0)$ is a homothety of ratio $1/2$. We consider a smooth conjugacy of $f_0|_{W^\s(p_0)}$ with $D^\s f_0(p_0)$. Let $\sigma$ be the curve in assumption (vii) and denote by $\sigma_{p_0}$ its image under this conjugacy. Assume $\delta>0$ is sufficiently small such that
\[
	\sigma_{p_0}\cap \De_{\delta} =\emptyset.
\]

Recall the choice of $\phi_0$ in Section \ref{sss.Plya}. Consider a smooth isotopy 
$\{\phi_t\colon\bR^2\to\bR^2\colon t\in[0,1]\}$ from $\phi_0$ to the linear map $\phi_1=D^\s f_0(p_0)$. 
We assume that $\phi_t=D^\s f_0(p_0)$ outside $\Delta_{2\delta}$, that $\phi_t=\phi_0$ for every
$t>0$ small, and that $\phi_t =D^\s f(p_0)$ for every $t$ close to $1$.

Recall $\lambda^\u_{p_0}$ in assumption (iv). For $T>0$, consider the diffeomorphisms 
\begin{equation}\label{PhiT}
	\Phi_T\colon \RR^3\to \RR^3,\quad
	\Phi_T(x,y,z)
	\eqdef \begin{cases}
		(\phi_{z/T}(x,y), \lambda^\u_{p_0} z)&\text{ if }|z|\leq T,\\
		(\frac12 (x,y), \lambda^\u_{p_0} z)&\text{ if }|z|\geq T,
	\end{cases}
\end{equation}
compare Figure \ref{Fig.localplug}.

Given $\varepsilon >0$ small, consider the cone field $\cC^\uu_\varepsilon \eqdef \{C^\uu_\varepsilon(p)\}_{p\in\bR^3}$ defined on $\bR^3$ by
\[
	C^{\uu}_\varepsilon(p)
	\eqdef \big\{(u,v)\in T_p \RR^3=\RR^2\times \RR\colon \|u\|\leq \varepsilon |v| \big\}.
\]
We say that the map $\Phi_T$ \emph{preserves the cone field $\cC^\uu_\varepsilon$} if 
$$
D\Phi_T(p)(C^{\uu}_\varepsilon(p))\subset C^{\uu}_\varepsilon(\Phi_T(p))
\quad \mbox{for every $p\in\bR^3$.}
$$

The following lemma is one important step in the proof of Theorem \ref{tp.SRBrobust}. It guarantees that the surgery described in Section \ref{sss.Plyd} does not destroy the transverse normally hyperbolic structure at the saddle fixed points $p_0$ and $q_0$ that are being replaced by the Plykin saddle-attractors and -repellers, respectively.

\begin{lemma}\label{lemconefield}
	For every $\varepsilon>0$ sufficiently small, there exists $T_0\in\bN$ such that for every $T\ge T_0$ the map $\Phi_T$ preserves the cone field $\cC^\uu_\varepsilon$. 
\end{lemma}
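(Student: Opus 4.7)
\textbf{Proof proposal for Lemma~\ref{lemconefield}.}

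The plan is to verify the cone inclusion $D\Phi_T(p)(C^\uu_\varepsilon(p))\subset C^\uu_\varepsilon(\Phi_T(p))$ by a direct pointwise computation, treating the two defining regions of $\Phi_T$ separately. Write a tangent vector at $p=(x,y,z)$ as $(u,v)\in\RR^2\times\RR$, so that the cone condition reads $\|u\|\le \varepsilon|v|$.

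In the outer region $|z|\ge T$, the map $\Phi_T$ is the product of the homothety of ratio $1/2$ on the $xy$-plane and the dilation by $\lambda^\u_{p_0}>1$ on the $z$-axis. Then $D\Phi_T(p)(u,v)=(\tfrac12 u,\lambda^\u_{p_0}v)$, so $\|\tfrac12 u\|\le \tfrac{\varepsilon}{2}|v|\le \varepsilon\lambda^\u_{p_0}|v|$, and the cone inclusion is immediate.

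In the inner region $|z|\le T$, I compute directly from \eqref{PhiT}:
\[
	D\Phi_T(p)(u,v)
	=\Bigl(D_{xy}\phi_{z/T}(x,y)\cdot u+\tfrac{1}{T}\,\partial_t\phi_t(x,y)\big|_{t=z/T}\cdot v,\;\lambda^\u_{p_0} v\Bigr).
\]
Let $K_1\eqdef\sup_{t\in[0,1],(x,y)\in\RR^2}\lVert D_{xy}\phi_t(x,y)\rVert$ and $K_2\eqdef\sup_{t\in[0,1],(x,y)\in\RR^2}\lVert\partial_t\phi_t(x,y)\rVert$; both are finite because the isotopy $\{\phi_t\}$ is smooth, equals the linear homothety of ratio $1/2$ outside $\Delta_{2\delta}$, and is constant in $t$ near the endpoints. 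Using $\lVert u\rVert\le\varepsilon|v|$ I obtain
\[
	\lVert D_{xy}\phi_{z/T}\cdot u+\tfrac{1}{T}\partial_t\phi_{z/T}\cdot v\rVert
	\le\bigl(K_1\varepsilon+\tfrac{K_2}{T}\bigr)|v|,
\]
so the cone inclusion reduces to $K_1\varepsilon+K_2/T\le \varepsilon\lambda^\u_{p_0}$, i.e.\ to
\[
	T\ge \frac{K_2}{\varepsilon(\lambda^\u_{p_0}-K_1)}.
\]
Such a $T_0$ exists (depending on $\varepsilon$) provided $\lambda^\u_{p_0}>K_1$.

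The single substantive point is thus the comparison $\lambda^\u_{p_0}>K_1$, and this is where condition (iv) of the skeleton map comes in. By assumption, $\lambda^\u_{p_0}>\lambda^2=(2\max_{\Theta}\lVert D\phi_0|_{F^\u}\rVert)^2$. Since $\phi_0$ carries a hyperbolic splitting $F^\s\oplus F^\u$ on $\Theta$ with $\lVert D\phi_0|_{F^\s}\rVert<1$ and uniform angle between $F^\s$ and $F^\u$, and since outside $\Delta_{2\delta}$ the map $\phi_0$ is the homothety of ratio $1/2$, the operator norm $\lVert D\phi_0\rVert$ is bounded by a universal multiple of $\max_\Theta\lVert D\phi_0|_{F^\u}\rVert=\lambda/2$. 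Interpolating smoothly along $t\in[0,1]$ toward the homothety $\phi_1$ of ratio $1/2$ (which has norm $1/2$) only decreases this, and by choosing the isotopy carefully we may ensure $K_1\le C\lambda/2$ for a fixed $C$ independent of $\lambda$. Enlarging $\lambda^\u_{p_0}$ in (iv) (which is compatible with the free choice we have in the skeleton map) we secure $\lambda^\u_{p_0}>\lambda^2> K_1$.

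With $\lambda^\u_{p_0}>K_1$ in hand, fix any such $\varepsilon>0$, set $T_0\eqdef K_2/(\varepsilon(\lambda^\u_{p_0}-K_1))$, and the pointwise computation above yields $D\Phi_T(p)(C^\uu_\varepsilon(p))\subset C^\uu_\varepsilon(\Phi_T(p))$ for every $p\in\RR^3$ and every $T\ge T_0$. The main technical nuisance is the bookkeeping of constants linking $K_1$ to $\lambda^\u_{p_0}$; the remainder is a direct computation.
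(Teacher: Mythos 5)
Your proposal is correct and follows essentially the same route as the paper: a direct two-region computation of $D\Phi_T$, the cone condition $\|u\|\le\varepsilon|v|$, and the domination $\lambda^\u_{p_0}>\lambda^2$ to absorb the error terms for $T$ large. Two remarks on where you diverge. First, your chain rule is the literally correct one: the $\partial_t\phi_t$-term is multiplied by $v$ (the $z$-component), whereas the paper's displayed derivative carries a $u$ there; the only consequence is that your $T_0$ depends on $\varepsilon$, which is consistent with the lemma's phrasing, while the paper's does not. Second, you correctly isolate the need for a bound $K_1=\sup_t\|D_{xy}\phi_t\|<\lambda^\u_{p_0}$; the paper silently uses $\|D\phi_{z/T}(u)\|\le\lambda\|u\|$, i.e.\ that $\lambda$ bounds the full operator norm of every $D\phi_t$, which does not literally follow from \eqref{eqdeflambda} (that only controls $D\phi_0$ restricted to $F^\u$ over $\Theta$). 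Your patch — that $\|D\phi_0\|$ is a universal multiple of $\max_\Theta\|D\phi_0|_{F^\u}\|$ and that the isotopy can be chosen so $K_1\le C\lambda/2$ — is not really justified as stated (the splitting exists only on $\Theta$, and $\lambda^2>K_1$ needs $C<2\lambda$), but since $\phi_0$, the isotopy, and $\lambda^\u_{p_0}$ are all at the constructor's disposal, it is legitimate to impose $\sup_t\|D\phi_t\|\le\lambda$ (or to enlarge $\lambda^\u_{p_0}$) as part of the construction; stating that as an explicit standing assumption would be cleaner than deriving it.
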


\begin{proof}
Let
\begin{equation}\label{defC}
	C
	\eqdef\max_{t\in[0,1]} \Big\lVert\frac{\partial}{\partial t} \phi_{t}\Big\rVert.%(x,y,z).
\end{equation}
Let $p=(x,y,z)\in\bR^3$ and $(u,v)\in T_p\bR^3=\bR^2\times\bR$. Consider first the case $|z|\le T$. Check that
\[
	D_p\Phi_T(u,v)
	= \Big(D\phi_{z/T}(u)+\frac{1}{T}\frac{\partial}{\partial t} \phi_{t}|_{t=z/T}(x,y,z)\cdot u, 
		\lambda^\u_{p_0} v\Big)
	\eqdef (u',v').
\]
Using \eqref{eqdeflambda} and $\lVert u\rVert\le\varepsilon|v|$, we have that
\[
	\lVert u'\rVert
	= \big\lVert D\phi_{z/T}(u)+\frac{1}{T}\frac{\partial}{\partial t} \phi_{t}|_{t=z/T}(x,y,z)u\big\rVert
	\le \lVert \lambda u \rVert+ \frac 1T C\lVert u\rVert
	\leq \lambda\varepsilon |v| +\frac 1T C\varepsilon |v|.
\]
From assumption (iv), $\lambda^\u_{p_0}>\lambda^2$, whenever $T$ was large enough, one gets 
\[
	\lVert u'\rVert
	\le \left(\lambda+\frac1TC\right)\varepsilon|v|
	= \left(\lambda+\frac1TC\right)\varepsilon\frac{1}{\lambda^\u_{p_0}}|v'|
	< \left(\frac1\lambda+\frac{1}{\lambda^2T}C\right)\varepsilon|v'|
	< \varepsilon|v'|,
\]
that is, $(u',v')\in C^{\uu}_{\varepsilon}(\Phi_T(p))$. 

In the case $|z|\ge T$, we have $(u',v')=(\frac12u,\lambda^\u_{p_0}v)$ and hence 
\[
	\lVert u'\rVert
	= \frac12\lVert u\rVert
	\le \frac12\varepsilon|v|
	= \frac12(\lambda^\u_{p_0})^{-1}\varepsilon|v'|
	< \varepsilon|v'|,
\]	
getting $(u',v')\in C^{\uu}_{\varepsilon}(\Phi_T(p))$. This proves the lemma.
\end{proof}

In what follows, we fix large $T>0$ sufficiently large and consider the solid cylinder 
\[
	\Gamma_{p_0}\eqdef\De_{2\delta} \times [-\lambda^\u_{p_0} T, \lambda^\u_{p_0} T].
\]	
The plug around the local unstable manifold of the point $p_0$ that we consider below is the restriction of $\Phi_T$ to $\Gamma_{p_0}$ and denoted by $(\Gamma_{p_0}, \Phi_T)$ (see Figure \ref{Fig.localplug}).  Note that, in a neighborhood of the boundary of $\Gamma_{p_0}$, the map $\Phi_T$ coincides with the linear map 
$$
	Df(p_0)= \left(\begin{array}{ccc}
                  \frac12&0&0\\
                  0&\frac12&0\\
                  0&0&\lambda^\u_{p_0}
                 \end{array}\right)= 
                 \left(\begin{array}{cc}
                   D^\s f(p_0)&0\\
                0&\lambda^\u_{p_0}
                 \end{array}\right).
                 $$

\begin{figure}[h] 
 \begin{overpic}[scale=.4]{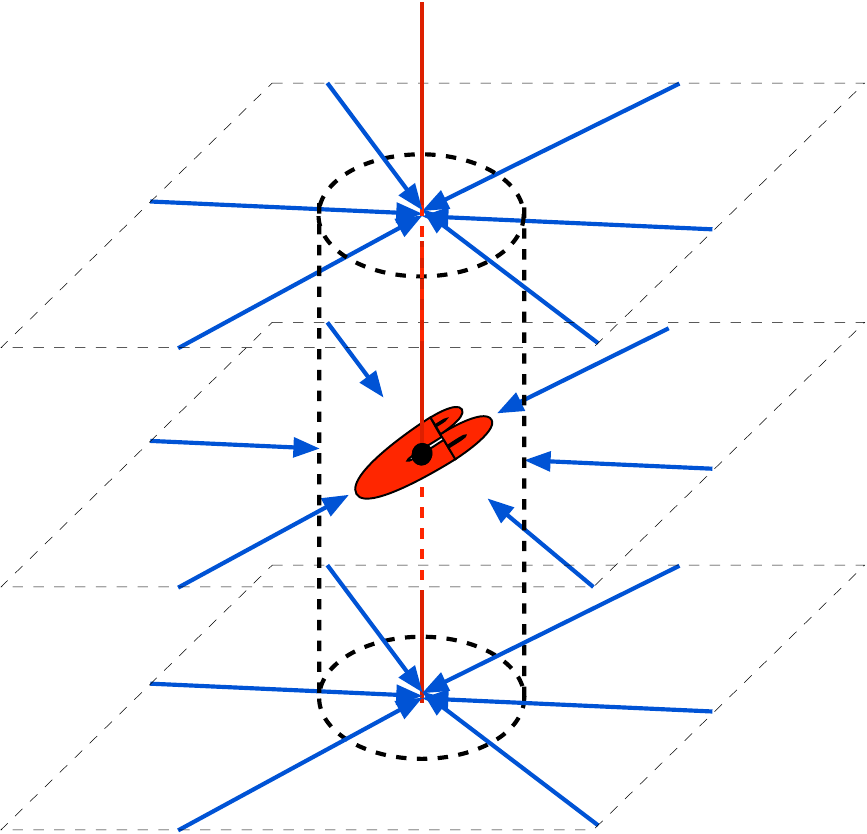}
 	\put(97,23){\small{$\phi_1(x,y)$ at $z=T$}}
 	\put(97,53){\small{$\phi_0(x,y)$ at $z=0$}}
 	\put(97,81){\small{$\phi_1(x,y)$ at $z=-T$}}
 \end{overpic}
  \caption{The local plug $(\Gamma_{p_0},\Phi_T)$}
  \label{Fig.localplug}
\end{figure}

\begin{remark}[The  plug $(\Gamma_{q_0}, \Phi_{T})$]\label{remotherplug}
To define the second plug (around the one-dimensional local stable manifold of $q_0$), consider a smooth conjugacy of the restriction of $f_0$ to $W^\u(q_0)$ with its linear part $Df_0(q_0)$ (the latter is the homothety of ratio $2$). We denote by $\sigma_{q_0}$ the image of the segment $\sigma$ by this conjugacy and assume that $\delta>0$ is sufficiently small such that $\sigma_{q_0}\cap \De_{\delta} =\emptyset$. The  plug $(\Gamma_{q_0}, \Phi_{T})$ is obtained by considering the linearization of $f_0^{-1}$ restricted to $W^\u(q_0)$. For the final step, we concatenate a switch of coordinates $(y,z,x)\mapsto(x,y,z)$ with the plug $(\Phi_T(\Gamma_{p_0}), \Phi_T^{-1})$. 
\end{remark}

%-----------------------------------------------------------------------------------------------------------------
\subsubsection{The surgery:  inserting the plugs}\label{sss.Plyd}
%-----------------------------------------------------------------------------------------------------------------

We now define the map $f$. It is obtained from $f_0$, performing two surgeries to insert the local plugs  in Section \ref{secPlyc}. This construction is local and done using the linearizing coordinates of $f_0$ around $p_0$ and $q_0$, respectively.

We denote by $H_\varepsilon$ the homotethy of ratio $\varepsilon$ and consider the solid cylinders 
\[
	\Ga_{\varepsilon,p_0}\eqdef H_\varepsilon(\Gamma_{p_0})
	\quad\text{ and }\quad
	\Ga_{\varepsilon,q_0}\eqdef H_\varepsilon(\Gamma_{q_0}).
\]	 
Note that, for $\varepsilon $ small enough, the sets $f_0^{-1} (\Ga_{\varepsilon,p_0}) \cup \Ga_{\varepsilon,p_0} \cup  f_0(\Ga_{\varepsilon,p_0})$ and 
$f_0^{-1} (\Ga_{\varepsilon,q_0}) \cup \Ga_{\varepsilon,q_0} \cup  f_0(\Ga_{\varepsilon,q_0})$ are disjoint. We now perform the following surgery: 
\begin{itemize}[leftmargin=0.4cm ]
\item replace $f_0$ in $\Ga_{\varepsilon,p_0}$  by the conjugate of $\Phi_T$  by $H_\varepsilon$,
\item replace $f_0$ in $\Ga_{\varepsilon,q_0}$ by the conjugate of $\Phi_T^{-1}$ by $H_\varepsilon$,
\item outside these sets the map $f_0$ is not modified. 
\end{itemize}
This surgery provides a $C^2$ diffeomorphism $f$. To see why this is so, note that the conjugate of $\Phi_T$ coincides with $f_0$ in the neighborhood of the boundary of $\Ga_{\varepsilon,p_0}$ and the conjugate of $\Phi_T^{-1}$ coincides with $f_0$ in the neighborhood of the boundary of $\Ga_{\varepsilon,q_0}$.

\begin{remark}[The map $f$ has saddle-SRB measures]\label{r.valetudo}
The above construction implies that $f$ has a Plykin saddle-attractor $\La^+$ that is two-dominated and whose local stable manifold coincides with the local stable manifold of $p_0$ for $f_0$ (except at three points in $\Ga_{\varepsilon,p_0}$). Analogously, $f$ also has a Plykin saddle-repeller $\La^-$ that is two-dominated and whose local unstable manifold coincides with the local unstable manifold of $q_0$ for $f_0$ (except at three points in $\Ga_{\varepsilon,q_0}$). 

The set $\La^+$ has a partially hyperbolic splitting $T_{\La^+} M=E^\ss\oplus E^\cu\oplus E^\uu$,  where $E^\ss$ is its stable bundle and $E^\cu\oplus E^\uu$ is its unstable bundle. Similarly, 
the set $\La^-$ has a partially hyperbolic splitting $T_{\La^-} M=E^\ss\oplus E^\cs\oplus E^\uu$,  where $E^\ss \oplus E^\cs$ is its stable bundle and $E^\uu$ is its unstable one.
We consider the splitting
\begin{equation}\label{e.joinsplittion}
	T_{\La^+ \cup \La^-} M
	\eqdef E^\ss\oplus E^\c \oplus E^\uu, \quad \text{ where }\quad 
	E^\c(x)\eqdef  
	\begin{cases}
		E^\cs(x)& \mbox{if $x\in\La^-$},\\
 		E^\cu(x)& \mbox{if $x\in\La^+$}. 
	\end{cases}
\end{equation}
We will refer to $E^\c$ as the \emph{central bundle}.

As a consequence of the properties of the Plykin attractor $\Theta$ and normal hyperbolicity, the
set $\La^+$ (respectively, $\La^-$) is a one-di\-mensional lamination whose leaves are each dense in $\La^+$
(resp. $\La^-$) and tangent to the central bundle $E^\c$. Because of the two-domination, the sets $\Lambda^\pm$ support saddle-SRB measures $\mu^\pm$. Compare Remark \ref{remSRB}. 
\end{remark}

\begin{remark}[Continuations of the saddle-SRB measures]
The sets $\Lambda^\pm$ and the measures $\mu^\pm$ have well-defined continuations (to Plykin saddle-sets and saddle SRB measures) when considering diffeomorphisms which are nearby $f$, recall Remark~\ref{r.contsrb}.
\end{remark}

\begin{remark}[Invariant manifolds of the Plykin saddle-sets $\La^\pm$]\label{interesectionPlya}
Recalling the definitions and the choices of the curves $\sigma_{p_0}$ and $\sigma_{q_0}$, the two-dimensional $C^2$ manifolds  $W^\s_{r,L} (\La^+)$ and $W^\u_{r,L} (\La^-)$ (both relative to $f$) intersect transversely along the curve $\sigma$. Moreover, this curve is differentiable. After an arbitrarily small perturbation and by  shrinking $\sigma$, if necessary, one may assume that $\sigma$  is transverse in $W^\s_{r,L}(\La^+)$ to the strong stable foliation $\cW^\ss_{\Lambda^+}$ and transverse in $W^\u_{r,L}(\La^-)$ to the strong unstable foliation $\cW^\uu_{\Lambda^-}$, respectively. Recall Remark \ref{r.stablefoliation}.
\end{remark}

\begin{remark}[Transverse intersections]\label{interesectionPlyb}
The points $p_1$ and $q_1$ continue to be fixed points of $f$ with the same stable and unstable directions as for $f_0$. The construction implies that $W^\s(\Lambda^-)$ intersects $W^\uu(p_1)$. Possibly after a perturbation, we can guarantee that this intersection contains transverse points. As a consequence of the $\lambda$-lemma (also called inclination lemma), we have  $W^\s(y)\pitchfork W^\uu(p_1)\ne\emptyset$  for every $y\in \La^-$. Similarly, $W^\u(x)\pitchfork W^\ss(q_1)\ne\emptyset$ for every $x\in \La^+$. 
\end{remark}

\subsection{End of the proof: construction of the partially hyperbolic set $\Lambda$}\label{ss.proofofpSRBrobust}

Once a pair of cyclically related Plykin saddle-sets is constructed, the major difficulty is to ensure that the corresponding intersections occur within a partially hyperbolic set satisfying conditions (a) and (b) in Theorem \ref{tp.SRBrobust}. Moreover, this partially hyperbolic splitting needs to extend the one in \eqref{e.joinsplittion}. The main result of this section is Proposition~\ref{pl.fsatisfies}, it answers positively this question and therefore implies Theorem \ref{tp.SRBrobust}. 

\begin{proposition}\label{pl.fsatisfies}
Consider the diffeomorphism $f$ constructed above. Then there is a partially hyperbolic set $\Lambda$ containing  $\Lambda^\pm$ satisfying conditions (a) and (b) of  Theorem~\ref{tp.SRBrobust}.
\end{proposition}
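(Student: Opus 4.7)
The plan is to take $\Lambda$ to be the maximal $f$-invariant set contained in a suitable narrow open neighborhood $U$ of the compact $f$-invariant set
\[
	K
	\eqdef \La^+ \cup \La^- \cup \overline{\textstyle\bigcup_{n\in\bZ} f^n(\sigma)},
\]
chosen so that $U$ avoids the auxiliary saddles $p_1, q_1$, whose non-real eigenvalues from condition~(vi) would obstruct the existence of a one-dimensional central bundle on $\Lambda$. Because $\sigma \subset W^s_{r,L}(\La^+) \cap W^u_{r,L}(\La^-)$ by Remark~\ref{interesectionPlya}, forward iterates of $\sigma$ converge to $\La^+$ and backward iterates to $\La^-$, so $K$ is indeed compact. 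Since $K$ is $f$-invariant and contained in $U$, one has $K \subset \Lambda$; in particular $\sigma \subset \Lambda$, and condition~(a) follows at once from Remark~\ref{interesectionPlya}.

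To establish a partially hyperbolic splitting $E^\ss \oplus E^\c \oplus E^\uu$ on $\Lambda$ extending~\eqref{e.joinsplittion}, the key step is to construct $Df$-invariant cone fields $\cC^\uu_\varepsilon$ and $\cC^\ss_\varepsilon$ throughout $U$. In the plug $\Ga_{\varepsilon, p_0}$ containing $\La^+$, Lemma~\ref{lemconefield} provides the required strong unstable cone field for $\Phi_T$; a mirror argument applied to the time-reversed plug around $\La^-$ yields the strong stable cone field on $\Ga_{\varepsilon, q_0}$. Along the remaining part of $U$ (a narrow tube around $\bigcup_n f^n(\sigma)$ staying away from $p_1$ and $q_1$), the map $f$ coincides with $f_0$, and the diagonal linearizations of $f_0$ near $p_0$ and $q_0$ given by condition~(v) together with the transversality of $\sigma$ supply continuous extensions of both cone fields that remain $Df$-invariant after suitably shrinking $U$. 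The one-dimensional central bundle $E^\c$ on $\Lambda$ is then extracted as the intersection of iterates of the two cone fields.

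For condition~(b), fix $x\in\La^+$ and $y\in\La^-$ and argue by an iterated application of the $\lambda$-lemma. The transverse intersection of $W^\u(\La^-)$ with $W^\s(\La^+)$ along $\sigma$ (Remark~\ref{interesectionPlya}), combined with the $\lambda$-lemma at the hyperbolic attractor $\La^+$ (viewed on its invariant surface $S$), produces pieces of $W^\u(\La^-)$ that $C^1$-accumulate on $W^\u_{r,L}(x)$ for any $x \in \La^+$ once $L$ is sufficiently large. Since $W^\u(\La^-)$ is foliated by strong unstable leaves tangent to $E^\uu$, these pieces contain arcs $C^1$-close to $W^\uu_{r,L}(x)$; applying the symmetric $\lambda$-lemma at $\La^-$ via $f^{-1}$ then places these arcs within any prescribed $C^1$-neighborhood of the chosen $W^\s_{r,L}(y)$, producing a transverse intersection point of $W^\uu_{r,L}(x)$ with $W^\s_{r,L}(y)$. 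The resulting point has backward orbit asymptotic to that of $x$ and forward orbit asymptotic to that of $y$, with its intermediate segment shadowing $\bigcup_n f^n(\sigma)$; consequently its whole orbit remains inside $U$ and therefore in $\Lambda$. A symmetric argument under $f^{-1}$ yields $W^\u_{r,L}(x) \pitchfork W^\ss_{r,L}(y) \cap \Lambda \neq \emptyset$.

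The main obstacle is the tension between keeping $\Lambda$ partially hyperbolic (which forbids inclusion of $p_1, q_1$) and realising condition~(b) inside $\Lambda$ (which a priori might force intersection points whose orbits stray near $p_1, q_1$ through the heteroclinic web of the skeleton map). This is resolved by observing that the curve $\sigma$, furnished by condition~(vii) as a direct transverse intersection $W^\s(p_0) \pitchfork W^\u(q_0)$, entirely bypasses $p_1$ and $q_1$; the intersection points produced by the $\lambda$-lemma argument shadow $\bigcup_n f^n(\sigma)$ and can therefore be chosen to remain within any sufficiently narrow tubular neighborhood of that curve, avoiding $p_1, q_1$ and so lying in $\Lambda$.
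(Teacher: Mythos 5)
Your construction of $\Lambda$ cannot work as stated, and the failure is exactly at condition~(b). The transverse intersections $W^{\uu}_{r,L}(x)\pitchfork W^\s_{r,L}(y)$ with $x\in\Lambda^+$, $y\in\Lambda^-$ are realized, in this example, through the heteroclinic web of the skeleton map that passes through the auxiliary saddles: $W^{\uu}(x)$ accumulates on $W^\u(p_1)$ (via condition~(ii) and the $\lambda$-lemma), and it is $W^\u(p_1)$ that meets $W^\s(\Lambda^-)$ (via the quasi-transverse point $s_0$ of condition~(iii)); symmetrically for $q_1$. Hence any point of $W^{\uu}_{r,L}(x)\pitchfork W^\s_{r,L}(y)$ has an orbit that transits near $p_1$ (and the points of $W^\u_{r,L}(x)\pitchfork W^{\ss}_{r,L}(y)$ transit near $q_1$). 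Your claim that these points ``shadow $\bigcup_n f^n(\sigma)$'' confuses the two branches of the cycle: $\sigma\subset W^\s(\Lambda^+)\cap W^\u(\Lambda^-)$ realizes the transition from $\Lambda^-$ to $\Lambda^+$, whereas condition~(b) concerns the opposite transition from $\Lambda^+$ to $\Lambda^-$, which does not follow the orbit of $\sigma$. So the maximal invariant set in a narrow neighborhood of $\Lambda^+\cup\Lambda^-\cup\overline{\bigcup_n f^n(\sigma)}$ that avoids $p_1,q_1$ simply does not contain the points required by~(b).

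There is a second, independent gap: even after locating intersection points of the two-dimensional manifolds $W^\u(x)$ and $W^\s(y)$, you must check that the splitting $E^{\ss}\oplus E^\c\oplus E^{\uu}$ extends coherently there, i.e.\ that $T_zW^{\ss}(y')$, $T_z(W^\u(x)\pitchfork W^\s(y))$ and $T_zW^{\uu}(x)$ are in direct sum. Because the weak invariant manifolds fold, $W^{\uu}(x)$ can be tangent to the strong stable lamination of $W^\s(y)$ at a given intersection point, and a cone-field argument along the transit breaks down precisely there. The paper resolves this by \emph{selecting} good intersection points: the non-real stable eigenvalues of $p_1$ (condition~(vi)) force the backward-iterated strong stable laminations to have at least two distinct limit laminations on $W^\s(p_1)$, one of which is transverse to the relevant curve, and this is what produces points where the three bundles span (Lemmas analogous to the paper's on limit laminations and on the choice of $z(x,y)$, $w(x,y)$). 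You instead treat the non-real eigenvalues as an obstruction to be excised, which both discards the tool the construction relies on and leaves the transversality of the three bundles unaddressed. The correct set $\Lambda$ is not a maximal invariant set in a neighborhood but the union of $\Lambda^+\cup\Lambda^-$, the orbit of $\sigma$, and the orbits of finitely many continuously chosen families of well-positioned intersection points.
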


Note first that the cycle of the sets $\La^\pm$ involves two types of intersections: 
\begin{itemize}[leftmargin=0.8cm ]
\item[(qt)] the quasi-transverse intersection between the one-dimensional (strong) stable manifolds of points in $\Lambda^+$ and the one-dimensional (strong) unstable manifold of points in $\Lambda^-$,
\item[(t)] the transverse intersection between the two-dimensional (weak) unstable manifold of points in $\Lambda^+$ and the two-dimensional (weak) stable manifold of points in $\Lambda^-$. 
\end{itemize}
We need to ``put" these intersection points within a partially hyperbolic set $\Lambda$. Dealing with the quasi-transverse intersection points is simpler. However, due to the ``folding" nature of the two-dimensional (weak) unstable and (weak) stable manifolds tangencies may occur, making the handling of the latter type of points more intricate.

\begin{proof}[Proof of Proposition \ref{pl.fsatisfies}]
Consider the splitting $T_{\La^-\cup\La^+} M=E^\ss\oplus E^\c\oplus E^\uu$ in \eqref{e.joinsplittion}. We need to extend this splitting to a greater set $\Lambda$ to be defined below. Besides $\La^-\cup\La^+$, this set will also contain quasi-transverse and transverse intersections of the invariant sets of $\Lambda^\pm$. More precisely, 
\[
	\Lambda
	\eqdef \Lambda_{\rm qt}\cup\Lambda_{\rm t},
\]
where $\Lambda_{\rm qt}$ and $\Lambda_{\rm t}$ are defined in \eqref{eqdefLambdaqt} and \eqref{e.Kt}, respectively. Both sets contain $\Lambda^-\cup\Lambda^+$. Moreover, $\Lambda_{\rm qt}$ contains the intersections in item (a) in Theorem~\ref{tp.SRBrobust} and $\Lambda_{\rm t}$ contains the intersections in item (b) in Theorem~\ref{tp.SRBrobust}. We will see that the  splitting  in \eqref{e.joinsplittion} extends coherently to $\Lambda$. 

We begin by defining $\Lambda_{\mathrm qt}$,
\begin{equation}\label{eqdefLambdaqt}
	\Lambda_{\rm qt} \eqdef \Lambda^+ \cup \Lambda^- \cup \bigcup_{i\in \bZ} f^{i} (\sigma) .
\end{equation}
This set is invariant, by construction. Moreover, as $f^i(\sigma)$ accumulates on $\Lambda^+$ and $\Lambda^-$ as $i\to-\infty$ and $i\to\infty$, respectively, it follows that $\Lambda_{\mathrm qt}$ is compact.  

\begin{lemma}\label{lcl.K0}
The set $\Lambda_{\mathrm qt}$ is partially hyperbolic with a splitting which extends the one of $\La^+ \cup \La^-$.
\end{lemma}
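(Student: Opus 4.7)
The plan is to define the candidate splitting explicitly on $\Lambda_{\rm qt}$, verify $Df$-invariance and continuous matching with \eqref{e.joinsplittion} on $\La^+\cup\La^-$, and then inherit the domination inequalities from $\La^\pm$ by continuity.

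For $x\in\sigma$, since $\sigma\subset W^\s_{r,L}(\La^+)\pitchfork W^\u_{r,L}(\La^-)$ and since $W^\s(x^+)=W^\ss(x^+)$ for $x^+\in\La^+$ and $W^\u(x^-)=W^\uu(x^-)$ for $x^-\in\La^-$, there are unique $x^\pm\in\La^\pm$ with $x\in W^\ss(x^+)\cap W^\uu(x^-)$. I set
\[
E^\ss(x)\eqdef T_xW^\ss(x^+),\quad E^\uu(x)\eqdef T_xW^\uu(x^-),\quad E^\c(x)\eqdef T_x\sigma,
\]
and extend by $Df$-invariance to all $f^i(\sigma)$, $i\in\bZ$; $Df$-invariance of the strong foliations $\cW^\ss_{\La^+}$ and $\cW^\uu_{\La^-}$ makes this well-defined. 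The transversality of $\sigma$ to $\cW^\ss_{\La^+}$ inside $W^\s_{r,L}(\La^+)$ and to $\cW^\uu_{\La^-}$ inside $W^\u_{r,L}(\La^-)$, combined with the transversality of the two surfaces along $\sigma$, yields $T_xM=E^\ss(x)\oplus E^\c(x)\oplus E^\uu(x)$.

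To match the splitting on $\La^\pm$ at the accumulation points, I use that $S=W^\s_{r,L}(\La^+)$ is forward $f$-invariant (so $f^i(\sigma)\subset S$ for $i\ge 0$) and $W^\u_{r,L}(\La^-)$ is backward invariant. As $f^i(x)\to z\in\La^+$, the tangent $E^\ss(f^i(x))$ to the strong stable leaf through $f^i(x)$ inside $S$ varies continuously and converges to $E^\ss(z)$. Since the component of $T_x\sigma$ transverse to $\cW^\ss_{\La^+}$ in $TS$ is $Df$-expanded at the weak-unstable rate of $\La^+$ while its $E^\ss$-component contracts, the projective line $T_{f^i(x)}f^i(\sigma)$ aligns with the direction of $TS$ transverse to $\cW^\ss_{\La^+}$, which on $\La^+$ is exactly $E^\cu$; hence $E^\c(f^i(x))\to E^\cu(z)=E^\c(z)$. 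For $E^\uu$, Lemma~\ref{lemconefield} supplies a $Df$-invariant cone $\cC^\uu_\varepsilon$ whose forward iterates contract projectively onto $E^\uu$, so $E^\uu(f^i(x))\to E^\uu(z)$. A symmetric argument applied to $f^{-1}$ handles the backward limits on $\La^-$, where $E^\c$ matches $E^\cs$. Together these produce a continuous splitting on the compact set $\Lambda_{\rm qt}$ extending \eqref{e.joinsplittion}.

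For partial hyperbolicity, the strict inequalities in \eqref{defdomspl} hold on $\La^+\cup\La^-$; by continuity of the extended splitting and of $Df$, they persist on a relative neighborhood of $\La^+\cup\La^-$ in $\Lambda_{\rm qt}$. For any $x\in\sigma$ and $N$ large enough, the tails $\{f^k(x):|k|\ge N\}$ lie in this neighborhood, while the bounded middle is controlled by the cone-field preservation in Lemma~\ref{lemconefield} and its symmetric counterpart for $\Phi_T^{-1}$ near $q_0$, both of which yield uniform estimates on the expansion of $E^\uu$ and the contraction of $E^\ss$. Choosing $N$ sufficiently large, one obtains the uniform inequalities $\|Df^N|_{E^\ss}\|<m(Df^N|_{E^\c})\le\|Df^N|_{E^\c}\|<m(Df^N|_{E^\uu})$ on all of $\Lambda_{\rm qt}$, which is partial hyperbolicity.

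The main obstacle will be the continuity of $E^\c$ at the accumulation points on $\La^\pm$: one needs that $T_{f^i(x)}f^i(\sigma)$ truly converges to $E^\cu$ (respectively $E^\cs$), not to some intermediate direction. This projective convergence is precisely what the two-domination of $\La^\pm$, the normal hyperbolicity of the surfaces $S$ and $W^\u_{r,L}(\La^-)$, and the eigenvalue condition (iv) (which guarantees the cone preservation in Lemma~\ref{lemconefield}) are designed to provide.
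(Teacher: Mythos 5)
Your proposal is correct and follows essentially the same route as the paper: define $E^\ss$, $E^\c$, $E^\uu$ on $\sigma$ as the tangents to the strong stable leaf, to $\sigma$, and to the strong unstable leaf, iterate to get an invariant splitting on the orbit closure, check it glues with the splitting on $\La^\pm$ via transversality and domination, and deduce uniform partial hyperbolicity from the fact that each orbit spends all but a uniformly bounded number of iterates in neighborhoods of $\La^\pm$ where the inequalities already hold. You simply spell out in more detail the projective convergence of $T(f^i(\sigma))$ to $E^\cu$ (resp.\ $E^\cs$) that the paper compresses into ``by transversality, this splitting glues.''
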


\begin{proof}
By Remark~\ref{interesectionPlya}, the segment $\sigma$ is a differentiable curve that is transverse to the stable (respectively, unstable) foliation defined on $W^\s (\La^+)$ (respectively $W^\u (\La^-)$). Given $x\in \sigma$, we let  $E^\c(x)= T_x \sigma$, 
$E^\ss(x)= T_x W^\ss (x)$, and $E^\uu(x)= T_x W^\uu (x)$. By iterating this splitting, we get an invariant
 splitting defined on the closure of the orbit of $\sigma$. By transversality, this splitting ``glues" with the ones of $\La^\pm$ and therefore extends continuously to the splitting defined on $\La^-$ and $\La^+$. 
 
Finally note that the splittings on $\Lambda^+$ and $\Lambda^-$ extend to some neighborhoods $U^+$ and $U^-$, respectively. Then observe that $\Lambda_{\mathrm qt}$ contains besides the points in $\Lambda^+\cup\Lambda^-$ only ones that ``transit in finite time'' from $U^+$  to $U^-$ and remain there forever. This straightforwardly implies partial hyperbolicity of the splitting on $\Lambda_{\rm qt}$.
\end{proof}	

In what follows, we define $\Lambda_{\mathrm{t}}$. For that, we need the preliminary results in Lemmas~\ref{lcl.cicloPl}, \ref{lcl.ph}, and \ref{lcl.PH}. 
Recall again our notation in Remark \ref{r.localR}. 

\begin{lemma}\label{lcl.cicloPl}  
For $r>0$ small and $i\in\bN$ sufficiently large, for every $x\in \Lambda^+$ and  $y\in \Lambda^-$, the intersections
\[
	W^\uu_{r,i}(x)\cap W^\s_{r,i}(y)
	\quad \text{ and }\quad
	W^\u_{r,i}(x)\cap W^{\ss}_{r,i}(y)
\]	
contain some transverse points.
\end{lemma}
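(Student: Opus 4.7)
The plan is to combine the surgical construction with the inclination ($\lambda$-) lemma at the auxiliary saddles $p_1$ and $q_1$, together with the topological mixing of the Plykin basic sets. I will describe the argument for the intersection $W^{uu}_{r,i}(x)\pitchfork W^s_{r,i}(y)$; the second intersection $W^u_{r,i}(x)\pitchfork W^{ss}_{r,i}(y)$ follows symmetrically by applying the argument to $f^{-1}$ and using the saddle $q_1$ in place of $p_1$.

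First, I establish a single transverse intersection. By condition (ii) from Section~\ref{secPlyb}, pre-surgery $W^u(p_0)$ meets $W^s(p_1)$ transversally at some point $s_0$. The surgery in Section~\ref{sss.Plyd} replaces the dynamics near $p_0$ by the Plykin saddle-attractor $\La^+$, so that the original 1-dimensional unstable direction of $p_0$ becomes the strong unstable direction $E^{uu}$ of $\La^+$. Since the surgery is supported away from $s_0$, it persists as a transverse intersection $s_0\in W^{uu}(\tilde x)\pitchfork W^s(p_1)$ for some $\tilde x\in\La^+$. From Remark~\ref{interesectionPlyb} and the 1-dimensionality of $W^u(p_1)$ we have $W^{uu}(p_1)=W^u(p_1)\pitchfork W^s(y)\ne\emptyset$ for every $y\in\La^-$. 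Applying the inclination lemma at $p_1$ to the arc of $W^{uu}(\tilde x)$ through $s_0$: for $n$ large, $f^n(W^{uu}(\tilde x))$ contains an arc that is $C^1$-close to $W^u(p_1)$ near $p_1$, hence inherits a transverse intersection with $W^s(y)$. Since this arc lies in $W^{uu}(f^n(\tilde x))$, we conclude $W^{uu}(x_0)\pitchfork W^s(y)\ne\emptyset$ for $x_0=f^n(\tilde x)\in\La^+$ and every $y\in\La^-$.

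Second, I propagate this to every $x\in\La^+$. The Plykin basic sets $\La^\pm$ are topologically mixing, so $f\times f$ is topologically transitive on $\La^+\times\La^-$. Let $A\eqdef\{(x,y)\in\La^+\times\La^-:W^{uu}(x)\pitchfork W^s(y)\ne\emptyset\}$. By Step~1, $A$ is nonempty; by $C^1$-continuity of the invariant manifolds on compact portions together with openness of transversality, $A$ is open; by invariance of the strong unstable and weak stable foliations under $f$ and persistence of transversality, $A$ is $f\times f$-invariant. Topological transitivity therefore forces $A$ to be dense in $\La^+\times\La^-$. To upgrade density to every pair, I exploit the density of the $E^c$-leaves of $\La^+$ in $\La^+$ (Remark~\ref{r.valetudo}) and the inclination lemma at a periodic orbit of $\La^+$ close to $x$: the invariance $f^k(W^{uu}(p))=W^{uu}(p)$ for a periodic $p$ of period $k$ allows one to transport the transverse intersection from a dense orbit in $A$ to a $C^1$-approximation of $W^{uu}(x)$, thereby yielding the required transverse intersection for the given $(x,y)$. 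Once existence holds for every pair, uniformity of $r$ and $i$ over $\La^+\times\La^-$ follows from compactness and the openness of the transverse intersection condition.

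The main obstacle is the propagation step: upgrading from a dense set of good pairs to every pair. The set $A$ is open and invariant under a topologically transitive system, hence dense, but it is not automatically closed — along sequences $(x_n,y_n)\to(x,y)$ in $A$ the intersection point may escape to $\La^\pm$ along the unbounded strong unstable or weak stable manifolds rather than converge inside $W^{uu}(x)\cap W^s(y)$. The resolution requires carefully combining the $\lambda$-lemma at periodic orbits of the Plykin basic sets with the minimality of their $E^c$-laminations and the continuous dependence of the strong unstable manifolds in the $C^1$-topology on compact portions, to ensure a transverse intersection is produced for every pair $(x,y)\in\La^+\times\La^-$.
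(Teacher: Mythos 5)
Your reduction to the auxiliary saddle $p_1$ (and the symmetric use of $q_1$, $f^{-1}$ for the second intersection) is the right skeleton, but the argument has a genuine gap in the propagation step, which you yourself flag as ``the main obstacle'' without resolving. Your Step~1 produces $W^\uu(x)\pitchfork W^\s(y)\ne\emptyset$ only for the single point $x_0=f^n(\tilde x)$ and its orbit, and Step~2 then only yields that the set $A$ of good pairs is open, invariant and hence dense in $\La^+\times\La^-$. Dense is not ``every pair'', and the suggested upgrade --- the inclination lemma at a periodic orbit $p\in\La^+$ close to $x$ --- does not work as stated: the $\lambda$-lemma at $p$ governs $C^1$-accumulation of transversals onto the \emph{two-dimensional} unstable manifold $W^\u(p)$, not onto the one-dimensional strong leaf $W^\uu(p)$, and knowing that $W^\uu(p)$ meets $W^\s(y)$ for some $p$ near $x$ gives no control over $W^\uu(x)$: strong unstable leaves of nearby points are in general disjoint and only $C^1$-close on compact pieces, which is precisely why $A$ need not be closed. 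So the conclusion for every pair is not established.

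The gap disappears if you obtain the intersection with $W^\s(p_1)$ for \emph{every} $x\in\La^+$ at once, which is what the construction actually provides: inside the plug the strong unstable leaves of $\La^+$ are (nearly) vertical segments, and after exiting the plug the horizontal contraction forces their forward iterates to $C^1$-approximate compact pieces of $W^\u(p_0,f_0)=W^\uu(p_0,f_0)$; by condition (ii) this manifold meets $W^\s(p_1,f_0)$ transversally, so $W^\uu_{r,i}(x,f)\pitchfork W^\s_{r,i}(p_1,f)\ne\emptyset$ for every $x\in\La^+$ and some uniform $i$ (compactness plus continuity of the strong leaves). The $\lambda$-lemma at $p_1$ --- legitimate here because $W^\uu(x,f)$ is a curve transverse to the two-dimensional $W^\s(p_1,f)$ --- then makes every $W^\uu(x,f)$ accumulate in the $C^1$-topology on $W^\u(p_1,f)=W^\uu(p_1,f)$, which by your own first observation (condition (iii) plus Remark~\ref{interesectionPlyb}) meets $W^\s_{r,i}(y,f)$ transversally for every $y\in\La^-$. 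This gives the statement for all pairs simultaneously with uniform $r$ and $i$, and no transitivity or density argument is needed. (A minor slip: the transverse intersection of $W^\u(p_0)$ with $W^\s(p_1)$ coming from condition (ii) is not the point $s_0$; in the paper's notation $s_0$ is the quasi-transverse intersection of $W^\u(p_1)$ with $W^\s(q_0)$.)
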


\begin{proof}
As the proof involves invariant manifolds relative to both the initial map $f_0$ and the map $f$ resulting from the surgery, we add this dependence in the notation for clarity. To accompany the proof, refer to Figure~\ref{figbefore} and recall conditions (i)--(vii) in Section~\ref{secPlyb}. 

As $W^\uu (p_1,f_0)= W^\u (p_1,f_0)$, condition (iii) implies that $W^\uu(p_1, f_0)$ intersects quasi-transversely $W^\ss(q_0, f_0)=W^\s(q_0, f_0)$. This implies that for every $y\in \Lambda^-$ the set $W^\uu(p_1,f)$ and the (weak) stable set $W^\s (y, f)$ have some transverse intersection. See also  Remark~\ref{interesectionPlyb}. Therefore, for every $i\in\bN$ large enough, the manifolds $W^\uu_{r,i} (p_1,f)$ and $W^\s_{r,i}(y,f)$ have some transverse intersection, for every $y\in\Lambda^-$.
 
On the other hand, recalling condition (ii) and the fact that $W^\u (p_0,f_0)=W^\uu (p_0,f_0)$, $W^\uu (p_0,f_0)$ intersects transversely  $W^\s(p_1,f_0)$. This implies that there is $i\in\bN$ large enough such that the sets $W^\uu_{r,i}(x,f)$ and $W^\s_{r,i}(p_1,f)$ have some transverse intersection, for every $x\in \Lambda^+$. By the $\lambda$-lemma, for every $x\in\Lambda^+$ the set $W^\uu(x,f)$ accumulates on $W^\uu(p_1,f)=W^\u (p_1,f)$ in the $C^1$-topology. Therefore $W^\uu(x,f)$ intersects transversely the (weak) stable manifold $W^\s_{r,i}(y,f)$, for every $y\in\La^-$. Thus, after increasing $i$ if necessary, one gets that for every $x\in \Lambda^+$ and $y\in \Lambda^-$ the manifolds $W^\uu_{r,i}(x,f)$ and $W^\s_{r,i}(y,f)$ have some transverse intersection.
  
The second item follows similarly, by interchanging the roles of $p_0,p_1$ and $q_0,q_1$. This completes the proof of the lemma.
\end{proof}

For the next lemma, recall that $W^\u(x)$ is two-dimensional for every $x\in \La^+$ and that $W^\s(y)$ is two-dimensional for every $y\in \La^-$. 

\begin{lemma}\label{lcl.ph}
For every $x\in\La^+$ there are points $y\in \La^-$ and  $z\in W^\uu(x)\pitchfork W^\s(y)$  such that the bundles 
\[
	E_z\eqdef T_z W^\ss(y), \quad  F_z\eqdef
	T_z( W^\u(x)\pitchfork W^\s(y)),  	
	\quad 
	G_z\eqdef T_z W^\uu(x)
\]	
satisfy $T_zM= E_z \oplus F_z\oplus G_z$.
\end{lemma}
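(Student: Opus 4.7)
Fix $x\in\La^+$. By Lemma~\ref{lcl.cicloPl} applied at $x$ and any $y\in\La^-$, there is a transverse intersection point $z\in W^{\uu}_{r,i}(x)\pitchfork W^s_{r,i}(y)$. At such a point, the $1$-line $G_z = T_zW^{\uu}(x)$ is transverse to the $2$-plane $T_zW^s(y)$, so $G_z+T_zW^s(y)=T_zM$. Since $G_z\subset T_zW^u(x)$, the 2-planes $T_zW^u(x)$ and $T_zW^s(y)$ are likewise transverse in $T_zM$, and therefore $F_z = T_zW^u(x)\cap T_zW^s(y)$ is a well-defined 1-dimensional subspace lying inside $T_zW^s(y)$.

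Both $E_z=T_zW^{\ss}(y)$ and $F_z$ are 1-lines inside the 2-plane $T_zW^s(y)$, so the desired splitting $T_zM=E_z\oplus F_z\oplus G_z$ holds if and only if $E_z\ne F_z$. Hence the entire content of the lemma reduces to arranging this non-tangency: the intersection curve $W^u(x)\cap W^s(y)$ should not be tangent at $z$ to the strong-stable leaf $W^{\ss}(y)$.

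I would establish this non-tangency by contradiction. Assume $E_z = F_z$; then $T_zW^{\ss}(y)\subset T_zW^u(x)$, so the 2-plane $T_zW^u(x)$ simultaneously contains a strong-stable direction $E_z$ and a strong-unstable direction $G_z$. Since each of the three candidate bundles is $Df$-invariant along the orbit of $z$ (the strong un-/stable foliations and the intersection curve are all invariant), the same coincidence $E_{f^n(z)} = F_{f^n(z)}$ would persist for every $n$. Pushing forward, $f^n(z)$ accumulates on $\La^-$, where the partially hyperbolic structure is already defined; there $Df^n(E_z)$ aligns with the strong-stable bundle $E^{\ss}$, while $Df^n(G_z)$ remains in the strong-unstable cone preserved by Lemma~\ref{lemconefield}. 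Pulling this situation backward by $Df^{-n}$ to $x$, using preservation of the complementary strong-stable cone along the heteroclinic orbit, would force the tangent plane $T_xW^u(x)$ to contain a non-trivial component of $E^{\ss}(x)$. This contradicts the fact that, for $x\in\La^+$, one has $T_xW^u(x)=E^c(x)\oplus E^{\uu}(x)$, which is transverse to $E^{\ss}(x)$.

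The main obstacle is exactly this last dynamical step: propagating the cone-field/transversality information between the two regions of established partial hyperbolicity (around $\La^+$ and around $\La^-$). The subtlety is that the point $z$ itself need not lie in a region where the global partially hyperbolic splitting has already been defined, so one has to use the explicitly constructed invariant cone field of Lemma~\ref{lemconefield} (together with its strong-stable counterpart coming from the plug at $q_0$) along the full heteroclinic orbit of $z$, relying on the explicit form of $f$ from Section~\ref{sss.Plyd} to guarantee that the $E^{\ss}$-direction cannot invade $T W^u(x)$.
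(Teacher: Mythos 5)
Your first paragraph is correct and matches the paper's implicit reduction: since the intersection $W^{\uu}(x)\pitchfork W^\s(y)$ is transverse (supplied by Lemma~\ref{lcl.cicloPl}), the whole lemma reduces to arranging $E_z\ne F_z$ inside the $2$-plane $T_zW^\s(y)$, i.e.\ to a non-tangency between the intersection curve $W^\u(x)\cap W^\s(y)$ and the strong stable leaf through $z$.

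The proof of that non-tangency, however, has a genuine gap. Your argument by contradiction, if it worked, would establish $E_z\ne F_z$ for \emph{every} $y$ and \emph{every} intersection point $z$ — but the lemma is deliberately only existential in $y$ and $z$, precisely because such tangencies do occur: this is the ``folding'' phenomenon of the two-dimensional weak invariant manifolds that the paper flags as the main difficulty before Proposition~\ref{pl.fsatisfies}. Concretely, the contradiction you aim for does not materialize. From $E_z=F_z$ you get that the invariant $2$-plane field $T_{f^n(z)}W^\u(x)$ contains, near $\La^-$, both a strong-unstable and a strong-stable direction; but this is not incompatible with anything. The cone field of Lemma~\ref{lemconefield} lives in the plug and controls only the $\uu$-cone, and the orbit of $z$ need not lie in any set where a coherent partially hyperbolic splitting with an invariant strong-stable cone has been defined. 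Pulling back by $Df^{-n}$, the direction $E_{f^n(z)}=T_{f^n(z)}W^{\ss}(f^n(y))$ is the tangent to a strong stable leaf evaluated \emph{far} from $\La^-$ (since $f^{-n}(z)$ tends to the orbit of $x$, not of $y$), and such a leaf followed far out can point in any direction; so no alignment with $E^{\ss}(x)$ is forced and no contradiction with $T_xW^\u(x)=E^{\c}(x)\oplus E^{\uu}(x)$ results.

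The paper's proof uses a mechanism your proposal never touches: hypothesis (vi) of Section~\ref{secPlyb}, that the stable eigenvalues of $Df(p_1)$ are non-real. One fixes a curve $\alpha\subset W^\u(x)\pitchfork W^\s(p_1)$ through a point of $W^{\uu}(x)$, observes via the $\lambda$-lemma that the backward iterates $\cW^{\ss}_i(y_0)$ of the strong stable laminations of $W^\s(y_0)$ accumulate on $W^\s(p_1)$, and uses the rotation coming from the complex eigenvalues to produce at least two distinct limit laminations (Claim~\ref{cf.limitl}); at least one limit lamination is then transverse to $\alpha$, and $y$ and $z$ are chosen along the corresponding subsequence. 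This selection step is the actual content of the lemma, and it is missing from your argument.
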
	

\begin{proof} 
	Fix $x\in\Lambda^+$. By construction, $W^\u(x)\pitchfork W^\s(p_1)$ contains a curve $\alpha$. Note that $\alpha$ can be chosen to intersect $W^\uu(x)$ in its interior at some point $\widehat z$. 

Given $y_0\in \La^-$, consider the strong stable lamination $W^\ss(\cdot)$ of $W^\s(y_0)$, and the family  of laminations obtained considering backward iterations
$$
\cW^\ss_i (y_0)\eqdef\{f^{-i}(W^\ss(\cdot))\}
$$ 
defined on $W^\s(f^{-i}(y_0))$.

\begin{claim}\label{cf.limitl}
Let $y_0\in \La^-$.  Then the family  of laminations $\cW^\ss_i(y_0)$ has at least two different limit laminations on $W^\s(p_1)$.
\end{claim}

This claim implies that, after shrinking the curve $\alpha$ if necessary, at least one of these limit laminations
is transverse to $\alpha$.

\begin{proof}[Proof of Claim~\ref{cf.limitl}]
Recall again that $W^\s(y_0)\pitchfork W^\uu(p_1)$ (Remark \ref{interesectionPlyb}).
By the $\lambda$-lemma, $W^\s(f^{-i}(y_0))$ accumulates at $W^\s(p_1)$. Each $W^\s(f^{-i}(y_0))$ is foliated by strong stable leaves, $\cW^\ss_i\eqdef\{f^{-i}(W^\ss(\cdot))\}$. Taking subsequences, we obtain limit laminations in $W^\s(p_1)$. As, by condition (vi) in Section \ref{secPlyb}, stable eigenvalues of $Df_0(p_1)=Df(p_1)$ are non-real, and there are at least two limit laminations. This proves the claim.
\end{proof}

We now select the points $y$ and $z$. Consider a subsequence $(i_k)_k$ such that the sequence of laminations $\cW^\ss_{i_k}$  converges to a lamination $\cW^\ss_\infty=\{W^\ss_\infty(\cdot)\}$ transverse to $\alpha$. For $k$ large enough consider a sequence of curves $(\alpha_k)_k$, 
\[
	\alpha_k\subset W^\s(f^{-i_k}(y_0))\pitchfork W^\u(x),
\]
such that $\alpha_k$ transversally intersects $W^\uu(x)$ at some point $z_k$ and that $\alpha_k\to\alpha$ as $k\to\infty$. Note that $W^\s(f^{-i_k}(y_0))$ is foliated by the strong stable manifolds of points in $\Lambda^-$, indeed by the leaves of the foliation $\cW^\ss_i$. Hence, there is $y_k\in W^\s(f^{-i_k}(y_0))\cap\Lambda^-$ such that there is a point $z_k\in W^\ss(y_k)\cap W^\uu(x)$. Up to passing to a subsequence, we can assume that $z_k\to\widehat z$. 

As $\alpha$ is transverse to $\cW^\ss_\infty$, we have that $T_{\widehat z}\alpha\oplus T_{\widehat z}W^\ss_\infty(\widehat z)=T_{\widehat z}W^\s(p_1)$. Hence, by continuity, for $k$ sufficiently large, we have that $T_{z_k}\alpha_k\oplus T_{z_k}W^\ss(y_k)$. Now fix some $k$ sufficiently large and let $z\eqdef z_k$ and $y\eqdef y_k$. Then the bundles $E_z$ and $F_z$ as in the assertion of the lemma satisfy $E_z\oplus F_z$. The lemma follows as the bundle $G_z$ as defined in the lemma is transverse to $E_z\oplus F_z$.
\end{proof}

\begin{lemma}\label{lcl.PH} 
For every pair of points $x\in\La^+$ and $y\in \La^-$, there are points 
\[
	z=z(x,y)\in W^{\uu}(x)\pitchfork W^\s(y) \quad \mbox{and} \quad 
	w=w(x,y)\in W^{\u}(x)\pitchfork W^\ss(y)
\]
such that  
\begin{itemize}
\item
$E_z=T_zW^\ss(y')$, for some $y'\in W^\s(y)$,  $F_z= T_z(W^{\u}(x)\pitchfork W^\s(y))$,
and $G_z= T_z W^{\uu}(x)$ satisfy $T_zM= E_z \oplus F_z\oplus G_z$,
\item
$E'_w =T_w W^\ss (x)$, $F'_w= T_w (W^{\u}(x)\pitchfork W^\s(y))$, and $G'_w=T_wW^\uu(x')$, for some $x'\in W^\u(x)$, satisfy $T_wM= E'_w \oplus F'_w\oplus G'_w$.
 \end{itemize}
\end{lemma}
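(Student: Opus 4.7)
The strategy is to upgrade Lemma \ref{lcl.ph}, which for fixed $x\in\La^+$ produces \emph{some} compatible $(\tilde y,z_0)$ carrying the transverse three-way splitting, into a statement valid for \emph{every} prescribed pair $(x,y)\in\La^+\times\La^-$, and to treat the $w$-point by the symmetric argument applied to $f^{-1}$. The key mechanism combines (i) minimality of the one-dimensional Plykin laminations inside $\La^+$ and $\La^-$ (tangent to the central bundle $E^\c$) with (ii) $C^1$-openness of the transverse three-way splitting condition.

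\textbf{The point $z=z(x,y)$.} Fix $x\in\La^+$ and $y\in\La^-$. Apply Lemma \ref{lcl.ph} at $x$ to obtain $\tilde y\in\La^-$ and $z_0\in W^\uu(x)\pitchfork W^\s(\tilde y)$ with $T_{z_0}M=T_{z_0}W^\ss(\tilde y)\oplus T_{z_0}(W^\u(x)\pitchfork W^\s(\tilde y))\oplus T_{z_0}W^\uu(x)$. Let $L^\cs(y)\subset\La^-$ be the one-dimensional leaf tangent to $E^\c$ through $y$; this is an ``unstable'' leaf of the Plykin attractor $\La^-$ seen for $f^{-1}|_{S^-}$, hence dense in $\La^-$ by Plykin minimality. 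Since $L^\cs(y)\subset W^\s(y)$, we have $W^\s(y^*)=W^\s(y)$ for every $y^*\in L^\cs(y)$. Pick $y^*_k\in L^\cs(y)$ with $y^*_k\to\tilde y$. The continuous dependence of local weak stable manifolds on the base point, propagated to a neighbourhood of $z_0$ by a bounded number of $f^{-1}$-iterations via $W^\s(y^*_k)=\bigcup_n f^{-n}(W^\s_{\mathrm{loc}}(f^n(y^*_k)))$, produces compact pieces of $W^\s(y^*_k)$ near $z_0$ that converge in the $C^1$-topology to the corresponding piece of $W^\s(\tilde y)$. Since transversality $W^\uu(x)\pitchfork W^\s$ and the three-way splitting are $C^1$-open conditions, for $k$ large there exists $z\in W^\uu(x)\cap W^\s(y^*_k)=W^\uu(x)\cap W^\s(y)$ close to $z_0$ satisfying the desired splitting, with $y'\in L^\cs(y)$ the $\La^-$-point whose strong stable leaf passes through $z$.

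\textbf{The point $w=w(x,y)$.} The argument is symmetric: Lemma \ref{lcl.ph} applied to $f^{-1}$ swaps the roles of $\La^+$ and $\La^-$ (legitimate since both are Plykin saddle-sets and condition (vi) is symmetric between $p_1$ and $q_1$) and, for the given $y\in\La^-$, yields $\tilde x\in\La^+$ together with $w_0\in W^\ss(y)\pitchfork W^\u(\tilde x)$ carrying the analogous transverse splitting. Density of the leaf $L^\cu(x)\subset\La^+$ tangent to $E^\c$ supplies $x^*_k\in L^\cu(x)\subset W^\u(x)$ with $x^*_k\to\tilde x$, so that $W^\u(x^*_k)=W^\u(x)$. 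The same $C^1$-approximation then yields $w\in W^\u(x)\cap W^\ss(y)$ near $w_0$ at which $T_wM$ splits transversely into the strong stable direction $T_wW^\ss(y)$, the tangent $T_w(W^\u(x)\pitchfork W^\s(y))$ to the intersection curve, and $T_wW^\uu(x')$ for $x'\in L^\cu(x)$ the $\La^+$-element whose strong unstable leaf contains $w$.

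\textbf{Main obstacle.} The delicate step is transporting the local $C^1$-continuity of weak stable (resp.\ unstable) manifolds based at $\tilde y$ (resp.\ $\tilde x$) to a neighbourhood of the possibly distant point $z_0$ (resp.\ $w_0$). This uses a finite but potentially large number of iterations and relies on the $C^2$-regularity of $f$ to ensure that the strong foliations $\cW^\ss_{\La^-}$ and $\cW^\uu_{\La^+}$ are smooth enough that the three-way transverse decomposition is genuinely an open condition under $C^1$-perturbations of the intersecting invariant manifolds.
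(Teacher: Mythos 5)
Your proof is correct and follows essentially the same route as the paper: it upgrades Lemma~\ref{lcl.ph} to an arbitrary prescribed pair $(x,y)$ by exploiting the density of each central leaf of the Plykin laminations (so that $W^\s(y)$ contains points of $\La^-$ arbitrarily close to the $\tilde y$ produced by Lemma~\ref{lcl.ph}) together with the $C^1$-continuity of the invariant laminations and the openness of the transverse three-way splitting, treating $w$ by the symmetric argument for $f^{-1}$. Your write-up is in fact more explicit than the paper's own compressed argument, which appeals to the same density and strong-stable subfoliation structure of the two-dimensional stable lamination of $\La^-$.
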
 

\begin{proof}
We only prove the lemma for the points $z$, the proof for $w$ is analogous considering $f^{-1}$ instead of $f$.

By Remark \ref{r.valetudo}, the set $\La^-$ is a one-di\-mensional lamination whose leaves are each dense in $\La^-$ and tangent to the central bundle $E^\c$. Thus, the local stable manifold of $\La^-$, which is the union of the local strong stable manifold $W^{\ss}_{r,L}(y)$, $y\in\La^-$, is a two-di\-mensional lamination whose leaves are dense in $\La^-$, tangent to a continuous plane field, and subfoliated by the strong stable leaves.
As a consequence, for any $x\in\La^+$ such that $W^{\uu}(x)\pitchfork W^\s(y_0)\ne \emptyset$ for some  $y_0\in \La^-$, taking $z\in  W^{\uu}(x)\pitchfork W^\s(y_0)$ and defining $E_z,F_z,G_z$ as in the assertion of the lemma it follows $T_zM= E_z\oplus F_z\oplus G_z$. 
\end{proof}

We are now ready to define the set $\Lambda_{\mathrm{t}}$. First observe that for every $(x,y)\in\Lambda^+\times\Lambda^-$, there is a neighborhood $\Gamma_{(x,y)}$ such that the functions $z(\cdot)$ and $w(\cdot)$ in Lemma \ref{lcl.PH} can be chosen to vary continuously on it. By compactness of $\Lambda^+\times\Lambda^-$, there are finitely many points $(x_i,y_i)$ and associated neighborhoods $\Gamma_i=\Gamma_{(x_i,y_i)}$ with $\Lambda^+\times\Lambda^-\subset\bigcup_i\Gamma_i$ such that the associated functions $z_i$ and $w_i$ vary continuously on $\Gamma_i$. We let
\begin{equation}\label{e.Kt}
	\Lambda_{\rm t} 
	\eqdef   \bigcup_i \Lambda_{\rm t,i} \cup \La^+ \cup \La^-,
	\,\,\text{ where }\,\,
	\Lambda_{\rm t,i} \eqdef
  	\bigcup_{(x,y)\in \Gamma_i}
 \bigcup_{k\in \mathbb{Z}} f^k (\{w_i(x,y)), z_i(x,y)\}).
\end{equation}
It follows from the construction that this set is invariant and compact.  

\begin{lemma}\label{lcl.K0bis}
The set $\Lambda_{\mathrm t}$ is partially hyperbolic with a splitting which extends the one of $\La^+ \cup \La^-$.
\end{lemma}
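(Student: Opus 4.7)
The plan is to mirror the strategy used for $\Lambda_{\rm qt}$ in Lemma~\ref{lcl.K0}. First I would define a three-bundle splitting on $\Lambda_{\rm t}$: on $\La^+\cup\La^-$ take the splitting $E^\ss\oplus E^\c\oplus E^\uu$ from \eqref{e.joinsplittion}; at each base point $z=z_i(x,y)$ declare $E^\ss(z)\eqdef E_z$, $E^\c(z)\eqdef F_z$, $E^\uu(z)\eqdef G_z$ as provided by Lemma~\ref{lcl.PH}, and analogously at $w=w_i(x,y)$ using $E'_w, F'_w, G'_w$; then transport along orbits via $Df^k$. The $Df$-invariance is immediate since each bundle is tangent to an $f$-invariant object: a strong stable or strong unstable leaf of $\La^\pm$, or the one-dimensional central arc $W^\u(x)\pitchfork W^\s(y)$.

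The second step is continuity. The key observation is that every orbit in $\Lambda_{\rm t,i}\setminus(\La^+\cup\La^-)$ transits in finite time between neighborhoods $U^\pm$ of $\La^\pm$ to which the partially hyperbolic splitting of $\La^\pm$ extends by robustness: for $k\gg 0$, the iterate $f^k(z_i(x,y))$ lies on $W^\s(f^k(y))$ and accumulates on $\La^-$; for $k\ll 0$ it lies on $W^\uu(f^k(x))$ and accumulates on $\La^+$; the orbits of $w_i(x,y)$ behave symmetrically in time. On every transiting iterate inside $U^+$ or $U^-$, the geometrically defined bundles should coincide with the extensions of $E^\ss, E^\c, E^\uu$ by uniqueness of the extremal contracting/expanding directions in a dominated splitting. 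Combined with continuity of $z_i,w_i$ on $\Gamma_i$ and continuous variation of the invariant manifolds on the hyperbolic sets $\La^\pm$, this yields continuity of the splitting across the gluing with $\La^+\cup\La^-$.

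Once continuity is in hand, partial hyperbolicity of the splitting on $\Lambda_{\rm t}$ follows exactly as for $\Lambda_{\rm qt}$: each transiting orbit spends all but a bounded number of iterates inside $U^+\cup U^-$, where the uniform rates \eqref{domsplphy1}--\eqref{domsplphy2} hold, so the finitely many iterates outside contribute only a bounded multiplicative distortion to $\|Df^n|_{E^\ss}\|$ and $m(Df^n|_{E^\uu})$. The asymptotic rates of contraction, expansion, and domination therefore extend uniformly to $\Lambda_{\rm t}$.

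The hard part will be the identification step within continuity. For instance, at an iterate $f^{-k}(z)\in U^+$ I have defined $E^\ss(f^{-k}(z)) = T_{f^{-k}(z)}W^\ss(f^{-k}(y'))$ for some $y'\in\La^-$, and I must show this coincides with the extended strong stable direction of $\La^+$ at that point. My plan is to characterize $E^\ss$ in the extended partially hyperbolic splitting on $U^+$ as the unique $Df$-invariant line contracted at the strong stable rate and contained in a $Df^{-1}$-invariant strong stable cone field on $U^+\cup U^-$, then verify that $TW^\ss(y')$ enjoys both properties using its definition via the strong stable foliation of $\La^-$ together with cone-field invariance (the strong-stable analogue of Lemma~\ref{lemconefield} provided by the plug at $q_0$). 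The symmetric argument, applied to $E^\uu$ and to the orbits of $w$, completes the identification.
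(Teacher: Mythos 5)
Your proposal follows essentially the same route as the paper's proof: define the bundles at the points $z_i(x,y)$ and $w_i(x,y)$ via Lemma~\ref{lcl.PH}, iterate by $Df^k$, observe that they align with the splitting of $\La^+\cup\La^-$, and deduce partial hyperbolicity from the fact that orbits in $\Lambda_{\mathrm t}\setminus(\La^+\cup\La^-)$ spend all but finitely many iterates in neighborhoods $U^\pm$ where the splitting of $\La^\pm$ extends. The identification step you flag as the hard part is exactly the point the paper compresses into ``these bundles align with the ones in $\Lambda^+$ and $\Lambda^-$'', and your cone-field/uniqueness-of-extremal-directions argument is a legitimate way to justify it.
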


\begin{proof}
For every $i$ and $(x,y)\in\Gamma_i$, we consider the bundles in Lemma \ref{lcl.PH} at the points $w_i(x,y)$ and $z_i(x,y)$. We iterate these bundles by $Df^k$ and extend to the whole $\Lambda_{\rm t,i}$.  This provides an invariant splitting. Note that (as in the case of $\Lambda_{\mathrm{qt}}$) these bundles align with the ones in $\Lambda^+$ and $\Lambda^-$. 

Arguing as in the proof of Lemma \ref{lcl.K0}, the splittings on $\Lambda^+$ and $\Lambda^-$ extend to neighborhoods $U^+$ and $U^-$, respectively. The set $\Lambda_{\mathrm t}$ contains besides $\Lambda^+\cup\Lambda^-$ only points that ``transit in finite time'' from $U^+$  to $U^-$ and remain there and transit from $U^-$  to $U^+$ and remain there. This implies partial hyperbolicity of the splitting on $\Lambda_{\rm t}$.
\end{proof}

Note that the partially hyperbolic splittings of $\Lambda_{\rm t}$ and $\Lambda_{\rm qt}$ coincide in their intersection $\La^+\cup \La^-$. This provides a partially hyperbolic splitting in $\Lambda_{\rm qt}\cup\Lambda_{\rm t}$, concluding the proof of Proposition~\ref{pl.fsatisfies}.
\end{proof}

The proof of Theorem~\ref{tp.SRBrobust} is now also complete.
\qed

%%%%%%%%%%%%%%%%%%%%%%%%%%%%%%%%%%%%%%%%%%%%%%%%
\section{Rich cycles of measures in step skew products with blenders}\label{s.robustblenders}
%%%%%%%%%%%%%%%%%%%%%%%%%%%%%%%%%%%%%%%%%%%%%%%%

In this section, we consider skew products having a $\cs$-blender and a $\cu$-blender related by a robust heterodimensional cycle. We see that this dynamical configuration leads to the occurrence and abundance of rich heterodimensional cycles of measures. Here, robustness is understood within the class of step skew products. Note that the term ``rich cycle'' is a slight abuse of notation since this terminology was introduced above in the differentiable setting. However, the considered skew products admit smooth realizations (see Section \ref{secsmooth}) that will allow to use the tools developed in previous sections.

Let $\Sigma_4 \eqdef \{1,2,3,4\}^\ZZ$ and endow this space with the standard metric. Its elements are written in the form $\Sigma_4\ni\omega \eqdef (\omega_i)_{i\in \bZ}$, $\omega_i \in \{1,2,3,4\}$. We denote by $\sigma \colon \Sigma_4 \to \Sigma_4$ the standard shift map. Given diffeomorphisms $h_i\colon \bR \to \bR$, $i=1,2,3,4$, we consider their associated one-step skew product
\begin{equation}\label{sspF}
	F\colon\Sigma_4\times \RR\to\Sigma_4\times \RR, \quad
	F(\omega,t)\eqdef(\sigma(\omega),h_{\omega_0}(t)).
\end{equation}
Here, we assume that $h_1$ and $h_2$ are uniform dilations by a factor greater than $\lambda>1$ with fixed points $p^+<0$ and $q^+>1$, respectively. We also assume that $h_3$ and $h_4$ are uniform contractions by a factor smaller than $\lambda^{-1}<1$ with fixed points $p^-<0$ and $q^->1$, respectively. Moreover, we consider the following two \emph{superposition hypotheses} (compare Figure~\ref{fig.blendersuperposition}): 
\begin{eqnarray}
	&&\mbox{for every $t\in [0,1]$ it holds either }h_1(t)\in (0,1) \text{ or }h_2(t)\in (0,1),	\label{e.superpositioncu}\\
	&&\mbox{for every $t\in [0,1]$ it holds either }h_3^{-1}(t)\in (0,1) \text{ or }h_4^{-1}(t)\in (0,1).
	\label{e.superpositioncs}
\end{eqnarray}
\begin{figure}[h] 
  \begin{overpic}[scale=.35]{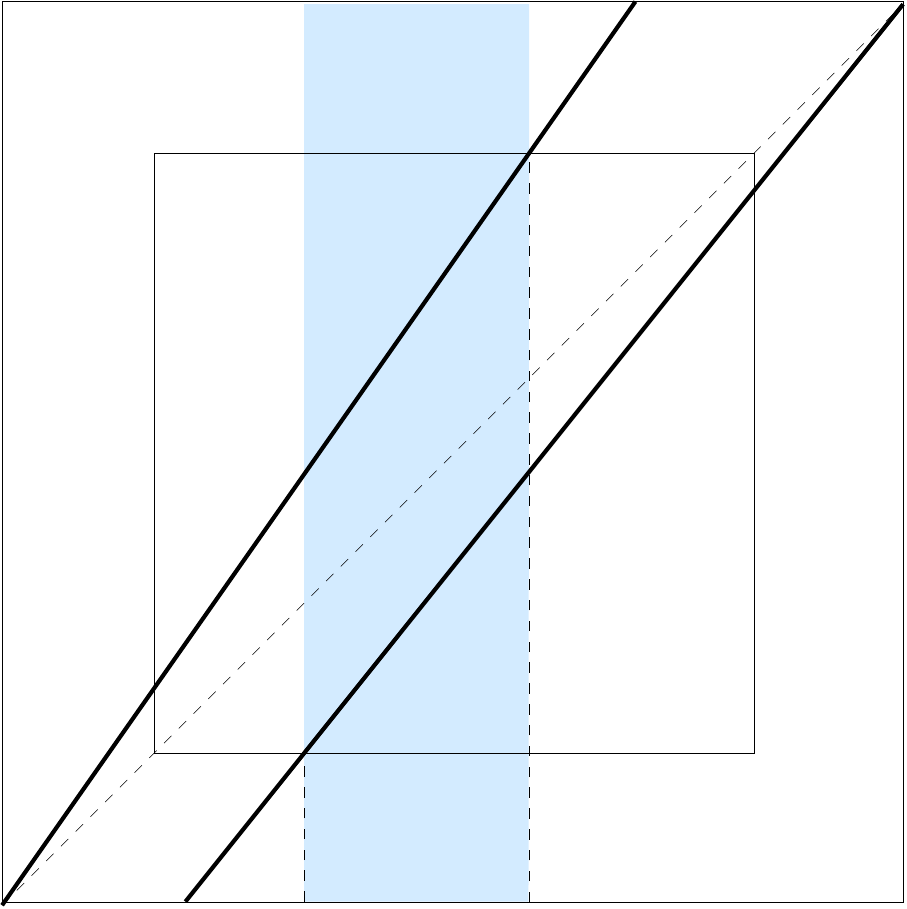}
 	\put(-2,-6){\small{$p^+$}}
 	\put(98,-6){\small{$q^+$}}
	\put(5,18){\small$h_1$}
	\put(89,79){\small$h_2$}
	\put(32,-6){\small$t_0$}
	\put(57,-6){\small$t_1$}
	\put(81,-6){\small$1$}
	\put(15,-6){\small$0$}
	\put(-6,81){\small$1$}
	\put(-6,16){\small$0$}
 \end{overpic}  
 \hspace{1cm}
 \begin{overpic}[scale=.35]{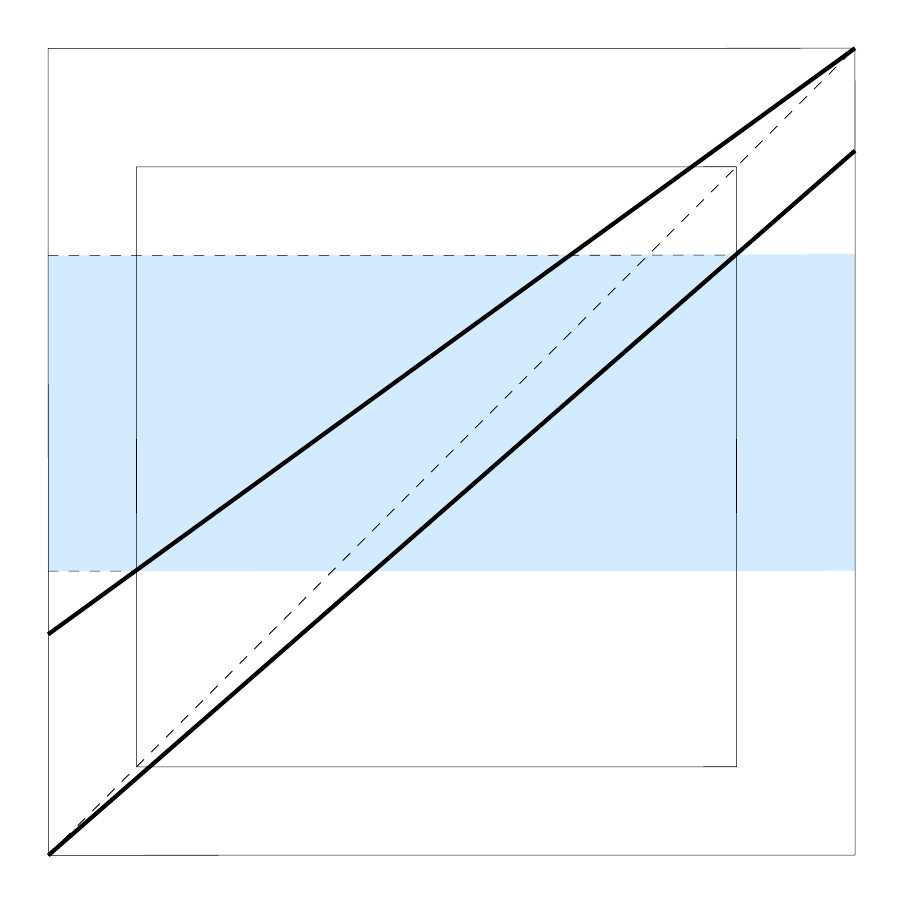}
 	\put(5,-6){\small{$p^-$}}
 	\put(93,-6){\small{$q^-$}}
	\put(25,50){\small$h_4$}
	\put(59,45){\small$h_3$}
	\put(80,-6){\small$1$}
	\put(14,-6){\small$0$}
	\put(-6,80){\small$1$}
	\put(-6,15){\small$0$}
 \end{overpic}
  \caption{Superposition regions for the maps $h_1,h_2$ and the maps $h_3,h_4$}
  \label{fig.blendersuperposition}
\end{figure}
We say that a step skew product $F$ as in \eqref{sspF} satisfying \eqref{e.superpositioncu} (respectively \eqref{e.superpositioncs}) has a \emph{$\cu$-blender} (respectively, a \emph{$\cs$-blender}).  For a justification of this terminology see Remark \ref{bigremark} item (5). When both conditions hold, we say that $F$ is a \emph{skew product with overlapping blenders} (where the ``overlap'' refers to the interval $(0,1)$). The overlapping condition will guarantee ``dynamical interactions'' between the \emph{$\cu$-blender} and the \emph{$\cs$-blender}.  

Note that hypothesis \eqref{e.superpositioncu} implies that
\begin{equation}\label{eqqconsequence}
p^+<h_2^{-1}(p^+)<h_1^{-1}(q^+)<q^+.
\end{equation}

\begin{remark}[Superposition region]\label{remsupersuper}
Hypothesis \eqref{e.superpositioncu} implies that there are $0<t_0<t_1<1$ such that
\[
	0<t_0=h_2^{-1}(0)< h_1^{-1}(1)=t_1<1.
\]
The interval $(t_0,t_1)$ is called  \emph{superposition region} and is characterized by the fact that  if $h_1(t),h_2 (t)\in(0,1)$ then $t\in (t_0,t_1)$ (compare Figure~\ref{fig.blendersuperposition}).

The analogous statement holds for the maps $h_3^{-1}$ and $h_4^{-1}$.
\end{remark}

\begin{remark}[Alternative superposition region(s)]
Instead of the interval $[t_0,t_1]=[h_2^{-1}(0),h_1^{-1}(1)]$, we could also work for instance with $[t_0',t_1']=[h_2^{-1}(p^+),h_1^{-1}(q^+)]$ as superposition region. We prefer, however, hypotheses \eqref{e.superpositioncu} and \eqref{e.superpositioncs} for notational simplicity in order to have the \emph{common} reference interval $[0,1]$ for both sets of maps $h_1,h_2$ as well as $h_3^{-1},h_4^{-1}$.\end{remark}

\begin{remark}[Robustness of blenders]\label{remRobustskew}
	Consider a skew product with overlapping blenders $F$ as above. Let $J\subset\bR$ be some compact interval containing $[p^+,q^+]\cup[p^-,q^-]$ in its interior. Then every skew product $G$ with fiber maps $g_1,\ldots,g_4\colon\bR\to\bR$ which are sufficiently \emph{$C^2$-close} to $h_1,\ldots,h_4$ in $J$, respectively, also is a skew product with overlapping blenders. In this case we simply say that $G$ is sufficiently $C^2$-close to $F$.
\end{remark}

We are interested in two particular subsets of $F$-invariant measures. To define them, consider the canonical projection 
\begin{equation}\label{canproj}
	\Pi\colon\Sigma_4\times \RR\to \RR,\quad
	\Pi(\omega,x)\eqdef x.
\end{equation} 
Recall \eqref{defBn} and the definition of limit measures $\fB_{\pm\infty}(\cdot)$ thereafter. Given an $F$-ergodic measure $\mu$, we denote by $W^\s(\mu)$ (respectively, $W^\u(\mu)$) the set of \emph{forward} (respectively, \emph{backward}) \emph{generic points} of $\mu$, that is, 
\begin{equation}\label{defgenericF}\begin{split}
	W^\s(\mu)
	&\eqdef \big\{(\omega,x)\in\Sigma_4\times\bR\colon\fB_\infty(\omega,x)=\mu\big\},\\
	W^\u(\mu)
	&\eqdef \big\{(\omega,x)\in\Sigma_4\times\bR\colon\fB_{-\infty}(\omega,x)=\mu\big\}.
\end{split}\end{equation}
We say that an $F$-invariant probability measure $\mu$ is \emph{equivalent in projection to the Lebesgue measure} if $\Pi_\ast(\mu)$ is equivalent to the Lebesgue measure in a restriction to some nonempty open subset of $\RR$. Below we will consider $F$-ergodic measures $\mu^\pm$ such that the intersection
\begin{equation}\label{large}
	\Pi(W^\s(\mu^+))\cap\Pi(W^\u(\mu^-))\cap[0,1]
\end{equation}
is ``large'' (Lebesgue-almost every point in $[0,1]$ belongs to this set). To guarantee that ergodic measures satisfying \eqref{large} exist, in what is below we require that the maps $h_1,\ldots,h_4$ are $C^2$ to be able to invoke Lasota-Yorke arguments (see Theorem \ref{t.L-Y}). 

In what follows, we let $\Sigma_2^+\eqdef \{1,2\}^\ZZ$ and    $\Sigma_2^-\eqdef \{3,4\}^\ZZ$ and denote by $\sigma^\pm$ the restriction of $\sigma$ to $\Sigma_2^\pm$. We consider the restrictions $F^\pm \colon \Sigma_2^\pm\times \bR \to \Sigma_2^\pm\times \bR$ of $F$ to the corresponding subspaces.

\begin{definition}[The sets of measures $\Leb(F^\pm)$]\label{defLebFpm}
We define $\Leb^+(F)$ as the set of $F^+$-invariant ergodic measures $\mu$ which are equivalent in projection to the Lebesgue measure and have the property that $\Pi(W^\s(\mu))$ contains a set of full measure inside some dense open subset of $[0,1]$.
 We analogously define the set of $F^-$-invariant ergodic measures equivalent in projection to Lebesgue and considering $\Pi(W^\u(\mu))$ instead, and denote it by $\Leb^-(F)$.
\end{definition}

\begin{remark}\label{remRRskew}
	For every pair of measures $\mu^-\in\Leb^-(F)$ and $\mu^+\in\Leb^+(F)$, we have
\[
	\Pi(W^\s(\mu^+))\cap\Pi(W^\u(\mu^-))\cap[0,1]
	\ne\emptyset.
\]	
\end{remark}

Our main result is the following that implies Theorem \ref{mmt.skew}. 

\begin{theorem}\label{t.skew} 
	Assume that $F$ is a step skew product with overlapping blenders and $C^2$ fiber maps. Then the convex hull of $\Leb^-(F)\cup\Leb^+(F)$ is contained in the closure of ${\cM_{\rm per}(F)}$. Moreover, this property holds for every skew product map sufficiently $C^2$-close to $F$.
\end{theorem}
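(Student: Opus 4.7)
The strategy is to verify the hypotheses of (a skew-product analog of) Theorem~\ref{t.cycle} for each pair $\mu^\pm \in \Leb^\pm(F)$ and then lift from pairwise convex combinations to arbitrary finite convex combinations via Appendix~\ref{appA}. First I would identify the partial hyperbolicity: the base shift contributes the strong stable/unstable directions $E^{\ss/\uu}$ and the fiber direction is the one-dimensional center $E^\c$. Measures in $\Leb^+(F)$ (resp.\ $\Leb^-(F)$) are supported in $\Sigma_2^+ \times [p^+,q^+]$ (resp.\ $\Sigma_2^- \times [p^-,q^-]$), where all fiber maps are uniform dilations (resp.\ contractions) by factor $\lambda^{\pm 1}$, so $\chi^\c(\mu^+) \ge \log\lambda>0$ and $\chi^\c(\mu^-) \le -\log\lambda<0$ and the supports are uniformly hyperbolic. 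This yields condition~(1) of Definition~\ref{defrichcycmeas}.

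Next I would exhibit the heteroclinic structure. The superposition hypotheses \eqref{e.superpositioncu} and \eqref{e.superpositioncs} provide ``transit codings'': every $t \in [0,1]$ admits forward codings in $\{1,2\}^\bN$ and backward codings in $\{3,4\}^\bN$ whose fiber orbits stay in $[0,1]$. Combined with Remark~\ref{remRRskew}, which gives a fiber point $t^* \in [0,1]$ lying simultaneously in $\Pi(W^\s(\mu^+))$ and $\Pi(W^\u(\mu^-))$, this supplies a generic point whose spliced coding realizes condition~(4) of Definition~\ref{defrichcycmeas}. For condition~(3), given any $x=(\omega,t)\in\supp(\mu^+)$ and $y=(\eta,s)\in\supp(\mu^-)$, the unique coding obtained by splicing $\omega_{i<0}$ with $\eta_{i\ge 0}$ at the appropriate fiber level produces points in $W^\uu(x)\cap W^\s(y)$ and $W^\u(x)\cap W^\ss(y)$; the transit codings keep these points in a bounded set. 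The partially hyperbolic set $\Lambda$ is then defined as the closure of $\supp(\mu^+)\cup\supp(\mu^-)$ together with the orbits of all such spliced heteroclinic points.

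Once the rich-cycle structure is in place, I would reproduce the pseudo-orbit construction of Section~\ref{sec3}: concatenate a $\mu^-$-generic backward segment of length $N^-$, a bounded transit segment of length $\ell$ through the superposition region encoded via the blenders, and a $\mu^+$-generic forward segment of length $N^+$, with $N^-/(N^-+N^++\ell)$ approximating a prescribed $s \in [0,1]$. Gan's shadowing (Theorem~\ref{theoGan}) applied to this $\lambda$-quasi-hyperbolic $\delta$-pseudo-orbit yields a periodic orbit whose empirical measure approximates $s\mu^- + (1-s)\mu^+$ in the weak$\ast$ topology. For a general convex combination $\nu=\sum_i a_i\mu_i^- + \sum_j b_j\mu_j^+$ with $\mu_i^\pm\in\Leb^\pm(F)$, I would iterate the procedure: concatenate generic segments for the various $\mu_i^\pm$ interleaved by short transits, with time proportions matching the coefficients $a_i,b_j$. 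The auxiliary result on convex sums of Appendix~\ref{appA} is then precisely the combinatorial ingredient that turns the resulting Birkhoff average into the claimed convex combination.

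Robustness is automatic: by Remark~\ref{remRobustskew}, every step skew product $G$ sufficiently $C^2$-close to $F$ still has overlapping blenders, the $C^2$ hypothesis keeps the Lasota--Yorke machinery available, and the sets $\Leb^\pm(G)$ remain nonempty with the projection-Lebesgue property; thus the same construction applies verbatim to $G$. The main obstacle I anticipate is the extension from pairwise to arbitrary finite convex combinations, which is exactly what Appendix~\ref{appA} is designed to handle. A secondary technical point is that the phase space $\Sigma_4\times\bR$ is not a manifold, but all estimates in Section~\ref{sec3} are purely metric and depend only on the uniform hyperbolicity constants of the fiber maps and on the product metric, so Gan's shadowing and the Birkhoff-average lemmas (Lemmas~\ref{l.weakclose}--\ref{l.convexsum}) transfer without modification.
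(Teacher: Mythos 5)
Your overall skeleton is the paper's: verify the rich-cycle conditions for each pair $\mu^\pm\in\Leb^\pm(F)$, invoke the pairwise result, use Appendix~\ref{appA} for general convex combinations, and get robustness from Remark~\ref{remRobustskew}. The verification of conditions (1)--(4) of Definition~\ref{defrichcycmeas} and the pairwise approximation match Theorem~\ref{tpropt.skew} and Lemma~\ref{lemproc.segment}. One structural remark: the paper does not argue that the estimates of Section~\ref{sec3} ``transfer without modification'' to $\Sigma_4\times\bR$; it builds an explicit smooth realization $\fF$ on $\bR^3$ (Section~\ref{secsmooth}) precisely so that Theorem~\ref{t.cycle} applies literally. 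Your hand-wave here is repairable but should be replaced by that construction.

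The genuine gap is in the passage from pairwise to general convex combinations. You propose to build a single long quasi-hyperbolic pseudo-orbit concatenating generic segments for all the $\mu_i^\pm$ with time proportions matching the coefficients. This does not work as stated: condition (1) of Definition~\ref{defGan} requires \emph{every prefix} of the orbit segment to contract along $E=E^\ss\oplus E^\c$ at a uniform exponential rate, and condition (2) requires every suffix to expand along $F$. For a two-block segment this forces the contracting ($\mu^-$-generic) block to come first and the total center average to be negative --- which is exactly how Proposition~\ref{pl.quasihyp} is arranged. For many interleaved blocks of mixed sign, the prefix conditions do not follow from the total average being negative, and no ordering is exhibited that makes them hold. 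The paper avoids this entirely: Proposition~\ref{p.convexsums} rewrites $\mu=\sum_{i,j}b_{ij}\mu_{ij}$ with each $\mu_{ij}$ a \emph{two}-measure combination satisfying $\chi^\c(\mu_{ij})<0$; each $\mu_{ij}$ is approximated by a periodic orbit in $\Lambda$ via Lemma~\ref{lemproc.segment}; and then --- the ingredient missing from your proposal --- all these periodic orbits have $\s$-index two, meet $[0,1]^2\times[p^-,q^-]$, and are therefore pairwise homoclinically related by Corollary~\ref{c.convexblender} (which rests on the blender intersection property, Proposition~\ref{p.blender}, and Corollary~\ref{c.homorelperpoints}). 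Sigmund's theorem for the resulting homoclinic class then approximates $\sum_{i,j}b_{ij}\delta_{i,j,n}$ by periodic measures, and a diagonal argument finishes. The same homoclinic-relation mechanism is also what handles the case where $\mu_0$ is a combination of measures all in $\Leb^-(F)$ (or all in $\Leb^+(F)$), a case your proposal does not treat at all since no heterodimensional cycle is involved there.
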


Because of Remark \ref{remRobustskew}, the second assertion in Theorem \ref{t.skew} is immediate.

Given any pair of measures $\mu^-\in\Leb^-(F)$ and $\mu^+\in\Leb^+(F)$, the first step in the proof of  Theorem \ref{t.skew} is to show that both measures are ``related by a rich heterodimensional cycle'' and that hence, by Theorem \ref{t.cycle}, the segment of measures $[\mu^-,\mu^+]$ is accumulated by periodic measures. This would be an immediate consequence if we could deal with the skew product $F$ as if it would be a diffeomorphism restricted to some convenient set. This is why in Section \ref{secsmooth} we introduce smooth realizations $\fF$ of $F$. As by Remark \ref{bigremark} below any measure in $\Leb(F^\pm)$ can be viewed as an $\fF$-invariant one, we refrain from introducing any new notation for $\fF$-invariant measures. 

The first step to establish the cycle property is the following. 

\begin{proposition}\label{p.skew}
Assume that $F$ is a step skew product $C^2$ fiber maps and a $\cu$-blender (a $\cs$-blender). Then the set of measures $\Leb^+(F)$ (the set of measures $\Leb^-(F)$) is uncountable. Moreover, this property holds for every step skew product sufficiently $C^2$-close to $F$.
\end{proposition}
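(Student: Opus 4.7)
I address the $\cu$-blender case; the $\cs$-blender statement follows by applying the same argument to $F^{-1}$, whose fiber maps $h_3^{-1},h_4^{-1}$ satisfy the $\cu$-blender superposition condition by \eqref{e.superpositioncs}, together with the identity $W^\s_F=W^\u_{F^{-1}}$. The strategy is to reduce to a parametric family of piecewise expanding interval maps, apply the Lasota--Yorke theorem (Theorem~\ref{t.L-Y}), and lift the resulting absolutely continuous measures to $F^+$.

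For each tie-breaking parameter $\alpha\in(t_0,t_1)$ in the superposition region (Remark~\ref{remsupersuper}), define
\[
T_\alpha\colon[0,1]\to[0,1],\qquad T_\alpha(x)=\begin{cases}h_1(x),& x\in[0,\alpha],\\ h_2(x),& x\in(\alpha,1].\end{cases}
\]
The superposition hypothesis forces $h_1(0),h_2(1)\in(0,1)$ and the defining relations $h_1(t_1)=1$, $h_2(t_0)=0$ combined with monotonicity ensure $T_\alpha([0,1])\subset[0,1]$. Since $h_1,h_2$ are $C^2$ and uniformly dilate by a factor larger than $\lambda>1$, the map $T_\alpha$ is piecewise $C^2$ and uniformly expanding, with a single discontinuity at $\alpha$. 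By Theorem~\ref{t.L-Y}, $T_\alpha$ admits an ergodic absolutely continuous invariant probability measure $\rho_\alpha$ of bounded variation density, bounded away from zero on a dense open subset $U_\alpha\subset[0,1]$ containing $\supp\rho_\alpha$; here topological mixing of $T_\alpha$ on $U_\alpha$ is guaranteed by uniform expansion combined with the overlap structure from \eqref{eqqconsequence}.

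Define the coding $\pi_\alpha\colon[0,1]\to\{1,2\}^{\bN}$ by $\pi_\alpha(x)_n=1$ if $T_\alpha^n(x)\in[0,\alpha]$ and $\pi_\alpha(x)_n=2$ otherwise, so that $\pi_\alpha\circ T_\alpha=\sigma\circ\pi_\alpha$. Taking the natural extension of $(T_\alpha,\rho_\alpha)$ and embedding its bi-infinite orbits into $\Sigma_2^+\times[0,1]$ (for $n\le 0$, $\omega_n$ is determined by the tie-breaking rule applied to the backward orbit) produces an $F^+$-ergodic measure $\mu_\alpha$ with $\Pi_\ast\mu_\alpha=\rho_\alpha$. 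Thus $\Pi_\ast\mu_\alpha$ is equivalent to Lebesgue on $U_\alpha$, and by the Birkhoff ergodic theorem $\mu_\alpha$-almost every $(\omega,x)$ lies in $W^\s(\mu_\alpha)$; projecting, $\Pi(W^\s(\mu_\alpha))$ contains a full Lebesgue measure subset of $U_\alpha$. Hence $\mu_\alpha\in\Leb^+(F)$.

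To establish uncountability, fix $\alpha<\beta$ in $(t_0,t_1)$ and consider the measurable set $C=\{\omega\colon\omega_0=2\}\times(\alpha,\beta)$. By construction $\mu_\alpha(C)=\rho_\alpha((\alpha,\beta))>0$ (as $\pi_\alpha(x)_0=2$ for $x>\alpha$, and $\rho_\alpha$ has positive density on $(\alpha,\beta)\subset U_\alpha$), while $\mu_\beta(C)=0$ (as $\pi_\beta(x)_0=1$ for $x\le\beta$). Thus $\alpha\mapsto\mu_\alpha$ is injective on $(t_0,t_1)$, yielding an uncountable family in $\Leb^+(F)$. Robustness follows because any $G$ sufficiently $C^2$-close to $F$ still has overlapping blenders (Remark~\ref{remRobustskew}), and $C^2$-stability of the Lasota--Yorke estimates preserves both the construction (applied with the perturbed superposition interval $(t_0^G,t_1^G)$) and the distinctness argument. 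The main technical obstacle is verifying positivity of the density of $\rho_\alpha$ on every subinterval of $U_\alpha$, which reduces to a standard topological-mixing/full-support analysis for piecewise expanding maps with the overlap structure imposed by the $\cu$-blender hypothesis.
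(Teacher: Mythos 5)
Your overall strategy coincides with the paper's: introduce the one-parameter family of piecewise expanding "core" maps obtained by tie-breaking between $h_1$ and $h_2$ at a point of the superposition region (the paper's $\varphi_T$ on $[p^+,q^+]$, your $T_\alpha$ on $[0,1]$ -- equivalent, since the relevant invariant interval lies in $(0,1)$), apply Lasota--Yorke, lift the acip to an $F^+$-ergodic measure, and separate the measures by the parameter. Your separation argument via the cylinder set $\{\omega_0=2\}\times(\alpha,\beta)$ is a valid and arguably cleaner alternative to the paper's, which distinguishes the projected measures by their supports $[h_2(T),h_1(T)]$.

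However, there is a genuine gap in the verification that $\mu_\alpha\in\Leb^+(F)$. You assert that the density of $\rho_\alpha$ is positive on a dense open subset $U_\alpha$ of $[0,1]$; this is false. The interval $[h_2(\alpha),h_1(\alpha)]$ is forward invariant for $T_\alpha$ and absorbs every orbit, so $\supp\rho_\alpha=[h_2(\alpha),h_1(\alpha)]$, a compact interval strictly contained in $(0,1)$ (this is the paper's Lemma~\ref{l.mainstep}). Consequently your argument only shows that $\Pi(W^\s(\mu_\alpha))$ has full Lebesgue measure in $\supp\rho_\alpha$, whereas Definition~\ref{defLebFpm} demands full measure inside a \emph{dense open subset of $[0,1]$}. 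The missing ingredient is that $W^\s(\mu_\alpha)$ is far larger than the image of the natural extension: for $t\notin\supp\rho_\alpha$ one must exploit the freedom to choose the symbol sequence $\omega$ (not merely the tie-breaking itinerary of $T_\alpha$). This is the content of the paper's Lemma~\ref{lem222} and Claim~\ref{cl.denseinI}: every interval $I\subset[p^+,q^+]$ contains a subinterval $J_I$ and admits $\omega$, $n$ with $\Pi\big((F^+)^n(\{\omega\}\times J_I)\big)$ landing where $\Pi_\ast\nu_T$ is equivalent to Lebesgue, whence $J_I\setminus\Pi(W^\s(\nu_T))$ is null; taking a countable base of intervals yields the required dense open set. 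Your closing remark identifies the "main technical obstacle" as positivity of the density on $U_\alpha$, but that is not where the difficulty lies: the real obstacle is this covering/itinerary argument, which is absent from your proof. The remaining parts (natural-extension lift, reduction of the $\cs$-case to $F^{-1}$, robustness via the $C^2$-openness of the overlapping-blender hypotheses) are fine.
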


\begin{remark}[Rich heterodimensional cycles for skew products]\label{remRRbis}%remrichssp}
	The concept of a rich heterodimensional cycle between measures can be formulated in the skew product setting. In this case, hyperbolicity of an ergodic measure must be restated in terms of fiber-hyperbolicity. Stable and unstable bundles correspond to the shift base-dynamics and hence ``partial hyperbolicity'' is automatic. The central Lyapunov exponent corresponds to the fiber-Lyapunov exponent. Stable and unstable invariant manifolds naturally translate to stable and unstable sets, similarly for the strong manifolds. Note that in this case transversality cannot be (and is not) required. For a formulation of heterodimensional cycles of sets, see \cite{DiaEstRoc:16}. We refrain from giving details. 

The property in Remark \ref{remRRskew} is analogous to property (4) in Definition \ref{defrichcycmeas}. On the other hand, the superposition hypotheses \eqref{e.superpositioncu} and \eqref{e.superpositioncs} correspond to (3) in Definition \ref{defrichcycmeas}. Indeed stronger intersection properties hold  (see Proposition \ref{p.blender} in the differentiable context) that allow to verify the occurrence of a rich heterodimensional cycle-property. 
\end{remark}

Remark \ref{p.skew} is the main step towards the following result proved in Section \ref{newsecProofLY}.

\begin{theorem}\label{tpropt.skew}
	Let $\fF$ be a smooth realization of a step skew product with $C^2$ fiber maps and overlapping blenders. Then every pair of measures $\mu^-\in\Leb^-(F)$ and $\mu^+\in\Leb^+(F)$ has a rich heterodimensional cycle.
\end{theorem}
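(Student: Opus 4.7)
The plan is to verify the four conditions of Definition~\ref{defrichcycmeas} (transposed to the skew-product setting via Remark~\ref{remRRbis}) and then lift the verification to the smooth realization $\fF$. Conditions~(1) and~(2) are structural: since $\mu^+$ is supported in $\Sigma_2^+\times\bR$ where $h_1,h_2$ uniformly expand with rate $\geq\lambda>1$, it is uniformly fiber-hyperbolic with $\chi^\c(\mu^+)\geq\log\lambda>0$, and symmetrically $\chi^\c(\mu^-)\leq -\log\lambda<0$, giving~(1). I would choose a compact interval $J\subset\bR$ large enough that $h_i(J)\subset J$ for $i=3,4$ and $h_i^{-1}(J)\subset J$ for $i=1,2$, and take $\Lambda$ to be the maximal $F$-invariant subset of $\Sigma_4\times J$. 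The strong symbolic hyperbolicity of $\sigma$ dominates the bounded fiber derivatives, so $\Lambda$ carries a partially hyperbolic splitting $E^\ss\oplus E^\c\oplus E^\uu$ with $E^\c$ the fiber direction, and $\supp(\mu^+)\cup\supp(\mu^-)\subset\Lambda$, giving~(2).

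For condition~(3) I will use the explicit product form of the strong sets in the skew product. Since $E^\s_{\mu^+}=E^\ss$, for $x=(\omega,t)\in\supp(\mu^+)$ one has $W^\s(x)=W^\ss(x)=\{(\omega',t)\colon \omega'_i=\omega_i\text{ for }i\geq 0\}$, while $W^\uu(x)$ is the analogous past-cylinder with fixed fiber. Symmetrically, for $y=(\tau,s)\in\supp(\mu^-)$, $W^\u(y)=W^\uu(y)$, and $W^\s(y)$ contains $\{(\tau',r)\colon \tau'_i=\tau_i\text{ for }i\geq 0,\, r\in J\}$; the latter inclusion uses that the uniform contraction by $h_3,h_4$ on $J$ absorbs any starting fiber $r$ into the $(\tau,s)$-orbit. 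Given any such $x,y$, the concatenated sequence $\rho=(\ldots\omega_{-2}\omega_{-1}.\tau_0\tau_1\ldots)\in\Sigma_4$ furnishes $(\rho,t)\in W^\uu(x)\cap W^\s(y)\cap\Lambda$, and the mirror concatenation with fiber coordinate $s$ produces a point in $W^\u(x)\cap W^\ss(y)\cap\Lambda$.

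The main obstacle is condition~(4): producing $\mu^-$- and $\mu^+$-generic points with a \textbf{common} fiber coordinate. This is precisely where the Lebesgue-equivalence built into the definition of $\Leb^\pm(F)$ is essential. By Definition~\ref{defLebFpm}, $\Pi(W^\s(\mu^+))$ contains a Lebesgue-full subset of some dense open $U\subset[0,1]$ and $\Pi(W^\u(\mu^-))$ a Lebesgue-full subset of some dense open $V\subset[0,1]$; Baire gives $U\cap V$ dense open, and two full-measure subsets of $U\cap V$ intersect in a set of full Lebesgue measure. Any $t_0$ in this intersection yields a $\mu^+$-generic point $(\omega^+,t_0)$ and a $\mu^-$-generic point $(\omega^-,t_0)$, both lying in $\Lambda$ because genericity forces bounded two-sided orbits. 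The hybrid sequence $\rho$ with $\rho_i=\omega^-_i$ for $i\leq -1$ and $\rho_i=\omega^+_i$ for $i\geq 0$ then furnishes a point $(\rho,t_0)\in W^\s((\omega^+,t_0))\cap W^\u((\omega^-,t_0))\cap\Lambda$, completing~(4).

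Finally I would transfer the verification to the smooth realization $\fF$: the symbolic past- and future-cylinders become honest smooth local strong manifolds of $\fF$ tangent to $E^{\ss/\uu}$, the fiber-coordinate concatenations performed above correspond to genuine intersections of these manifolds, and the uniform $\lambda$-domination in the fibers preserves the partially hyperbolic splitting of $\fF|_\Lambda$. I do not anticipate difficulty at this step once the skew-product picture is in place, so the rich heterodimensional cycle property for $(\mu^-,\mu^+)$ follows.
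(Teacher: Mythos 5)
Your proposal is correct and follows essentially the same route as the paper: conditions (1)--(2) come from the uniform fiber hyperbolicity of the blenders and the maximal invariant set in $\Sigma_4\times J$, condition (3) from the product form of the strong and weak invariant sets (your symbolic concatenations are exactly the paper's points $(x_0,y_1,t)$ in realization coordinates), and condition (4) from picking a common fiber coordinate $t_0$ in the intersection of the Lebesgue-full projections $\Pi(W^\s(\mu^+))$ and $\Pi(W^\u(\mu^-))$ and mixing past and future itineraries. Your treatment of the ``full measure inside a dense open set'' clause in Definition~\ref{defLebFpm} via Baire is in fact slightly more careful than the paper's own wording at that step.
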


Theorem \ref{t.skew} will be a consequence of Theorem \ref{tpropt.skew}, Theorem \ref{t.cycle}, and some additional convexity arguments (to deal with convex combinations of nonergodic measures). 

The remainder of this section is organized as follows. Smooth realizations of skew products are introduced in Section \ref{secsmooth}. In Section \ref{secacipLY}, we study special properties of the fiber maps of the skew product $F$ that give rise to measures that are absolutely continuous with respect to the Lebesgue measure and provide measures in $\Leb(F^\pm)$. In Sections \ref{secproofLYPro}, \ref{newsecProofLY}, and  \ref{secproofThm3} we prove Proposition \ref{p.skew}, and Theorems \ref{tpropt.skew} and \ref{t.skew}, respectively.

%%%%%%%%%%%%%%%%%%%%%%%%%%%%%%%%%%%%%%%%%%%%%%%%%%%
\subsection{Realization of the skew product by diffeomorphisms}\label{secsmooth}
%%%%%%%%%%%%%%%%%%%%%%%%%%%%%%%%%%%%%%%%%%%%%%%%%%%

We now explain how $F$ can be realized by a ``smooth analogue'' $\fF$ and provide a dictionary between them. We consider a  smooth realization of $F$ in a set of the form $\Sigma_4\times J$, where $J\subset \bR$ is an interval containing $[p^+,q^+]\cup [p^-,q^-]$ in its interior: there are a diffeomorphism $\fF\colon \bR^3\to \bR^3$ and a cube $[0,1]^2\times J$ so that the restriction of $\fF$ to this cube is of the form
$$
\fF(x,y,t)= \big(\ff(x,y), h_{x,y}(t)\big).
$$
Here, $\ff\colon \RR^2\to\RR^2$ is a diffeomorphism having a linear horseshoe on $R\eqdef[0,1]^2$  
conjugate to $\Sigma_4$ and there is small $\varepsilon>0$ such that
$$
	R\cap \ff^{-1}(R)
	=\bigcup_{i=1}^4 H_i, 
	\quad  \text{ where }\quad
	H_i \eqdef [0,1]\times [\frac i5-\varepsilon,\frac i5+\varepsilon].
$$
The explicit expression of $\ff$ is
$$
	\ff(x,y)
	= \Big( 2\varepsilon x +\frac i5-\varepsilon, \frac {y-\frac i5}{2\varepsilon}+ \frac12\Big),
	\quad \text{ if }  (x,y) \in H_i.
$$
The maps $h_{x,y}$ are defined by
$$
	h_{x,y}(t)
	=h_i(t) \quad \text{ for }y\in [\frac i5-\varepsilon,\frac i5+\varepsilon], \quad i=1,2,3,4,
$$
where $h_i$ are the maps in \eqref{sspF}. Compare Figure \ref{fig.blender}.

\begin{figure}[h] 
 \begin{overpic}[scale=.5]{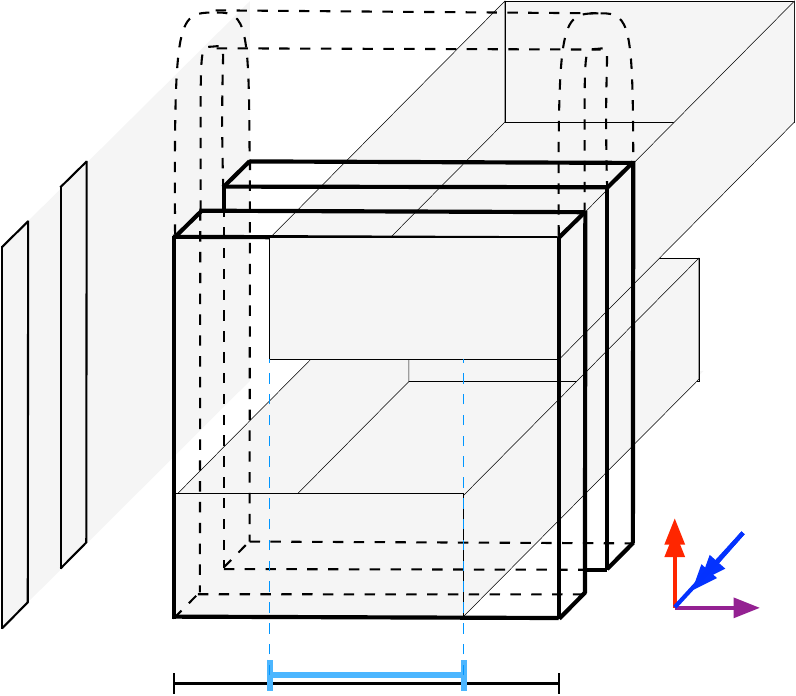}
	\put(2,64){\rotatebox{42}{\small{$R=[0,1]^2$}}}
  	\put(96,10){\small$t$}
  	\put(84,24){\small$y$}
  	\put(94,21){\small$x$}
 \end{overpic}
 \hspace{0.5cm}
  \begin{overpic}[scale=.35]{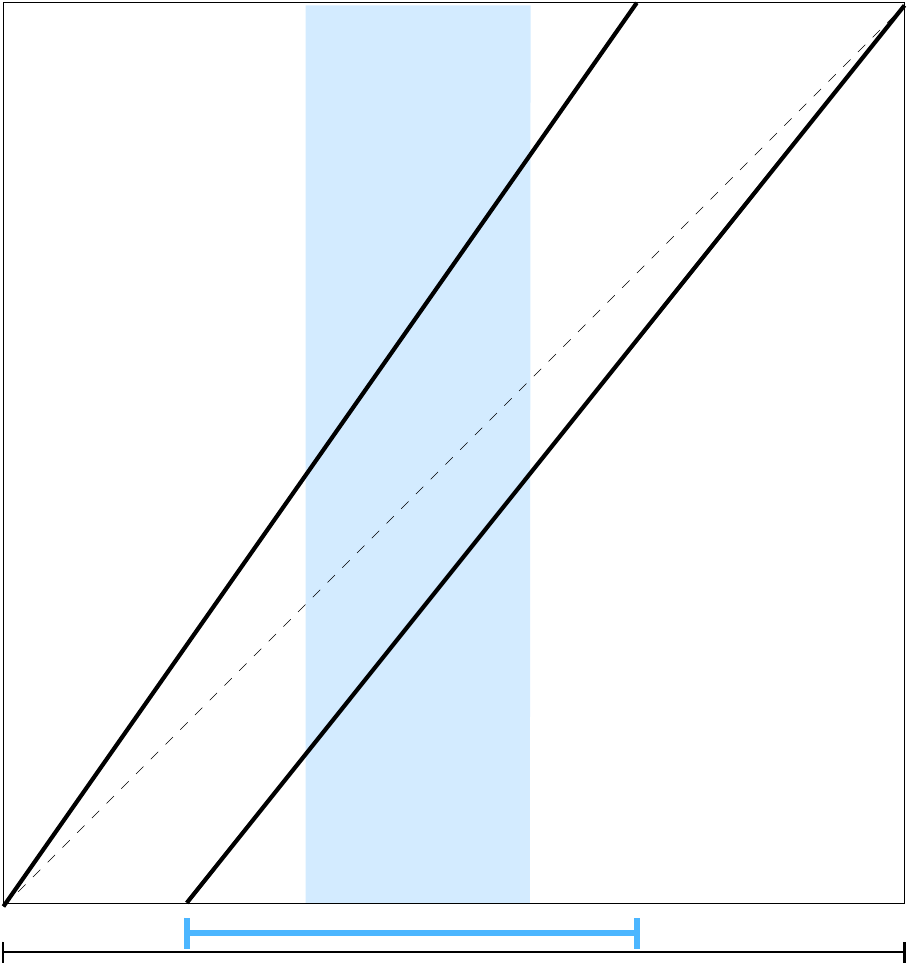}
 	\put(5,23){\small$h_1$}
	\put(84,79){\small$h_2$}
 	\put(0,-5){\small$p^+$}
  	\put(90,-5){\small$q^+$}
  	\put(15,-5){\small$h_2^{-1}(p^+)$}
  	\put(54,-5){\small$h_1^{-1}(q^+)$}
 \end{overpic}
  \caption{The smooth realization $\fF$}
  \label{fig.blender}
\end{figure}

We call $\fF$ a \emph{smooth skew product with overlapping blenders}.

In the remainder of this section, we can and will use either the skew product $F$ on $\Sigma_4\times J$ or the diffeomorphism $\fF$ on $\fR\times J$, depending on which point of view is more convenient. 

Consider the maximal invariant set for $\fF$,
\begin{equation}\label{e.thesetL}
	\Lambda\eqdef \bigcap_{n\in\ZZ}\fF^n\big([0,1]^2\times J\big),
\end{equation}
and also its subsets 
\begin{equation}\label{e.thesetsLpm}
	\Lambda^-\eqdef \bigcap_{n\in\ZZ}\fF^n\Big([0,1]\times [\frac 12,1]\times J\Big),\qquad
	\Lambda^+\eqdef \bigcap_{n\in\ZZ}\fF^n\Big([0,1]\times [0,\frac 12]\times J\Big).
\end{equation}
The restriction of  $\fF$ to $\Lambda^-$ is conjugate to $F^-$ on its maximal invariant set in $\Sigma_2^-\times J$, analogously for the restriction of  $\fF$ to $\Lambda^+$ and $F^+$ on $\Sigma_2^+\times J$.

We collect some useful properties of $\fF$ to get rich heterodimensional cycles between measures in $\Leb^-(F)$ and $\Leb^+(F)$. 

\begin{remark}[Summary of properties of the dynamics of $\fF$ in $\Lambda$]\label{bigremark}
$\,$

\begin{itemize}[leftmargin=0.6cm ]
\item[(1)] \textbf{Localization of measures.} 
Given any $t\not\in [p^+,q^+]$ then, for every $\omega\in\Si_2^+$, the second coordinate of the sequence $(F^+)^n(\omega,t)$ tends to $\pm\infty$ as $n\to +\infty$. Hence, the support of any $F^+$-invariant measure is contained in $\Si_2^+\times [p^+,q^+]$. The analogous statement holds for $F^-$ and the interval $[p^-,q^-]$. This immediately translates to the localization of the measures of $\fF$.

\item[(2)] \textbf{Uniformly hyperbolic sets.} We denote by $\fR$ the maximal invariant set of $\ff$ in $R$. Thus, $\fR$ is a hyperbolic basic set such that $\ff|_\fR$ is conjugate to $\sigma\colon \Si_4\to \Si_4$ and has a contraction rate $2\varepsilon$ in the first coordinate and an expansion rate $(2\varepsilon)^{-1}$ in the second one.  
Furthermore, the restriction of $\fF$ to $\fR\times J$ is conjugate to the restriction of $F$ to $\Si_4\times J$ (a set that contains the support of every $F$-invariant measures by the previous item).  This conjugacy projects to the identity map on $J$. In particular, it preserves the Lebesgue measure in the center direction.  Moreover, the canonical projection \eqref{canproj} corresponds to the projection $\fR\times J \to J$.

\item[(3)] \textbf{Partially hyperbolic splitting.}
We chose $\varepsilon>0$ such that the expansion/contraction rates dominate the derivative of the maps $h_i$ on $J$. Then there is a partially hyperbolic splitting with one-dimensional center $E^\ss\oplus E^\c\oplus E^\uu$ defined on $\fR\times J$, where $E^\ss\oplus E^\uu$ is the hyperbolic splitting for $\ff|_\fR$. 

\item[(4)] \textbf{Local strong invariant manifolds.} 
	For every $p=(x,y,t) \in \fR\times J$, it holds $[0,1]\times \{(y,t)\}\eqdef W^{\ss}_{r,L}(p)\subset W^\ss(p)$ and $\{x\}\times [0,1]\times\{t\}\eqdef W^\uu_{r,L}(p)\subset W^\uu(q)$.

\item[(5)] \textbf{Occurrence of blenders} 
The sets $\Lambda^+$ and $\Lambda^-$ are uniformly hyperbolic basic sets, with respect to $\fF$, where the center direction is uniformly expanding for $\Lambda^+$ and uniformly contracting for $\Lambda^-$. With the terminology introduced in \cite{BonDia:12}, the sets $\Lambda^+$ and $\Lambda^-$ are referred to as $\cu$-blender and $\cs$-blender horseshoes, respectively.

\item[(6)] \textbf{$\s$-index of $F$-invariant measures.} %\label{r.dictionary}
Every $F^\pm$-invariant measure can be viewed as an $\fF$-invariant measure. By a slight abuse of notation, we use the same symbol for both of them. Consider the correspondingly defined \emph{Lyapunov exponent} (relative to the bundle $E^\c$). For later purposes, we define it for any \emph{$\fF$-invariant} (not necessarily ergodic) measure. Let
\begin{equation}\label{defLyapexp}
	\chi^\c(\mu)
	\eqdef \int \log\,\lVert D\fF|_{E^\c}\rVert\,d\mu.
\end{equation}
Any ergodic measure supported on $\Lambda^-$ has negative $\chi^\c$-exponent and, in particular,  is uniform hyperbolic with $\s$-index two. Analogously, $\Lambda^+$ supports ergodic measures with positive $\chi^\c$-exponent and which are uniformly hyperbolic with $\s$-index one.

Every measure in $\mu^-\in\Leb^-(F)$ corresponds to one supported on $\Lambda^-$. Similarly, every measure in $\mu^+\in\Leb^+(F)$ corresponds to one supported on $\Lambda^+$. Therefore, the measures $\mu^\pm$ are uniformly hyperbolic and have different $\s$-indices. We refrain from introducing the notation $\Leb(\fF^\pm)$ and, by a slight abuse of notation, use $\Leb(F^\pm)$ instead.
\end{itemize}
\end{remark}

For the sake of completeness, we establish an intersection property that is in the core of the definition of a blender (similar results can be found in \cite{BonDiaVia:95}). 

\begin{proposition}[Intersection property of blenders]\label{p.blender} 
	For every $t\in [p^+,q^+]$  and $x\in [0,1]$ the segment $\{x\}\times [0,1]\times \{t\}$ contains a point $z\in W^\ss(a)$ for some $a\in\Lambda^+$.

Similarly, for every $t\in [p^-,q^-]$  and every $y\in [0,1]$ the segment $[0,1]\times \{(y,t)\}$ contains a point in $W^\uu(b)$ for some $b\in \Lambda^-$.
\end{proposition}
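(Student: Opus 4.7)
The proof hinges on the following consequence of \eqref{eqqconsequence}. Since $p^+$ is fixed by $h_1$ and $q^+$ by $h_2$, the intervals $h_1^{-1}([p^+,q^+]) = [p^+, h_1^{-1}(q^+)]$ and $h_2^{-1}([p^+,q^+]) = [h_2^{-1}(p^+), q^+]$ are both contained in $[p^+, q^+]$, and their union covers all of $[p^+, q^+]$ precisely because $h_2^{-1}(p^+) \leq h_1^{-1}(q^+)$. Equivalently, for every $t\in[p^+,q^+]$ there exists $i\in\{1,2\}$ with $h_i(t)\in[p^+,q^+]$.

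Fix $t_0\in[p^+,q^+]$ and $x_0\in[0,1]$. The plan is to produce a point $a\in\Lambda^+$ whose $t$-coordinate equals $t_0$ and to pick $z$ on the given segment sharing the $(y,t)$-coordinates of $a$. Apply the observation above inductively to build a one-sided sequence $\omega_0,\omega_1,\ldots\in\{1,2\}$ such that $h_{\omega_{n-1}}\circ\cdots\circ h_{\omega_0}(t_0)\in[p^+,q^+]$ for every $n\geq 0$. Extend to a bi-infinite sequence $\omega\in\Sigma_2^+$ arbitrarily: since $h_1^{-1}$ and $h_2^{-1}$ are contractions mapping $[p^+,q^+]$ into itself, the backward $\omega$-orbit of $t_0$ automatically stays in $[p^+,q^+]$. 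Via the conjugacy of $\ff|_\fR$ with $\sigma$ on $\Sigma_4$ (Remark \ref{bigremark}(2)), $\omega$ determines a unique point $(a_x,a_y)\in\fR$ whose full $\ff$-orbit is confined to $[0,1]\times[0,\frac{1}{2}]$. Consequently, the $\fF$-orbit of $a\eqdef(a_x,a_y,t_0)$ lies in $[0,1]\times[0,\frac{1}{2}]\times[p^+,q^+]\subset[0,1]\times[0,\frac{1}{2}]\times J$, so $a\in\Lambda^+$ by \eqref{e.thesetsLpm}. Since $a_y\in[0,\frac{1}{2}]\subset[0,1]$, the point $z\eqdef(x_0,a_y,t_0)$ lies on the segment $\{x_0\}\times[0,1]\times\{t_0\}$ and, by Remark \ref{bigremark}(4), in $W^\ss_{r,L}(a)\subset W^\ss(a)$.

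The second statement is strictly analogous. Running the same derivation using hypothesis \eqref{e.superpositioncs} for the expansions $h_3^{-1}$ and $h_4^{-1}$ (which have fixed points $p^-$ and $q^-$, respectively) yields the analog $p^-<h_4(p^-)<h_3(q^-)<q^-$ of \eqref{eqqconsequence}, hence $h_3([p^-,q^-])\cup h_4([p^-,q^-])=[p^-,q^-]$. Equivalently, for every $t\in[p^-,q^-]$ there exists $i\in\{3,4\}$ with $h_i^{-1}(t)\in[p^-,q^-]$, and this suffices to build a two-sided $\omega\in\Sigma_2^-$ whose $\omega$-orbit through $t_0$ stays in $[p^-,q^-]$; the forward direction is automatic because $h_3$ and $h_4$ contract $[p^-,q^-]$ into itself. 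The resulting point $b=(b_x,b_y,t_0)\in\Lambda^-$ furnishes the intersection $(b_x,y,t_0)\in W^\uu_{r,L}(b)\cap([0,1]\times\{(y,t_0)\})$. No substantive obstacle arises; the only care is in bookkeeping the roles of $h_i$ versus $h_i^{-1}$ and their respective fixed points.
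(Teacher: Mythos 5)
Your proof is correct, but it reaches the conclusion by a route dual to the paper's. Both arguments begin identically: the superposition property, in the quantitative form \eqref{eqqconsequence}, produces a forward itinerary $\omega_0,\omega_1,\dots\in\{1,2\}$ keeping the fiber orbit of $t$ inside $[p^+,q^+]$. The paper then stays on the given segment: it forms the nested preimages $V_n=\fF^{-n}(\{x_n\}\times[0,1]\times\{t_n\})\subset\{x\}\times[0,1]\times\{t\}$, whose lengths shrink exponentially, takes the single point $z=\bigcap_n V_n$, observes that the forward orbit of $z$ never leaves $(H_1\cup H_2)\times J$, and concludes $z\in W^\s(\Lambda^+)=\bigcup_{a\in\Lambda^+}W^\ss(a)$ — the point $a$ is produced only abstractly, via the structure of stable sets of basic sets. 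You instead construct $a$ explicitly: you extend $\omega$ to a bi-infinite sequence (correctly noting that the backward fiber orbit stays in $[p^+,q^+]$ automatically, since $h_1^{-1}$ and $h_2^{-1}$ map that interval into itself — a step the paper never needs, since it only requires forward control), obtain a point $a=(a_x,a_y,t)\in\Lambda^+$ with the prescribed fiber coordinate by \eqref{e.thesetsLpm}, and then use the product structure $W^\ss_{r,L}(a)=[0,1]\times\{(a_y,t)\}$ of Remark \ref{bigremark}(4) to slide to the prescribed $x$-coordinate. Your version is shorter and more explicit for this step skew product, precisely because the local strong stable leaves are entire horizontal segments with constant $(y,t)$; the paper's nested-preimage argument is the one that survives perturbation to a genuine diffeomorphism, where those leaves are only $C^0$-close to horizontal, which is why it is the standard blender argument. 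Your treatment of the second statement, including the correct bookkeeping of which of $h_3,h_4$ fixes $p^-$ and which fixes $q^-$, is likewise sound.
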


\begin{proof}
Observe first that given $p=(x,y,t) \in \fR\times J$ and writing $(x', y', z')=\fF(p)$ it follows
\begin{equation}\label{usethis}\begin{split}
	\fF\big([0,1]\times \{(y,t)\}\big) &\subset [0,1]\times \{( y', t')\},\\
	\fF^{-1}\big(\{ x'\}\times [0,1]\times \{ t'\}\big)&\subset \{x\}\times [0,1] \times\{ t\}.
\end{split}\end{equation}

We only prove the first part of the assertion. Fix $(x,t)$ as in the statement. By the superposition assumption \eqref{e.superpositioncu}, the point  $t\in [p^+,q^+]$ has an image either by $h_1$ or by  $h_2$ in $[p^+,q^+]$. This means that there is $\omega\in\Si_2^+$ such that $t_n=\Pi((F^+)^n(\omega,t) )\in[p^+,q^+]$. We let $t_0=t$. One deduces that there is a sequence $x_n$ in $[0,1]$, such that $x_0=x$ and that
\[
 	\fF(\{x_n\}\times [0,1]\times \{t_n\}) \cap (H_1\cup H_2)\times J )
	\supset\{x_{n+1}\}\times [0,1]\times \{t_{n+1}\}.
\]
Consider 
\[
 V_n \eqdef \fF^{-n}(\{x_n\}\times [0,1]\times \{t_n\})
\]
and observe that this is nested sequence of segments contained  $\{x\}\times [0,1]\times \{t\}$ whose length decreases exponentially. Thus 
$$
\bigcap_n V_n = \{z\}, \quad  z=(x,y,t).
$$  
By construction,
 $\fF^n(z)\in (H_1\cup H_2)\times J $ 
for every $n\geq 0$. This implies that the $\omega$-limit set of $z$ is contained in the maximal invariant set in $(H_1\cup H_2)\times J $,  which is $\Lambda^+$. In other words, $z\in W^\s(\Lambda^+)$.
As $\Lambda^+$ is a hyperbolic basic set which is expanding in the center direction, this implies that there is $a\in \Lambda^+$ with $z\in W^\s(a)=W^\ss(a)$.
\end{proof}

In the next two results, given $p\in \Lambda$ we use the notation 
\begin{equation}\label{notation}
	p=(x_p,y_p,t_p).
\end{equation}

\begin{corollary} \label{c.homorelperpoints} 
Let $a=(x_a,y_a,t_a)\in\Lambda$ be a hyperbolic periodic point of $\s$-index one with $t_a\in [p^+,q^+]$.  
Then there is $x\in (0,1)$ such that $\{x\}\times [0,1]\times J$ is contained in the (two-dimensional)  
unstable manifold $W^\u(a)$. 

Similarly, for every hyperbolic periodic point $b=(x_b,y_b,t_b)\in\Lambda$ of $\s$-index two with $t_b\in[p^-,q^-]$, there is $y\in (0,1)$ such that $[0,1]\times \{y\}\times  J$ is contained in the
(two-dimensional)  stable manifold $W^\s(b)$.
\end{corollary}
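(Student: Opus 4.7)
The plan is to take $x=x_a$ and show the whole slab $\{x_a\}\times[0,1]\times J\subset W^\u(a)$; the second assertion then follows by the symmetric argument applied to $\fF^{-1}$. Note that $x_a\in(0,1)$ because $a\in\Lambda\subset[0,1]^2\times J$ and the iterated-function-system formula $x_a=(\sum_{j=0}^{k-1}(2\varepsilon)^{k-1-j}(\omega_{j+1}/5-\varepsilon))/(1-(2\varepsilon)^k)$ keeps $x_a$ strictly in the interior of $[0,1]$ for small $\varepsilon$. The argument has two parts: first produce a thin slab $\{x_a\}\times[0,1]\times I_0\subset W^\u(a)$ for some short interval $I_0\ni t_a$, and then iterate using the periodic dynamics to grow $I_0$ into all of $J$.

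For the initial slab, I use that $a$ has $\s$-index one, so $W^\u_\loc(a)$ is a two-dimensional disk tangent to $E^\c\oplus E^\uu$ at $a$; in the skew-product coordinates it contains a central arc $\gamma=\{(x_a,y_a,t)\colon|t-t_a|<\delta\}$ for some $\delta>0$. For each $b=(x_a,y_a,t)\in\gamma$ we have $b\in\fR\times J$, and Remark~\ref{bigremark}(4) gives $W^{\uu}_{r,L}(b)=\{x_a\}\times[0,1]\times\{t\}\subset W^\uu(b)$. Because the strong-unstable foliation subfoliates the unstable manifold of any periodic point in the partially hyperbolic context (indeed, for $c\in W^\uu(b)$ with $b\in W^\u(a)$, $\fF^{-n}(c)$ stays close to $\fF^{-n}(b)$, which stays close to $\fF^{-n}(a)$, so $c\in W^\u(a)$), we get $W^\uu(b)\subset W^\u(a)$. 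Taking the union over $t\in(t_a-\delta,t_a+\delta)$ yields $\{x_a\}\times[0,1]\times I_0\subset W^\u(a)$ with $I_0=(t_a-\delta,t_a+\delta)$.

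For the iteration step, let $\omega=(\omega_1,\ldots,\omega_k)$ be the itinerary of $a$ and set $\phi=h_{\omega_k}\circ\cdots\circ h_{\omega_1}$, so that $\phi(t_a)=t_a$ and $\phi'(t_a)>1$ by the $\s$-index one hypothesis. On a distinguished $y$-subinterval $J_\omega\subset[0,1]$ of length $(2\varepsilon)^k$ the first $k$ iterates follow the itinerary $\omega$ throughout, so $\fF^k$ factors as $(\ff^k,\phi)$ on this strip; by the periodicity of $a$'s $x$-coordinate, $\ff^k$ maps $\{x_a\}\times J_\omega$ bijectively onto $\{x_a\}\times[0,1]$. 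Hence $\fF^k(\{x_a\}\times J_\omega\times I_0)=\{x_a\}\times[0,1]\times\phi(I_0)$, and the $\fF^k$-invariance $W^\u(a)=\fF^k(W^\u(a))$ gives
\[
  \{x_a\}\times[0,1]\times\phi^n(I_0)\subset W^\u(a)\quad\text{for every }n\ge 0.
\]

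The main obstacle I anticipate is the final step: verifying $\bigcup_{n\ge 0}\phi^n(I_0)\supset J$. Since $\phi'(t_a)>1$ and $\phi$ is orientation-preserving, $(\phi^n(I_0))_n$ is a strictly increasing nested family. When $a\in\Lambda^+$ all $\omega_j\in\{1,2\}$, $\phi$ is a uniform expansion on $\bR$, and $\phi^n(I_0)\nearrow\bR\supset J$ immediately. For a general $a\in\Lambda$ with mixed itinerary, $\phi$ need not be globally expanding, and one has to use the superposition hypothesis \eqref{e.superpositioncu} together with the placement $t_a\in[p^+,q^+]$ to exclude competing $\phi$-fixed points inside $J$ that would trap the nested family. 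Once this is settled, $\{x_a\}\times[0,1]\times J\subset W^\u(a)$, and the symmetric argument applied to $\fF^{-1}$ using \eqref{e.superpositioncs} produces the $y\in(0,1)$ with $[0,1]\times\{y\}\times J\subset W^\s(b)$.
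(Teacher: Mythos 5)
There is a genuine gap, and it sits exactly where you flagged it --- but it is worse than a missing verification: that step cannot be repaired within your scheme. By confining yourself to the single fiber $\{x_a\}$, the only dynamics you have available is the period-$k$ fiber return map $\phi=h_{\omega_k}\circ\cdots\circ h_{\omega_1}$ along $a$'s own itinerary. For a periodic point of $\Lambda$ whose itinerary mixes expanding symbols ($1,2$) with contracting ones ($3,4$) --- and such points are plentiful in this setting --- $\phi$ is not globally expanding and in general has an attracting fixed point $t^*$ strictly between $t_a$ and an endpoint of $J$. Then $\bigcup_n\phi^n(I_0)$ is trapped by $t^*$ and never covers $J$. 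Worse, since the backward orbit of any point $(x_a,y,t)$ follows $a$'s backward base-itinerary, its fiber coordinate evolves under $\phi^{-1}$, for which $t^*$ is repelling; so points with $t$ beyond $t^*$ do not converge backwards to the orbit of $a$, and $\{x_a\}\times[0,1]\times J\not\subset W^\u(a)$. The corollary only asserts the existence of \emph{some} $x\in(0,1)$, and in general it cannot be taken equal to $x_a$.

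The paper's proof circumvents this by changing the fiber. Since $t_a\in[p^+,q^+]$, Proposition~\ref{p.blender} shows that the strip $S=\{x_a\}\times[0,1]\times[t_a-\delta,t_a+\delta]\subset W^\u(a)$ meets the local (strong) stable manifold of a point of $\Lambda^+$, hence, by density of periodic points in the basic set $\Lambda^+$, meets $W^\s_{\rm loc}(p)=W^{\ss}_{\rm loc}(p)$ transversely for some periodic $p\in\Lambda^+$. For such $p$ the itinerary uses only the symbols $1,2$, so its fiber return map \emph{is} a uniform dilation of all of $\bR$ and one does obtain $\{x_p\}\times[0,1]\times J\subset W^\u(p)$ --- this is precisely the special case where your computation is valid, and the only one. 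The $\lambda$-lemma then makes large forward iterates of $S$ (which remain finite unions of product slabs $\{x'\}\times[0,1]\times I'$) accumulate on this rectangle, so some iterate contains a slab $\{x\}\times[0,1]\times J$ for a new $x$, and invariance of $W^\u(a)$ concludes. If you want to salvage your argument, run it only for periodic points of $\Lambda^+$ (respectively $\Lambda^-$) and then transfer the rectangle to a general $a$ via the heteroclinic connection supplied by the blender intersection property and the $\lambda$-lemma.
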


\begin{proof} 
We only prove the first part. By  Proposition~\ref{p.blender}, $W^\uu_{\rm loc}(a)$ intersects the strong stable manifold of some point of $\Lambda^+$. Note that the two-dimensional unstable manifold $W^\u(a)$ contains a strip $S=\{x_a\}\times [0,1]\times [t_a-\delta, t_a+\delta]$, for some small $\delta>0$.  As periodic points are dense in the hyperbolic basic set $\Lambda^+$, there is a periodic point $p=(x_p,y_p,t_p)\in\Lambda^+$ whose local stable manifold $W^\s_{\rm loc}(p)=W^\ss_{\rm loc}(p)$ intersects transversely $S$.  Now the $\lambda$-lemma implies that the positive iterates of $W^\u(a)$ accumulate on every compact part of $W^\u_{\rm loc}(p)$. Noting now that $\{x_p\}\times [0,1]\times J\subset W^\u(p)$, we deduce that sufficiently large iterates of the strip $S$ (corresponding to multiple of the period of $a$) contains a rectangle of the form $\{x\}\times [0,1]\times J$. By construction, this rectangle is contained in $W^\u(a)$.
\end{proof}

Given a periodic point $a$ of period $m$, denote 
\[
	\mu_a
	\eqdef \fB_m(a,F)
	= \frac1m\sum_{k=0}^{m-1}\delta_{F^k(a)}.
\]
Recall that two hyperbolic period points in $\Lambda$ of the same $\s$-index are \emph{homoclinically related} (relative to $\Lambda$) if the stable and the unstable manifolds of their orbits intersect cyclically and transversally in points in $\Lambda$. As a straightforward consequence of Corollary~\ref{c.homorelperpoints}  one gets the following. Indeed, the last assertion follows from result in \cite{Sig:74}.

\begin{corollary}\label{c.convexblender} 
$\,$
\begin{enumerate}[ leftmargin=0.7cm ]
\item Every pair of hyperbolic periodic points $b_1,b_2\in\Lambda$ of $\s$-index two and with $t_{b_1},t_{b_2}\in[p^-,q^-]$ are homoclinically related (relative to $\Lambda$).
\item Every pair of hyperbolic periodic points $a_1,a_2\in\Lambda$ of $\s$-index one and with $t_{a_1},t_{a_2}\in[p^+,q^+]$ are homoclinically related (relative to $\Lambda$).
\end{enumerate}
In particular, the segments of measures $[\mu_{a_1},\mu_{a_2}]$ and $[\mu_{b_1},\mu_{b_2}]$ are contained in the closure of $\cM_{\rm per}(\fF|_\Lambda)$.
\end{corollary}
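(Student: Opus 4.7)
The plan is to derive the two homoclinic-relation claims from Corollary~\ref{c.homorelperpoints} combined with Remark~\ref{bigremark}(4), and then invoke Sigmund's density theorem. I will focus on part~(2); part~(1) will follow by the dual argument applied to $\fF^{-1}$. Fix $a_1,a_2\in\Lambda$ of $\s$-index one with $t_{a_1},t_{a_2}\in[p^+,q^+]$. By Corollary~\ref{c.homorelperpoints} there exist $x_1,x_2\in(0,1)$ such that $\{x_i\}\times[0,1]\times J\subset W^\u(a_i)$; moreover, since the $\s$-index is one and $a_j\in\fR\times J$, Remark~\ref{bigremark}(4) yields the local strong stable manifolds $W^{\ss}_{r,L}(a_j)=[0,1]\times\{(y_{a_j},t_{a_j})\}\subset W^\s(a_j)$. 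These pieces of invariant manifolds meet at the cyclic pair
\[
	z\eqdef (x_1,y_{a_2},t_{a_2})\in W^\u(a_1)\cap W^\s(a_2),\quad
	z'\eqdef (x_2,y_{a_1},t_{a_1})\in W^\u(a_2)\cap W^\s(a_1),
\]
and transversality in $\bR^3$ at each of these points is immediate, as $TW^\u(a_i)$ spans the $(y,t)$-plane while $TW^\s(a_j)$ lies along the $x$-axis.

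The main technical step is the verification $z,z'\in\Lambda$. I would exploit the product structure of the base horseshoe: a direct computation on $\ff$ shows that $\fR=\fR_x\times\fR_y$, that the past symbolic code of a point of $\fR$ depends only on its $x$-coordinate, and that the future code depends only on its $y$-coordinate. Inspecting the proof of Corollary~\ref{c.homorelperpoints}, the value $x_1$ arises as the $x$-coordinate of an iterate $\fF^{kN}(S)$ of a local strip $S\subset W^\u_{\rm loc}(a_1)$ produced along an itinerary $\omega\in\{1,2\}^{kN}$ chosen so that $h_\omega([t_{a_1}-\delta,t_{a_1}+\delta])\supset J$; in particular $x_1\in\fR_x$. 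Combined with $y_{a_2}\in\fR_y$, the product decomposition gives $(x_1,y_{a_2})\in\fR$. For the center coordinate, the forward symbolic code of $z$ coincides with that of $a_2$, so the forward $t$-orbit of $z$ is that of $a_2$ and remains in $J$. For the backward direction, the covering property allows $\omega$ to be selected so that $h_\omega^{-1}(t_{a_2})\in[t_{a_1}-\delta,t_{a_1}+\delta]$, where I use that $h_1^{-1}$ and $h_2^{-1}$ preserve $[p^+,q^+]$, a consequence of \eqref{eqqconsequence}; from that iterate onwards, the $t$-orbit of $z$ is exponentially contracted towards the backward $t$-orbit of $a_1$ (the center direction is expanding forward, hence contracting backward, for $\s$-index one), and the latter sits in $J$. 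This will yield $z\in\Lambda$, and the same argument applies to $z'$.

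Part~(1) will be obtained by the dual argument: using the local strong unstable fibers $W^\uu_{r,L}(b_i)=\{x_{b_i}\}\times[0,1]\times\{t_{b_i}\}\subset W^\u(b_i)$ from Remark~\ref{bigremark}(4), the strips $[0,1]\times\{y_i\}\times J\subset W^\s(b_i)$ from Corollary~\ref{c.homorelperpoints}, and the dual superposition hypothesis \eqref{e.superpositioncs} which forces $h_3$ and $h_4$ to preserve $[p^-,q^-]$. Once both cyclic transverse intersections are shown to lie in $\Lambda$, the periodic points $a_1,a_2$ (respectively $b_1,b_2$) will be homoclinically related relative to $\Lambda$. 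Their homoclinic classes are then compact, transitive, locally maximal, uniformly hyperbolic subsets of $\fF|_\Lambda$, so Sigmund's theorem \cite{Sig:74} will yield density of periodic measures in the space of invariant measures supported on them; in particular $[\mu_{a_1},\mu_{a_2}]$ and $[\mu_{b_1},\mu_{b_2}]$ will be contained in the closure of $\cM_{\rm per}(\fF|_\Lambda)$. The hard part is the $\Lambda$-membership verification: since $\Lambda$ is only the maximal invariant set of $\fF$ in the cube and is not itself a single basic set, it cannot be read off from the invariance of the stable/unstable manifolds alone and will require the skew-product analysis described above together with the invariance of $[p^\pm,q^\pm]$ under the appropriate one-step maps.
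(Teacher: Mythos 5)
Your proposal is correct and follows the same route as the paper, which states the corollary as a direct consequence of Corollary~\ref{c.homorelperpoints} together with the local strong invariant manifolds of Remark~\ref{bigremark}(4) and Sigmund's theorem; you have simply filled in the $\Lambda$-membership verification (via the product structure $\fR=C_x\times C_y$ of the base horseshoe and the fact that $h_1^{-1},h_2^{-1}$ preserve $[p^+,q^+]$, resp.\ $h_3,h_4$ preserve $[p^-,q^-]$) that the paper leaves implicit. The only small imprecision is the phrase ``homoclinic classes are\dots locally maximal'': what one actually uses is that homoclinically related hyperbolic periodic orbits lie in a common basic set (a horseshoe spanning the transverse heteroclinic cycle, contained in $\Lambda$), to which \cite{Sig:74} applies.
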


%%%%%%%%%%%%%%%%%%%%%%%%%%%%%%%%%%%%%%%%%%%%%%%%%%%
\subsection{Associated Lasota-Yorke-type core dynamics}\label{secacipLY}
%%%%%%%%%%%%%%%%%%%%%%%%%%%%%%%%%%%%%%%%%%%%%%%%%%%

The results in this section are the key to prove that $\Leb(F^\pm)$ are uncountable sets (Proposition \ref{p.skew}). 

Recall the superposition region $(t_0,t_1)$ from Remark \ref{remsupersuper}. In this section, we study the dynamics (and associated invariant measures) of the following family of interval maps. Given  $T\in (t_0,t_1)$, let
\begin{equation}\label{defvarphiT}
	\varphi_T \colon [p^+,q^+]\to [p^+,q^+],\quad
	\varphi_T(t)\eqdef 
	\begin{cases}
		h_1(t)&\text{ if }t\in[p^+,T],\\
		h_2(t)&\text{ if }t\in(T,q^+]
	\end{cases}
\end{equation}
(compare Figure \ref{fig.acipA}). It follows from our hypotheses that $\varphi_T([p^+,q^+])=[p^+,q^+]$. Moreover, $\varphi_T$ is a uniformly expanding map with a discontinuity at $T$. 

\begin{figure}[h] 
 \begin{overpic}[scale=.4]{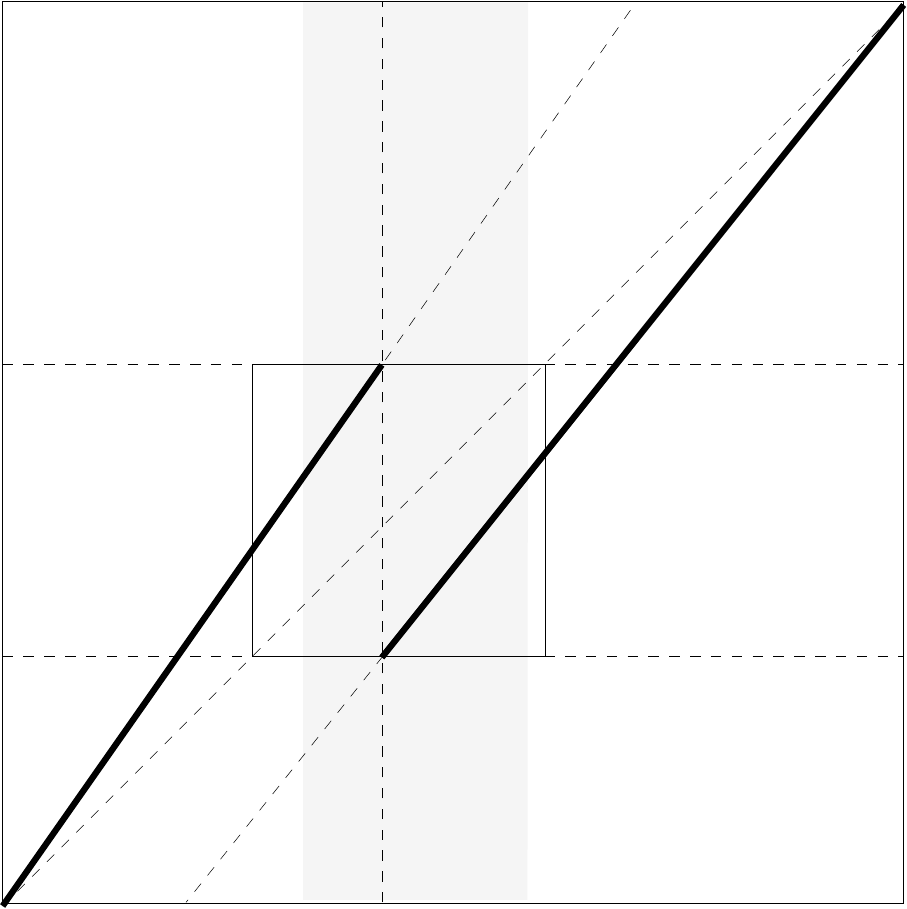}
	\put(5,18){\small$h_1$}
	\put(89,79){\small$h_2$}
 	\put(-2,-6){\small{$p^+$}}
 	\put(98,-6){\small{$q^+$}}
	\put(40,-6){\small$T$}
	\put(32,-6){\small$t_0$}
	\put(57,-6){\small$t_1$}
	\put(-40,59){\small$h_1(T)=\varphi_T(T)_-$}
	\put(-40,27){\small$h_2(T)=\varphi_T(T)_+$}
 \end{overpic}
  \caption{The map $\varphi_T$}
  \label{fig.acipA}
\end{figure}

Note that similar type of associated maps, though with a slightly different focus, were introduced in \cite{NakNakToyYan:23} and named \emph{core dynamics}.

Let us start with some preliminary considerations. We first recall a classical key result that goes back to \cite{LasYor:73} (see also the more comprehensive discussion in \cite[Chapters 5--6]{BoyGor:97}). 

\begin{theorem}[{\cite{BoyGor:97}}]\label{t.L-Y} 
Let $\psi\colon [a,b]\to [a,b]$ be a piecewise $C^2$ interval map such that 
\[
	\min\{\psi'_+(t), \psi'_-(t)\}>\lambda>1
	\quad\text{ for every $t\in [a,b]$},
\]	 
where $\psi'_+(t)$ and $\psi'_-(t)$ are the right and left derivatives of $\psi$ at $t$, respectively. Then there is a $\psi$-invariant ergodic probability  $\mu$ such that $\supp(\mu)$ is an interval and that $\mu$ is equivalent to the Lebesgue measure in $\supp(\mu)$.
\end{theorem}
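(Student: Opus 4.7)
The plan follows the classical Perron--Frobenius (transfer) operator approach pioneered by Lasota and Yorke. Define the transfer operator $\cL\colon L^1([a,b],\Leb)\to L^1([a,b],\Leb)$ of $\psi$ by the duality $\int(\cL f)\,g\,d\Leb=\int f\,(g\circ\psi)\,d\Leb$ for every $g\in L^\infty$, or explicitly by
\[
	(\cL f)(t)
	= \sum_{s\in\psi^{-1}(t)}\frac{f(s)}{|\psi'(s)|}.
\]
A probability density $h$ satisfies $\cL h=h$ if and only if $h\,d\Leb$ is $\psi$-invariant. Note that $\psi$ need not be continuous---it has finitely many discontinuities at which only one-sided derivatives are available---which is precisely the setting Lasota--Yorke address and explains the hypothesis involving $\psi'_\pm$.

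The crucial step, and the main technical obstacle, is the \emph{Lasota--Yorke inequality}: for every function $f$ of bounded variation on $[a,b]$ one establishes
\[
	\mathrm{Var}(\cL f)
	\le 2\lambda^{-1}\,\mathrm{Var}(f) + C\,\lVert f\rVert_{L^1},
\]
where $C$ depends on $\lambda$, on the number of branches of monotonicity of $\psi$, and on a uniform bound on $|\psi''|/(\psi')^2$ (finite by the piecewise $C^2$ hypothesis). Iterating yields $\mathrm{Var}(\cL^n f)\le(2\lambda^{-1})^n\mathrm{Var}(f)+C'\lVert f\rVert_{L^1}$ with $C'$ independent of $n$. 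The proof requires partitioning $[a,b]$ into branches of monotonicity, a change-of-variables estimate on each branch, and distortion control; this is where the two-sided $\lambda$-expansion and the $C^2$ regularity enter decisively.

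Starting from the uniform density $f_0\eqdef(b-a)^{-1}\mathbf{1}_{[a,b]}$, form the Ces\`aro averages $g_n\eqdef n^{-1}\sum_{k=0}^{n-1}\cL^k f_0$. Since $\lVert\cL\rVert_{L^1\to L^1}=1$, the Lasota--Yorke inequality forces $\mathrm{Var}(g_n)\le M$ uniformly, while $\lVert g_n\rVert_{L^1}=1$. Helly's selection theorem (the compact embedding of bounded-variation functions into $L^1$) extracts a subsequence $g_{n_j}\to h$ in $L^1$ with $h\ge 0$, $h$ of bounded variation, and $\lVert h\rVert_{L^1}=1$. Continuity of $\cL$ in $L^1$ together with the telescoping identity $\cL g_n-g_n=n^{-1}(\cL^n f_0-f_0)$ yield $\cL h=h$. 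Hence $\mu\eqdef h\,d\Leb$ is a $\psi$-invariant, absolutely continuous probability with bounded-variation density.

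To upgrade $\mu$ to an \emph{ergodic} measure whose support is an interval, one appeals to the standard structure theorem for absolutely continuous invariant measures of piecewise expanding maps (Li--Yorke, Keller, or Chapter~6 of Boyarsky--G\'ora): $\mu$ decomposes into finitely many ergodic components, each with bounded-variation density, and the support of each is a finite union of intervals cyclically permuted by $\psi$. Under an irreducibility argument (using that $\psi$ is piecewise expanding with a positive number of monotonicity branches, hence admits a component on which a suitable power is topologically mixing) one may select an ergodic component whose support is a single interval $I$. Bounded variation forces the density to be bounded away from zero on the interior of $I$, which in turn yields equivalence with Lebesgue on $I$. In summary, once the Lasota--Yorke inequality is in hand, the existence of $\mu$ is pure functional-analytic compactness, while ergodicity and the interval support follow from the structure theorem; the bounded-variation inequality is by far the only difficult step.
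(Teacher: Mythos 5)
The paper does not prove this statement: Theorem~\ref{t.L-Y} is quoted from \cite{LasYor:73} and \cite[Chapters 5--6]{BoyGor:97}, so there is no internal proof to compare with. Your sketch follows the same classical transfer-operator route as those references (Lasota--Yorke inequality, Ces\`aro averages, Helly compactness, structure theorem for the ergodic decomposition), which is the right architecture; but two of your steps do not work as written.

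The first is the iteration of the Lasota--Yorke inequality. From $\mathrm{Var}(\cL f)\le 2\lambda^{-1}\mathrm{Var}(f)+C\lVert f\rVert_{L^1}$ you conclude $\mathrm{Var}(\cL^n f)\le(2\lambda^{-1})^n\mathrm{Var}(f)+C'\lVert f\rVert_{L^1}$ with $C'$ independent of $n$. This requires $2\lambda^{-1}<1$, whereas the hypothesis only gives $\lambda>1$; for $\lambda\in(1,2]$ the geometric series defining $C'$ diverges and the uniform variation bound on the Ces\`aro averages is lost. The standard repair, and the one used in \cite{LasYor:73}, is to fix $N$ with $\lambda^N>2$, note that $\psi^N$ is again piecewise $C^2$ and expanding with constant $\lambda^N$, obtain a $\psi^N$-invariant density $h$ of bounded variation, and then pass to $\frac1N\sum_{k=0}^{N-1}\cL^k h$, which is a $\psi$-invariant density of bounded variation. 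You should make this reduction explicit.

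The second is the final paragraph. The structure theorem you invoke only gives that the support of each ergodic component is a \emph{finite union} of closed intervals cyclically permuted by $\psi$; no ``irreducibility argument'' can produce a component with connected support in the stated generality, because for some piecewise expanding maps no ergodic absolutely continuous measure has an interval as support (take a map permuting two disjoint subintervals $A$ and $B$, with $\psi^2|_A$ full-branched expanding and with every other point eventually mapped into $A\cup B$). The quoted statement should therefore be read with ``finite union of intervals''; in the paper the connectedness of $\supp(\mu_T)$ for the specific maps $\varphi_T$ is in any case established separately (Claim~\ref{cl.invariant} and Lemma~\ref{l.mainstep}). Relatedly, ``bounded variation forces the density to be bounded away from zero on the interior of $I$'' is not a correct function-theoretic assertion --- a density of bounded variation can vanish on a subinterval --- and the lower bound on the density over its support, hence the equivalence with Lebesgue there, must instead be extracted from invariance under $\cL$ together with the expansion.
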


Note that any map $\varphi_T$ as defined in \eqref{defvarphiT} satisfies the hypotheses of Theorem \ref{t.L-Y}.

In what follows,  given a piecewise expanding map $\psi$ as in Theorem~\ref{t.L-Y}, we denote by $\cM_{\mathrm{eqLeb}}(\psi)$ the set of $\psi$-ergodic invariant probability measures which are equivalent to the Lebesgue measure in some nonempty open interval. 

\begin{proposition}\label{p.theseteqLeb}
$\,$
\begin{enumerate}[ leftmargin=0.7cm ]
\item For every $T\in (t_0,t_1)$ the set $\cM_{\mathrm{eqLeb}}(\varphi_T)$ is a singleton $\{\mu_T\}$.
\item For every  $T_1, T_2 \in (t_0,t_1)$, $T_1\ne T_2$, it holds $\mu_{T_1} \ne \mu_{T_2}$.
\end{enumerate}
\end{proposition}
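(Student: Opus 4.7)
My plan is as follows. \textbf{Part (1).} Apply Theorem~\ref{t.L-Y} to $\varphi_T$, which is piecewise $C^2$ on $[p^+,q^+]$ with both one-sided derivatives bounded below by $\lambda>1$; this produces an ergodic $\varphi_T$-invariant probability measure $\mu_T$ equivalent to Lebesgue on its support (an interval), so in particular $\mu_T\in\cM_{\mathrm{eqLeb}}(\varphi_T)$. For uniqueness I first observe that $h_1(t)>t>h_2(t)$ for every $t\in(p^+,q^+)$: this follows from the mean value theorem together with $h_1(p^+)=p^+$, $h_2(q^+)=q^+$, and $\min\{h_1',h_2'\}\ge\lambda>1$. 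In particular $h_2(T)<h_1(T)$, so
\[
	\varphi_T\bigl([p^+,T]\bigr)\cup \varphi_T\bigl((T,q^+]\bigr)
	=[p^+,h_1(T)]\cup[h_2(T),q^+]=[p^+,q^+],
\]
i.e., $\varphi_T$ is surjective onto $[p^+,q^+]$. Combined with the uniform expansion, a standard iterated-expansion argument (at each step expand an open subinterval by factor $\ge\lambda$, splitting at successive preimages of $T$ and following a connected component of maximal length) gives topological exactness: for every nonempty open $U\subset[p^+,q^+]$ some iterate $\varphi_T^n(U)$ covers $[p^+,q^+]$ up to a Lebesgue null set. The support of any ergodic acip is forward-invariant modulo null sets, so this forces $\supp(\nu)=[p^+,q^+]$ for every $\nu\in\cM_{\mathrm{eqLeb}}(\varphi_T)$. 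Two distinct such ergodic measures would then be mutually absolutely continuous on their common full support, contradicting the mutual singularity of distinct ergodic measures. Hence $\cM_{\mathrm{eqLeb}}(\varphi_T)=\{\mu_T\}$.

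\textbf{Part (2).} Assume WLOG $T_1<T_2$, and suppose for contradiction $\mu_{T_1}=\mu_{T_2}=:\mu$. By part~(1), $\mu\sim\Leb$ on $[p^+,q^+]$ and $\mu$ is invariant under both $\varphi_{T_1}$ and $\varphi_{T_2}$. Using again $h_1>h_2$, the set
\[
	B_0\eqdef\bigl(h_2(T_1),\,\min\{h_1(T_1),h_2(T_2)\}\bigr]
\]
is a nondegenerate interval contained in $(h_2(T_1),h_2(T_2)]$ and disjoint from $(h_1(T_1),h_1(T_2)]$. Fix any Borel $B\subset B_0$ with $\Leb(B)>0$. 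From $B\subset[p^+,h_1(T_1)]$, $B\subset(h_2(T_1),h_2(T_2)]$, and $B\cap(h_1(T_1),h_1(T_2)]=\emptyset$, we respectively get $h_1^{-1}(B)\subset[p^+,T_1]$, $h_2^{-1}(B)\subset(T_1,T_2]$, and $h_1^{-1}(B)\cap(T_1,T_2]=\emptyset$. A direct preimage computation then yields
\[
	\varphi_{T_1}^{-1}(B)=h_1^{-1}(B)\sqcup h_2^{-1}(B),\qquad
	\varphi_{T_2}^{-1}(B)=h_1^{-1}(B).
\]
Invariance of $\mu$ under both maps forces $\mu(h_2^{-1}(B))=\mu(\varphi_{T_1}^{-1}(B))-\mu(\varphi_{T_2}^{-1}(B))=0$. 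But $h_2^{-1}(B)\subset(T_1,T_2]\subset[p^+,q^+]=\supp(\mu)$ has positive Lebesgue measure, contradicting $\mu\sim\Leb$ on its support.

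\textbf{Main obstacle.} The substantive step is the uniqueness in part~(1): one must convert the surjectivity and uniform expansion of $\varphi_T$ into topological exactness on $[p^+,q^+]$ despite the discontinuity at $T$, which requires some care in keeping track of components after splitting at preimages of $T$. Once part~(1) is available, part~(2) reduces to the short preimage computation above, exploiting the strict inequality $h_1>h_2$ that comes for free from the fixed-point and expansion structure of the branches.
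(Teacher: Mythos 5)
Your part~(1) contains a genuine error. You claim that $\varphi_T$ is topologically exact on $[p^+,q^+]$ and conclude that every ergodic acip has support $[p^+,q^+]$. This is false: the interval $[h_2(T),h_1(T)]$ is \emph{forward invariant} under $\varphi_T$ (on $[h_2(T),T]$ the map is $h_1>\id$, sending it into $[h_2(T),h_1(T)]$; on $(T,h_1(T)]$ it is $h_2<\id$, sending it into $(h_2(T),h_1(T)]$), and it is a \emph{proper} subinterval of $[p^+,q^+]$, since $T\in(t_0,t_1)=(h_2^{-1}(0),h_1^{-1}(1))$ gives $h_2(T)>0>p^+$ and $h_1(T)<1<q^+$. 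Hence no open subset of $[h_2(T),h_1(T)]$ ever covers $[p^+,q^+]$ under iteration, the exactness claim fails, and the support of the acip is in fact exactly $[h_2(T),h_1(T)]$ (the region $[p^+,h_2(T))\cup(h_1(T),q^+]$ is transient and carries zero measure). Your mutual-singularity argument for uniqueness survives once the support is corrected — any two measures in $\cM_{\mathrm{eqLeb}}(\varphi_T)$ can be shown to be equivalent to Lebesgue on a common subinterval of $[h_2(T),h_1(T)]$ (e.g.\ near $h_2(T)$, by pushing forward the interval of equivalence until it straddles $T$ and splits at the discontinuity) — but the step you wrote down does not establish this.

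The correct support identification also makes part~(2) immediate: $\mu_{T_1}$ and $\mu_{T_2}$ have supports $[h_2(T_1),h_1(T_1)]$ and $[h_2(T_2),h_1(T_2)]$, which differ when $T_1\ne T_2$ since $h_2$ is injective; this is the route the paper takes. Your preimage computation $\varphi_{T_1}^{-1}(B)=h_1^{-1}(B)\sqcup h_2^{-1}(B)$, $\varphi_{T_2}^{-1}(B)=h_1^{-1}(B)$ is correct and is a nice alternative, but as written it invokes $\supp(\mu)=[p^+,q^+]$; to repair it, observe that $h_2^{-1}(B_0)$ contains an interval $(T_1,T_1+\epsilon)$, which lies in the interior $(h_2(T_1),h_1(T_1))$ of the true support, so $\mu(h_2^{-1}(B_0))>0$ still yields the contradiction.
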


\begin{proof}
We start with some preliminary results. Fix some $T\in (t_0,t_1)$.

\begin{claim}\label{cl.forward}
Let $I\subset [p^+, q^+]$ be an interval with nonempty interior. Then there are a subinterval $H\subset I$ and $m=m(T,H)\geq 0$ such that $T\in \interior(\varphi_T^m(H))$ and $\varphi_T^m$ is a diffeomorphism on $\interior(H)$.
\end{claim}

\begin{proof} 
Just note that the length of the interval $\interior(\varphi_T^n(I))$ grows exponentially until $T\in \interior(\varphi_T^n(I))$. Now it is enough to take some appropriate subinterval $H$ of $I$.
\end{proof}

Let  $\varphi_T(T)_+  \eqdef \lim_{t\to T} h_2(t)$ and $\varphi_T(T)_-\eqdef\lim_{t\to T} h_1(t)$ (compare Figure \ref{fig.acipA}).

\begin{claim} \label{cl.invariant}
 The interval $[h_2(T), h_1(T)]= [\varphi_T(T)_+, \varphi_T(T)_-]$ is $\varphi_T$-invariant.
\end{claim}

\begin{proof}
Note that in the interval $[h_2(T), T]$ one has  $\varphi_T= h_1>\id$, thus $h_1([h_2(T), T]) \subset   [h_2(t), h_1(T)]$. Similarly, in the interval $(T, h_1(T)]$  one has $\varphi_T= h_2<\id$, therefore $h_2((T, h_1(T)) \subset   (h_2(T), h_1(T)]$. This implies the assertion.
 \end{proof}

\begin{claim}
There is an open and dense subset of points $t\in[p^+,q^+]$ such that there is $n(t)>0$ so that $(\varphi_T)^{n(t)}\in [h_2(T), h_1(T)]$ for every $n\geq n(t)$.
\end{claim}

\begin{proof}
By Claim~\ref{cl.forward}, any open interval $I\subset [p^+,q^+]$ contains an interval $J$ such
that $\varphi^{m}_T(J)=[h_2(T), h_1(T)]$ for some $m\in\bN$. Now it suffices to apply Claim~\ref{cl.invariant}.
\end{proof}

As the map $\varphi_T$ satisfies the hypotheses of Theorem \ref{t.L-Y}, $\cM_{\mathrm{eqLeb}}(\varphi_T)$ is nonempty. Pick some measure $\mu_T\in\cM_{\rm eqLeb}(\varphi_T)$. Note that $\mu_T$ is $\varphi_T$-ergodic and equivalent to the Lebesgue measure in some nonempty open interval $I_T\subset[0,1]$.

\begin{lemma}\label{l.mainstep}
It holds $\supp(\mu_T)=[h_2(T),h_1(T)]$.
\end{lemma}

\begin{proof} 
Let $H$ and $m$ be provided by Claim \ref{cl.forward} applied to the interval $I_T$. This implies that there are $\mu_T$-generic points in the $\varphi_T$-invariant interval $[h_2(T), h_1(T)]$ (recall Claim~\ref{cl.invariant}). This implies that
\[
 	\supp(\mu_T)\subset [h_2(T), h_1(T)].
\]
The next iterate  $\varphi_T^{m+1}(H)$ is the union of two intervals, one starting at $h_2(T)$ and the other one at $h_1(T)$. This proves that $\inf\supp(\mu_T)= h_2(T)$ and  $\sup\supp(\mu_T)= h_1(T) $. The assertion now follows from Theorem \ref{t.L-Y}.
 \end{proof}

To prove the proposition, suppose that $\cM_{\mathrm{eqLeb}}(\varphi_T)$ contains two measures $\mu_T$ and $\nu_T$. Then these measures must be equivalent in some interval of the form $(h_2(T),h_2(T)+\varepsilon)$, for some small $\varepsilon>0$. From $\varphi_T$-ergodicity, it follows that $\nu_T=\mu_T$.
 
Finally, if $T_1, T_2 \in (t_0,t_1)$, $T_1\ne T_2$, then the measures have different support and therefore are different. This completes the proof of the proposition. 
\end{proof}

\begin{remark}\label{r.skewproductF-}
We can obtain similar results considering the fiber maps $h_3^{-1}$ and $h_4^{-1}$ of the skew product $F^-$.
\end{remark}

\begin{remark}[Further types of measures in $\Leb(F^\pm)$]
In this section, the goal was to show that the set of measures $\Leb(F^\pm)$ is large. For that, we focused on special measures arising from Lasota-Yorke results. Probably, there are further classes of measures with this property. Let us briefly present another class of measures whose nature is different as they arise from stationary measures of an iterated function system. We will present the class of so-called \emph{Bernoulli convolutions}. Although this setting is very specific as it requires \emph{affine} fiber maps, it illustrates the complexity of the class of measures $\Leb(F^\pm)$. The discussion of possible further examples studying non-affine generalizations is beyond the scope of this paper. In \cite{Sol:23}, there is described the state of the art of the so-called transversality method which was developed to prove absolute continuity. We also point out the results in \cite{Bie:23,BarSimSolSpi:} which fits well into the spirit of this paper.  

As in the setting of stationary measure, one typically focuses on contractions, let us consider $\Leb^-(F)$. Consider the skew product \eqref{sspF} in a particular cases where $h_3(x)=\lambda x$ and $h_4(x)=\lambda x+\lambda-1$, where $\lambda\in(1/2,1)$. Applying each map with probability $1/2$, there is a unique associated stationary measure $\mu_\lambda$, that is, the measure satisfies $\mu_\lambda =2^{-1}(h_3)_\ast\mu_\lambda+2^{-1}(h_4)_\ast\mu_\lambda$. Moreover, its support equals the interval $[0,1]$. For certain parameters $\lambda$ this measure is absolutely continuous. Which parameters remains a challenging problem. Note that $\mu_\lambda$ is just the projection of the measure of maximal entropy for $F^-$ by $\Pi_\ast$ that, in the case of absolute continuity, is in $\Leb^-(F)$.  
\end{remark}

%----------------------------------------------------------------------------------------------------------------------------
\subsection{Proof of Proposition~\ref{p.skew}}\label{secproofLYPro}
%----------------------------------------------------------------------------------------------------------------------------

We prove the proposition for $F^+$. The proof for $F^-$ is analogous, replacing $h_1,h_2$ by $h_3^{-1}, h_4^{-1}$ and using Remark~\ref{r.skewproductF-}. Fix some $T\in (t_0,t_1)$.

\begin{lemma} \label{l.projectionofmeasures}
	There is an $F^+$-ergodic measure $\nu_T$ such that $\Pi_\ast(\nu_T)=\mu_T$ and hence is equivalent in projection to the Lebesgue measure.
\end{lemma}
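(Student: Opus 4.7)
My plan is to lift $\mu_T$ from the one-dimensional piecewise expanding system $(\varphi_T,[p^+,q^+])$ to an $F^+$-invariant ergodic measure on $\Sigma_2^+\times\bR$ by inserting a symbolic coordinate that records the ``branch history'' of each orbit under $\varphi_T$.

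First, I would form the natural extension $(\hat X,\hat\sigma,\hat\mu_T)$ of $(\varphi_T,\mu_T)$: take $\hat X$ to be the space of $\varphi_T$-orbit bi-sequences $(t_i)_{i\in\bZ}$ satisfying $t_{i+1}=\varphi_T(t_i)$, let $\hat\sigma$ be the shift, and let $\hat\mu_T$ be the unique $\hat\sigma$-invariant Borel probability measure projecting to $\mu_T$ under $(t_i)\mapsto t_0$. Since $\mu_T$ is $\varphi_T$-ergodic by Proposition~\ref{p.theseteqLeb}, the natural extension $\hat\mu_T$ is $\hat\sigma$-ergodic.

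Second, I would define the itinerary map $\Phi\colon \hat X\to\Sigma_2^+\times[p^+,q^+]$ by $\Phi((t_i)_{i\in\bZ})\eqdef((\omega_i)_{i\in\bZ},t_0)$ with $\omega_i=1$ if $t_i\le T$ and $\omega_i=2$ if $t_i>T$. The definition \eqref{defvarphiT} of $\varphi_T$ is exactly the assertion $h_{\omega_0}(t_0)=\varphi_T(t_0)=t_1$, so $\Phi$ intertwines $\hat\sigma$ with $F^+$. Setting $\nu_T\eqdef\Phi_\ast\hat\mu_T$ then yields an $F^+$-invariant probability measure, ergodic because the source is ergodic and $\Phi$ is equivariant. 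By construction $\Pi\circ\Phi((t_i))=t_0$, so $\Pi_\ast\nu_T=\mu_T$; and since Lemma~\ref{l.mainstep} gives $\supp(\mu_T)=[h_2(T),h_1(T)]$ on which $\mu_T$ is equivalent to Lebesgue, the measure $\nu_T$ is equivalent in projection to Lebesgue.

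The only delicate point I anticipate is the branch ambiguity of $\omega_i$ when $t_i=T$. However, $\{T\}$ has $\mu_T$-measure zero (as $\mu_T\ll\Leb$ on $\supp(\mu_T)$), and its grand orbit under $\varphi_T$ is a countable $\mu_T$-null set. Hence the set of $\hat t\in\hat X$ for which some coordinate equals $T$ is $\hat\mu_T$-null, $\Phi$ is well-defined on a full-measure subset, and the construction goes through. This ``hard part'' is thus a minor measure-theoretic cleanup rather than a genuine obstacle.
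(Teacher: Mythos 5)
Your proof is correct, but it takes a genuinely different route from the paper. The paper works forward from a single $\mu_T$-generic point $t$: it records the branch itinerary $\omega\in\Sigma_2^+$ of $t$ under $\varphi_T$, takes a weak$\ast$ accumulation point $\widetilde\nu_T$ of the Birkhoff averages $\fB_n((\omega,t),F)$, observes that $\Pi_\ast(\widetilde\nu_T)=\mu_T$ because $\Pi(F^k(\omega,t))=\varphi_T^k(t)$, and then extracts an ergodic component via the ergodic decomposition, using the $\varphi_T$-ergodicity of $\mu_T$ to see that almost every component still projects to $\mu_T$. You instead build the natural extension $(\hat X,\hat\sigma,\hat\mu_T)$ and push $\hat\mu_T$ forward under the equivariant itinerary map $\Phi$, obtaining $\nu_T$ directly as an ergodic measure with $\Pi_\ast\nu_T=\mu_T$; your treatment of the branch ambiguity at $t_i=T$ is adequate (and in fact the convention $\omega_i=1$ when $t_i\le T$ is already consistent with the definition \eqref{defvarphiT}, so even the null-set cleanup is not strictly needed). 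What your approach buys is an explicit, canonical lift and the avoidance of the ergodic-decomposition step, where one must justify that the projections of the ergodic components are $\varphi_T$-invariant; what the paper's approach buys is brevity, since it never constructs the inverse limit and only manipulates empirical measures of one orbit. Both arguments are sound and yield the same conclusion.
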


\begin{proof}
Take a generic point $t$ for the measure preserving system $([p^+,q^+],\varphi_T,\mu_T)$ and consider a point $\omega=(\omega_i)_i \in \Sigma_4$ satisfying the following for every $i\in \bN$,
\[
	\omega_i
	=\begin{cases}
		1&\text{ if }(\varphi_T)^i(t)\in [h_2(T),T],\\
		2&\text{ if }(\varphi_T)^i(t)\in (T, h_1(T)].
	\end{cases}
\]
Now, consider any accumulation point $\widetilde\nu_T$ of the sequence 
\[
	\nu_{n,T}
	=\fB_n\big((\omega,t),F\big)
	=\frac 1n\sum_{k=0}^{n-1} \delta_{F^k(\omega,t)}.
\]
Note that $\widetilde\nu_T$ is $F$-invariant and that $\Pi_\ast(\widetilde\nu_T)=\mu_T$. Furthermore, since $\mu_T$ is $\varphi_T$-ergodic, almost every measure in the ergodic decomposition of $\widetilde\nu_T$ (relative to $F$) also projects to $\mu_T$. This implies the assertion.
\end{proof}

Let now $\nu_T$ as provided by Lemma \ref{l.projectionofmeasures} and consider
\[
	\Pi (W^\s(\nu_T))=
	\{t\in \RR\colon
	 \mbox{there is }\omega\in \Sigma_2^+\text{ such that }(\omega,t)\in W^\s(\nu_T)\}.
\]

\begin{lemma} \label{lem222}
	The set $[p^+,q^+]\setminus \Pi (W^\s(\nu_T))$ has zero Lebesgue measure.
\end{lemma}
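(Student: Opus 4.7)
The plan is as follows. For Lebesgue-a.e.\ $t\in[p^+,q^+]$ we must exhibit $\omega\in\Sigma_2^+$ making $(\omega,t)$ a $\nu_T$-generic point of $F^+$. First, I would introduce the measurable section $s_T\colon[p^+,q^+]\to\Sigma_2^+\times[p^+,q^+]$, $s_T(t)\eqdef(\omega(t),t)$, where $\omega(t)_i\eqdef 1$ if $(\varphi_T)^i(t)\in[p^+,T]$ and $\omega(t)_i\eqdef 2$ otherwise (the choice at the countable set of preimages of $T$ is immaterial). The definition of $\varphi_T$ gives the intertwining $F^+\circ s_T=s_T\circ\varphi_T$, so $(s_T)_\ast\mu_T$ is $F^+$-invariant and projects to $\mu_T$ under $\Pi$; ergodicity of $\mu_T$ together with $\Pi\circ s_T=\id$ forces $(s_T)_\ast\mu_T$ to be $F^+$-ergodic. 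Inspecting the proof of Lemma~\ref{l.projectionofmeasures}, the limit measure $\widetilde\nu_T$ there is precisely $(s_T)_\ast\mu_T$, which is already ergodic, so one may take $\nu_T=(s_T)_\ast\mu_T$.

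The next step is to show that $s_T(t)\in W^\s(\nu_T)$ whenever $t$ is $\mu_T$-generic for $\varphi_T$. The key observation is the identity of Birkhoff averages
\[
\frac1n\sum_{k=0}^{n-1}\phi\big((F^+)^k(s_T(t))\big)
=\frac1n\sum_{k=0}^{n-1}(\phi\circ s_T)(\varphi_T^k(t))
\]
for every continuous $\phi$ on $\Sigma_2^+\times[p^+,q^+]$. Since $\phi\circ s_T$ is bounded and measurable, the Birkhoff ergodic theorem applied to $(\varphi_T,\mu_T)$ yields convergence to $\int\phi\circ s_T\,d\mu_T=\int\phi\,d\nu_T$ for $\mu_T$-a.e.\ $t$. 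Separability of $C(\Sigma_2^+\times[p^+,q^+])$ then gives, via a countable dense subset and a uniform-approximation argument, a single full-$\mu_T$-measure set on which simultaneous convergence holds for every continuous $\phi$, i.e., a full-$\mu_T$-measure set of $\nu_T$-generic points of the form $s_T(t)$.

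Because $\mu_T$ is equivalent to Lebesgue on $\supp(\mu_T)=[h_2(T),h_1(T)]$ by Lemma~\ref{l.mainstep}, the conclusion already holds for Lebesgue-a.e.\ $t\in[h_2(T),h_1(T)]$. To propagate it to all of $[p^+,q^+]$, I would use that $\mu_T$-genericity is invariant under $\varphi_T$-preimages and invoke the standard Lasota--Yorke theory \cite{BoyGor:97}: for a piecewise expanding interval map, the union of basins of its finitely many ergodic absolutely continuous invariant probability measures has full Lebesgue measure. By Proposition~\ref{p.theseteqLeb}(1), $\mu_T$ is the unique such measure (any ergodic absolutely continuous invariant probability measure belongs to $\cM_{\mathrm{eqLeb}}(\varphi_T)$), so its basin has full Lebesgue measure in $[p^+,q^+]$, which completes the proof.

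The step I view as the most delicate is the last one, namely promoting full Lebesgue measure from $\supp(\mu_T)$ to the whole interval. The openness and density of points whose forward orbits enter $\supp(\mu_T)$ was already obtained in the proof of Proposition~\ref{p.theseteqLeb}, but the upgrade to full Lebesgue measure genuinely requires the piecewise-expanding structure together with uniqueness of the absolutely continuous invariant measure.
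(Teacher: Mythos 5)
Your route is genuinely different from the paper's. The paper stays ``upstairs'': it uses the expansion property (Claim~\ref{cl.denseinI}) to send a subinterval $J_I$ of any given interval $I$ over the region where $\Pi_\ast\nu_T=\mu_T$ is equivalent to Lebesgue, and then runs a covering argument over a countable base of intervals. You instead work ``downstairs'' with the core map $\varphi_T$, transport genericity through a section $s_T$, and then invoke the full-basin theorem for piecewise $C^2$ expanding maps to cover Lebesgue-a.e.\ $t\in[p^+,q^+]$. That last ingredient is a stronger external input than anything the paper uses, but it is available and it cleanly replaces the paper's covering step; in exchange, your first two paragraphs do more work than is needed, since the statement they aim at (Lebesgue-a.e.\ point of $\supp(\mu_T)$ lies in $\Pi(W^\s(\nu_T))$) already follows from Birkhoff's theorem applied to $\nu_T$ itself: $W^\s(\nu_T)$ has full $\nu_T$-measure, hence $\Pi(W^\s(\nu_T))$ has full $\mu_T$-measure, and $\mu_T$ is equivalent to Lebesgue on $[h_2(T),h_1(T)]$ by Lemma~\ref{l.mainstep}. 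No section is required for this.

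There are two concrete problems with the write-up. First, $\Si_2^+=\{1,2\}^\ZZ$ is the \emph{two-sided} shift, so $s_T(t)$ only determines the coordinates $\omega(t)_i$ for $i\ge 0$; whatever convention you adopt for $i<0$, the intertwining $F^+\circ s_T=s_T\circ\varphi_T$ fails in the past coordinates, $(s_T)_\ast\mu_T$ is \emph{not} $F^+$-invariant, it is not equal to $\widetilde\nu_T$ (which is the natural extension of $(\varphi_T,\mu_T)$), and the displayed identity of Birkhoff sums is false for continuous $\phi$ depending on negative coordinates --- note that $(\sigma^k\omega(t))_{-1}$ records the true past symbol of $\varphi_T^{k-1}(t)$ while $\omega(\varphi_T^k(t))_{-1}$ is your arbitrary choice, and these stay at definite distance. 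This is repairable (approximate $\phi$ by functions depending on coordinates in $[-m,m]$, rewrite $\phi((F^+)^k(s_T(t)))$ as $\Psi_\phi(\varphi_T^{k-m}(t))$ for a bounded measurable $\Psi_\phi$, and apply Birkhoff downstairs; the limit measure is then the natural extension of $\mu_T$, which one takes as $\nu_T$), but as stated the central identity is wrong. Second, the propagation step is slightly miscalibrated: being in the Lasota--Yorke basin of $\mu_T$ (i.e.\ $\mu_T$-generic for $\varphi_T$) does not by itself put $t$ in $\Pi(W^\s(\nu_T))$, because your genericity of $s_T(t)$ was only obtained for $\mu_T$-\emph{almost every} $t$, not for every $\mu_T$-generic $t$ (the observables $\phi\circ s_T$ are not continuous). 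What you actually need is: (i) $\Pi(W^\s(\nu_T))$ is backward invariant under the branches $h_1,h_2$ (prepend the corresponding symbol to a generic itinerary --- this only changes past coordinates and hence not forward Birkhoff limits); (ii) the branches of $\varphi_T^n$ are diffeomorphisms, so they pull back Lebesgue-null sets to Lebesgue-null sets; and (iii) Lebesgue-a.e.\ $t$ has some forward $\varphi_T$-iterate in $\supp(\mu_T)$, which is the part of the Lasota--Yorke theory you correctly identify as the decisive external fact. With (i)--(iii) the conclusion follows.
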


\begin{proof} 
Let us first show an auxiliary result.

\begin{claim}\label{cl.denseinI}
For every open interval $I=(a,b)\subset [p^+,q^+]$ there are $\omega\in \Si_2^+$ and $n_0\in\bN$ such that
\[ 
	[p^+,q^+] \subset \Pi \big((F^+)^n (\{\omega\}\times I)\big) \quad 
	\mbox{for every $n\ge n_0$.}
\]
\end{claim}

\begin{proof} 
The consequence \eqref{eqqconsequence} implies that for every $t\in (p^+,q^+)$ we have either $h_1(t)\in (p^+,q^+)$ or $h_2(t)\in(p^+,q^+)$. Hence, there is $\omega=\omega(t) \in\Si_2^+$ such that
\[
\Pi(F^n (\omega,t))\in (p^+,q^+)
\quad \text{ for every }n \ge 0.
\]

Consider now $t\in (a,b)$, its associated $\omega$, and the sets $(F^+)^n(\{\omega\}\times I)$. Note that $\Pi ((F^+)^n(\{\omega\}\times (a,t)))$ and $\Pi((F^+)^n(\{\omega\}\times (t,b)))$ are two intervals whose length increase exponentially and that $\Pi((F^+)^n (\omega,t))\in (p^+,q^+)$ for every $n$. Thus, we have 
\[
 	[p^+,q^+] \subset \Pi ((F^+)^n \{\omega\}\times I) \quad 
	\mbox{for every $n$ large enough},
\]
proving the claim.
\end{proof}

As a consequence  of Claim~\ref{cl.denseinI}, given any interval  $I$  there are $n$, $\omega$, and an open interval $J_I\subset I$ such that $\Pi_\ast(\nu_T)$ is equivalent to Lebesgue in  $\Pi \big( (F^+)^n (\{\omega\}\times J_I) \big)$. In particular,
$$
J_I \setminus \Pi (W^\s(\nu_T))
$$
has zero Lebesgue measure. Let $U$ now be the union of all intervals $J_I$ for a countable family of intervals $I$ forming a base of the topology of $(p^+, q^+)$. By construction, $U$ is open and dense in $[p^+,q^+]$ and $U\setminus W^\s(\nu_T)$ has zero Lebesgue measure. As $[p^+,q^+]\setminus U$ is countable and hence has zero Lebesgue measure, the assertion of the lemma follows.
\end{proof}

\begin{corollary}
	$\nu_T\in\Leb^+(F)$.
\end{corollary}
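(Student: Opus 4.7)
The plan is to verify that $\nu_T$ satisfies the three conditions defining $\Leb^+(F)$ in Definition~\ref{defLebFpm}: it is an $F^+$-ergodic probability measure, it is equivalent in projection to the Lebesgue measure, and $\Pi(W^\s(\nu_T))$ contains a subset of full Lebesgue measure inside some dense open subset of $[0,1]$. Each of these points will be an essentially immediate consequence of a previously established lemma; no new argument is required beyond matching outputs to the definition.

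First, Lemma~\ref{l.projectionofmeasures} produces $\nu_T$ as an $F^+$-ergodic probability measure with $\Pi_\ast(\nu_T)=\mu_T$, which settles the invariance and ergodicity requirement. For the projection condition, recall from Proposition~\ref{p.theseteqLeb} that $\mu_T$ is equivalent to the Lebesgue measure on its support, and from Lemma~\ref{l.mainstep} that this support equals the interval $[h_2(T),h_1(T)]$. A short verification using monotonicity of $h_1,h_2$ and the defining relations $t_0=h_2^{-1}(0)$, $t_1=h_1^{-1}(1)$ shows that for $T\in(t_0,t_1)$ one has $[h_2(T),h_1(T)]\subset(0,1)$. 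Hence $\Pi_\ast(\nu_T)$ is equivalent to Lebesgue on a non-empty open subinterval of $[0,1]$, as required.

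Finally, Lemma~\ref{lem222} gives that $[p^+,q^+]\setminus\Pi(W^\s(\nu_T))$ has zero Lebesgue measure. Since $[0,1]\subset[p^+,q^+]$, taking the dense open subset $U=(0,1)\subset[0,1]$ we obtain that $U\setminus\Pi(W^\s(\nu_T))$ has zero Lebesgue measure, so $\Pi(W^\s(\nu_T))$ contains a full Lebesgue measure subset of $U$. Combining these three items gives $\nu_T\in\Leb^+(F)$. The substantive obstacles have already been overcome in Lemmas~\ref{l.projectionofmeasures} and~\ref{lem222} (construction of $\nu_T$ with the right projection, and the spreading of $\Pi(W^\s(\nu_T))$ across $[p^+,q^+]$ using the expanding superposition dynamics); the corollary is a pure verification step.
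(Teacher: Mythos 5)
Your proof is correct and follows essentially the same route as the paper, which simply notes $[0,1]\subset[p^+,q^+]$ and invokes Lemmas~\ref{l.projectionofmeasures} and~\ref{lem222}; you merely spell out the intermediate verifications (that $\supp(\mu_T)=[h_2(T),h_1(T)]\subset(0,1)$ and that $(0,1)$ serves as the dense open set) that the paper leaves implicit.
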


\begin{proof}
Recall $[p^+,q^+]\supset[0,1]$. Hence, Lemmas \ref{l.projectionofmeasures} and \ref{lem222}  imply the assertion. 
\end{proof}

The assertion that $\Leb^+(F)$ is uncountable is a consequence of the fact that, by Proposition \ref{p.theseteqLeb}, the measures $\mu_T$, $T\in (t_0,t_1)$, are pairwise distinct. This proves the proposition.
\qed

%%%%%%%%%%%%%%%%%%%%%%%%%%%%%%%%%%%%%%%%%%%%%
\subsection{Proof of Theorem \ref{tpropt.skew}}\label{newsecProofLY}
%%%%%%%%%%%%%%%%%%%%%%%%%%%%%%%%%%%%%%%%%%%%%

We need to check conditions (1)-(4) in Definition \ref{defrichcycmeas}. 

Condition (1) follows from Remark~\ref{bigremark} item (6). Moreover, $\supp (\mu^\pm) \subset \Lambda^\pm$. 
The partially hyperbolic set involved in the cycle is the set $\Lambda$ defined in \eqref{e.thesetL}. As this set contains the sets $\Lambda^\pm$ in \eqref{e.thesetsLpm} and, by Remark~\ref{bigremark} item (6), condition (2) holds taking $\Lambda$.

By Remark \ref{bigremark} item (4), local strong unstable manifolds of points in $\Lambda^+$ are segments of the form $\{x\}\times [0,1]\times \{t\}$. Local stable manifolds of points in $\Lambda^-$ are of the form $[0,1]\times \{y\}\times J$.  Therefore, these invariant manifolds intersect (transversely) in points in the partially hyperbolic set $\Lambda$. In the same way, local strong stable manifolds of points in $\Lambda^-$ are segments of the form $[0,1]\times \{(y,t)\}$ and local unstable manifolds of points in $\Lambda^+$  are on the form $\{x\}\times [0,1]\times J$. Therefore these invariant manifolds transversely intersect in points in $\Lambda$. This proves condition (3).

It remains to check condition (4): there are a $\mu^+$-generic point $p$ and a $\mu^-$-generic point $q$ such that $W^\s(p)\cap W^\u(q)\cap\Lambda\neq \emptyset$. Indeed, by Proposition~\ref{p.skew}, the sets $\Pi(W^\s(\mu^+))$ and $\Pi(W^\u(\mu^-))$ have each full Lebesgue measure in the reference interval $[0,1]$. In particular, there is some $t\in \Pi (W^\s(\mu^+))\cap \Pi(W^\u(\mu^-))$. Thus, there are $(x^+,y^+),(x^-,y^-)$ such that $p=(x^+,y^+,t)$ is $\mu^+$-generic and $q=(x^-,y^-,t)$ is $\mu^-$-generic.
Now it follows from Remark \ref{bigremark} item (4) that $(x^-,y^+,t)\in W^\s(p)\cap W^\u(q)\cap\Lambda$. 

This concludes the proof of the theorem.\qed

%%%%%%%%%%%%%%%%%%%%%%%%%%%%%%%%%%%%%%%%%%%%%
\subsection{Proof of Theorem~\ref{t.skew}}\label{secproofThm3}
%%%%%%%%%%%%%%%%%%%%%%%%%%%%%%%%%%%%%%%%%%%%%

Throughout this section, assume that $F$ is a skew product with overlapping blenders. Consider a smooth realization $\fF$ of $F$. 

\begin{lemma}\label{lemproc.segment} 
For every pair of measures $\mu^+\in \Leb^+(F)$ and $\mu^-\in \Leb^-(F)$, every measure in $[\mu^-, \mu^+]$  is the limit of ergodic measures supported on periodic orbits of $\Lambda$.
\end{lemma}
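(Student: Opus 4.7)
The plan is to combine Theorem~\ref{tpropt.skew} with Theorem~\ref{t.cycle}, working within a smooth realization $\fF$ of $F$. First, I would invoke Remark~\ref{bigremark} item~(6) to identify every measure of $\Leb^+(F)$ (resp.\ $\Leb^-(F)$) with an $\fF$-invariant ergodic measure supported on the uniformly hyperbolic basic set $\Lambda^+$ (resp.\ $\Lambda^-$), both of which are contained in the partially hyperbolic maximal invariant set $\Lambda$ defined in \eqref{e.thesetL}. Under this identification, the segment $[\mu^-,\mu^+]$ in $\cM(F)$ corresponds to the analogous segment in $\cM(\fF)$, and it therefore suffices to approximate the latter.

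Next, I would apply Theorem~\ref{tpropt.skew}, which guarantees that $\mu^-$ and $\mu^+$ are related by a rich heterodimensional cycle for $\fF$ with associated partially hyperbolic set $\Lambda$. Theorem~\ref{t.cycle} then yields, for every $\nu \in [\mu^-, \mu^+]$, a sequence of hyperbolic periodic measures $(\mu_{\nu,n})_n$ converging to $\nu$ in the weak$\ast$ topology, each supported on a periodic orbit contained in an arbitrarily small neighborhood of $\Lambda$.

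The last step is to argue that these periodic orbits can in fact be taken inside $\Lambda$ itself. This follows from the local maximality built into the construction: by \eqref{e.thesetL}, $\Lambda$ is the maximal $\fF$-invariant set contained in the compact region $[0,1]^2 \times J$, so any $\fF$-invariant set contained in a sufficiently small neighborhood of $\Lambda$ whose closure lies in the interior of $[0,1]^2 \times J$ is automatically a subset of $\Lambda$. Applying this observation to the periodic orbits supporting $\mu_{\nu,n}$ (discarding at most finitely many terms), the sequence $(\mu_{\nu,n})_n$ lies in $\cM_{\rm per}(\fF|_\Lambda)$ and still converges to $\nu$, which gives the conclusion. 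The bulk of the technical work is already packaged inside Theorems~\ref{tpropt.skew} and~\ref{t.cycle}; the only mild issue to watch is this final localization step, which is handled by the definition of $\Lambda$ as a locally maximal invariant set.
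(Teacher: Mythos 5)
Your proposal is correct and follows essentially the same route as the paper: invoke Theorem~\ref{tpropt.skew} to get the rich heterodimensional cycle for the smooth realization $\fF$, apply Theorem~\ref{t.cycle} to approximate every measure in $[\mu^-,\mu^+]$ by hyperbolic periodic measures near $\Lambda$, and then use the local maximality of $\Lambda$ in $[0,1]^2\times J$ to conclude the periodic orbits lie in $\Lambda$. The only (minor) divergence is in the last step: rather than assuming a small neighborhood of $\Lambda$ has closure in the interior of $[0,1]^2\times J$ (which would require $\Lambda\subset\interior([0,1]^2\times J)$), the paper first localizes the periodic orbits via Remark~\ref{bigremark} item~(1) into $[0,1]^2\times([p^+,q^+]\cup[p^-,q^-])$, which is compactly contained in $[0,1]^2\times J$, and then applies maximality.
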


\begin{proof}
By Theorem~\ref{tpropt.skew}, we can apply Theorem~\ref{t.cycle} to the measures $\mu^\pm$ which implies that any measure in $[\mu^-,\mu^+]$ is accumulated by a sequence of hyperbolic periodic measures in the weak$\ast$ topology. The only point that is not explicitly stated in Theorem~\ref{t.cycle} is that these periodic orbits are contained in $\Lambda$. It follows from Remark \ref{bigremark} item (1) that every such periodic measure is in the maximal invariant set in $[0,1]^2\times([p^+,q^+]\cup[p^-,q^-])$. As $J$ contains $[p^+,q^+]\cup[p^-,q^-]$ in its interior and, by construction \eqref{e.thesetL}, $\Lambda$ is the maximal invariant set in $[0,1]^2\times J$, the assertion follows.
\end{proof}

We are now ready to provide the

\begin{proof}[Proof of Theorem~\ref{t.skew}]
Let $\mu_0$ be an $F$-invariant measure  in the convex hull of $\Leb^+(F)\cup \Leb^-(F)$.  Consider its Lyapunov exponent $\chi^\c(\mu_0)$ defined in \eqref{defLyapexp} and note that it is just the integral of a continuous function. Assume that $\chi^\c(\mu_0)\leq 0$ (the case where $\chi^\c(\mu_0)>0$ is analogous and hence omitted). Note that, within this convex hull, the measure $\mu_0$ is the limit of measures $\mu$ satisfying $\chi^\c(\mu)<0$. Let us discuss the two possible cases:
\medskip

\noindent\textbf{(1) $\mu_0$ is a convex combination of ergodic measures in $\Leb^-(F)$.}
We refrain from giving full arguments. We just observe that any $\fF$-ergodic measure of $\s$-index two can be weak$\ast$-approximated by periodic ones. Verify that any associated periodic orbit has $\s$-index two and is contained in $\Lambda$ (see arguments in the proof of Lemma \ref{lemproc.segment} above). By Corollary \ref{c.convexblender}, all such periodic points are homoclinically related. Hence, it follows that $\mu_0$ is accumulated by periodic measures supported on orbits in $\Lambda$ (see for example  \cite[Theorem 2]{BocBonGel:18}).
\medskip

\noindent\textbf{(2) $\mu_0$ is a convex combination of ergodic measures in $\Leb^-(F)$ and in $\Leb^+(F)$.}
In this case, the assertion is the direct consequence of the following result.

\begin{proposition}\label{p.convex} 
Consider a measure 
\[
 	\mu
	\eqdef \sum_{i=1}^m\alpha^+_i \mu^+_i+\sum_{j=1}^n\alpha^-_j\mu^-_j,
\]
where  $\mu^-_1,\ldots,\mu^-_n\in \Leb^-(F)$ and $\mu^+_1,\ldots,\mu^+_m \in \Leb^+(F)$ and $\alpha^-_1,\ldots,\alpha^-_n$ and $\alpha^+_1,\ldots,\alpha^+_m$ are positive numbers such that
$
 	\sum_{i=1}^m\alpha^+_i+\sum_{j=1}^n\alpha^-_j=1.
$
Assume that $\chi^\c(\mu)<0$. Then there is a sequence of $\fF$-periodic orbits contained in $\Lambda$ whose corresponding measures converge to $\mu$.
\end{proposition}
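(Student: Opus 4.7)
The plan is to reduce the general convex combination to pairwise convex combinations of exactly one measure from $\Leb^-(F)$ and one from $\Leb^+(F)$, apply Theorems~\ref{tpropt.skew} and~\ref{t.cycle} to each such pair, and then use the homoclinic relation inside $\Lambda$ to pass from convex combinations of periodic measures to single periodic measures. Since $\chi^\c$ is a continuous linear functional on $\cM(\fF)$ (it is the integral of $\log\lVert D\fF|_{E^\c}\rVert$) and $\chi^\c(\mu)<0$ strictly, I expect the convex decomposition result of Appendix~\ref{appA} to produce weights $\beta_{ij}\ge 0$ with $\sum_{ij}\beta_{ij}=1$ and coefficients $s_{ij}\in[0,1]$ such that
\[
    \mu=\sum_{ij}\beta_{ij}\nu_{ij},
    \qquad
    \nu_{ij}\eqdef s_{ij}\mu_j^-+(1-s_{ij})\mu_i^+,
\]
with $\chi^\c(\nu_{ij})<0$ whenever $\beta_{ij}>0$. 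This reduces the problem to approximating each individual $\nu_{ij}$, which lies on the segment $[\mu_j^-,\mu_i^+]$.

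For each pair $(i,j)$ with $\beta_{ij}>0$, Theorem~\ref{tpropt.skew} provides a rich heterodimensional cycle between $\mu_j^-$ and $\mu_i^+$, and Lemma~\ref{lemproc.segment} (Theorem~\ref{t.cycle} together with the localization inside $\Lambda$) yields periodic measures $\rho_{ij,n}$ supported on orbits in $\Lambda$ with $\rho_{ij,n}\to\nu_{ij}$ as $n\to\infty$. By weak$\ast$-continuity of $\chi^\c$, the strict inequality $\chi^\c(\nu_{ij})<0$ forces $\rho_{ij,n}$ to have $\s$-index two for all sufficiently large $n$; moreover, any periodic orbit in $\Lambda$ with negative center exponent must visit the contracting fiber maps $h_3$ or $h_4$ with positive frequency, so each such orbit contains at least one point with $t$-coordinate in $[p^-,q^-]$.

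Then the measures $\tau_n\eqdef\sum_{ij}\beta_{ij}\rho_{ij,n}$ converge weak$\ast$ to $\mu$. Each $\tau_n$ is a finite convex combination of periodic measures supported on orbits of $\s$-index two in $\Lambda$, each admitting a representative in $[p^-,q^-]$, and hence by Corollary~\ref{c.convexblender}(1) pairwise homoclinically related within $\Lambda$. A Sigmund-type approximation inside this common homoclinic class — precisely the argument invoked in case~(1) via \cite[Theorem~2]{BocBonGel:18} — approximates each $\tau_n$ by a single periodic measure, and a diagonal extraction produces a sequence of periodic measures in $\Lambda$ converging to $\mu$.

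I expect the main obstacle to lie in Step~1: producing a convex decomposition all of whose nontrivial slices have strictly negative center Lyapunov exponent is a linear feasibility problem whose solvability rests precisely on the hypothesis $\chi^\c(\mu)<0$, and it is this combinatorial/convex-geometric step that is deferred to Appendix~\ref{appA}. Once this decomposition is in hand, every remaining step is a direct invocation of machinery already assembled in the paper.
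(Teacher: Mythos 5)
Your proposal is correct and follows essentially the same route as the paper: the Appendix~\ref{appA} decomposition into pairwise slices with strictly negative center exponent, the application of Lemma~\ref{lemproc.segment} to each pair, the observation that the resulting periodic orbits have $\s$-index two and meet $[0,1]^2\times[p^-,q^-]$ (hence are homoclinically related by Corollary~\ref{c.convexblender}), and the final convex-hull plus diagonal-extraction step. No substantive differences from the paper's argument.
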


\begin{proof}
Note that, 
$$
\chi^\c(\mu)=\sum_{i=1}^m\alpha^+_i \chi^+_i+\sum_{j=1}^n\alpha^-_j\chi^-_j,
\quad
\mbox{where $\chi^\pm_r=\chi^\c(\mu^\pm_r)$.}
$$
Applying Proposition~\ref{p.convexsums} to $\lambda=\chi^\c(\mu)$ taking $A_i^\pm=\alpha_i^\pm$ and $\lambda_i^\pm=\chi_i^\pm$, we get numbers $a_{i,j}\in[0,1]$ and $b_{i,j}\geq 0$ such that:
\begin{itemize}[leftmargin=0.8cm ]
 	\item $\chi_{i,j}\eqdef a_{i,j}\chi_i^+ + (1-a_{i,j})\chi_j^-<0$  for every $1\leq i\leq m$, $1\leq j\leq n$,
 	\item $\sum b_{i,j}=1$,
 	\item $\alpha_i^+= \sum_{j=1}^n b_{i,j} a_{i,j}$ for every $1\leq i\leq n$,
 	\item $\alpha_j^-= \sum_{i=1}^m b_{i,j} (1-a_{i,j})$ for every $1\leq j\leq m$.
\end{itemize}
Consider the measures $\mu_{i,j}\eqdef a_{i,j}\mu^+_i+ (1-a_{i,j})\mu^-_j$.  Then
\begin{itemize}[leftmargin=0.8cm ]
	\item $\chi^\c(\mu_{i,j})=\chi_{i,j}<0$,
	\item $\mu=\sum_{i,j} b_{i,j} \mu_{i,j} 
	=\sum_{i,j} b_{i,j}\big(a_{i,j} \mu^+_i + (1-a_{i,j})\mu^-_j\big).$
\end{itemize}

Note that $\mu_{i,j}\in[\mu^+_i,\mu^-_j]$ and we apply Lemma~\ref{lemproc.segment} to $\mu_{i,j}$. Hence, there is a sequence of hyperbolic periodic points $(p_{i,j,n})_{n\in\bN}\subset\Lambda$ whose corresponding periodic measures $\delta_{i,j,n}$ satisfy
\begin{equation}\label{e.convergencedieracs}
	\delta_{i,j,n} \to \mu_{i,j}
	=a_{i,j}\mu^+_i+ (1-a_{i,j})\mu^-_j.
\end{equation}
As $\chi^\c(\mu)$, for $\mu$ supported on $\Lambda$, is just the integral of the continuous function $a\mapsto\log\, \lVert D\fF|_{E^\c(a)}\rVert$, this number depends continuously on measures supported in $\Lambda$. Thus, 
$$
\chi^\c(\delta_{i,j,n})\to  \chi^\c(\mu_{i,j})=\chi_{i,j}<0.
$$
As a consequence, $\chi^\c(\delta_{i,j,n})<0$ for every $n$ large enough.
 
It follows from Remark \ref{bigremark} item (1), that $\mu_j^-$ is supported in  $[0,1]^2\times [p^-,q^-]$.
As $\delta_{i,j,n}$ ``approaches a positive proportion of $\mu^-_j$", the orbits $p_{i,j,n}$ must have points in $[0,1]^2\times [p^-,q^-]$. Hence, by Corollary~\ref{c.convexblender}, the orbits of the periodic points $p_{i,j,n}$ are pairwise homoclinically related (relative to $\Lambda$).
As a consequence, the convex hull of the collection of measures $\{\delta_{i,j,n}\colon n\in\bN,i,j\}$ is contained in the closure of the periodic measures supported in a homoclinic class (relative to $\Lambda$). 
It follows that for each $n$ there is a sequence of periodic measure $(\delta_{n,k})_k$ supported on $\Lambda$ such that 
 $$
 \delta_{n,k} \to
 \sum_{i,j} b_{i,j}\delta_{i,j,n}.
$$
By \eqref{e.convergencedieracs} and since  $\sum_{i,j}b_{i,j} \mu_{i,j}=\mu$, we obtain a sequence $(k_n)_n$ such that that the periodic measures $(\delta_{n,k_n})_n$ converge  to $\mu$. This ends the proof of Proposition~\ref{p.convex}.
\end{proof}

This proves the theorem.
\end{proof}

\subsection{From skew products to diffeomorphisms: an open question}\label{sec:acip}

In view of the results in this section, the first natural question that arises is the following. 

\begin{question} 
Is there a version of Theorem \ref{t.skew} for a partially hyperbolic diffeomorphism having two blenders of different types of hyperbolicity related by a robust heterodimensional cycle of co-index one? 
\end{question}

To answer this question would require a good understanding of the measures supported on a blender. Certainly, some differentiability hypothesis would be needed, and perhaps some extra assumptions on the ``superposition and intersection properties'' and the type of the blenders. One critical issue here is the existence of ergodic measures satisfying ``absolute continuity-like properties'' similar to the measures in $\Leb(F^\pm)$ (Definition \ref{defLebFpm}). Note that, compared with the Plykin saddle-SRB measures, there are no normally hyperbolic surfaces where the holonomies associated to the strong foliations behave nicely. Thus, a critical issue is to control the holonomies. We now discuss this in more detail providing a technical description of the problem. 

For simplicity, this discussion is done in dimension three. Following \cite[Section~3]{BonDia:12}, the definition of a $\cu$-blender-horseshoe $\Gamma$ involves a superposition region (\cite[Definition 3.1]{BonDia:12}): a $C^1$-open family $\mathcal{D}$ of embeddings of one-dimensional disks intersecting the local stable manifold of the blender $\Gamma$. In fact, the family $\cD$ consists in all the segments tangent to a strong unstable cone field and ``crossing'' the blender. Consider now the ``strips'' $S$ tangent to the center-unstable cone and foliated by disks in $\mathcal{D}$ giving rise to a foliation $\mathcal{F}$. When this foliation is absolutely continuous with respect to Lebesgue, then we say that \emph{$(S, \mathcal{F})$ is an absolutely continuous pair}.

\begin{question}\label{q.generalLY}
Let $\Gamma$ be a $\cu$-blender-horseshoe with superposition region $\mathcal{D}$. Does $\Gamma$ support an ergodic measure $\mu_\Gamma$ such that, for every every absolutely continuous pair $(S, \cF$),
the union of the leaves of $\cF$ intersecting the stable manifold of a generic point for $\mu_\Ga$ has total Lebesgue measure in $S$?
\end{question}

Again, to address this question some high regularity would be necessary. Answering it positively would be a first step to get robust cycles of measures in robust heterodimensional cycles.

\section{Non-rich cycles: examples}\label{appnonrich}
%%%%%%%%%%%%%%%%%%%%%%%%%%%%%%%%
The purpose of this section is to illustrate that without the assumption of richness, heterodimensional cycles involving a pair of measures $\mu^\pm$ do not guarantee any convexity property. We present a variety of results demonstrating different scenarios where either no measure, only one measure, or a subinterval of measures in $(\mu^-, \mu^+)$ are accumulated by ergodic measures. We do not provide an exhaustive study but rather present pertinent examples in simple settings. For that, we consider heterodimensional cycles involving fixed points $p$ and $q$ of different types of hyperbolicity, and examine the cycle of measures corresponding to the Dirac measures $\delta_p$ and $\delta_q$. 

In Section~\ref{ss.nonrich}, we perform a semi-local study considering the dynamics in a neighborhood of the cycle. We present two types of non-rich cycles. In the case of the so-called {\em{non-twisted cycles,}} the only measures supported on a neighborhood of the cycle are the two Dirac measures $\delta_p$ and $\delta_q$, see Proposition~\ref{propB2}. In the case of {\em{twisted cycles,}} there are infinitely many periodic measures supported in that neighborhood. However, they have a very specific feature: there is only one measure in  $(\delta_p, \delta_q)$ which is accumulated, see Proposition~\ref{p.twisted}. Further approximation possibilities are outlined in Remark \ref{remsparsecycle}. In Section~\ref{ss.global}, we present some global dynamics where no measure in $(\mu^-, \mu^+)$ is accumulated by ergodic measures.

%%%%%%%%%%%%%%%%%%%%%%%%%%%%%%%%
\subsection{Non-rich heterodimensional cycles}\label{ss.nonrich}
%%%%%%%%%%%%%%%%%%%%%%%%%%%%%%%%

We consider a diffeomorphism with two fixed points related by a heterodimensional cycle. 
According to the terminology in \cite{BonDiaKir:12}, this cycle may be {\em{twisted}} and {\em{non-twisted}}.
To discuss these two configurations simultaneously, we denote the diffeomorphism by $f_\varepsilon$, where $\varepsilon\in \{+,-\}$. We use the symbol ``$+$" when the cycle is non-twisted, and ``$-$" otherwise.

We assume that $f_\varepsilon$ has two fixed points $p$ and $q$ having linearizing neighborhoods 
$U_p$ and $U_q$, respectively, as follows: there are smooth coordinates $(x,y,z)_j=(x_j,y_j,z_j)$ on $U_j\cup f_\varepsilon(U_j)$, $j=p,q$ such that, in these coordinates,
\begin{itemize}[leftmargin=0.8cm ]
\item[(i)] $f_\varepsilon$ maps as $(x,y,z)_p \mapsto (2x, \frac 14 y, 4z)_p$ on $U_p$ (where $p=(0,0,0)_p$),
\item[(ii)] $f_\varepsilon$ maps as $(x,y,z)_q\mapsto (\frac12 x, \frac 14 y, 4z)_q$ on $U_q$ (where $q=(0,0,0)_q$).
\item[(iii)] there are $n\in\bN$ and small $\delta >0$ such  that
\[
	f^n_\varepsilon (1+x,y,z)_p= (-1+x,\frac1{4^n}y ,4^n z)_q \mbox{ for every }x, y,z \in [-\delta, \delta]
\]	
and $f^i_\varepsilon(1+x,y,z)_p\notin U_p\cup U_q$ for every $i=1,\ldots,n-1$,
\item[(iv)] there is  $m\in\bN$ so that
\[
	f^m_\varepsilon(x,y,-1+z)_q= (\varepsilon x, -1+ \frac 14y, 4z)_p\mbox{ for every } x,y,z \in [-\delta,\delta],  
\]	 
and $f^i_\varepsilon(x,y,-1+z)_q\notin U_p\cup U_q$ for $i=1,\ldots,m-1$.
\end{itemize}

\begin{figure}[h] 
 \begin{overpic}[scale=.62]{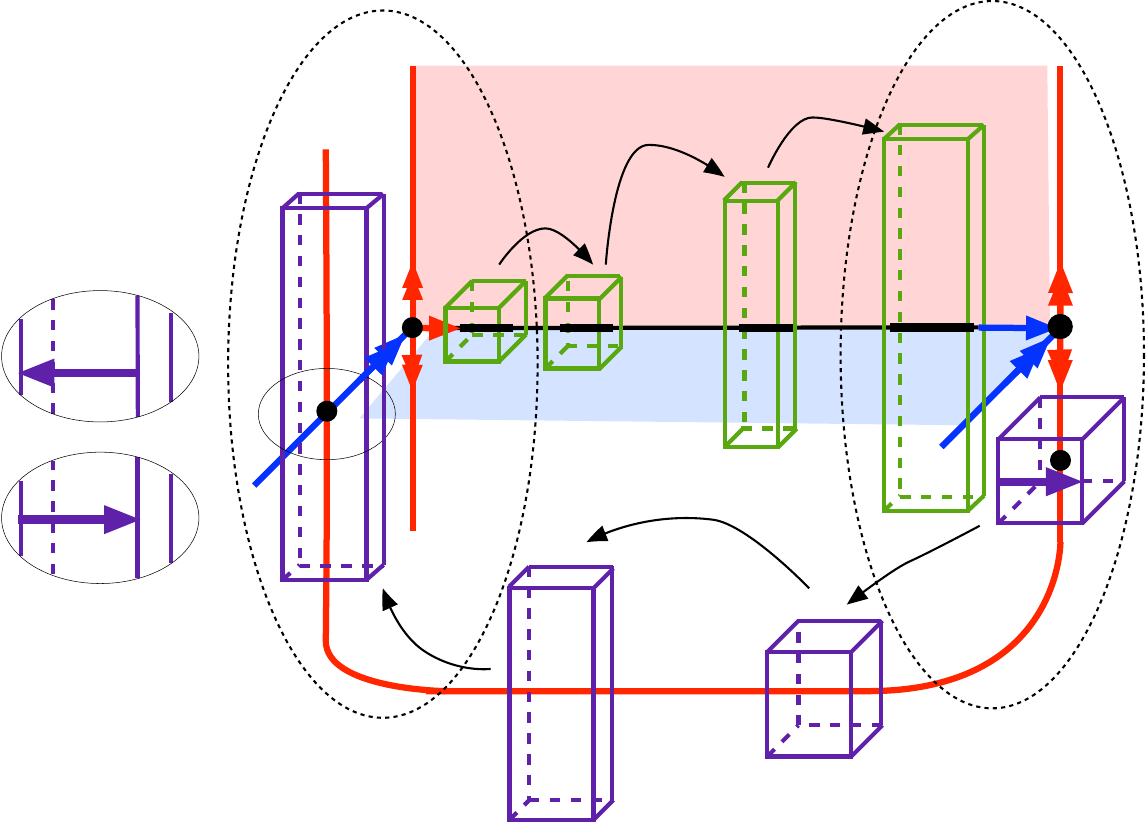}
 	\put(19,60){$U_p$}
 	\put(98,60){$U_q$}
	\put(94,44){$q$}
 	\put(34,44){$p$}
 	\put(-5,34){$\varepsilon=-1$}
 	\put(-5,20){$\varepsilon=1$}
	\put(41,41){$\gamma$}
	\put(80,40.5){$\gamma'$}
	\put(40,37){\textcolor{forestgreen}{$\Delta_p$}}
	\put(98.5,32){\textcolor{purple}{$\Delta_q$}}
	\put(47,53){$f_\varepsilon$}
	\put(55,61){$f_\varepsilon^{n-2}$}
	\put(70,63){$f_\varepsilon$}
	\put(57,23){$f_\varepsilon^{m-2}$}
	\put(37,16){$f_\varepsilon$}
	\put(80,20){$f_\varepsilon$}
	\put(88.5,31.5){$a$}
	\put(29.5,33){$b$}
\end{overpic}
\caption{Twisted and non-twisted heterodimensional cycles}
\label{figtwisted}
\end{figure}

\begin{remark}[Heterodimensional cycles]
Under these hypotheses, the segments 
\[
	\gamma\eqdef [1-\delta,1+\delta]_p\times \{(0,0)_p\}
	\quad\text{ and }\quad
	f^n_\varepsilon (\gamma)=\gamma'\eqdef [1-\delta,1+\delta]_q\times \{(0,0)_q\}
\]	
are contained in $W^\u(p)\pitchfork W^\s(q)$ and the points $a=(0,0,-1)_q$ and $b\eqdef f^m_\varepsilon(a)=(0,-1,0)_p$ are contained in $W^\u(q)\cap W^\s(p)$. Compare Figure \ref{figtwisted}. As the saddles have different indices, this implies that $f_\varepsilon$ has a heterodimensional cycle 
associated to these saddles. In the conditions above, (i) and (ii) determine the local dynamics at the saddle points, (iii) and (iv) describe the transition from  $p$ to $q$ following the transverse intersection $\gamma$, 
and from $q$ to $p$ following the quasi-transverse intersection point $a$, respectively.
\end{remark}

We consider the  \emph{cycle neighborhood} defined by
\[
	W_\varepsilon\eqdef \\
	U_p\cup U_q\cup \bigcup_{i=1}^{n-1} f_\varepsilon^i (\Delta_p) 
		\cup \bigcup_{j=1}^{m-1} f_\varepsilon^j (\Delta_q), 
\]
where 
$$
\Delta_p\eqdef  [1-\delta,1+\delta]_p\times [-\delta,\delta]_p^2 \quad \mbox{and} \quad
\Delta_q\eqdef [-\delta,\delta]_q^2\times [-1-\delta,-1+\delta]_q.
$$
We denote by $\Lambda_\varepsilon$  the maximal invariant set of $f_\varepsilon$ in  $W_\varepsilon$,
\[
	\Lambda_\varepsilon
	\eqdef \bigcap_{n\in\bZ}f_\varepsilon^n(W_\varepsilon).
\]	 

\begin{remark} 
	The restriction of $f_\varepsilon|_{W_\varepsilon}$ has  the partially hyperbolic splitting, 
	$E^{\ss}\oplus E^\c \oplus E^{\uu}$, whose  bundles $E^{\ss}$, $E^\c$, and $E^{\uu}$ are directed by the vector fields $\frac\partial{\partial y}$, $\frac\partial{\partial x}$, and $\frac\partial{\partial z}$, respectively.
\end{remark}

\begin{remark} 
The Dirac measures $\delta_p$ and $\delta_q$ at $p$ and $q$, respectively, are related by a heterodimensional cycle of $f_\varepsilon$. Taking $\Lambda= \Lambda_\varepsilon$ in Definition~\ref{defrichcycmeas}, we observe that this cycle is not rich: conditions (i), (ii), and (iv) hold, but condition (iii) is not satisfied. This is because the strong unstable and stable manifolds of the saddles are not involved in the cycle.
\end{remark}

The next remark states the main difference between twisted and non-twisted cycles.

\begin{remark}
The map $f_1$ preserves the orientation of the center bundle, whereas this is not the case for $f_{-1}$
(the transition $f_{-1}^m$ reverses the orientation). 
\end{remark}

Let us introduce some notation. In what is below, given $w\in U_r$, $r=p,q$, we denote by $(x_r(w), y_r(w), z_r(w))$ its local coordinate in the chart $U_r$. Given a set $A$, denote by
\[
	\cO^-(A)
	\eqdef \bigcup_{n\ge0}f_\varepsilon^{-n}(A)
	\quad\text{ and }\quad
	\cO^+(A)
	\eqdef \bigcup_{n\ge0}f_\varepsilon^n(A)
\]
its \emph{negative} and \emph{positive orbit}, respectively (with respect to $f_\varepsilon$). The \emph{orbit} of $A$ is $\cO(A)\eqdef\cO^-(A)\cup\cO^+(A)$. We also use this notation when $A$ is just a point.

%%%%%%%%%%%%%%%%%%%%%%%%%%%%%%%%%%%%%%%%%%%%%%%%%%
\subsubsection{The non-twisted case}
%%%%%%%%%%%%%%%%%%%%%%%%%%%%%%%%%%%%%%%%%%%%%%%%%%%

\begin{proposition} \label{propB2}
	The maximal invariant set of the diffeomorphism $f_1$ in the cycle neighborhood $W_1$ is 
	$$
	\La_1 = \{p\} \cup \{q\} \cup  \cO(\gamma) \cup  \cO(a).
	$$
	As a consequence, the unique $f_1$-invariant ergodic measures supported on $W_1$ are the Dirac measures $\delta_p$ and $\delta_q$.  In particular, there is no measure in the  segment $(\delta_p,\delta_q)$ 
	accumulated by ergodic measures supported on $W_1$.
\end{proposition}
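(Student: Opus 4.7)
The plan is to classify all full orbits of $f_1$ that lie in $W_1$ and then read off the ergodic invariant measures. Since $W_1$ is assembled from the linearizing boxes $U_p, U_q$ together with the two transition chains, any orbit contained in $W_1$ can exit $U_p$ forward only via $T \eqdef f_1^n|_{\Delta_p}$ (landing in $U_q$) and can exit $U_q$ forward only via $T' \eqdef f_1^m|_{\Delta_q}$ (landing in $U_p$); the analogous statements hold in backward time. For $w \in \Lambda_1$, let $\tau^+(w), \tau^-(w) \in \{0,1,2,\ldots,\infty\}$ count the number of transitions in the forward and backward orbit of $w$. The core of the argument is the claim that, in the non-twisted case $\varepsilon = 1$,
\[
\tau^+(w) + \tau^-(w) \le 1.
\]

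To prove this, after a time shift one may assume $w \in U_p$ at time $0$. If $\tau^+ \ge 1$, the orbit stays in $U_p$ up to the first transition, which must therefore be $T$, applied at some time $t_1 > 0$ with state $(1+\xi, y, z)_p \in \Delta_p$ for some $|\xi|, |y|, |z| \le \delta$. If in addition $\tau^- \ge 1$, the analogous reasoning backward forces the first backward transition to be $T'^{-1}$, so there is $s_1 < 0$ with $f_1^{s_1}(w) \in T'(\Delta_q) \subset U_p$ of the form $(X, -1+Y/4, 4Z)_p$ where $(X, Y, -1+Z)_q \in \Delta_q$. Iterating the linear dynamics $(x,y,z)_p \mapsto (2x, y/4, 4z)_p$ on $U_p$ from $s_1$ to $t_1$ gives $2^{t_1 - s_1} X = 1 + \xi$, hence $X > 0$. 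Continuing the backward orbit past $s_1$, one enters $U_q$ at $(X, Y, -1+Z)_q$ and iterates $(x,y,z)_q \mapsto (2x, 4y, z/4)_q$; the $x$-coordinate remains $2^k X > 0$ for all $k \ge 0$. To stay in $W_1$ backward one would need either convergence to $q$, impossible since $X \ne 0$, or a further backward transition $T^{-1}$, which would require the $x$-coordinate in $U_q$ to lie in $[-1-\delta, -1+\delta]$ and is also impossible since $2^k X > 0$. This contradiction proves $\tau^+ \ge 1 \Rightarrow \tau^- = 0$; the symmetric argument starting from $w \in U_q$ and exploiting that the transition $T'(x,y,-1+z)_q = (\varepsilon x, -1+y/4, 4z)_p$ preserves the sign of the $x$-coordinate precisely because $\varepsilon = 1$ yields $\tau^- \ge 1 \Rightarrow \tau^+ = 0$.

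Granted $\tau^+ + \tau^- \le 1$, the description of $\Lambda_1$ follows by a direct case check. When $\tau^+ = \tau^- = 0$ the orbit stays entirely in $U_p$ or in $U_q$, so $w \in \{p, q\}$. When $\tau^+ = 1$, $\tau^- = 0$ the backward orbit converges to $p$ or to $q$, placing $w$ on $W^\u(p)$ (equivalently $y = 0$ in $U_p$-coordinates) or on $W^\u(q)$ (equivalently $x = y = 0$ in $U_q$-coordinates); requiring moreover that the forward orbit after the single transition stay in the other linearizing box and converge to its fixed point then imposes one further rigid constraint that places $w$ on the orbit of $\gamma$ or on the orbit of $a$, respectively. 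The case $\tau^+ = 0$, $\tau^- = 1$ is handled symmetrically. Combined with the inclusion $\{p\} \cup \{q\} \cup \cO(\gamma) \cup \cO(a) \subset \Lambda_1$, which is immediate from the cycle construction, this yields the stated description of $\Lambda_1$. Finally, $\cO(\gamma)$ and $\cO(a)$ are wandering (both forward and backward asymptotic to $\{p, q\}$) and hence carry no mass for any finite invariant measure, so the $f_1$-invariant ergodic probability measures supported on $W_1$ are exactly $\delta_p$ and $\delta_q$, and no measure in the open segment $(\delta_p, \delta_q)$ can be a weak-$\ast$ accumulation point of this two-point set. The main obstacle is keeping the coordinate bookkeeping straight through the transitions; conceptually, everything hinges on the single observation that, when $\varepsilon = 1$, both $T, T'$ and the linear maps on $U_p, U_q$ preserve the sign of the center coordinate.
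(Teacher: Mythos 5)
Your proof is correct and follows essentially the same route as the paper's: both arguments hinge on the fact that the linearized dynamics on $U_p$, $U_q$ and the non-twisted transitions preserve the sign of the center ($x$-) coordinate, so that an orbit staying in $W_1$ can make at most one transition, after which the classification of $\Lambda_1$ and the identification of the ergodic measures are immediate. Your bookkeeping via the transition counts $\tau^\pm$ is just a repackaging of the paper's Remark \ref{r.mapoutside} and Claims \ref{c.heteroorbit}--\ref{c.othercase}.
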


\begin{proof} 
Note that $\{p\}$ and $\{q\}$ are the maximal invariant sets of $f_1$ in $U_p$ and $U_q$, respectively. 
The fact that $\cO(\gamma)\subset \Lambda_1$ is straightforward. Hence, what remains to show is that, besides the already verified points, $\Lambda_1$ only contains the orbit of $a$. 

\begin{remark}\label{r.mapoutside}
Every $w\in U_q$ with $x_q(w)>0$ is mapped outside of $W_1$ by negative iterations of $f_1$,  therefore  $w\notin \Lambda_1$.
Analogously, every $w\in U_p$ with  $x_p(w)<0$
does not belong to $\Lambda_1$.
\end{remark}

\begin{claim}\label{c.heteroorbit}
	Consider $w\not\in \gamma$. If
	\begin{itemize}
	\item
	$w\in U_q\cap\Lambda_1$ and $\cO^+(w)\not\subset U_q$, then $w\in\cO(a)$;
	\item
$w\in U_p\cap\Lambda_1$ and $\cO^-(w)\not\subset U_p$, then $w\in\cO(a)$.
\end{itemize}
\end{claim}

\begin{proof}
We only prove the first item, the proof of the second one is analogous and hence omitted. Consider a point $w$ as in the claim, clearly $w\ne q$. As $\Lambda_1$ is the maximal invariant set in $W_1$, there is $i\in\bN$ such that $f^j(w) \in U_q$ for every $j=0,\dots,i$ and $f^i(w) \in \Delta_q \subset U_q$. 
There are the following three possibilities for $x_q(w)$:
\smallskip

\noindent\textbf{Case $x_q(w)<0$.}
In this case, $x_q(f^i(w))<0$. As the cycle is non-twisted, by Remark~\ref{r.mapoutside} we have that $x_p(f^{i+m}(w))<0$, where $m$ is as in (iv) in the description of $f_1$. By Remark~\ref{r.mapoutside}, $w\not\in \Lambda_1$, hence, this case cannot occur.
\smallskip

\noindent\textbf{Case $x_q(w)>0$.}
Analogously to the previous case, we get $x_p(f^{i+m}(w))>0$ and hence there is $k\in\bN$ such that $x_q(f^{i+m+k}(w))<0$. Thus, we can apply the previous case to the point $w'=f^{i+m+k}(w)$ instead of $w$, getting a contradiction. Hence, this case cannot occur.
\smallskip

\noindent\textbf{Case $x_q(w)=0$.}
In this case, $x_q(f^i(w))=0$ and therefore $x_p(f^{i+m}(w))=0$.\smallskip

Studying analogously the possible values of the other coordinates of points in the orbit of $w$, we obtain the following. We have that $z_p(f^{i+m}(w)) = 0$, otherwise positive iterates of $f^{+m}(w)$ would eventually leave the set $W_1$. This implies that $f^{i+m}(w) \in W^\text{s}_{\text{loc}}(p)$. Similarly, we have that $y_q(f^i(w)) = 0$, otherwise negative iterates of $w$ will go out of $W_1$.
  Thus $f^i(w)\in W^\u_{\rm loc}(q)$ and $f^i(w)$ is the unique point of the intersection $W^\u_{\rm loc}(q)\cap f^{-m}(W^\s_{\rm loc}(p))$, that is, $f^i(w)=a$. This shows that $w\in\cO(a)$ and proves the claim. 
\end{proof}

The next result follows arguing as in the proof of Claim~\ref{c.heteroorbit}, hence its proof is omitted.

\begin{claim}
\label{c.othercase}
	 Let $w\in (U_p\cup U_q)\cap\Lambda_1$. 
	\begin{itemize}
	\item
	If $\cO^-(w)\subset U_p$ and $\cO^+(w)\cap \Delta_p\ne\emptyset$ then $w \in\cO(\gamma)$.
	\item
	If $\cO^+(w)\subset U_q$ and $\cO^-(w)\cap \Delta_q\ne\emptyset$ then $w \in\cO(\gamma)$.
	\end{itemize}
\end{claim}

The proposition follows from Claims~\ref{c.heteroorbit} and \ref{c.othercase}.
\end{proof}

%%%%%%%%%%%%%%%%%%%%%%%%%%%%
\subsubsection{The twisted case}
%%%%%%%%%%%%%%%%%%%%%%%%%%%%%%

\begin{proposition}
\label{p.twisted}
The maximal invariant set of the diffeomorphism $f_{-1}$ in the cycle neighborhood $W_{-1}$ contains periodic points whose corresponding measures converge in the weak$\ast$ topology to the measure $\frac12\delta_p+\frac12\delta_q$.  Moreover, none of the measures in $\{t\delta_p+(1-t)\delta_q \colon t\in(0,\frac12)\cup(\frac12, 1)\}$ is accumulated by ergodic measures.
\end{proposition}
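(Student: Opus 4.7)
The plan is to carry out a careful analysis of the center-direction ($x$-direction) dynamics. The first step will be to derive from the local formulas (i)--(iv) with $\varepsilon=-1$ a rigid arithmetic constraint for any orbit in $W_{-1}$ that alternates between $U_p$ and $U_q$. Denote by $k_j$ and $\ell_j$ the lengths of the $j$-th visit of such an orbit to $U_p$ and to $U_q$ respectively, and by $\xi^{(p)}_j$ its entry $x$-coordinate in $U_p$. Using (i)--(ii) to iterate inside the linearizing charts and (iii)--(iv) to transit, one finds the recursion
$\xi^{(p)}_{j+1}=2^{-\ell_j}(2-2^{k_j}\xi^{(p)}_j)$,
where the sign inversion is produced precisely by the twist $\varepsilon=-1$. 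Since by definition of $k_j,k_{j+1}$ both $2^{k_j}\xi^{(p)}_j$ and $2^{k_{j+1}}\xi^{(p)}_{j+1}$ lie in $[1-\delta,1+\delta]$, we will get $2^{k_{j+1}-\ell_j}\in[\frac{1-\delta}{1+\delta},\frac{1+\delta}{1-\delta}]$, and for $\delta$ small enough this forces the rigid identity $k_{j+1}=\ell_j$.

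To prove the first assertion, for each large $k$ we construct an explicit periodic point $w_k$ of period $N_k=2k+n+m$ making exactly one loop $p\to q\to p$ with $k_1=\ell_1=k$. The $x$-closing equation yields $\xi^{(p)}_1=2^{-k}$; the $y$- and $z$-closing equations have unique solutions thanks to the uniform $1/4$-contraction and $4$-expansion rates in those directions (explicitly, $y_0\to -1$ and $z_0\to 0$ as $k\to\infty$). Along such an orbit, the linearized model near $p$ shows that all $k$ iterates in $U_p$ except a bounded number (depending on $\delta'>0$, but not on $k$) lie within $\delta'$ of $p$, and symmetrically for $U_q$. Since the $n+m$ iterates in the transit tubes contribute mass $(n+m)/N_k\to 0$, the associated periodic measure $\mu_{w_k}$ will converge weakly-$\ast$ to $\tfrac12\delta_p+\tfrac12\delta_q$.

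For the second assertion, fix any ergodic measure $\mu$ supported in $W_{-1}$ with $\mu\ne\delta_p,\delta_q$ and pick a $\mu$-generic point $x$; by ergodicity its orbit cannot stay in $U_p$ or $U_q$ alone (otherwise $\mu$ would coincide with a Dirac mass at the corresponding fixed point), so it must visit both infinitely often. Let $\tau^q_J$ be the time of the $J$-th entry into $U_q$. Using $\ell_j=k_{j+1}$, at time $N=\tau^q_{J+1}$ the orbit will have spent $\sum_{j=1}^{J+1}k_j$ iterates in $U_p$ and $\sum_{j=1}^{J}\ell_j=\sum_{j=2}^{J+1}k_j$ in $U_q$, so the difference equals the constant $k_1$. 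Birkhoff's theorem then yields
$\mu(U_p)-\mu(U_q)=\lim_{J\to\infty}k_1/\tau^q_{J+1}=0$.
Consequently, if a sequence of ergodic measures $\mu_n$ converged to $t\delta_p+(1-t)\delta_q$ with $t\in(0,1)$, we would have $\mu_n\ne\delta_p,\delta_q$ eventually and hence $\mu_n(U_p)=\mu_n(U_q)$; after slightly shrinking $U_p,U_q$ so that their boundaries carry no mass for the limit, passing to the weak-$\ast$ limit would force $t=1-t$, that is, $t=\tfrac12$, which contradicts $t\in(0,\tfrac12)\cup(\tfrac12,1)$.

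The main technical point will be the rigid identity $k_{j+1}=\ell_j$. This synchronization of dwell times, caused by the orientation-reversing transition $\varepsilon=-1$ and absent in the non-twisted case (compare Proposition~\ref{propB2}, where the analogous recursion takes points outside the valid window and no looping orbit exists at all), is precisely what forces every accumulation point of ergodic measures in $(\delta_p,\delta_q)$ to sit at the midpoint $\tfrac12(\delta_p+\delta_q)$.
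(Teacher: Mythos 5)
Your proposal is correct and follows essentially the same route as the paper's (sketched) proof: the twist $\varepsilon=-1$ forces the dwell-time synchronization $k_{j+1}=\ell_j$ (the paper's $s_k=r_k$), the explicit one-loop periodic orbits with equal dwell times give the approximating sequence, and the forced ratio $1$ pins any accumulation point of ergodic measures to the midpoint of the segment. One minor simplification for the last step: no shrinking of $U_p,U_q$ is needed --- the limit $t\delta_p+(1-t)\delta_q$ already charges no boundary, and shrinking would in fact degrade the exact identity $\mu_n(U_p)=\mu_n(U_q)$ to a bounded-error-per-visit estimate requiring an extra argument.
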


We refrain from providing detailed proof and just give the main ingredients.

\begin{proof}[Skecth of the proof of Proposition~\ref{p.twisted}]   
The orbit of any point $w \in \Lambda_{-1}$, where $w \notin {p, q} \cup {p} \cup \mathcal{O}(\gamma) \cup \mathcal{O}(a)$, has an itinerary as follows: $r_0$ iterates in $U_p$, followed by $n$ iterates in the orbit $\Delta_p$, $s_1$ iterates in $U_q$, and $m$ iterates in the orbit of $\Delta_q$, and then $r_1$ iterates in $U_p$, and so on (here $n$ and $m$ are as in (iii) and (iv) in the description of $f_\varepsilon$). In this way, we obtain a bi-infinite sequence $\ldots, s_{-1}, m, r_0, n, s_1, m, r_1, \ldots$ of natural numbers. These numbers satisfy $s_k = r_k$ for every $k \in \mathbb{Z}$. Indeed, this is a consequence of the following facts:

\begin{itemize}[leftmargin=0.8cm ]
\item $x_p(f_{-1}^m(w))=-x_q(w)$ for every $w\in \Delta_q$
 \item $f_{-1}$ multiplies by $2$ the $x_p$-coordinates in $U_p$ and by $1/2$ the $x_q$-coordinates,
 \item each orbit enters in $U_q$ with the $x_q$-coordinate in the interval $(-1-\delta,-1+\delta)$ and leaves  $U_p$ with the $x_p$-coordinate in $(1-\delta,1+\delta)$.
\end{itemize}

If an ergodic measure $\mu$ is close to the segment $(\delta_p,\delta_q)$, then the itinerary of a generic point has, in average, very large numbers $r_i$ and $s_i$ and the ratio of these numbers determines the point in 
$[\delta_p,\delta_q]$ close to $\mu$.  As the proportion is $1$, $\mu$ is indeed close to  $\frac12\delta_p+\frac12\delta_q$. 

It remains see that $\frac12\delta_p+\frac12\delta_q$ can  be approximated by periodic measures.
Indeed, our precise choice of $f_{-1}$ implies that 
$$
f_{-1}^{k+m+k+n}(1,y_0,z_0)_p=(1,y_0,z_0)_p
$$
if  $y_0,z_0$ are such that $f_{-1}^{k+n}(1,y_0,z_0)_p\in \Delta_q$.  A straightforward calculation shows that, for every sufficiently large enough $k$ there are $y_k,z_k$ such that $f_{-1}^{k+m+k+n}=(1,y_k,z_k)_p$. This provides a periodic point $r_k\in \Lambda_{-1}$. By the above, the sequence of measures
supported on the orbits of the points $r_k$ tends to $\frac12 \delta_p+\frac12\delta_q$ as $s\to\infty$.
\end{proof}

\begin{remark}
	The fact that the only point in $(\delta_p,\delta_q)$ which is accumulated by ergodic measures is its middle point because the center Lyapunov exponent of the limit measure is zero.
	In a more general scenario, but still maintaining the linearized setting of the cycle for simplicity, 
	if $\alpha_p>1$ and $\alpha_q<1$ are the center eigenvalues of the saddles in the cycle, the unique point in $(\delta_p,\delta_q)$ accumulated by ergodic measures would be
\[
	\frac{\log \alpha_p}{\log \alpha_p +|\log \alpha_q|}\delta_p+\frac{|\log \alpha_q|}{\log \alpha_p+|\log \alpha_q|}\delta_p,
\]	
that is, the value for which the averaged center Lyapunov exponent vanishes.
\end{remark}

\begin{remark}[Sparse heterodimensional cycle]\label{remsparsecycle}
For twisted cycles, and still in a semi-local setting, the following phenomena occur in a heterodimensional cycle of measures $\delta_p$ and $\delta_q$. There is a measure $\mu\in(\delta_p,\delta_q)$ such that the only measures in this segment which are accumulated by ergodic ones are the ones in $(\delta_p,\mu)$. Indeed,  in \cite{DiaHorRioSam:09} the ``central exponent of $\mu$ is zero'', while in \cite{DiaGel:12} is nonzero.  
\end{remark}

\subsection{Non-rich cycles: global settings}\label{ss.global}
We now sketch the construction of global dynamics having a heterodimensional cycle associated to the Dirac measures of a pair of saddles, $\mu^-$ and $\mu^+$, such that no measure in $(\mu^-,\mu^+)$ is accumulated by ergodic ones.

We revisit the construction in \cite{BonDia:12b} from a slightly different perspective. As the setting in  \cite{BonDia:12b} is quite elaborate, let us focus on its aspects that are relevant for us. This construction begins with an auxiliary Morse-Smale vector field $X$ defined on the three-sphere $\mathbb{S}^3$. After a surgery, employing an identification quotient map $\Psi$, one obtains a diffeomorphism $F_\Psi$ on $\mathbb{S}^2 \times \mathbb{S}^1$, defined as the time-one map $X_1$ taking the quotient by $\Psi$ (see \cite[Section 3]{BonDia:12b}). There is a fundamental domain $\Delta$ of $F_\Psi$ (independent of $\Psi$), however  the map $F_\Psi$ does depend on $\Psi$. For a fixed $\Psi$, there are two subsets $\EE^s(F_\Psi)$ and $\EE^u(F_\Psi)$ of $\Delta$  whose intersection contains the chain recurrent set of $F_\Psi$ in $\Delta$,  \cite[Corollary 3.5]{BonDia:12b}. When the intersection $\EE^s(F_\Psi)\cap \EE^u(F_\Psi)$ is just a point then  $F_\Psi$ has a heterodimensional cycle associated to a saddle $p_1$ of $\s$-index two and a saddle $q_1$ of $\s$-index one. In such a case, the construction allows to control the dynamics outside the cycle: points ``far" from the cycle tend to an attractor in the future and to a repeller in the past.

In \cite{BonDia:12b} the map $\Psi$ is chosen to get a cycle. This occurs when the intersection $\EE^s(F_\Psi)\cap\EE^u(F_\Psi)$ has nonempty interior. Here we alter this strategy choosing $\Psi$ such that $\EE^s(F_\Psi)\cap\EE^u(F_\Psi)$ consists of a unique point.  By \cite[Lemma 3.6, item 1]{BonDia:12b},  this intersection belongs to the unstable manifold of $p_1$ and to the stable manifold of $q_1$. The global properties of the map $X_1$ imply that these two saddles are involved in a heterodimensional cycle. Moreover, the chain recurrent set of $F_\Psi$ equals the set of the fixed points of $X_1$ (which is finite since we started with a Morse-Smale vector field) together with the heteroclinic orbits relating $p_1$ and $q_1$ giving rise to the cycle. Note that the latter set does not support any invariant measure. Thus, the only ergodic measures are the ones supported on fixed points. As this set is finite, there are no ergodic measures accumulating on $(\delta_{p_1}, \delta_{q_1})$.

Finally, \cite[Sections 4 and 5]{BonDia:12b} considers perturbations of the diffeomorphisms $F_\Psi$. One may deduce that the situation above (heterodimensional cycles where the measures in the interval $(\delta_{p_1},\delta_{q_1})$ cannot be approached by ergodic measures) corresponds to a (local) codimension-one submanifold of the set of diffeomorphisms. 

\appendix
%------------------------------------------------------------------------------------------
\section{Convex sums}\label{appA}
%------------------------------------------------------------------------------------------

In this section, we prove the following auxiliary result that we first describe in words.
Given a finite collection of negative and positive numbers having a negative average $\lambda$, each attached with a certain weight, one can rewrite $\lambda$ as an average of averages of \emph{pairs} of numbers, one positive and one negative, such that each such average is negative and that the total weight of each number equals the initial one (compare \eqref{eqconsequence} below). Since we did not find any reference, for completeness, we prove this fact.

\begin{proposition}\label{p.convexsums}
Consider real numbers $\lambda_1^-, \dots,\lambda_m ^-<0$ and $\lambda_1^+, \dots,\lambda_n^+>0$.
Assume that there are nonnegative numbers $\A_1^-,\ldots,\A_ m^-$ and $\A_1^+,\ldots,\A_n^+$ so that
\begin{itemize}[leftmargin=0.8cm ]
  \item[(i)] $\lambda\eqdef  \sum_{i=1}^m  \A_i^-\lambda_i^-+\sum_{j=1}^n\A_j^+\lambda_j^+<0$,
  \item[(ii)] $\sum_{i=1}^m \A_i^-+\sum_{j=1}^n\A_j^+=1$.
 \end{itemize}
Then there are $a_{ij}\in[0,1]$ and $b_{ij}\geq 0$ so that:
\begin{itemize}[leftmargin=0.8cm ]
 \item[(1)] for any $1\leq i\leq m $, $1\leq j\leq n$, 
 \[\lambda_{ij}\eqdef (1-a_{ij})\lambda_i^-+a_{ij} \lambda_j^+<0,\]
 \item[(2)] $\sum_{j=1}^n\sum_{i=1}^m  b_{ij}=1$,
 \item[(3)] one has
 \[
 	\A_i^-= \sum_{j=1}^nb_{ij} (1-a_{ij})\quad\text{ for }1\leq i\leq m,\quad\quad
	\A_j^+= \sum_{i=1}^m  b_{ij} a_{ij}\quad\text{ for }1\leq j\leq n
\]
\end{itemize}
and hence
\begin{equation}\label{eqconsequence}
	\sum_{j=1}^n\sum_{i=1}^m  b_{ij}
	\big((1-a_{ij}) \lambda_i^- + a_{ij}\lambda_j^+ \big)=\lambda.
\end{equation}
\end{proposition}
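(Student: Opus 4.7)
The plan is to reduce the problem to finding two non-negative matrices with prescribed marginals satisfying a per-entry feasibility inequality, and then to exhibit an explicit separable ansatz that succeeds precisely when $\lambda<0$.

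First I would introduce the change of variables $x_{ij}\eqdef b_{ij}(1-a_{ij})$ and $y_{ij}\eqdef b_{ij}a_{ij}$, so $b_{ij}=x_{ij}+y_{ij}$. In these new coordinates, condition (3) becomes the marginal constraints $\sum_j x_{ij}=\A_i^-$ and $\sum_i y_{ij}=\A_j^+$, condition (2) and the identity \eqref{eqconsequence} are automatic consequences of (ii) and of the definition of $\lambda$, while condition (1) reduces (for pairs with $b_{ij}>0$) to $x_{ij}\lambda_i^-+y_{ij}\lambda_j^+<0$. Pairs with $b_{ij}=0$ cause no issue: one simply sets $a_{ij}\eqdef 0$ there, which gives $\lambda_{ij}=\lambda_i^-<0$.

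Second, I propose the separable ansatz $x_{ij}\eqdef \A_i^- v_j$ and $y_{ij}\eqdef \A_j^+ w_i$, with $(v_j)$ and $(w_i)$ probability vectors, which automatically satisfies the marginal constraints. The key move is to weight the positive mass proportionally to the negative mass times magnitude, namely
\begin{equation*}
 w_i\eqdef\frac{\A_i^-|\lambda_i^-|}{S},\qquad S\eqdef\sum_{k=1}^m \A_k^-|\lambda_k^-|.
\end{equation*}
A short computation then shows that the feasibility inequality factors through $\A_i^-|\lambda_i^-|$ (which is strictly positive exactly when $b_{ij}>0$ under the ansatz) and collapses to the $i$-independent condition $v_j>\A_j^+\lambda_j^+/S$ for every $j=1,\dots,n$.

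Finally, I would verify the existence of such a probability vector $(v_j)$: summing the inequalities over $j$ forces $1>\sum_j \A_j^+\lambda_j^+/S$, which is exactly hypothesis (i) rewritten as $\sum_j \A_j^+\lambda_j^+<S$. An admissible choice is then $v_j\eqdef \A_j^+\lambda_j^+/S+(S-\sum_k \A_k^+\lambda_k^+)/(nS)$, and the construction is completed by setting $a_{ij}\eqdef y_{ij}/b_{ij}$ wherever $b_{ij}>0$. The main obstacle, in my view, is purely the conceptual one of guessing the right ansatz: the feasibility inequalities are per-pair constraints that do not obviously decouple from the marginal ones, but weighting the positive mass by $\A_i^-|\lambda_i^-|$ rather than by $\A_i^-$ alone is precisely what untangles them, replacing $m\times n$ per-pair inequalities by a single one-parameter family whose simultaneous solvability is exactly the hypothesis $\lambda<0$.
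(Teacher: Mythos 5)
Your proof is correct, and it takes a genuinely different route from the paper's. The paper proceeds through two lemmas: it first collapses all the positive exponents into a single weighted average $\lambda^+$ and proves the case $n=1$ by induction on $m$ (with a separate barycentric computation in the base case where every $\lambda_i^-\le\lambda$), and then re-expands each aggregated positive term by a symmetric lemma for $m=1$. You instead give a direct, closed-form construction: the change of variables $x_{ij}=b_{ij}(1-a_{ij})$, $y_{ij}=b_{ij}a_{ij}$ correctly converts condition (3) into marginal constraints and condition (1) into the per-pair inequality $x_{ij}\lambda_i^-+y_{ij}\lambda_j^+<0$ (for $b_{ij}>0$), and the separable ansatz with $w_i\propto \A_i^-\lvert\lambda_i^-\rvert$ does exactly what you claim --- the left-hand side factors as $\A_i^-\lvert\lambda_i^-\rvert\bigl(\A_j^+\lambda_j^+/S-v_j\bigr)$, the prefactor is positive precisely when $b_{ij}>0$, and the remaining condition on $(v_j)$ is solvable within the simplex exactly when $\lambda<0$. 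Your treatment of the degenerate pairs ($a_{ij}=0$ when $b_{ij}=0$) is also right, since then $x_{ij}=y_{ij}=0$ and condition (3) is unaffected. The only point you should add is the one-line observation that $S=\sum_k\A_k^-\lvert\lambda_k^-\rvert>0$: if $S=0$ then every $\A_i^-=0$ and $\lambda=\sum_j\A_j^+\lambda_j^+\ge 0$, contradicting (i). Compared with the paper's argument, yours is shorter, avoids induction, and produces explicit formulas for $a_{ij}$ and $b_{ij}$; the paper's two-step decomposition has the mild advantage of mirroring the way the proposition is applied in Section 5 (pairing each negative measure with each positive one through intermediate averages), but both arguments are elementary and of comparable depth.
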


Note that \eqref{eqconsequence} is an immediate consequence of the assertions (1)--(3).

We first prove this proposition in the case $n=1$.

\begin{lemma}\label{l.1} 
Consider real numbers $\lambda_1^-, \dots,\lambda_m ^-<0$ and $\lambda^+>0$. Assume that there are nonnegative numbers $\A_1^-, \ldots,\A_m ^-$ and $\A^+$ so that
\begin{itemize}[leftmargin=0.8cm ]
  \item[(i)] $\lambda\eqdef  \sum_{i=1}^m  \A_i^-\lambda_i^-+\A^+\lambda^+<0$,
  \item[(ii)] $\sum_{i=1}^m \A_i^-+\A^+ =1$.
 \end{itemize}
Then there are $a_i\in[0,1]$ and $b_i\geq 0$ so that:
\begin{itemize}[leftmargin=0.8cm ]
 \item[(1)] $\lambda_i\eqdef   (1-a_i)\lambda_i^-+a_i\lambda^+<0$ for every  $1\leq i\leq m $,
 \item[(2)] $\sum_{i=1}^m b_i=1$,
 \item[(3)] $\A_i^-= b_i (1-a_i)$ for every $1\leq i\leq m$ and 
 	$\A^+= \sum_{i=1}^m  b_i a_i $.
\end{itemize}
As a consequence,
\begin{equation}\label{eqtriaangle11}
	\sum_{i=1}^m  b_i\lambda_i 
	= \sum_{i=1}^m  b_i \big((1-a_i)\lambda^-_i+a_i\lambda^+\big)
	=\lambda.
\end{equation}
\end{lemma}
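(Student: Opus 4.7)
The plan is to reduce the system for the unknowns $b_i, a_i$ to a single family of auxiliary unknowns $c_i \eqdef b_i a_i \geq 0$, which intuitively represents how much of the positive mass $\A^+$ is paired with $\lambda_i^-$. From condition (3), $b_i(1-a_i)=\A_i^-$ and $b_i a_i = c_i$ force $b_i = \A_i^- + c_i$ and, when $b_i>0$, $a_i = c_i/(\A_i^- + c_i) \in [0,1)$; condition (2) $\sum b_i = 1$ then becomes the single linear constraint $\sum c_i = \A^+$, using hypothesis (ii). (Degenerate indices with $\A_i^- = 0$ will force $c_i = 0$ and hence $b_i=0$; on those indices I will simply set $a_i \eqdef 0$ by convention, so that the formal value $\lambda_i = \lambda_i^- < 0$ is still well-defined.)

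The first step is to rewrite condition (1) as an explicit upper bound on $c_i$. Substituting the expression for $a_i$ into $(1-a_i)\lambda_i^- + a_i \lambda^+ < 0$ and clearing the positive denominator $\A_i^- + c_i$ reduces, after cancellation, to
\[
	c_i \,\lambda^+ \;<\; \A_i^-\,|\lambda_i^-|,
	\qquad\text{i.e.,}\qquad
	c_i \;<\; \frac{\A_i^-\,|\lambda_i^-|}{\lambda^+}.
\]

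The second step is to produce such $c_i \geq 0$ with the prescribed sum. Summing the upper bounds gives
\[
	\sum_{i=1}^m \frac{\A_i^-\,|\lambda_i^-|}{\lambda^+}
	= \frac{-\sum_i \A_i^- \lambda_i^-}{\lambda^+}
	> \A^+,
\]
where the strict inequality is precisely hypothesis (i) (rewritten as $-\sum \A_i^-\lambda_i^- > \A^+\lambda^+$). Hence a proportional assignment
\[
	c_i \eqdef \A^+ \cdot \frac{\A_i^-\,|\lambda_i^-|}{\sum_j \A_j^-\,|\lambda_j^-|}
\]
is well-defined (the denominator is positive, since otherwise all $\A_i^-$ would vanish, forcing $\A^+=1$ and $\lambda = \lambda^+>0$, contradicting (i)). It satisfies $\sum c_i = \A^+$ and, since the ratio $c_i \cdot \lambda^+ / (\A_i^- |\lambda_i^-|) = \A^+\lambda^+ / \sum_j \A_j^- |\lambda_j^-|$ is the same for every $i$ and strictly less than $1$, the strict bound on $c_i$ holds uniformly.

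With the $c_i$ in hand, define $b_i$ and $a_i$ as above and verify (1), (2), (3) by direct substitution; identity \eqref{eqtriaangle11} follows by inserting (3) into the definition of $\lambda_i$ and collecting terms. I do not anticipate a serious obstacle: the only subtlety is verifying that the hypothesis $\lambda < 0$ is exactly what makes the linear program $\{c_i \geq 0,\ \sum c_i = \A^+,\ c_i < \A_i^-|\lambda_i^-|/\lambda^+\}$ feasible, which is the content of the second step.
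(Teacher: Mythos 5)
Your argument is correct, and it takes a genuinely different route from the paper's proof. The paper first treats the special case $\lambda_i^-\le\lambda<0$ for all $i$ by choosing $a_i$ so that every pairwise average $\lambda_i$ equals $\lambda$ itself (namely $a_i=(\lambda-\lambda_i^-)/(\lambda^+-\lambda_i^-)$), and then verifies (2)--(3) by a comparison-of-coefficients argument; the general case is handled by induction on $m$, peeling off one index $j_0$ with $\lambda<\lambda_{j_0}^-$ at a time. Your substitution $c_i\eqdef b_ia_i$, $b_i=\A_i^-+c_i$ instead converts the whole system into the feasibility of the linear constraints $c_i\ge0$, $\sum_i c_i=\A^+$, $c_i\lambda^+<\A_i^-\lvert\lambda_i^-\rvert$, which hypothesis (i) makes solvable by proportional allocation; this is uniform in $m$, avoids the case split and the induction, and makes transparent that $\lambda<0$ is exactly the feasibility condition. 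The reduction of (1) to $c_i\lambda^+<\A_i^-\lvert\lambda_i^-\rvert$ is right (the denominator $\A_i^-+c_i=b_i$ is positive whenever the inequality is actually needed), the positivity of $\sum_j\A_j^-\lvert\lambda_j^-\rvert$ is correctly justified, and your convention $a_i=0$, $b_i=0$ on indices with $\A_i^-=0$ correctly sidesteps the only place where the strict bound $c_i<\A_i^-\lvert\lambda_i^-\rvert/\lambda^+$ would degenerate to $0<0$, since there condition (1) reads $\lambda_i=\lambda_i^-<0$ and conditions (2)--(3) hold vacuously for that index. The trade-off is that the paper's first-case construction produces the extra (unstated but convenient) property $\lambda_i=\lambda$ for all $i$, whereas your $\lambda_i$ are merely negative, which is all the lemma asks for.
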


\begin{proof}
Note that \eqref{eqtriaangle11} follows immediately from the other assertions of the lemma.

We first consider the following particular case.

\smallskip\noindent\textbf{Case $\lambda^-_i \leq \lambda<0$ for every $i$}.
In this case, the lemma follows taking
\[
	a_i
	\eqdef\frac{\lambda-\lambda^-_i}{\lambda^+-\lambda^-_i}\in (0,1)
	\quad\text{ and }\quad
	b_i\eqdef\frac{c_i}{\sum_{k=1}^m  c_k},
	\quad\text{ where }\quad
	c_i\eqdef \frac{\A^-_i}{1-a_i}>0.
\]	
With this choice we have 
\begin{equation}\label{eqtriaangle}
	\lambda_i
	\eqdef  (1-a_i)\lambda^-_i+a_i\lambda^+
	=\lambda<0,
\end{equation}
which proves item (1).
Moreover, we get
\begin{equation}\label{eqtriaangle2}
	\sum_{i=1}^m  b_i=1, 
	\quad
	\A^-_i= c_i(1-a_i),
	\quad\text{ and }\quad
	b_i(1-a_i)= \frac{\A^-_i}{\sum_{k=1}^m  c_k}.
\end{equation}
This proves item (2).
Summing over $i=1,\ldots,m$, the latter implies
\begin{equation}\label{eqtriaangle3}
 	\frac{\sum_{i=1}^m  \A^-_i}{\sum_{k=1}^m  c_k}  + \sum_{i=1}^m  a_i b_i 
	= \sum_{i=1}^m \big((1-a_i)b_i+a_ib_i\big)
	= \sum_{i=1}^m  b_i 
	=1.
\end{equation}	
Taking in \eqref{eqtriaangle} the average with respect to the weights $\{b_i\}_i$, with \eqref{eqtriaangle2} we get
\[
	\sum_{i=1}^m  b_i\lambda_i
	= \sum_{i=1}^m  b_i (1-a_i)\lambda^-_i+(\sum_{i=1}^m  a_i b_i)\lambda^+ 
	= \lambda\sum_{i=1}^m  b_i
	=\lambda.
\]
Using \eqref{eqtriaangle2}, we can rewrite
\[
	\lambda= \sum_{i=1}^m  \frac{\A^-_i}{\sum_{k=1}^m  c_k}\lambda^-_i
			+(\sum_{i=1}^m  a_i b_i)\lambda^+  
	=  \Big(\frac{\sum_{i=1}^m  \A^-_i}{\sum_{k=1}^m  c_k}\Big)
		\frac{\sum_{i=1}^m  \A^-_i\lambda^-_i}{\sum_{i=1}^m  \A^-_i}
		+ \Big(\sum_{i=1}^m  a_i b_i\Big)\lambda^+.
\]
Note that by \eqref{eqtriaangle3} the latter is a convex sum of $\sum_i \A^-_i\lambda^-_i/\sum_i \A^-_i$ and $\lambda^+$ with value $\lambda$ (whose coefficients $\sum_i \A^-_i/\sum_k c_k$ and $\sum_i a_ib_i$ are uniquely defined).
On the other hand, by hypothesis,
\[
	\lambda
	= \sum_{i=1}^m  \A^-_i\lambda^-_i+ \A^+\lambda^+ 
	= \frac{\sum_{i=1}^m  \A^-_i}{1} \frac {\sum_{i=1}^m  \A^-_i\lambda^-_i}{\sum_{i=1}^m  \A^-_i}
		+ \A^+\lambda^+.
\]
Comparing coefficients, this implies  $\sum_k  c_k=1$ and $\sum_i  a_i b_i=\A^+$. Hence, \eqref{eqtriaangle2} gives $b_i(1-a_i)=A^-_i$. This implies item (3).
This proves the lemma in this case.

%%%%%%%%%%%%%%%%%%%%%%%%%%%%%%%%%%%%%%%%%%%%%
\medskip\noindent\textbf{General case.}
%%%%%%%%%%%%%%%%%%%%%%%%%%%%%%%%%%%%%%%%%%%%%%%%%%%%%%
Assume now that $\lambda< \lambda^-_{j_0}<0$ for some index $0\leq j_0\leq m $; for notational simplicity we assume $j_0=m$. 

We now argue by induction on $m$. For $m=1$, the assertion of the lemma is straightforward. Let us now suppose that $m>1$ and that the assertion was shown for $m-1$. 

Let us first draw some auxiliary consequences of hypotheses (i) and (ii). By (i),
\begin{eqnarray}
	0>\lambda
	&=& \sum_{i=1}^m  \A_i^-\lambda_i^-+ \A^+\lambda^+ \notag\\
 	&=& \A_m ^-\lambda^-_m  
		+(1-\A^-_m )\Big(\sum_{i=1}^{m-1}  \frac{\A_i^-}{1-\A_m ^-}\lambda_i^-
			+\frac{\A^+}{1-\A_m ^-}\lambda^+ \Big).\label{eqtriaangle7}
\end{eqnarray}
Note that hypothesis (ii) implies
\begin{equation}\label{eqtriaangle6}
	\sum_{i=1}^{m-1}  \frac{\A_i^-}{1-\A_m ^-}+\frac{\A^+}{1-\A_m ^-} 
	= \frac{\sum_{i=1}^{m-1} \A_i^-+\A^+}{1-\A_m ^-}
	=\frac{1-\A_m ^-}{1-\A_m ^-}  
	= 1.
\end{equation}
Let
\begin{equation}\label{eqtriaangle5}
	0> \widetilde \lambda
	\eqdef \frac{\lambda-\A^-_m \lambda^-_m }{1-\A^-_m }
	=	\sum_{i=1}^{m-1}  \frac{\A_i^-}{1-\A_m ^-}\lambda_i^-
		+ \frac{\A^+}{1-\A_m ^-}\lambda^+ .
\end{equation}
Letting 
\begin{equation}\label{eqtriaangle4}
	 \widetilde \A^-_i
	 \eqdef\frac{\A^-_i}{1-\A_m ^-},\quad
	 \widetilde \A^+
	\eqdef\frac{\A^+}{1-\A_m ^-},
\end{equation}
together with \eqref{eqtriaangle5} and \eqref{eqtriaangle6}, we get	
\[
	0
	> \widetilde \lambda
	=\sum_{i=1}^{m-1} \widetilde\A^-_i \lambda^-_i
		+  \widetilde \A^+ \lambda^+ 
	\quad\text{ and }\quad	
	\sum_{i=1}^{m-1} \widetilde \A^-_i+ \widetilde \A^+
	=1.
\]	
Thus, applying the induction hypothesis to the above, the lemma is true for the numbers $\widetilde A_1^-,\ldots,\widetilde A_{m-1}^-$ and $\widetilde A^+$ and hence there are  $\widetilde a_i\in[0,1]$ and $\widetilde b_i\geq 0$ so that:
\begin{equation}\label{consequence0}\begin{split}
	&\text{($\widetilde 1$)}\quad 0> \widetilde \lambda_i\eqdef (1-\widetilde a_i)\lambda_i^-+\widetilde a_i\lambda^+\text{for every }1\leq i\leq m -1,\\[-0.2cm]
	&\text{($\widetilde 2$)}\quad \sum_{i=1}^{m-1}  \widetilde b_i=1,\\[-0.2cm]
	&\text{($\widetilde 3$)}\quad
	\widetilde \A_i^-
	= \widetilde b_i (1-\widetilde a_i)\text{ for every }1\leq i\leq m -1,
	\quad\quad
	\widetilde \A^+= \sum_{i=1}^{m-1}  \widetilde b_i\widetilde a_i
\end{split}\end{equation}
and hence
\[
	\sum_{i=1}^{m-1}  \widetilde b_i\widetilde \lambda_i 
	= \sum_{i=1}^{m-1} \widetilde b_i\big((1-\widetilde a_i)\lambda^-_i+\widetilde a_i\lambda^+\big)
	= \widetilde \lambda.
\]	

From \eqref{eqtriaangle7}, it follows that
\[\begin{split}
	\lambda
	&=\A_m ^-\lambda^-_m  
		+(1-\A^-_m )\Big(\sum_{i=1}^{m-1}  \frac{\A_i^-}{1-\A_m ^-}\lambda_i^-
						+ \frac{\A^+}{1-\A_m ^-}\lambda^+ \Big)\\
	\text{\small{(by \eqref{eqtriaangle4} and \eqref{consequence0})}}\quad					
	&= \A^-_m \lambda^-_m 
		+ \sum_{i=1}^{m-1}  (1-\A^-_m )\widetilde b_i \big((1-\widetilde a_i)\lambda^-_i
										+ \widetilde a_i\lambda^+\big) .
\end{split}\]

Let us now show the assertions (i)--(iii) of the lemma hold taking the following numbers
 \[\begin{split}
	&b_i\eqdef(1-\A^-_m )\widetilde b_i\text{ for every }1\leq i\leq m -1,\quad b_m \eqdef\A^-_m,\\
	&a_i\eqdef\widetilde a_i\text{ for every }1\leq i\leq m -1,\quad a_m \eqdef0.
\end{split}\]
Indeed, with this choice,  one has
\[\begin{split}
	&(1-a_i)\lambda_i^-+a_i\lambda^+
	=(1-\widetilde a_i)\lambda_i^-+\widetilde a_i\lambda^+
	=\widetilde\lambda_i<0
	\quad \text{for $1\le i\le m-1$},\\
	&(1-a_m)\lambda_m^-+a_m\lambda^+
	=\lambda_m^-<0,
\end{split}\]
which proves item (i). Together with \eqref{consequence0} item (2), we get
\[
	\sum_{i=1}^m  b_i
	= \sum_{i=1}^{m-1}  b_i+b_m
	=(1-\A^-_m )\sum_{i=1}^{m-1} \widetilde b_i +\A^-_m
	= (1-\A^-_m )+\A^-_m =1,
\]	
which proves item (ii). Finally, by \eqref{eqtriaangle4} and \eqref{consequence0},
\[\begin{split}
	&A_i^-
	= (1-\A^-_m ) \widetilde \A^-_i
	=   (1-\A^-_m )\widetilde b_i(1-\widetilde a_i)
	= b_i (1-a_i)
	\quad \text{for $1\le i\le m-1$},\\
	&A_m^-
	= b_m =b_m (1-a_m ),
\end{split}\]
which proves the first half of item (iii). Moreover, by \eqref{eqtriaangle4}, we can write
\[\begin{split}
	\A^+&=(1-\A^-_m )\widetilde\A^+
	=(1-\A^-_m )\sum_{i=1}^{m-1} \widetilde b_i\widetilde a_i
	= \sum_{i=1}^{m-1}  b_ia_i+0
	=\sum_{i=1}^m  b_i a_i,
\end{split}\]
which proves the second half of item (iii). 
This finishes the proof by induction.
\end{proof}

The next lemma is, somehow, analogous to  Lemma~\ref{l.1}.

\begin{lemma}\label{l.-1} 
Consider real numbers $\lambda^-<0$ and $\lambda^+_1,\dots, \lambda^+_n>0$.
Assume that there are nonnegative numbers $\A^-$ and $\A^+_1,\ldots,\A^+_n$ so that
\begin{itemize}[leftmargin=0.8cm ]
\item[(i)] $\lambda= \A^-\lambda^-+ \sum_{j=1}^n\A_j^+\lambda_j^+ <0$,
\item[(ii)] $\A^- +\sum_{j=1}^n\A_j^+=1$.
 \end{itemize}
Then there are $a_j\in[0,1]$ and $b_j\geq 0$ so that:
\begin{itemize}[leftmargin=0.8cm ]
 \item[(1)]  $\lambda_j
	\eqdef (1-a_j)\lambda^-+a_j\lambda^+_j<0$ for every  $1\leq j\leq n$,
\item[(2)] $\sum_{j=1}^nb_j=1$,
\item[(3)] $\A_j^+= b_j a_j$  for every $1\leq j\leq n$ and $\A^-= \sum_{j=1}^nb_j (1-a_j)$.
\end{itemize}
As a consequence,
\[
	\sum_{j=1}^nb_j\lambda_j 
	= \sum_{j=1}^nb_j\big((1-a_j)\lambda^-+a_j\lambda^+_j\big)
	=\lambda.
\]
\end{lemma}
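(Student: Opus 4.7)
The plan is to adapt the construction from Case 1 of the proof of Lemma \ref{l.1} directly. The key observation is that Lemma \ref{l.-1} turns out to be simpler than Lemma \ref{l.1}: no induction or splitting into cases is needed. The reason is that in the present situation each pair $(\lambda^-, \lambda^+_j)$ straddles the value $\lambda < 0$ (since $\lambda^- < \lambda < 0 < \lambda^+_j$), so the natural choice of weight $a_j$ that places $\lambda$ exactly as the convex combination of $\lambda^-$ and $\lambda^+_j$ automatically lies in $(0,1)$. By contrast, in Lemma \ref{l.1} one had to handle the case where some $\lambda^-_i$ lies between $\lambda$ and $0$, which is what forced an induction on $m$.

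Concretely, assuming the nondegenerate case $\lambda > \lambda^-$, I would set
$$
	a_j \eqdef \frac{\lambda - \lambda^-}{\lambda^+_j - \lambda^-} \in (0,1)
	\quad\text{and}\quad
	b_j \eqdef \frac{A^+_j}{a_j} \geq 0.
$$
Item (1) is then immediate, since by construction $(1-a_j)\lambda^- + a_j \lambda^+_j = \lambda < 0$. The identity $A^+_j = b_j a_j$ appearing in item (3) is built into the definition of $b_j$. For $\sum_j b_j = 1$ in item (2), I would expand
$\sum_j b_j = (\lambda - \lambda^-)^{-1} \sum_j A^+_j(\lambda^+_j - \lambda^-)$
and substitute the hypotheses (i)--(ii), which yield $\sum_j A^+_j \lambda^+_j = \lambda - A^-\lambda^-$ and $\sum_j A^+_j = 1 - A^-$, so the right-hand side collapses to $1$. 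The remaining identity $\sum_j b_j(1-a_j) = A^-$ then follows by subtracting $\sum_j b_j a_j = \sum_j A^+_j$ from $\sum_j b_j = 1$ and using (ii) once more. The final displayed identity $\sum_j b_j \lambda_j = \lambda$ is automatic from (1)--(3).

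The only potential obstacle is the degenerate case $\lambda = \lambda^-$, where $a_j$ vanishes and $b_j$ is undefined; but hypotheses (i), (ii) combined with $\lambda^+_j > 0$ force this to occur only when $A^- = 1$ and every $A^+_j$ vanishes, which is handled trivially by setting, say, $b_1 = 1$, $b_j = 0$ for $j \geq 2$, and $a_j = 0$ for all $j$.
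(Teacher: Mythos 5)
Your proof is correct and follows essentially the same route as the paper: the paper also observes that here $\lambda\in(\lambda^-,0)$ forces the natural choice of $a_j$ (making each pairwise combination equal $\lambda$) to lie in $(0,1)$, and then repeats the Case-1 computation of Lemma~\ref{l.1}. Your direct verification of $\sum_j b_j=1$ by substituting (i)--(ii), and your explicit treatment of the degenerate case $\lambda=\lambda^-$, are minor streamlinings of the same argument.
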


\begin{proof}
Note that Lemma~\ref{l.-1} does not follow from Lemma~\ref{l.1} just by exchanging the roles of $\lambda^+_j, \A^+_j, \lambda^-,\A^- $ and $\lambda^-_i, \A^-_i, \lambda^+,\A^+ $ because of the sign $\lambda_j<0$. However, this was not used in the proof of Lemma~\ref{l.1}, except for requiring  $\lambda^-_i\leq \lambda<0$ in order to get  $a_i\in(0,1)$ with $\lambda_i=(1-a_i)\lambda^-_i+a_i\lambda^+<0$, see \eqref{eqtriaangle}.

Under the hypotheses of Lemma~\ref{l.-1}, for any $j$ there is $a_j\in(0,1)$ so that $\lambda_j= (1-a_j)\lambda^-+a_j\lambda^+_j =\lambda\in (\lambda^-,0)$. Now the proof of the lemma is analogous to the first part of the proof of Lemma~\ref{l.1}, after \eqref{eqtriaangle}.
\end{proof}

%%%%%%%%%%%%%%%%%%%%%%%%%%%%%%%%%%%%%%%

\begin{proof}[Proof of Proposition~\ref{p.convexsums}]
Let   
\begin{equation}\label{eqtriaangle17}
	\A^+
	\eqdef \sum_{j=1}^n \A_j^+
	\quad\mbox{ and }\quad
	\lambda^+
	\eqdef \frac{\sum_{j=1}^n \A^+_j\lambda_j^+}{\A^+}.
\end{equation}
With this notation, we get 
\[
	\lambda
	= \sum_{i=1}^m \A^-_i \lambda^-_i+\A^+\lambda^+ 
	<0
	\quad\text{ with }\quad
	\sum_{i=1}^m \A^-_i+\A^+ =1.
\]
Thus, we can apply Lemma~\ref{l.1} to get $a_i\in[0,1]$ and $b_i\geq 0$ so that:
\begin{equation}\label{consequence1}\begin{split}
	&\bullet \quad \lambda_i\eqdef  (1-a_i)\lambda_i^-+a_i\lambda^+<0\text{ for every }1\leq i\leq m,\\[-0.2cm]
	&\bullet \quad \sum_{i=1}^m b_i=1,\\[-0.3cm]
	&\bullet \quad \A_i^-= b_i (1-a_i)\text{ for every }1\leq i\leq m\text{ and  }\A^+= \sum_{i=1}^m  b_i a_i,\\[-0.2cm]
	&\bullet \quad \sum_{i=1}^m b_i\lambda_i =\lambda.
\end{split}\end{equation}	

Now, by the definition of $\lambda^+$ in \eqref{eqtriaangle17}, for every $1\leq i\leq m$ we have
\begin{equation}\label{eqtriaangle15}
	0
	>\lambda_i
	= (1-a_i)\lambda^-_i+a_i\lambda^+
	= (1-a_i)\lambda^-_i+a_i\sum_{j=1}^n\frac{ \A^+_j}{\A^+} \lambda^+_j.
\end{equation}
Recalling the definition of $A^+$ in \eqref{eqtriaangle17}, for every $i$
\begin{equation}\label{eqtriaangle18}
	(1-a_i)+\sum_{j=1}^n\frac{a_i \A^+_j}{\A^+} 
	= (1-a_i) +a_i\frac{\sum_{j=1}^n \A^+_j}{\A^+} 
	= 1-a_i+a_i\frac{\A^+}{\A^+}=1.
\end{equation}
Let 
\begin{equation}\label{eqtriaangle19}
%	A^-_i
%	\eqdef (1-a_i),\quad
	A^+_{ij}
	\eqdef \frac{a_i \A^+_j}{\A^+}\geq 0.
\end{equation}
With this notation and together with \eqref{eqtriaangle15} and \eqref{eqtriaangle18}, for each $1\le i\le m$ we get 
\[
	\lambda_i
	=(1-a_i)\lambda^-_i + \sum_{j=1}^nA^+_{ij}\lambda^+_j 
	<0 
	\quad\mbox{ and }\quad
	(1-a_i)+\sum_{j=1}^nA^+_{ij}=1.
\]	
Thus, for each $1\le i\le m$, we can apply Lemma~\ref{l.-1} to the numbers $\lambda^-_i$ and $\lambda^+_1,\ldots,\lambda^+_n$ and the nonnegative numbers $(1-a_i)$ and $A^+_{i1},\ldots,A^+_{in}$ getting $\alpha_{ij}\in[0,1]$ and $\beta_{ij}\geq 0$ so that
\begin{equation}\label{consequence2}\begin{split}
	&\bullet \quad \lambda_{ij}\eqdef(1-\alpha_{ij})\lambda^-_i+\alpha_{ij}\lambda^+_j <0\text{ for every  }1\leq j\leq n,\\[-0.2cm]
	&\bullet \quad \sum_{j=1}^n \beta_{ij}=1,\\[-0.3cm]
	&\bullet \quad (1-a_i)= \sum_{j=1}^n\beta_{ij} (1-\alpha_{ij})\text{ and }A_{ij}^+= \beta_{ij} \alpha_{ij}\text{ for every }1\leq j\leq n,\\[-0.2cm]
	&\bullet\quad 	\sum_{j=1}^n \beta_{ij}\lambda_{ij} =\lambda_i.
\end{split}\end{equation}	

We claim that taking 
\[
	b_{ij}\eqdef b_i\beta_{ij}\ge0,\quad
	a_{ij}\eqdef\alpha_{ij}\in[0,1]\quad\text{ for }\quad 0\leq i\leq m, 0\leq j\leq n
\]	
 the assertion of the proposition follows. First verify that with \eqref{consequence2}, for every $i,j$,
\[
	\lambda_{ij}
	= (1-\alpha_{ij})\lambda^-_i + \alpha_{ij}\lambda^+_j 
	<0,
\]
proving item (1). Next, by our choice, \eqref{consequence2} and \eqref{consequence1},
\[
	\sum_{j=1}^n\sum_{i=1}^m b_{ij}
	=\sum_{j=1}^n\sum_{i=1}^mb_i\beta_{ij}
	= \sum_{i=1}^mb_i\big(\sum_{j=1}^n\beta_{ij}\big)=\sum_{i=1}^m b_i
	=1,
\]
proving item (2). Further, check that for every $i$, also together with  \eqref{consequence2} and \eqref{consequence1},
\[
	\sum_{j=1}^nb_{ij}(1-a_{ij})
	= \sum_{j=1}^nb_i\beta_{ij}(1-\alpha_{ij})
	=b_i \A^-_i,
\]
proving the first half of item (3). To show the second half, it remains to prove the following.

\begin{claim}
 For every $1\leq j\leq n$, one has $\A_j^+= \sum_{i=1}^m  b_{ij} a_{ij}$. 
\end{claim}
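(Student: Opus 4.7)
The plan is to unfold the definitions of $b_{ij}$ and $a_{ij}$ and then chain together the identities already recorded in \eqref{consequence1}, \eqref{eqtriaangle19}, and \eqref{consequence2}. No new combinatorics is needed; this is purely a bookkeeping computation.

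More precisely, fix $1\le j\le n$. I would start from the definition $b_{ij}=b_i\beta_{ij}$ and $a_{ij}=\alpha_{ij}$ to write
\[
\sum_{i=1}^{m} b_{ij}\,a_{ij}
= \sum_{i=1}^{m} b_i\,\beta_{ij}\,\alpha_{ij}.
\]
Then I would use the third bullet of \eqref{consequence2}, namely $\beta_{ij}\alpha_{ij}=A^+_{ij}$, to rewrite this as $\sum_{i=1}^m b_i A^+_{ij}$. Next, substituting the definition \eqref{eqtriaangle19} of $A^+_{ij}$, the sum becomes
\[
\sum_{i=1}^{m} b_i\cdot\frac{a_i\,\A^+_j}{\A^+}
= \frac{\A^+_j}{\A^+}\sum_{i=1}^{m} b_i\,a_i.
\]
Finally, the third bullet of \eqref{consequence1} gives $\sum_{i=1}^{m} b_i a_i=\A^+$, so the right-hand side collapses to $\A^+_j$, which proves the claim.

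There is no real obstacle: every ingredient has already been produced by the two applications of the auxiliary lemmas (Lemma~\ref{l.1} and Lemma~\ref{l.-1}), and the claim is just the ``dual'' of the identity $\A_i^-=\sum_{j}b_{ij}(1-a_{ij})$ that was already verified in the previous paragraph. Once the claim is established, item~(3) of Proposition~\ref{p.convexsums} is complete, and \eqref{eqconsequence} follows immediately by taking the convex combination of the inequalities in item~(1) with the weights from items~(2)--(3).
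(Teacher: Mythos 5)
Your proposal is correct and follows essentially the same route as the paper: both unfold $b_{ij}a_{ij}=b_i\beta_{ij}\alpha_{ij}=b_ia_iA^+_j/\A^+$ via \eqref{consequence2} and \eqref{eqtriaangle19}, then sum over $i$ and use $\sum_i b_ia_i=\A^+$ from \eqref{consequence1}. The paper's version merely inserts an extra (dispensable) observation about ratios of the summands before performing the same collapse.
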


\begin{proof}
Recall that, by \eqref{eqtriaangle19} and \eqref{consequence2}, 
\[
	a_i \A^+_j/\A^+=A^+_{ij}=\beta_{ij}\alpha_{ij}>0
	\quad\text{  and }\quad 
	b_{ij}=b_i\beta_{ij}.
\]	 
Hence, 
\begin{equation}\label{stup}
	b_{ij} \cdot a_{ij}=b_i\beta_{ij}\cdot\alpha_{ij}=b_ia_iA^+_j/A^+.
\end{equation}
One deduces that for every $i,k\in \{1,\dots,m\}$ and  $1\leq j\leq m $ one has:
\[
	\frac{b_{ij}a_{ij}}{b_{kj}a_{kj}}
	=\frac{b_ia_iA^+_j}{b_ka_kA^+_j}
	=\frac{b_ia_i}{b_ka_k}.
\]
From \eqref{stup}, one also deduces
\[
	\frac{\sum_{i=1}^m  b_{ij}a_{ij}}{\sum_{k=1}^m  b_{kj}a_{kj}}
	= \frac{\sum_{i=1}^mb_ia_i}{\sum_{k=1}^mb_ka_k}
	=1.
\]
Hence,  \eqref{stup} together with \eqref{consequence1} implies that for every $1\le j\le n$
\[
	\sum_{i=1}^mb_{ij}a_{ij}
	= \sum_{i=1}^mb_ia_i\frac{A^+_j}{A^+}
	= \frac{A^+_j}{A^+}\sum_{i=1}^mb_ia_i
	= \frac{A^+_j}{A^+}A^+
	= A^+_j,
\]
proving the claim.
\end{proof}

This concludes the proof of Proposition~\ref{p.convexsums}.
\end{proof}

\end{document}